\theoremstyle{plain}
\newtheorem{theorem}[equation]{Theorem}
\newtheorem{lemma}[equation]{Lemma}
\newtheorem{proposition}[equation]{Proposition}
\newtheorem{proposition-definition}[equation]{Proposition-Definition}
\newtheorem{construction}[equation]{Construction}
\newtheorem{corollary}[equation]{Corollary}
\theoremstyle{definition}
\newtheorem{block}[equation]{}
\newtheorem{definition}[equation]{Definition}
\newtheorem{example}[equation]{Example}
\newtheorem{remark}[equation]{Remark}
\newcommand{\spec}{\mathrm{Spec}\,}
\newcommand{\an}{\mathrm{an}}
\newcommand{\sat}{\mathrm{sat}}
\newcommand{\cont}{\mathrm{cont}\,}
\renewcommand{\log}{\mathrm{log}}
\newcommand{\ord}{\mathrm{ord}}
\newcommand{\Hom}{\mathrm{Hom}}
\newcommand{\id}{\mathrm{Id}}
\newcommand{\add}[1]{\add \emph{#1}}
\newcommand{\val}{\mathrm{val}}
\renewcommand{\div}{\mathrm{div}\,}
\newcommand{\wt}{\mathrm{wt}}
\renewcommand{\sp}{\mathrm{sp}}
\newcommand{\tor}{\mathrm{tor}}
\newcommand{\coker}{\mathrm{coker}}
\newcommand{\sk}{\mathrm{sk}}
\newcommand{\lra}{\longrightarrow}
\newcommand{\gp}{\mathrm{gp}}
\newcommand{\mult}{\mathrm{mult}}
\newcommand{\colim}{\mathrm{colim}}
\newcommand{\codim}{\mathrm{codim}}
\newcommand{\vol}{\mathrm{vol}}
\newcommand{\rec}{\mathrm{rec}}
\newcommand{\cyc}{\mathrm{cyc}}
\renewcommand{\star}{\mathrm{star}}
\newcommand{\link}{\mathrm{link}}
\newcommand{\A}{\mathbb{A}}
\newcommand{\Q}{\mathbb{Q}}
\newcommand{\N}{\mathbb{N}}
\newcommand{\R}{\mathbb{R}}
\newcommand{\Z}{\mathbb{Z}}
\newcommand{\CC}{\mathscr{C}}
\renewcommand{\O}{\mathscr{O}}
\renewcommand{\H}{\mathscr{H}}
\newcommand{\I}{\mathcal{I}}
\newcommand{\DD}{\mathscr{D}}
\newcommand{\XX}{{X}}
\newcommand{\M}{\mathscr{M}}
\newcommand{\YY}{X'}
\newcommand{\p}{\mathfrak{p}}
\newcommand{\E}{\mathfrak{E}}
\newcommand{\FF}{\mathscr{F}}
\newcommand{\D}{\mathcal{D}}
\renewcommand{\add}[1]{$\bigstar\bigstar\bigstar\bigstar\bigstar$\emph{\footnotesize{#1}}}
\numberwithin{equation}{section}
\begin{document}
\title{Harmonic covers of skeleta}

\date{\today}
\author[Art Waeterschoot]{Art Waeterschoot}
\address{KU Leuven\\
Department of Mathematics\\
Celestijnenlaan 200B\\3001 Heverlee \\
Belgium}
\email{art.waeterschoot@kuleuven.be}
\thanks{The author was supported by the Fund for Scientific Research (FWO) Flanders, grants 11F0123N and G0B1721N. The results in this paper are part of my PhD thesis supervised by Johannes Nicaise, I am very grateful for his guidance and advice over the years. I also thank Stefan Wewers, Lars Halvard Halle, Michael Temkin, Yanbo Fang, Antoine Ducros, Micha\"el Maex, Alejandro Vargas and Soham Karwa for valuable discussions.}
\urladdr{\url{https://sites.google.com/view/artwaeterschoot}}
\subjclass[2010]{Primary 14G22, 31C20; secondary 14D10, 14T20, 31C05} 
%	14G22  	Rigid analytic geometry
%	14D10  	Arithmetic ground fields (finite, local, global) and families or fibrations 
%14T20 Geometric aspects of tropical varieties
%14T25 Arithmetic aspects of tropical varieties
%   14E22  	Ramification problems in algebraic geometry

% 14T15  	Combinatorial aspects of tropical varieties
% 14T20  	Geometric aspects of tropical varieties
% 14T25  	Arithmetic aspects of tropical varieties

%11G25 varietes over finite and local fields
%14G40  Arithmetic varieties and schemes; Arakelov theory; heights 
%31C20 Discrete potential theory
%32P05  	Non-Archimedean analysis
%31C05  	Harmonic, subharmonic, superharmonic functions on other spaces
% 	32Uxx		Pluripotential theory
% 32U05  	Plurisubharmonic functions and generalizations

\keywords{Tropical complexes, Berkovich skeleta, harmonic morphisms, toroidal schemes, Poincar\'e-Lelong formula, logarithmic differentials, Riemann-Hurwitz}
\begin{abstract}
The geometry of a toroidal scheme over a DVR is encoded in a $\Z$-PL space known as the dual polyhedral complex.
Any such dual complex is a skeleton, i.e. a nonarchimedean analytic retract, and admits a combinatorial divisor theory via specialisation. These structures on the dual complex interact via a Poincaré-Lelong slope formula, which interprets specialisations of divisors as Laplacians of PL functions. The main result presented here shows that finite covers of toroidal schemes give harmonic morphisms of dual complexes, i.e. morphisms that preserve the tropical Laplace equation. A crucial ingredient is a balancing condition which is a variant of the tropical multiplicity formula for dual complexes. 

We apply these results to obtain a Riemann-Hurwitz formula for covers of skeleta in any dimension: the Laplacian of the different function is the tropical relative canonical divisor.
\end{abstract}
\maketitle
\tableofcontents
\section{Introduction}
\begin{block}[\textbf{Motivation}] Harmonic functions play an important role in latest developments in nonarchimedean geometry. See for instance the recent work \cite{GJR}, 
where it is shown (\S7.6 in loc. cit.) that general morphisms of Berkovich analytic spaces are \emph{harmonic morphisms} in the sense that harmonic functions pull back to harmonic functions.  
Since it is well-known that Berkovich spaces retract onto piecewise linear subspaces called \emph{skeleta}, one is naturally led to contemplate harmonic morphisms of PL spaces in order to study Berkovich spaces from a polyhedral viewpoint, leading to a combinatorial or ``tropical'' approach to analytic geometry.

Skeleta are commonly constructed as dual complexes of toroidal schemes. We will focus on finite covers of such skeleta, working over discretely valued ground fields of arbitrary residue characteristic.
\end{block}
\begin{block}[\textbf{Summary of the results.}]
		We present four Theorems on skeleta and finite covers.
		\begin{itemize}
			\item First is a \emph{balancing condition} (Theorem~\ref{intro2}) which gives a combinatorial notion of local degree. %We use it to metrise skeleta and to define combinatorial Laplacians. 
			\item Second is a variant of the \emph{Poincar\'e-Lelong formula} for skeleta (Theorem~\ref{intro pll}). 
			\item Thirdly we show that finite covers of skeleta are \emph{harmonic morphisms} (Theorem~\ref{intro4}). 
			\item Fourth is a \emph{Riemann-Hurwitz formula} for skeleta (Theorem~\ref{intro5}).
		\end{itemize}
\end{block}
		\noindent Let us now give an overview of this work.
\begin{block}[\textbf{Toroidal schemes}]
	
\label{intro dual complexes}
Throughout we work over a discretely valued ground field $k$ whose ring of integers will be denoted by $k^{\circ}$. The residue field $\tilde{k}$ is assumed algebraically closed. 
Log-structures are defined with respect to the Zariski site, and we equip $S=\spec k^{\circ}$ with the ``standard'' log-structure $S^\dagger$ defined via $k^{\circ}\setminus 0\to k^{\circ}$.

In this paper a \emph{toroidal scheme over $S$} is defined as an integral separated finite type $S$-scheme $\XX$ equipped with a Zariski log-structure $$\XX^\dagger=(\XX,\M_\XX)$$ over $S^\dagger$ which is log-regular \cite{K94}, the last condition means $\XX^\dagger$ has some mild ``toric'' singularities that are adequately controlled by the log-structure -- basic properties will be recalled in Section~\ref{sec:log}. Then the log-structure of $\XX^{\dagger}$ is divisorial with respect to the support $$D_{\XX}=\{x\in \XX:\M_{\XX,x}\ne\O_{\XX,x}^\times\},$$ so that $\XX^\dagger$ is equivalently determined by the pair $(\XX,D_{\XX})$ for some ``boundary'' divisor $D_{\XX}$. Toroidal pairs generalise both:
\begin{itemize}
	\item \emph{snc pairs}: $\XX$ is regular and $D_{\XX}$ is strict normal crossings (containing $\XX_s$)
	\item \emph{toric pairs}: $\XX$ is a toric $S$-scheme andµ $D_{\XX}$ is torus-invariant (containing $\XX_s$).
\end{itemize}
We emphasise that a toroidal $S$-scheme $\XX^{\dagger}$ is \emph{not necessarily} (strictly) semistable, meaning that the special fiber $\XX_s$ is possibly not a reduced scheme. 
It is possible to study $S$-schemes which are toroidal with respect to the \'etale topology, but we do not pursue this here.\end{block}

\begin{block}[\textbf{Dual complexes}] A toroidal $S$-scheme $\XX^{\dagger}$ is stratified by the intersections of components of $D_{\XX}$, also called the log-stratification. The \emph{dual complex} $\D(\XX^\dagger)$ is obtained as the dual intersection complex of the log-stratification and is a polyhedral complex which may possibly have some unbounded polyhedra corresponding to horizontal log-strata. An important fact is that the dual complex $\D(\XX^\dagger)$ admits a canonical topological embedding $$\D(\XX^\dagger)\to \XX^\an$$ whose image is called the \emph{skeleton} $\sk(\XX^\dagger)$ of $\XX^\dagger$, see \cite{BM} and precursors \cite{B99,KS,MN,GRW}. The closure $\overline{\sk}(\XX^{\dagger})$ of $\sk(\XX^{\dagger})$ is a retract %\footnote{Plausibly a strong deformation retract, but we haven't checked this in the present generality.} 
of $\XX^{\an}$ (\ref{retract to compact skeleton}).
\end{block}
\begin{block}[\textbf{Toroidal covers}]
Throughout the paper we denote by $$f:\YY^\dagger\lra \XX^\dagger$$ a finite morphism of toroidal $S^{\dagger}$-schemes or short a \emph{toroidal cover}, and the associated cover of dual complexes is denoted by $$\phi:\D(\YY^\dagger)\to\D(\XX^\dagger)\quad \text{ or sometimes }\quad \phi:\Sigma'\to\Sigma.$$ 

We are primarily interested in such covers because they provide an adequate ``monomialised'' setting to study analytifications of covers of $k$-varieties via degeneration techniques. There is an advantage of working with toroidal schemes as opposed to \emph{snc} pairs: finite covers of \emph{snc} pairs hardly arise in nature -- the so-called ``impossibility of simultaneous resolution'' according to Abhyankar \cite[\S12]{K94}; also see the negative results of \cite{LL}. In contrast, if in the above only $f_\eta$ and $\XX^\dagger$ are given then the normalisation $f:\YY^{\dagger}\to\XX^{\dagger}$ is a toroidal cover assuming $f$ is tamely ramified along $D_\XX$. Kato raised the question whether in wildly ramified situations toroidal covers also exist (see \S\ref{examples f}). %perhaps mention relatino with cutkosky's monomialisation of morphisms}. %This is contrary to similarly defined covers of \emph{snc} pairs, which barely appear in nature, see \cite{LL} . 
% we will see (\S\ref{examples f}) that the toroidal covers $f:\YY^\dagger\lra \XX^\dagger$ considered here are abundant in tamely ramified situations. They are moreover conjectured to exist quite generally in wildly ramified situations. 
\end{block}
\begin{block}[\textbf{$\Z$-PL complexes}]\label{intro z-pl} In this paper \emph{$\Z$-PL complex} is defined as an integral $k$-affine strictly convex rational polyhedral complex. Roughly speaking a $\Z$-PL complex is glued from polyhedra defined by systems of inequalities of the form $a_1x_1+\dots+a_nx_n\ge b$ where $b,a_1,\dots,a_n\in\Z$.  % in the literature it is also known by the name of a \emph{$(\Z,|k^{\times}|)$-piecewise linear space} (also known as an integral $k$-affine structure). % (Definition~\ref{def PL space}). %on a convex rational polyhedral complex $\Sigma$ up to proper subdivisions, where ``integral $k$-affine'' 
Each polyhedron $\sigma$ of a $\Z$-PL complex $\Sigma$ is equipped with a lattice $M_{\sigma}$ of $\Z$-affine functions and for each pair of faces $\tau\le \sigma$ we have compatible restriction maps $M_{\sigma}\to M_{\tau}$. For toroidal $S$-schemes $\XX^\dagger$ the dual complex $\D(\XX^\dagger)$ comes canonically equipped with a $\Z$-PL structure where the lattices $M_{\sigma}$ consist of toric coordinates on $\sigma$. More precisely every polyhedron $\sigma$ of $\D(\XX^\dagger)$ corresponds to a unique 
%point $x$ of the Kato fan $F(\XX^\dagger)$ and the points of $F(\XX^{\dagger})$ are the 
generic point $x$ of a log-stratum of $\XX^\dagger$ and the lattice of $\Z$-affine functions $M_{\sigma}$ is then given by $$M_{\sigma}=\CC^{\gp}_{\XX,x},$$ this is the groupification of the characteristic monoid $\CC_x=\M_{\XX,x}^\sharp$ at $x$; details in Section~\ref{sec:dual complex}. Geometrically $M_{\sigma}$ is the lattice of Cartier divisors on $\spec\O_{\XX,x}$ that are supported on $D_{\XX,x}$, the inverse image of $D_{\XX}$ in $\O_{\XX,x}$. This leads to an isomorphism (Lemma~\ref{lem correspondence cartier and pl functions}) between the group of Cartier divisors supported on $D_{\XX}$ and the group of $\Z$-affine functions on $\Sigma$, defined as $$M_{\Sigma}\coloneqq\lim_{\sigma\in \Sigma} M_\sigma.$$
	 %of functions on $\Sigma$ that are $\Z$-affine on polyhedra 
\end{block}

\begin{block}[\textbf{Local degrees and the lattice index}] Let $\Sigma$ be a polyhedral complex of dimension $n$. By a \emph{facet-ridge pair} $(\sigma,\tau)$ of $\Sigma$ we mean a pair of a \emph{facet} $\sigma$ (a $n$-dimensional polyhedron) of $\Sigma$ and a \emph{ridge} $\tau$ (a $(n-1)$-dimensional polyhedron) of $\Sigma$ such that $\tau$ is a face of $\sigma$. %we will also call an $n$-dimensional face a \emph{facet}, and a $(n-1)$-dimensional face a \emph{ridge}. 

Now let $$\phi:\Sigma'\to \Sigma$$ be a finite cover of $n$-dimensional $\Z$-PL complexes.  Consider maps of a facet-ridge pair $(\tau',\sigma')$ of $\Sigma'$ to a given facet-ridge pair $(\tau,\sigma)$ of $\Sigma$ -- see Figure~\ref{fig: cover pl spaces} for a picture. Then, fixing $\tau'$, we call $\phi:\Sigma'\to\Sigma$ \emph{balanced at $\tau'$} if the following sum of lattice indices, called the \emph{local degree} at $\tau'$, is independent of the choice of $\sigma$ and thus well-defined :%we have sublatt if for each ridge $\tau'$ of $\Sigma$ the following definition of the \emph{local degree} $\deg_{\tau'}\phi$ is well-defined. Write $\tau=\phi(\tau')$ and choose a facet $\sigma$ containing $\tau$, then 
\begin{equation}
	\label{intro eqn: deg}
\deg_{\tau'}\phi\coloneqq\sum_{\substack{\text{facets }\, \sigma'\\(\sigma',\tau')\mapsto (\sigma,\tau)}}[M_{\sigma'}:M_{\sigma}].\end{equation} %of $\phi$ at $\tau'$ is a finite well-defined integer/ independent of the choice of $\sigma'$. 

The sum is over all facets $\sigma'$ such that $(\sigma',\tau')$ is a facet-ridge pair above a fixed pair $(\sigma,\tau)$. We call $\phi$ a \emph{balanced cover} if it is balanced at every ridge $\tau'$, and then assuming $\Sigma$ is connected we can define the \emph{degree} of $\phi$ as \begin{equation}
	\label{intro eqn: degree}
\deg\phi\coloneqq\sum_{\tau'\mapsto \tau}\deg_{\tau'}\phi,
\end{equation}
independently of the choice of ridge $\tau$ of $\Sigma$.
\end{block}
\refstepcounter{equation}
\begin{figure}[ht!]
		\centering
		\label{fig: cover pl spaces}
		\includegraphics[width=0.4\textwidth]{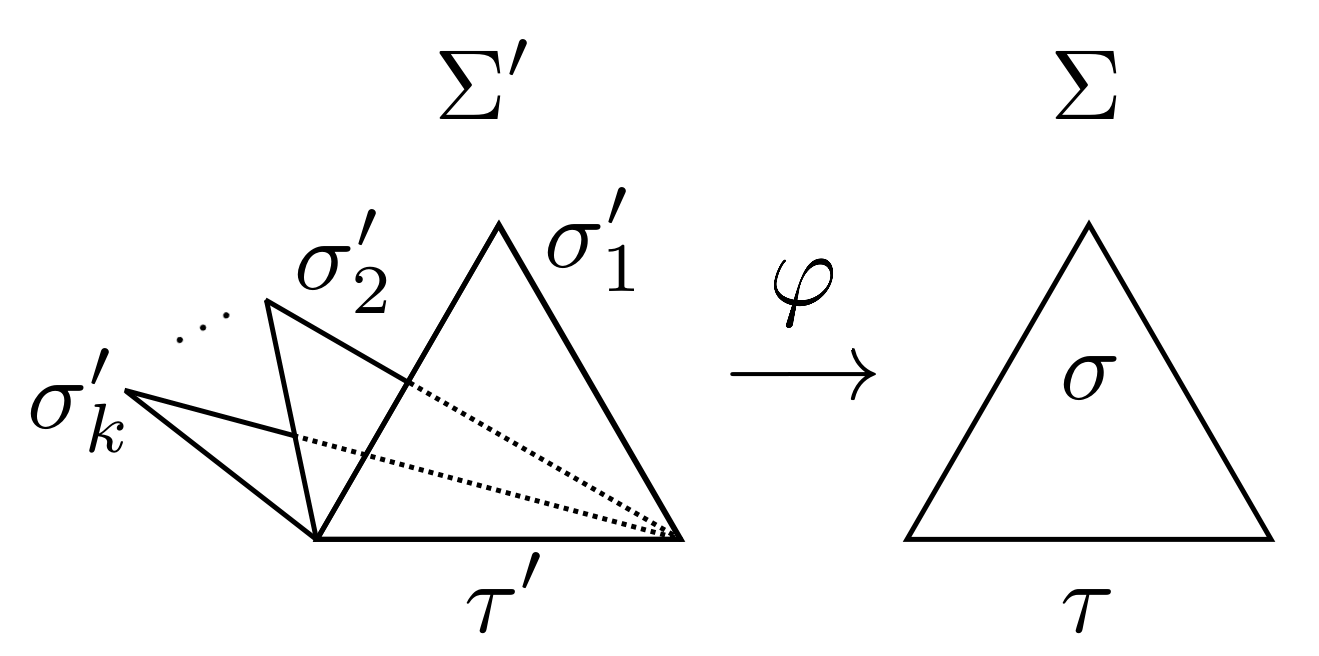}
		\caption{A finite cover of $\Z$-PL complexes is called \emph{balanced} if the local degree $\deg_{\tau'}\varphi\coloneqq \sum_{(\sigma',\tau')\mapsto (\sigma,\tau)}[M_{\sigma'}:M_{\sigma}]$ is independent of $\sigma$ (Definition~\ref{balanced cover Z-pl spaces}).}
	\end{figure}
\begin{theorem}[\textbf{Balancing condition}, See Theorem~\ref{thm: balanced}] \label{intro2} 
 Let $f:\YY^\dagger\to \XX^\dagger$ be a finite morphism of toroidal $S^\dagger$-schemes. Then $$\phi:\D(\YY^\dagger)\to\D(\XX^\dagger)$$ is a balanced cover of $\Z$-PL complexes and $$\deg \phi=\deg f.$$
\end{theorem}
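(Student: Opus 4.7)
The plan is to localize the statement at a ridge $\tau'\in\Sigma'$, reduce to a toric model via Kato's structure theorem for log-regular schemes, and identify the local degree $\deg_{\tau'}\phi$ with an intrinsic invariant of $f$ at the corresponding generic point $y'\in\YY$ that is manifestly independent of the choice of facet $\sigma\supset\tau$.

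\textbf{Localization and toric model.} The ridge $\tau'\in\Sigma'$ corresponds to the generic point $y'\in\YY$ of a $1$-dimensional log-stratum; set $x=f(y')$ and $\tau=\phi(\tau')$. For a facet $\sigma\supset\tau$ with corresponding closed point $x_0\in\XX$, the facets $\sigma'\supset\tau'$ with $\phi(\sigma',\tau')=(\sigma,\tau)$ are in bijection with closed points $y_0\in\overline{\{y'\}}$ satisfying $f(y_0)=x_0$. Strictly henselizing at $x_0$, the finite cover pulls back to $\coprod_{y_0\mapsto x_0}\spec\O^{\mathrm{sh}}_{\YY,y_0}\to\spec\O^{\mathrm{sh}}_{\XX,x_0}$, reducing the problem to a local statement on a finite map of strictly henselian log-regular $S$-schemes. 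By Kato's structure theorem, each $\spec\O^{\mathrm{sh}}_{\XX,x_0}$ is \'etale-locally isomorphic to a toric $S$-scheme $\spec k^\circ[P]$ with $P=\CC_{\XX,x_0}$, and similarly $\spec\O^{\mathrm{sh}}_{\YY,y_0}\cong\spec k^\circ[P'_{y_0}]$ with $P'_{y_0}=\CC_{\YY,y_0}$. The log morphism $f^\flat$ induces a finite extension of toric monoids $P\hookrightarrow P'_{y_0}$ whose lattice index is precisely $[(P'_{y_0})^{\gp}:P^{\gp}]=[M_{\sigma'}:M_\sigma]$, and in this toric model $f$ is log-flat.

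\textbf{Matching with the generic invariant.} The balancing condition at $\tau'$ follows from the identification
\begin{equation*}
\sum_{\substack{y_0\mapsto x_0 \\ y_0\in\overline{\{y'\}}}}[M_{\sigma'}:M_\sigma] \;=\; [k(y'):k(x)]\cdot e_{\log}(y'/x),
\end{equation*}
where the right-hand side depends only on $y'\to x$ (a product of a residue-degree and a log-ramification index of the induced valuation extension on the one-dimensional log-strata containing $y'$ and $x$). Since it is independent of $\sigma$, the local degree is well-defined. Summing over ridges above $\tau$ then produces
\begin{equation*}
\sum_{\tau'\mapsto\tau}\deg_{\tau'}\phi \;=\; \sum_{y'\mapsto x}[k(y'):k(x)]\cdot e_{\log}(y'/x) \;=\; \deg f,
\end{equation*}
the last equality being the decomposition of $\deg f$ at the generic point $x$ afforded by the log-flatness of $f$, which yields $\deg\phi=\deg f$.

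\textbf{Main obstacle.} The hard step is the identification above, matching a sum of abstract lattice indices with the algebraic invariant $[k(y'):k(x)]\cdot e_{\log}(y'/x)$. In the tamely ramified case this can be handled by a Kummer-theoretic computation on toric monoids. The wildly ramified case (when $\mathrm{char}(\tilde k)$ divides some lattice index) is more delicate: Kummer theory is unavailable, and one must instead exploit the log-flatness of the toric extension $k^\circ[P]\hookrightarrow k^\circ[P'_{y_0}]$ together with the explicit structure of $P'_{y_0}$ as a finitely generated $P$-set to compute the fiber length at $x_0$ directly and show that the sum of these fiber lengths over $y_0$ recovers the lattice-index sum. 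This is precisely the point at which log-regularity, as opposed to regularity, becomes essential.
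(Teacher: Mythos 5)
There is a genuine gap. The entire mathematical content of the theorem is the identity you isolate as the ``main obstacle'', namely that $\sum_{(\sigma',\tau')\mapsto(\sigma,\tau)}[M_{\sigma'}:M_{\sigma}]$ equals an invariant of $\tau'\mapsto\tau$ alone (in the paper's notation, $[M_{y'}:M_{y}]\cdot[\kappa(y'):\kappa(y)]$, Proposition~\ref{proposition: multiplicity formula}); you state this identity but do not prove it, and the route you sketch does not work as described. Two specific problems: (a) the assertion that the finite morphism $f$ is log-flat in the local model is unjustified --- a finite morphism of log-regular schemes is flat at codimension-one points of the target (automatic over a DVR) but need not be flat, or log-flat, at the closed point $x_0$ where you localize, and your fiber-length computation and your final decomposition $\deg f=\sum_{y'\mapsto x}[k(y'):k(x)]\,e_{\log}(y'/x)$ at a point $x$ of codimension $>1$ both lean on this; (b) Kato's structure theorem does not make $\spec\O^{\mathrm{sh}}_{\XX,x_0}$ \'etale-locally isomorphic to $\spec k^{\circ}[P]$ --- it only provides a chart (formally, $\widehat{\O}_{\XX,x_0}\cong C\FFbracket P\rrbracket\FFbracket t_1,\dots,t_r\rrbracket/(\theta)$), so the explicit monoid-algebra computation you propose is not available in the form stated. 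Deferring the wild case to ``exploit log-flatness'' therefore leaves the theorem unproven precisely where it is hardest.

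For comparison, the paper avoids all flatness issues by an induction on codimension: reducing to $\dim_{x}\XX=\dim_{y}\XX+1$, it restricts $f$ to the closed log-strata $E_{y'}\to E_{y}$, where $x$ becomes a codimension-one point of the normal scheme $E_{y}$ and the classical formula $[\kappa(y'):\kappa(y)]=\sum_{x'}e(\O_{E_{y'},x'}/\O_{E_{y},x})[\kappa(x'):\kappa(x)]$ holds with no flatness hypothesis. The ramification indices are then converted into ratios of lattice indices by applying the snake lemma to the exact sequences $0\to\CC^{\gp}_{E_y,x}\to\CC^{\gp}_{\XX,x}\to\CC^{\gp}_{\XX,y}\to0$ of Lemma~\ref{lemma balanced cospec}, giving $[M_{x'}:M_{x}]=[M_{y'}:M_{y}]\cdot[\CC^{\gp}_{E_{y'},x'}:\CC^{\gp}_{E_y,x}]$, with Lemma~\ref{lemma extension dvr balanced} identifying the last factor with $e(\O_{E_{y'},x'}/\O_{E_y,x})$. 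The equality $\deg\phi=\deg f$ is then obtained by running the codimension-one formula once more at the vertices (i.e.\ over the DVR base itself) and propagating down via the multiplicity formula, rather than by a single decomposition at a high-codimension point. If you want to salvage your outline, you should replace the log-flatness step by this restriction-to-strata argument, or else actually prove log-flatness of $f$ at $x_0$, which the paper never needs.
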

\begin{block}[\textbf{Metrisation of $\Z$-PL complexes}] Any $\Z$-PL complex $\Sigma$ can be equipped with a canonical piecewise Lebesgue measure $\mu_{\Sigma}$ that we call the \emph{conformal measure} such that isometries equal isomorphisms (i.e. unimodular bijections). It is determined by the following \emph{conformality property}, see Lemma~\ref{lemma z-pl mu on polyhedron}: for any finite cover of bounded $\Z$-PL polyhedra $\sigma'\to \sigma$, we have \begin{equation}
	\label{eq intro conformality}\vol(\sigma)/\vol(\sigma')=[M_{\sigma'}:M_{\sigma}].
\end{equation}
\end{block}
\begin{block} For the remainder of the introduction we consider \emph{proper} toroidal $S$-schemes $\XX$ for which the dual complex $\D(\XX^{\dagger})$ is \emph{simplicial}, equivalently Weil divisors supported on $D_\XX$ are $\Q$-Cartier (\ref{rmk qfactoriality}).\end{block}
\begin{block}[\textbf{Specialisation of Cartier divisors}] Given a $\Q$-Cartier divisor $\DD$ on $\XX$ which is supported on $D_\XX$ we have intersection numbers $\DD\cdot C\in\Q$ for any proper regular curve $C\subset \XX$. We define the \emph{specialisation} $\sp_*\DD$ as the combinatorial divisor 

	 \begin{equation}	
\label{intro dfn spec}	
\sp_*\DD\coloneqq \sum_{\tau}\frac{\DD\cdot E_{\tau}}{\mathrm{vol}(\tau^-)}[\tau],
	\end{equation}
	summing over all vertical ridges $\tau$ of $\Sigma$; here $E_{\tau}$ is the $1$-dimensional log-stratum associated to $\tau$ and $\tau^{-}$ is a bounded subset of $\tau$ (cf. \ref{def -}). In fact $\sp_*\DD$ only depends on the linear equivalence class of $\DD$.
\end{block}
\begin{block}[\textbf{Laplacians of PL functions}] 

 For the definition of slopes of PL functions, we need to take into account additional structure next to the $\Z$-PL structure of the dual complex $\Sigma=\D(\XX^{\dagger})$. In the spirit of the works \cite{GRW,Car} we additionally decorate $\Sigma$ with some intersection numbers called \emph{$\alpha$-constants} -- this gives rise to the notion of a \emph{$\Z$-PL tropical complex} 
 (Definition~\ref{defn zpl trop}). Informally speaking the slope $\partial_{\sigma/\tau}F$ of a $\Z$-affine PL function $F:\Sigma\to\R$ on a $\Z$-PL tropical complex $(\Sigma,\alpha)$ along a facet-ridge pair $(\sigma,\tau)$ should be thought of as an appropriate directional derivative of $F$ in a direction normal to $\tau$ that is distinguished by the $\alpha$-constants.  . 
The precise definition is given in~\ref{defn slope}. 

A \emph{combinatorial divisor} on $\Sigma$ is a formal sum of ridges with rational coefficients, and the Laplacian of $F$ is defined as the combinatorial divisor $$\Delta(F)=\sum_{(\sigma,\tau)}\partial_{\sigma/\tau}F[\tau],$$ 
summing over facet-ridge pairs $(\sigma,\tau)$ with $\tau$ vertical; and we call $F$ \emph{harmonic} if $\Delta(F)=0$.  
\end{block}

\begin{theorem}[\textbf{Poincar\'e-Lelong slope formula}, see Theorem~\ref{thm: pll}]\label{intro pll}
Let $\DD$ be a $\Q$-Cartier divisor on $\XX$ supported on $D_{\XX}$, and let $F_{\DD}\in M_{\D(\XX^{\dagger})}$ be the associated $\Q$-PL function, as in~\ref{intro z-pl}. Then \begin{equation}
	\label{eq intro pll}
\Delta(F_{\DD})={\sp}_*\DD.
\end{equation}
\end{theorem}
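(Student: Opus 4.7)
The plan is to reduce the global formula to a local identity at each $0$-dimensional log-stratum, which then follows from a toric intersection-theoretic computation. First, by $\Q$-linearity of the three maps $\DD\mapsto F_\DD$, $\DD\mapsto\sp_*\DD$ and $F\mapsto\Delta(F)$, and using that every Weil divisor supported on $D_\XX$ is $\Q$-Cartier under our simplicial hypothesis, one reduces to the case where $\DD = D$ is a single prime component of $D_\XX$.

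Fix a vertical ridge $\tau$ of $\Sigma=\D(\XX^\dagger)$ and compare coefficients of $[\tau]$ on both sides. The right-hand side produces $(D\cdot \bar{E}_\tau)/\vol(\tau^-)$ and the left-hand side produces $\sum_{\sigma\succ\tau}\partial_{\sigma/\tau}F_D$. Since $\XX$ is proper, $\bar{E}_\tau$ is a proper curve, and the facets $\sigma\succ\tau$ correspond bijectively to the $0$-dimensional log-strata $p_\sigma$ lying on $\bar{E}_\tau$; hence the intersection number decomposes as $D\cdot\bar{E}_\tau = \sum_{\sigma\succ\tau}(D\cdot\bar{E}_\tau)_{p_\sigma}$. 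It therefore suffices to prove the pointwise identity
\[
\vol(\tau^-)\cdot \partial_{\sigma/\tau}F_D \;=\; (D\cdot\bar{E}_\tau)_{p_\sigma}
\]
for each facet $\sigma\succ\tau$ (both sides being zero when $p_\sigma\notin D$).

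At $p_\sigma$ the log-regular ring $\O_{\XX,p_\sigma}$ has characteristic monoid $\CC_{p_\sigma}$ whose groupification is $M_\sigma$, and one has a local toric presentation in which $D$ corresponds to (a multiple of) the generator of $\CC_{p_\sigma}$ labelled by the vertex of $\sigma$ recording $D$, while $\bar{E}_\tau$ corresponds to the $1$-dimensional torus orbit dual to the inclusion $\tau\subset\sigma$. In this toric-local picture the intersection multiplicity $(D\cdot\bar{E}_\tau)_{p_\sigma}$ is a classical pairing between a character and a cocharacter, rescaled by a multiplicity coming from the non-reducedness of the special fiber. The slope $\partial_{\sigma/\tau}F_D$ is defined as precisely this pairing, weighted by the $\alpha$-constant $\alpha_{\sigma/\tau}$, which itself is an intersection number on $\XX_s$. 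Combined with the conformality property \eqref{eq intro conformality}, which realises $\vol(\tau^-)$ as a lattice index in $M_\tau$, the pointwise identity is then a direct consequence of the classical toric Poincar\'e-Lelong formula applied in the local model at $p_\sigma$.

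The main obstacle is the precise bookkeeping in the possibly non-semistable setting: both the $\alpha$-constants entering $\partial_{\sigma/\tau}$ and the conformal volume $\vol(\tau^-)$ encode lattice indices and component multiplicities of $\XX_s$, and one must check that these two combinatorial normalisations match so that no spurious factor survives in the pointwise identity. This matching is essentially built into the definitions of the $\alpha$-constants and of the conformal measure adopted in the paper, and is what makes the formula come out cleanly on the nose.
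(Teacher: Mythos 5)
Your global skeleton (reduce by $\Q$-linearity to prime components, fix a vertical ridge $\tau$, compare $[\tau]$-coefficients) agrees with the paper's, and your pointwise identity is correct for the components that meet $E_\tau$ \emph{properly}: for $D=E_{v}$ with $v\in\link(\tau)(0)$ one indeed has $\vol(\tau^-)\partial_{\sigma/\tau}F_{D}=[E_v]\cdot E_\tau=\det(\tau^-)/\det(\sigma^-)$, which is exactly Proposition~\ref{prop: computation ev.et} combined with Proposition~\ref{prop computation normalised volume via determinant}. The gap is that after your reduction you must also treat the prime components $D=E_w$ with $w\in\tau(0)$, i.e.\ $E_\tau\subset E_w$, and for these the whole local strategy breaks down. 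First, $D\cap E_\tau$ is not finite, so $D\cdot E_\tau=\deg_{\tilde k}\O(E_w)\vert_{E_\tau}$ is not a sum of local intersection multiplicities $(D\cdot E_\tau)_{p_\sigma}$; your parenthetical ``both sides being zero when $p_\sigma\notin D$'' never applies here since every $p_\sigma$ lies on $D$. Second, the corresponding slope is, by Definition~\ref{defn slope},
$$\partial_{\sigma/\tau}F_{E_w}=-\frac{\alpha_{w,\tau}}{\vol(\sigma^-)\,\mult_b\tau},$$
which involves $\alpha_{w,\tau}=-[E_w]\cdot E_\tau$, the degree of $\O(E_w)$ on the \emph{proper curve} $E_\tau$. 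In dimension $\ge 3$ the individual constants $\alpha_{w,\tau}$ for $w\in\tau(0)$ are not determined by the local toric model at $p_\sigma$ (only their weighted sum $\sum_w m_w\alpha_{w,\tau}=\mult_b\tau$ is, via the global relation of Example~\ref{eg special fiber trivial divisor}), so no ``classical toric Poincar\'e--Lelong formula applied in the local model at $p_\sigma$'' can produce them.

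This is precisely the step the paper's proof supplies and yours is missing: one computes $\DD_F\cdot E_\tau$ globally on $E_\tau$ by expanding $\cyc\,\DD_F$ over all rays (Lemma~\ref{lem computation weil divisor}), uses the relation $\sum_v m_v[E_v]\cdot E_\tau=0$ coming from the principal divisor $\div(\varpi)$ to trade the non-localizable self-intersection terms $\alpha_{w,\tau}$ for a weighted sum over the link, and the slopes of Definition~\ref{defn slope} are defined exactly so as to absorb this redistribution (the computation in~\ref{computationes}). Without that global input your argument proves the $[\tau]$-coefficient identity only for the part of $\DD$ transverse to $E_\tau$; in particular it cannot show the harmonicity of $F_{\div\varpi}$, which is part of the content of the theorem.
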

\begin{block} By the slope formula \eqref{eq intro pll} it follows that $F_{\DD}$ is harmonic if and only if $\DD$ is numerically trivial on the vertical $1$-dimensional log-strata. This is for instance the case if $\DD$ is a principal divisor \footnote{In this case the associated functions $F_{\DD}$ are usually called \emph{smooth} in the literature.}.

Given a balanced cover $\phi:\Sigma'\to\Sigma$ of $\Z$-PL tropical complexes (Definition~\ref{defn zpl trop}) we define the pullback of a combinatorial divisor  %pullback divisors such that $\phi_*\phi^*=[\deg f]$, 
via the rule  $\phi^*[\tau]=\sum_{\tau'\mapsto \tau}\deg_{\tau'}\phi[\tau']$ and %$f_*[\tau]=[E_{\tau'}:E_{\tau}][\tau']$ and 
extending linearly.
\end{block}
	\begin{theorem}[\textbf{Harmonic morphisms}, see Theorem~\ref{thm: harmonic morphisms}]\label{intro4} 
Let $\phi:\Sigma'\to\Sigma$ be a balanced cover of $\Z$-PL tropical complexes. Then for any $\Q$-PL function $F$ we have
\begin{equation}\label{eqn change lapl}
	{\Delta}(\phi^*F)=\phi^*{\Delta}(F).
\end{equation}
in particular if $F$ is harmonic then so is $\phi^*F$, that is $\phi$ is a harmonic morphism.
	\end{theorem}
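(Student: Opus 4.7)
The plan is to compare coefficients ridge by ridge. Both sides of \eqref{eqn change lapl} are combinatorial divisors on $\Sigma'$ supported on vertical ridges and both are $\Q$-linear in $F$, so fixing a vertical ridge $\tau'$ of $\Sigma'$ with image $\tau:=\phi(\tau')$ and unpacking the definitions will give
\[
\phi^*\Delta(F)\big|_{\tau'} \;=\; \deg_{\tau'}\phi \cdot \sum_{\sigma \supset \tau}\partial_{\sigma/\tau}F,\qquad
\Delta(\phi^*F)\big|_{\tau'} \;=\; \sum_{\sigma' \supset \tau'}\partial_{\sigma'/\tau'}(\phi^*F).
\]
Grouping the facets $\sigma'\supset\tau'$ by their images $\sigma:=\phi(\sigma')\supset\tau$, the theorem reduces to showing, for every facet $\sigma$ of $\Sigma$ containing $\tau$, the local identity
\begin{equation}\label{eqn:local-claim-plan}
\sum_{\substack{\sigma' \supset \tau' \\ \phi(\sigma') = \sigma}}\partial_{\sigma'/\tau'}(\phi^*F) \;=\; \deg_{\tau'}\phi \cdot \partial_{\sigma/\tau}F.
\end{equation}

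The crucial step is to establish a local \emph{slope transformation formula}: for any facet-ridge pair $(\sigma',\tau')$ of $\Sigma'$ above $(\sigma,\tau)$,
\begin{equation}\label{eqn:slope-transform-plan}
\partial_{\sigma'/\tau'}(\phi^*F) \;=\; [M_{\sigma'}:M_{\sigma}]\cdot \partial_{\sigma/\tau}F.
\end{equation}
To prove this I would unwind Definition~\ref{defn slope}. The restriction $\phi|_{\sigma'}\colon\sigma'\to\sigma$ is an integral affine surjection whose linear part identifies $M_{\sigma}$ with a sublattice of $M_{\sigma'}$ of index $[M_{\sigma'}:M_{\sigma}]$; dually, the primitive lattice direction normal to $\tau'$ inside $\sigma'$ pushes forward to exactly $[M_{\sigma'}:M_{\sigma}]$ times the primitive lattice direction normal to $\tau$ inside $\sigma$. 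Provided the $\alpha$-constants entering the definition of the slope match across $\phi$ (which I expect to be built into the notion of a cover of $\Z$-PL tropical complexes), the directional-derivative description of $\partial_{\bullet/\bullet}$ then produces the factor $[M_{\sigma'}:M_{\sigma}]$ in \eqref{eqn:slope-transform-plan}.

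With \eqref{eqn:slope-transform-plan} in hand, the left hand side of \eqref{eqn:local-claim-plan} becomes
\[
\bigg(\sum_{\substack{\sigma' \supset \tau' \\ \phi(\sigma')=\sigma}}[M_{\sigma'}:M_{\sigma}]\bigg)\,\partial_{\sigma/\tau}F,
\]
and the bracketed sum equals $\deg_{\tau'}\phi$ by \eqref{intro eqn: deg}, because $\phi$ is balanced at $\tau'$. Crucially, this same number $\deg_{\tau'}\phi$ appears regardless of the chosen facet $\sigma\supset\tau$, which is precisely the content of the balancing hypothesis and is what lets the local identity \eqref{eqn:local-claim-plan} be summed consistently over $\sigma$. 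The main obstacle in the plan will be the slope transformation \eqref{eqn:slope-transform-plan}: one must verify that the $\alpha$-constants pull back compatibly along $\phi$, a point of genuine content since these constants are intersection-theoretic data rather than purely $\Z$-affine data. Everything else is a rearrangement of sums driven by the balancing condition.
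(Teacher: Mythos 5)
Your proposal is correct and follows essentially the same route as the paper: the proof of Theorem~\ref{thm: harmonic morphisms} likewise reduces, after expanding $\phi^*\Delta(F)$ ridge by ridge and invoking the balancing condition, to the termwise identity $\partial_{\sigma'/\tau'}(\phi^*F)=[M_{\sigma'}:M_{\sigma}]\,\partial_{\sigma/\tau}F$, where the lattice-index factor is extracted from the conformal-volume identity $\vol(\sigma^-)/\vol(\sigma'^{-})=[M_{\sigma'}:M_{\sigma}]$ of Lemma~\ref{lemma z-pl mu on polyhedron}, and the compatibility of $\alpha$-constants that you flag as the point of genuine content is exactly what Definition~\ref{defn zpl trop} builds into the notion of a balanced cover via Lemma~\ref{lem atv.atv}. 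One small correction to your heuristic: in the paper's normalisation the factor $[M_{\sigma'}:M_{\sigma}]$ comes entirely from the $1/\vol(\sigma^-)$ prefactor in Definition~\ref{defn slope} (the bracketed term being invariant), not from the pushforward of the primitive normal direction, which scales by $[M_{\sigma'}:M_{\sigma}]/[M_{\tau'}:M_{\tau}]$ rather than $[M_{\sigma'}:M_{\sigma}]$.
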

	\begin{block}[An extension to compactified skeleta]
Let $\overline{\Sigma}=\overline{\sk}(\XX^\dagger)$ denote the closure of $\sk(\XX^\dagger)$ in $\XX^{\an}$ -- this is an analog of the \emph{compactified skeleton} introduced in \cite{GRW}. Then there exists a continuous retraction $\overline{\rho}:\XX^{\an}\to \overline{\Sigma}$. One can extend the definition of specialisation to combinatorial divisors on $\overline{\Sigma}$, as well as an extended laplacian, and Theorem~\ref{intro pll} extends to an equality $\overline{\Delta}(F_{\DD})=\overline{\sp}_*\DD$, see~\ref{block compact pll}. %which is the amalgamation of~\ref{intro pll} for all skeleta $\sk(E_x)$. %add $\sum_{\tau\in \D(E_x)(-1)}\frac{\DD\vert_{E_x}\cdot E_\tau}{\vol$
		
	\end{block}
	\begin{block}[Reminders on Riemann-Hurwitz formula] The classical Riemann-Hurwitz formula can be reformulated as follows. If $g:X\to Y$ is a finite seperable morphism of proper smooth $k$-curves, then the pullback map $$g^*\Omega_{Y/k}\to \Omega_{X/k}$$ yields a global section $\tau_{X/Y}$, called the \emph{trace} section, of the relative canonical bundle $$\omega_{X/Y}\coloneqq \det \Omega_{X/Y}\cong \underline{\Hom}(g^*\Omega_{Y/k},\Omega_{X/k}).$$ The Riemann-Hurwitz formula is then obtained by computing the degree of the relative canonical divisor $K_{X/Y}\coloneqq \div\tau_{X/Y}$ in two ways: $$-\chi(X)+(\deg g)\chi(Y)=\deg \omega_{X/Y}=\sum_{x\in X} \mathrm{length}_{\O_{X,x}}{\Omega_{X/Y,x}}.$$ We also write $\delta_g(x)\coloneqq\mathrm{length}_{\O_{X,x}}{\Omega_{X/Y,x}}$ for the \emph{different} of $g$ at $x$.
	\end{block}
	\begin{theorem}[\textbf{Riemann-Hurwitz formula for skeleta}, see Theorem~\ref{thm:rh}] \label{intro5}
		Let $f:\YY^{\dagger}\to\XX^{\dagger}$ be a finite seperable cover of proper toroidal $S$-schemes with simplicial dual complexes. Denote $\phi:\Sigma'\to\Sigma$ for the assocated cover of skeleta. Then there exists a $\Z$-affine function $\delta:\Sigma'\to\R_{\ge0}$, called the \emph{different function}, such that for each divisorial point $x'\in \Sigma'$ and $x=\phi(x')$ we have $$\delta(x)=\frac{1}{e(\H(x')/k)}\mathrm{length}_{\H(x')^{\circ}}{\Omega^{\log}_{\H(x')^{\circ}/\H(x)^{\circ}}}$$ and moreover \begin{equation}
			\Delta(\delta)=K_{\Sigma'/\Sigma}
			\label{eq intro rh}
		\end{equation}
		where $K_{\Sigma'/\Sigma}=\sum_{\tau'}-\chi_{\Sigma'}(\tau'/\tau)[\tau']$ %\mathrm{sp}_*K_{\YY^{\dagger}/\XX^{\dagger}}$ 
		is the tropical relative canonical divisor (Definition~\ref{def trop rel can}).
	\end{theorem}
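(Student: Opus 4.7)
My plan is to realise $\delta$ as the $\Z$-PL function associated (via Lemma~\ref{lem correspondence cartier and pl functions}) to a concrete integral Cartier divisor on $\YY$ supported in $D_{\YY}$, namely the logarithmic different of $f$, and then to read off~\eqref{eq intro rh} from the Poincar\'e--Lelong slope formula of Theorem~\ref{intro pll}.

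\smallskip
\noindent\textbf{Step 1: constructing the log-different divisor.} Since $f$ is finite and $f_\eta$ is separable, the canonical pullback $f^{*}\Omega^{\log}_{\XX^{\dagger}/S^{\dagger}}\to \Omega^{\log}_{\YY^{\dagger}/S^{\dagger}}$ is generically an isomorphism of locally free sheaves of the same rank $n=\dim\XX$. Its determinant is a nonzero section of the log-canonical line bundle $\omega^{\log}_{\YY^{\dagger}/\XX^{\dagger}}$; let $\mathfrak{d}_f$ denote the resulting Cartier divisor of zeros. Log-regularity of $\YY^{\dagger}$ and $\XX^{\dagger}$ forces $f$ to be log-\'etale outside $D_{\YY}$, so $\mathfrak{d}_f$ is supported in $D_{\YY}$. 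Its multiplicity along a component of $D_\YY$ corresponding to a divisorial point $x'\in\Sigma'$ is by definition the length of $\Omega^{\log}_{\H(x')^{\circ}/\H(x)^{\circ}}$. Setting $\delta:=F_{\mathfrak{d}_f}\in M_{\Sigma'}$ therefore produces a $\Z$-affine function whose value at $x'$ equals this length divided by $e(\H(x')/k)$, the factor $1/e$ arising from the conversion between multiplicities along $\YY$ and the intrinsic $\Z$-PL scale on $\Sigma'$ (where divisorial valuations are normalised by $v(\pi)=1$). Positivity $\delta\ge 0$ follows from the inclusion $f^{*}\omega^{\log}_{\XX^{\dagger}/S^{\dagger}}\hookrightarrow \omega^{\log}_{\YY^{\dagger}/S^{\dagger}}$.

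\smallskip
\noindent\textbf{Step 2: applying Poincar\'e--Lelong.} Theorem~\ref{intro pll} applied to $\mathfrak{d}_f$ on the proper simplicial toroidal scheme $\YY^{\dagger}$ yields
\begin{equation*}
\Delta(\delta)\;=\;\sp_{*}\mathfrak{d}_f\;=\;\sum_{\tau'}\frac{\mathfrak{d}_f\cdot E_{\tau'}}{\vol(\tau'^{-})}\,[\tau'],
\end{equation*}
summing over vertical ridges $\tau'$ of $\Sigma'$. It remains to show that each coefficient equals $-\chi_{\Sigma'}(\tau'/\tau)$ from Definition~\ref{def trop rel can}.

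\smallskip
\noindent\textbf{Step 3: identifying the coefficients with tropical Euler characteristics.} For each vertical ridge $\tau'$ of $\Sigma'$ with image $\tau=\phi(\tau')$, the one-dimensional vertical log-stratum $E_{\tau'}$ is a proper regular curve mapping finitely and separably onto $E_{\tau}$. Via adjunction, the restriction of $\omega^{\log}_{\YY^{\dagger}/\XX^{\dagger}}$ to $E_{\tau'}$ is isomorphic to the log-relative dualising sheaf of the log-curve cover $E_{\tau'}\to E_{\tau}$ (twisted by the normal contributions from the higher-dimensional log-strata incident to $E_{\tau'}$, which are themselves controlled by the link of $\tau'$ in $\Sigma'$). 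Classical logarithmic Riemann--Hurwitz on this curve cover computes $\deg\mathfrak{d}_f|_{E_{\tau'}}$ as a combination of log-Euler characteristics of $E_{\tau'}$ and $E_\tau$ weighted by the local degree at $\tau'$. Dividing by $\vol(\tau'^{-})$ and applying the conformality identity~\eqref{eq intro conformality} together with the balancing condition of Theorem~\ref{intro2} to absorb ramification multiplicities yields exactly $-\chi_{\Sigma'}(\tau'/\tau)$, proving~\eqref{eq intro rh}.

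\smallskip
The principal obstacle is Step 3: one has to verify that the adjunction twist from the higher-codimensional log-strata incident to $E_{\tau'}$ assembles, after normalisation by $\vol(\tau'^{-})$, into exactly the combinatorial link data encoding $\chi_{\Sigma'}(\tau'/\tau)$, with all lattice co-volumes and ramification indices cancelling on the nose. The simplicial hypothesis (ensuring $\Q$-factoriality of all divisors in sight) and the balancing condition of Theorem~\ref{intro2} (controlling how local degrees distribute over facets above $\tau$) are the two inputs that should make this bookkeeping rigid; the actual calculation should reduce to a single local computation in a toric chart around the generic point of $E_{\tau'}$.
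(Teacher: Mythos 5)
Your Steps 1 and 2 coincide with the paper's strategy: $\delta$ is taken to be the support function of the relative log-canonical (different) divisor $K_{\YY^{\dagger}/\XX^{\dagger}}=\div(\tau_{\YY^\dagger/\XX^\dagger})$, which is effective Cartier and supported on $D_{\YY}$, and the Laplacian identity is reduced via the Poincar\'e--Lelong formula (Theorem~\ref{thm: pll}) to computing $\sp_*K_{\YY^{\dagger}/\XX^{\dagger}}$. One caveat already in Step 1: the displayed formula is asserted for \emph{every} divisorial point $x'\in\Sigma'$, not only for vertices of $\D(\YY^{\dagger})$, and your argument only covers components of $\YY_s$. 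The paper (Lemma~\ref{lem}) handles general divisorial points by passing to a proper subdivision $\YY_0\to\YY$ realising $x'$ as a component (Lemma~\ref{lemma pullback subdivision}) and using that the log-\'etale map $\YY_0\to\YY$ does not change $\omega_{\YY^{\dagger}/\XX^{\dagger}}$; you should include this reduction.

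The substantive gap is Step 3, which you yourself flag as the principal obstacle and do not carry out. You propose to compute $\mathfrak{d}_f\cdot E_{\tau'}$ by restricting $\omega_{\YY^{\dagger}/\XX^{\dagger}}$ to the curve $E_{\tau'}$ and invoking a curve-level Riemann--Hurwitz, but you anticipate an extra ``twist by normal contributions from the higher-dimensional log-strata'' that you then have to cancel against link data. In fact no such twist appears if one routes the computation through the global log-adjunction formula $\omega_{\YY^{\dagger}/\XX^{\dagger}}\otimes f^*\omega_{\XX^{\dagger}/S^{\dagger}}=\omega_{\YY^{\dagger}/S^{\dagger}}$ (Proposition~\ref{propn: adj}): this gives $\sp_*K_{\YY^{\dagger}/\XX^{\dagger}}=\sp_*\omega_{\YY^{\dagger}/S^{\dagger}}-\sp_*f^*\omega_{\XX^{\dagger}/S^{\dagger}}$, and then Corollary~\ref{cor sp} (restriction of the log-canonical sheaf to a vertical one-dimensional stratum is $\omega_{E_{\tau'}^{\dagger}/\tilde{k}}$, whose degree is the tropical Euler characteristic) identifies the first term with $K_{\Sigma'}$, while Proposition~\ref{prop pullback commutes with specialisation} identifies the second with $\phi^*K_{\Sigma}$. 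What remains is the purely combinatorial ``tropical adjunction'' $K_{\Sigma'/\Sigma}=K_{\Sigma'}-\phi^*K_{\Sigma}$, which follows from the volume/lattice-index relation of Lemma~\ref{lemma z-pl mu on polyhedron} and the definition of $\phi^*$ on cycles. Your sketch is morally a local version of this, but as written it leaves precisely the cancellation you call ``the actual calculation'' unverified, and it is this cancellation that the adjunction-based decomposition renders automatic.
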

	\begin{block}[Weight functions]
		The method of the proof of Theorem~\ref{thm:rh} also gives the following results, further detailed in Section \ref{sec:rh}. 
		\begin{itemize}
			\item Suppose $\omega\in \Gamma(\XX_k,\omega_{\XX_k/k})$ is a nonzero holomorphic canonical form and let $\wt_\omega:\Sigma\to \R_{\ge0}$ denote the associated weight function of \cite{MN}. Then $$\Delta(\wt_{\omega})=K_{\Sigma}-\rho_*\div\omega;$$ this extends a Theorem of Baker-Nicaise \cite{BN} to arbitrary dimension, see \ref{wt}.
			\item The function $\lambda:X^{\div}\to \R_{\ge0}x\mapsto\frac{1}{e(\H(x)/k)}\mathrm{length}_{\H(x')^{\circ}} \Omega^{\log}_{\H(x)^{\circ}/k^{\circ}}$ extends to a $\Z$-affine function on the faces of dual complexes. This gives some evidence for an expectation of Temkin on the piecewise linearity of K\"ahler norms (Remark \ref{rmk: kahler pl}). 
		\end{itemize}		 
	\end{block}
	\begin{block}{(\textbf{Companion paper})} The companion paper \cite{different} contains some examples in the case of curves and gives applications to reduction types of arithmetic curves.

	% %to the results in this article from an analytic viewpoint. 
	%  %, the metrisation of sheaves of differential forms and 
	% There we study the metrisation of Berkovich curves over discretely valued fields in more detail and show that the different function controls simultaneous curve skeleta (shown earlier in \cite{CTT} with different hypotheses) %extending some of the main results of \cite{CTT} on the different for covers of Berkovich curves to the arithmetic setting
	%  and give applications to reduction types of arithmetic curves and wild surface quotient singularities.
	\end{block}
%Weak tropical complexes  \emph{} We extend the methods of  to define a combinatorial Laplacian $\Delta(F)$ for any $\Z$-PL function on a suitably \emph{decorated} tropical complex, and we prove that $\Delta(F)=\widetilde{\Delta}(F)$ for every regular \emph{snc} skeleta $\XX^\dagger$ and $F\in PL(\XX^\dagger)$ in Theorem~\ref{comparison GRW}. This can be viewed as a tropical version of the Poincar\'e-Lelong slope formula. The tropical complexes we consider are polyhedral complexes decorated with $\alpha$-numbers (as in loc.cit) and multiplicities of the vertices. %, by showing that such skeleta admit the structure of a weak tropical complex in the sense of Cartwright \cite{Car} with the slight deviation that we also need to remember multiplicities of vertices. We then prove in that harmonicity in the above sense corresponds to harmonicity in the sense of tropical complexes.
%\end{block}
% \begin{block}[\textbf{Some open ends}]
% It would be an interesting problem to develop a similar theory of harmonic (or conformal or balanced) morphisms beyond the case of finite covers.
%  %one can wonder if there exists a category of $\Z$-PL spaces endowed with a suitable tropical structure for which the notions of balancedness and harmonicity for finite covers are equivalent. 
% \end{block}

\begin{block}[\textbf{Related work}]
The balancing formula of~\ref{intro2} is strongly analogous to tropical multiplicity formulas \cite{ST,BPR16,GRW,GJR}, where similar sums of lattice indices play the role of the degree. Our result is different since we consider covers of a skeleton by a skeleton and not covers of a tropicalisation by a skeleton.
  % There is a crucial difference, since we only consider covers of two skeleta, which 
Roughly speaking our case corresponds to the setting of tropical multiplicity one. A similar notion of a balanced cover of complexes in all dimensions appears in \cite{vargas}. 

The notion of a harmonic morphism presented in~\ref{intro4} 
closely mimics the eponymous notion in Riemannian geometry \cite{fuglede,ishihara,BW,eells fuglede}, but is not directly related. %eells fuglede studies harmonic morphisms of polyhedra from a riemannian viewpoint.  
In dimension one discrete analogues were well-studied,
%The discrete analogue of this Riemannian notion was well-studied in the case of metrised graphs, that is in dimension one, 
see the works \cite{urakawa,BN09,ABBR}. %In higher dimensions, to the author's knowledge\footnote{Who acknowledges ignorancy on the subject of Riemannian polyhedra \cite{eells fuglede} where it seems that similar questions are studied in a quite different context.}, it seems like discrete analogues for harmonic morphisms were not available in the literature. 

Lebesgue measures on dual complexes were studied in \cite{JN}, where the conformal behaviour with respect to tamely ramified base change was already observed. Our construction of the conformal measures was also influenced by the available theory of metrisation of Berkovich curves \cite{BN,BPR,CTT,ducros}. 

The main inspiration for the slope formula~\ref{intro pll} is the prior work \cite{GRW}, which deals with the strictly semistable case over algebraically closed ground fields and builds on earlier work in \cite{Car,BPR16}. After the paper was written, Yanbo Fang kindly showed the work \cite{gross}, where tropical intersection theory is developed on the dual complex of a toroidal variety over a field. There is some overlap, but his results do not adress mixed characteristic situations. It is an interesting question whether his results on tropical chow groups carry over. Similarly one can wonder how the Poincar\'e-Lelong formula presented here relates to the ones developed in \cite{thuillier,CD,GJR,GK,Mih}, and to the theory of tropical intersection theory.

	%Let us point out that in the literature there is an alternative existing approach towards the slope formula, measures and potential theory on Berkovich spaces, based on the usage of tropical charts and pulling back Lagerberg $(p,q)$-superforms -- see \cite{CLD} and \cite{GJR} and references cited there. An advantage of this alternative approach is that one does not need to rely on any input of nice integral models, usually obtained via resolution of singularities. In contrast, our skeletal approach is more amenable to explicit computations once a nice integral model is known. %tropical pot is not so explicit and can be hard to compute in practice. 

The different function and skeletal Riemann-Hurwitz formula was introduced and studied systematically in a series of works \cite{T16,T17,CTT,BT} for covers of curves over algebraically closed ground fields, see the references cited in these papers for earlier work on wild ramification of analytic curves. In parallel upcoming work H\"ubner-Temkin \cite{HT} study analytic properties of the different function over any nonarchimedean field, but not the Riemann-Hurwitz formula. 

\end{block}

\begin{block}[\textbf{Overview of the paper}] 

Section~\ref{sec:log} covers preliminaries on logarithmic regularity. For convenience we give a self-contained exposition of the tools from logarithmic geometry that we use. We have put all necessary background on polyhedral geometry and fans in a separate Appendix~\ref{sec: polyhedral}. 
%In any case the logarithmic language is a flexible and convenient framework for many of our results. %Since the language of logarithmic geometry is still somewhat novel to the subject of tropical geometry, 
%For increased accessibility we have tried to give a self-contained exposition of the logarithmic tools that we use. For convenience we have put all necessary background on polyhedral geometry and fans in a separate appendix. % we fix our notation on fans and polyhedral geometry. %for convenience we collect some well-known material on Kato fans and subdivisions. 
In Section~\ref{sec:dual complex} we review dual complexes of toroidal $S$-schemes, describe their $\Z$-PL structure and prove Theorem~\ref{intro2}.
In Section~\ref{sec: conformal} we construct the conformal measures on $\Z$-PL complexes and give explicit descriptions for dual complexes. Section~\ref{sec: harmonicity} studies specialisation of Cartier divisors and some intersection numbers are computed. %,  interprete conformality via dilation factors 
Section~\ref{sec: tropical harmonicity} introduces slopes and the Laplacian of PL functions and contains the proofs of Theorems~\ref{intro pll} and ~\ref{intro4}. Section \ref{sec:log diff} contains some preparations on logarithmic differentials, our comparison of canonical sheaves and log-canonical sheaves may be of independent interest. In Section \ref{sec:rh} we discuss potential theory of differential forms and prove Theorem \ref{intro5}.
\end{block}

\begin{block}[\textbf{Conventions and notation}]\label{conventions}
We admit a modest familiarity with logarithmic structures \cite{Kato,K94}, geometry of toric fans \cite{oda}, and Berkovich analytifications \cite{B90,BPR}.

Throughout we denote by $k$ a discretely valued field. Its ring of integers is denoted by $k^{\circ}$ and its residue field, assumed algebraically closed, is denoted by $\tilde{k}$. We let $p\ge 1$ denote the characteristic exponent of $\tilde{k}$ and $\varpi$ a choice of uniformiser of $k^{\circ}$. We write $S=\spec k^{\circ}$. The generic and closed point of $S$ are denoted by $\eta$ and $s$ respectively, so $S=\{\eta,s\}$ as a set.

All rings and monoids are assumed to be commutative. All schemes are assumed to be locally Noetherian. Logarithmic structures are denoted by $(\cdot)^{\dagger}$ and defined with respect to the Zariski site. %Properties of log-schemes that depend on the log-structure usually come with a prefix \emph{log-}, eg. log-smooth or log-regular; all other properties refer to the underlying scheme, eg. finite type or integral. %A variety over a scheme $S$ is an integral separated finite type $S$-scheme.
If $X$ is a geometric object, then $|X|$ denotes the underlying topological space. Underlined objects like $\underline{\Hom}$ denote sheaves.

Conventions on monoids, cones, fans, polyhedra, and $\Z$-PL spaces are discussed in appendix~\ref{sec: polyhedral}, in summary:
\begin{itemize}
	\item All cones and polyhedra are assumed convex rational polyhedral and $\Z$-PL (i.e. integral $k$-affine, see \ref{def PL space}), they are denoted by $\sigma=(|\sigma|,N_{\sigma})$ where $N_{\sigma}$ is the lattice of integral points of the real linear span $(N_{\sigma})_{\R}$ of $|\sigma|$. We write $M_{\sigma}=N_{\sigma}^*=\Hom(N_{\sigma},\Z)$ for the dual lattice of $\Z$-affine functions on $\sigma$. 
	\item Given a polyhedral complex $\Sigma$ we write $\Sigma(n)$ for the set of $n$-dimensional polyhedra of $\Sigma$.
 \item The letter $M$ is used frequently, so we use the following convention: straight $M$ refers to dual lattice of $\Z$-affine functions and curly $\M$ refers to log-structure. 
 \item If $\M$ is a monoid, we write $\M^\times$ for its group of invertible elements, $\M^{\sharp}\coloneqq \M/\M^\times$ for its sharpification and $\M^{\gp}$ for its groupification. Most monoids we encounter will be \emph{fs}, i.e. fine and saturated.
\end{itemize}

%For the reader's convenience we elaborate our conventions on monoids, fans and polyhedral complexes in more detail in a separate appendix~\ref{sec: polyhedral}.

Berkovich $k$-analytification is denoted by $(\cdot)^{\an}$; it will only be used in the end of Section \ref{sec: harmonicity} and in Section \ref{sec:rh}. %they will always be good strictly $k$-analytic and $(\cdot)^{\an}$ denotes Berkovich analytification. 
Completed residue fields are denoted by $\mathscr{H}(\cdot)$. %The $G$-topology on a $k$-analytic space is the strong $G$-topology associated to the $G$-topology of strictly affinoid domains. %We call a $k$-analytic space \emph{nice} if it is connected, quasi-smooth and separated. 
\end{block}
\refstepcounter{equation}
\begin{table}[ht!]
\begin{tabular}{c|l}
Shorthand & Meaning\\\hline
 \emph{fs} monoid & fine saturated monoid\\
 \emph{snc} divisor & strict normal crossings divisor\\ %(not necessarily reduced)\\
 log-structure & logarithmic structure with respect to Zariski site\\
 % PL space & piecewise linear space (i.e. strictly convex polyhedral complex)\\
 % $\Z$-PL complex & Z-PL convex rational polyhedral complex
 $\Z$-PL & $(\Z,|k^\times|)$-piecewise linear\\
 %$(X,D)$ & pair of a proper smooth $k$-variety $X$ and an \emph{snc} divisor $D$\\
$S^\dagger$ & $S=\spec k^{\circ}$ equipped with standard log-structure $k^{\circ}\setminus \{0\}\to k^{\circ}$\\
 $\XX^\dagger$ & a toroidal $S$-scheme\\
 $D_{\XX}$ & support of the log-structure of $\XX^\dagger$, viewed as a reduced divisor.\\
  $F(\XX^\dagger)$ & Kato fan associated to $\XX^\dagger$\\
   $\CC_x=\CC_{\XX,x}$ & characteristic monoid at a point $x\in F(\XX^\dagger)$.\\
  $\Sigma(\XX^\dagger)$ & cone complex associated to $F(\XX^\dagger)$ \\
  $C(x)=\R_{\ge0}\CC_x^\vee$ & cone of $\Sigma(\XX^{\dagger})$ associated to $x\in F(\XX^{\dagger})$, i.e. dual cone of $\CC_{\XX,x}$ \\
 % dual cone of $(\R_{\ge0}\CC_x,\CC_x^{\gp})$ and 
 $M_x=M_{C(x)}=\CC_{\XX,x}^{\gp}$ & lattice of $\Z$-linear functions on $C(x)$,\\ 
 $N_x=N_{C(x)}=(\CC_{\XX,x}^\vee)^{\gp}$ & lattice of integral points on $C(x)$\\
  $\langle\cdot,\cdot\rangle:M_x\times N_x\to \Z$ & canonical pairing \\
 $\D(\XX^\dagger)$ & dual polyhedral complex associated to $\XX^\dagger$, i.e. ``$\left[\varpi\right]\mapsto 1$''-slice of $\Sigma(\XX^\dagger)$.\\
 $\sigma_x$ & face of $\D(\XX^\dagger)$ associated to $x\in F(\XX^\dagger)$, i.e. ``$\left[\varpi\right]\mapsto 1$''-slice of $C(x)$.\\ 
 $M_{\sigma_x}=M_{x}$ & lattice of $\Z$-affine functions on $\sigma_x$\\
 $E_{\sigma}$ & closed stratum associated to a polyhedron $\sigma$ of $\D(\XX^{\dagger})$. \\
 $\sk(\XX^\dagger)$ & skeleton of $\XX^\dagger$, i.e. homeomorphic image of $\D(\XX^\dagger)$ in $\XX_\eta^{\an}$.\\
$M_{\sk(\XX^{\dagger})}$ & group of $\Z$-affine functions on $\sk(\sk(\XX^{\dagger})$\\
 $\mu=\mu_{\Sigma}$ & conformal measure on a $\Z$-PL complex $\Sigma$\\
 $(\sigma,\tau)$ & a facet-ridge pair of a PL-space\\
 $f:\YY^\dagger\to\XX^\dagger$ & A toroidal cover, i.e. a finite morphism of toroidal $S$-schemes\\
 $\phi:\D(\YY^\dagger)\to\D(\XX^\dagger)$ & cover of dual complexes associated to $f$\\
% $(E_{\sigma},E_\tau)$ & point-curve pair of strata associated to a facet-ridge pair of $\D(\XX^\dagger)$\\

 \end{tabular}
 \vspace{5pt}
\caption{List of abbreviations and notations used throughout the paper}
\end{table}
\section{Background on logarithmic regularity}
\label{sec:log}
 \begin{block} In this section we give a short background exposition of log-regular log-schemes \cite{K94}. Some properties of monoids, fans and polyhedral geometry are recalled in Appendix~\ref{sec: polyhedral}. Experienced readers may wish to skip to Section \ref{sec:dual complex} and refer back as needed. Comprehensive references on log-geometry include \cite{ogus,Kato,K94,Niziol,GR,T24}. %See for our conventions on polyhedral geometry. 
 \end{block}

\begin{block}[Log-schemes] %Recall that schemes are equipped with the \'etale site by default, so stalks are strictly Henselian and log-structures are given by \'etale sheaves of monoids. 
Let $X^\dagger=(X,\O_X,u:\M_X\to \O_X)$ be a log-scheme, that is a scheme $(X,\O_X)$ equipped with a log-structure $u:\M_X\to \O_X$. If $X$ is a scheme and $D\subset X$ is a closed subset we write $X(\log D)$ for the log-scheme $(X,\O_X,\O_X\cap i_*\O^\times_{X\setminus D}\to \O_X)$ where $i:X\setminus D\to X$ is the open immersion. If $u:\M_X\to \O_X$ is a pre-log-structure we denote by $u^a:\M_X^a\coloneqq\M_X\oplus_{u^{-1}\O_X^\times}\O_X^\times\to\O_X$ the associated log-structure. Recall that a log-scheme $X^\dagger$ is called \emph{coherent} if $X$ is covered by charts $P_U\lra \Gamma(U,\M_U)$ for some finitely generated monoids $P_U$. Recall that this means that if $\underline{P_U}$ denotes the constant sheaf on $U$, we have an isomorphism $\underline{P_U}^a\overset{\sim}{\lra}\M_U$, in other words there is a \emph{strict} map $U^\dagger\to \A_P\coloneqq (\spec(\Z[P]),\underline{P}^a)$. %For instance, if the scheme $\A^2=\spec k[x,y]$ is equipped with the log-structure $\A^2(\log V(xy))$ then we have a chart $\N^2\to k[x,y]:(a,b)\mapsto x^ay^b$.
%Recall that a \emph{chart of a morphism} $X^\dagger\to Y^\dagger$ near $x$ is a monoid morphism $\phi:P\to Q$ such that $P$ and $Q$ define local charts near $x$ and $f(x)$ respectively and the obvious square commutes.
We call $X^\dagger$ \emph{fine} (resp. \emph{fs}) if it is coherent and $\M_X$ has integral (resp. saturated) stalks. Convention: if we call $X^{\dagger}$ integral, then we mean the underlying scheme $X$ is integral.
  The forgetful functor from the category of fine (resp. \emph{fs}) log-schemes to coherent log-schemes admits a right adjoint namely \emph{integralisation} $(\cdot)^{\mathrm{int}}$ (resp. \emph{saturation} $(\cdot)^{\mathrm{fs}}$). Beware that integralisation or saturation may change the underlying scheme. Base change in the category of fine (resp. \emph{fs}) log-schemes is obtained by applying $(\cdot)^{\mathrm{int}}$ (resp. $(\cdot)^{\mathrm{fs}}$) to the naive base change. All \emph{fs} log-schemes $X^\dagger=(X,\M_X)$ admit a chart near $x$ by the \emph{characteristic monoid} $\CC_{X,x}\coloneqq \M_{X,x}^\sharp$, also known as a \emph{neat} chart, see for instance \cite[\S II.2.3]{ogus}. 
   If $X^{\dagger}$ is coherent then by \cite[\S12.2.21]{GR} $\CC_X$ is constructible and the rank function $x\mapsto \mathrm{rank}(x)\coloneqq \mathrm{rank}_{\Z}\CC_{X,x}^{\gp}$ is upper-semicontinuous; so we obtain a \emph{rank stratification} of $X=\sqcup_{r\ge 0}X_r$ by locally closed subsets of equal rank. \label{log-sch}
\end{block}
\begin{block}[log-parameters]\label{log parameters}
	Let $X^{\dagger}$ be a log-scheme, and let $\M^+_{X}\coloneqq\M_X\setminus \M_X^\times$ be the maximal \emph{monoid ideal} of $\M_X$. Let $\I_X$ be the (ring) ideal sheaf of $\O_X$ generated by $\M^+_{X}$ -- we also call $\I_X$ the sheaf of \emph{logarithmic parameters}. We write $$D_X=\mathrm{supp}\,\I_X$$ for the support of $\I_X$. Note $D_X$ consists of the points of $X$ where the log-structure is non-trivial, so we will also call $D_X$ the \emph{support} of the log-structure. %Now suppose $X$ is coherent, then $V(\I_X)$ equips $X_{\ge r}=\{x\in X:\mathrm{rank}(x)\ge r\}$, and hence $X_r=X_{\ge r}\setminus X_{>r}$, with a subscheme structure (possibly not reduced).%this is what temkin calls the log stratification, but it is further refined by what we call log-stratification below.
\end{block}
\begin{block}[Log-regularity] %\cite[Definition 2.2]{Niziol}
  \label{defn: log reg} %Let $X^\dagger$ be a log-scheme. We write $\I_{X,x}$ for the ideal of $\O_{X,x}$ of \emph{logarithmic parameters} at $x$, which is the ideal generated by the non-units in $\M_{X,x}$. %Elements of $\I_{X,x}$ will also be called \emph{toric coordinates}. 
  A log-scheme $X^\dagger$ is defined to be \emph{log-regular} at a point $x\in X$ if $X^\dagger$ is \emph{fs} at $x$ and $V(\I_{X,x})$ defines a regular closed subscheme of $\spec\O_{X,x}$ of codimension equal to $\mathrm{rank}(x)$. 

\label{log regular discussion} 
%Whenever we say that a log-scheme is log-regular, we implicitly assume that log-scheme is \emph{fs}. 
Any log-regular scheme is normal \cite[\S4.1]{K94} and a \emph{fs} log-scheme log-smooth\footnote{
Recall that a morphism $f^\dagger:X^\dagger\lra Y^\dagger$ of coherent log-schemes is called \emph{log-smooth} if it is locally of finite type and formally log-smooth (defined via infinitesimal liftings of log-thickenings in \cite[\S IV.3.1]{ogus}). Log-smooth morphisms are preserved by composition and \emph{fs} base change \cite[IV.3.2]{ogus}. In practice one uses Kato's criterion \label{kato criterion}
  \cite[\S3.5]{Kato}or \cite[\S3.1.13]{ogus}: %Let $f:X^\dagger\to Y^\dagger$ be a morphism of coherent log-schemes and 
  if $\phi:P\to Q$ is a coherent chart at $x\in X$ such that $\#\ker\phi^{\gp}$ and $\#\left(\mathrm{coker}\phi^{\gp}\right)_{\mathrm{tor}}$ are invertible in $k(x)$, and $X\to Y\times_{\spec\Z[P]}\spec\Z[Q]$ is smooth (in the usual sense), then $f$ is log-smooth at $x$; and analogously for log-\'etale morphisms.} over a log-regular scheme is log-regular \cite[\S8.2]{K94}. % -- note that Kato's original definition of log-regularity is taken with respect to the Zariski topology, and Kato's results remain valid in the \'etale topology by \cite[Lemma 2.3]{Niziol}.
\end{block}
\begin{block}[Properties of log-regular log-schemes]
\label{regular log-regular}
 Suppose $X^\dagger$ is a log-regular log-scheme. It follows from \cite[\S11.6]{K94}  %that the closed subset $D=D_{X^\dagger}\coloneqq V(\I_{X})$ of points where the log-structure is nontrivial,
 that the support $D_X$ is of pure codimension one and $X^\dagger=X(\log D_X)$. Also $V(\I_{X,x})\subset \spec \O_{X,x}$ is the intersection of the $\mathrm{rank}(x)$ components of $D_{X,x}$ that contain $x$, where we write $D_{X,x}$ for the inverse image of $D_X$ along $\spec\O_{X,x}\to X$. 
 Observe that $\CC_{X,x}$ is isomorphic to the monoid of effective Cartier divisors on $\spec\O_{X,x}$ supported on $D_{X,x}$.

%We call $X^\dagger$ \emph{semistable} if $D$ is a reduced Weil divisor. %is of , equivalently $D$ is the complement of the largest open $U$ of $X$ on which the log-structure is trivial and we can recover $\M_{X}$ as $\O_X\cap(U\to X)_*\O_{U}^\times$. We will also call $D=D_{X^\dagger}$ the \emph{non-triviality locus} of the log-structure.
\end{block}
\begin{example}[\emph{snc} divisors]\label{example snc}
  Suppose that $X$ is a regular integral scheme of dimension $d$ and $D_X$ defines a (reduced) \emph{snc} divisor on $X$ \cite[0CBN]{stacks}; recall this means that at every point $x\in X$ there exists a system of regular parameters $t_1,\dots,t_d\in \O_{X,x}$ % and nonnegative integers $m_1,\dots,m_d\in \N$ 
  such that $D_{X,x}$ is defined by $\prod_{i=1}^rt_i=0$ in $\O_{X,x}$ for some $r\le d$. %cut out by $\prod_{i=1}^dt_i=0$ in $\spec \O_{X,x}$. 
  Then $X^\dagger=X(\log D_X)$ is log-regular: indeed, near $x$ we have a sharp \emph{fs} chart $\N^r\to \O_X:(n_1,\dots,n_r)\mapsto t_1^{n_1}\cdots t_r^{n_r}$ so $\CC_{X,x}\cong\N^r$ and $\mathrm{rank}(x)=r$, and $\I_{X,x}$ is generated by $t_1,\dots,t_r$ with $\codim V(\I_{X,x})=r$.
% those $t_i$ which vanish at $x$, say $t_1,\dots,t_r$. Then $r=\codim V(I_{X^\dagger,x})$ and the sharpening of $\M_{X,x}\cong t_1\N\oplus \dots t_r\N\oplus t_{r+1}\Z\oplus \dots\oplus t_d\Z$ has rank $r$.

Conversely if $X^\dagger$ is a log-regular log-scheme then by~\ref{regular log-regular} we have $X^\dagger=X(\log D_X))$ and if the underlying scheme $X$ is regular then $D_X$ is a \emph{snc} divisor: indeed, at each $x\in X$, the quotient of the regular ring $\O_{X,x}$ by $\I_{X,x}$ is regular, which implies that $\I_{X,x}$ is generated by regular parameters by \cite[00NR]{stacks}.
 \end{example}
\begin{block}[Fan of a log-regular log-scheme] \label{subs: kato fan}
  Let $X^\dagger$ be any log-regular log-scheme. Then we construct \emph{the fan $F(X^\dagger)$ associated to $X^\dagger$} as follows. As a topological space, $F(X^\dagger)$ is defined as the subspace $\{x\in X:\, \I_{X,x}=\mathfrak{m}_{X,x}\}$, with $\I_{X}$ as defined in~\ref{log parameters}. We denote by $\CC_{F(X^\dagger)}$ the restriction of $\CC_X$ along the canonical inclusion $F(X^\dagger)\to X$. Then $(F(X^\dagger),\CC_{F(X^\dagger)})$ is a \emph{fan} in the sense of Kato (see Definition~\ref{defn fan}) -- indeed: suppose $U\to X$ is an affine open neighbourhood of $x$ such that $P\to \Gamma(U,\M_U)$ is an \emph{fs} chart of $X^\dagger$ near $x$, then  \cite[12.6.9]{GR} shows that after possibly further shrinking $U$ we have $$F(U^\dagger)\cong (\spec P)^{\sharp}.$$ 
 There is a canonical \emph{strict} morphism of sharp monoidal spaces $$\pi_{X^\dagger}:(X,\CC_X)\to F(X^\dagger),$$ called the \emph{characteristic morphism} of $X^\dagger$, which sends a point $x$ to the prime ideal defined by $I_{X^\dagger,x}$ in $\spec\O_{X,x}$, it is open and in fact initial among all open maps of $(X,\M_X^\sharp)$ to a sharp monoidal space (see op.cit. or \cite[10.2]{K94}), this gives a categorical\footnote{informally, one can think of $\pi$ as a categorical quotient, though unfortunately this does not make sense since fans and log-schemes live in different categories. It is plausible one can remedy this by using Artin stacks instead of Kato fans -- see the work of Ulirsch \cite{uli}.} definition of $F(X^\dagger)$. For every morphism $f:X^\dagger\to Y^\dagger$ of log-regular log-schemes we have an induced map $F(f):F(X^\dagger)\to F(Y^\dagger)$ of fans, obtained via the composition $F(X^\dagger)\to (X,\CC_X)\to (Y,\CC_Y)\to F(Y^\dagger)$. \end{block}

 \begin{block}[log-stratification]\label{log-strata}
Let $X^\dagger$ be log-regular log-scheme. Each point of $F(X^\dagger)$ is locally closed. Since $\pi=\pi_{X^\dagger}:(X,\M_X)\to F(X^\dagger)$ is open, the fibers of $\pi$ define a stratification of $X$ into irreducible locally closed subsets, here called the \emph{log-stratification}, which by \cite[Lemma 2.2.3]{BM} coincides with the stratification defined by the support $D_{X^\dagger}$. Hence the points of $F(X^{\dagger})$ parametrise log-strata. For a point $x\in F(X^\dagger)$ we denote by $\pi^{-1}{x}=E^{\circ}_x$ the associated log-stratum and $E_x=\overline{\{x\}}$ for its schematic closure. We call $E_x^\circ$ and $E_x$ the log-stratum associated to $x$ and closed log-stratum associated to $x$ respectively. The log-stratification refines the rank stratification of \ref{log-sch}. 
%is not to be confused with the \emph{rank-stratification} of $X^\dagger$ induced by the upper-semicontinuous \emph{rank} function $$\mathrm{rank}:x\mapsto \mathrm{rank}\CC_{X,x},$$ which is a coarser stratification in general. \label{stratum notation} %For a point $x\in F(X^\dagger)$ we write   By \cite[Lemma 2.2.3]{BM} the points of $F(X^\dagger)$ coincide with the generic points of intersections of irreducible components of the Weil divisor $D=D_{X^\dagger}$ (see~\ref{regular log-regular}). Therefore, the log-stratification coincides with the stratification defined by $D_{X^\dagger}$. and $E_x$ for the schematic closure of $E_x^{\circ}$, note that $$.  where by an (open) \emph{stratum} of $D$ we mean an intersection of irreducible components minus the other components.  %which we . I by which we mean the locally  which are the locall henceforth called \emph {closed strata} of $D$. %, where a stratum of $D$ is defined as an intersection of irreducible components. 
% So the points of $F(X^\dagger)$ coincide with the generic points of strata of $D$. %(also see \cite[3.4.2]{temkin})  of $X^\dagger$ by $\dim V(\I_{X,x})$ 
\end{block}

\begin{lemma} \label{lemma balanced cospec}  Let $X^\dagger$ be a log-regular log-scheme and let $x\in F(X^{\dagger})$. Let $E_x=\overline{\{x\}}$ be the associated log-stratum. Equip $E_x$ with the log-structure $E_x^\dagger=E_x(\log D_x)$ where $D_x=E_x\setminus E_x^{\circ}$. Then we have the following.
\begin{enumerate}[label=(\roman*)]
	\item The points of $\spec \CC_{X,x}$ parametrise log-strata containing $E_x$.
	\item The log-scheme $E_x^\dagger$ is log-regular
	\item The natural map $\iota:F(E_x^\dagger)\to F(X^\dagger)$ is a homeomorphism onto $\overline{\{x\}}$.
	\item Let $y\in \overline{\{x\}}$. Then the cospecialisation map $\tau_{y,x}:\CC_{X,y}\to \CC_{X,x}$ is surjective 
	\item For all $y\in \overline{\{x\}}$, \begin{equation}
		\label{eq ker tau}
	\ker\tau_{y,x}\cong \CC_{E_x^\dagger,\iota^{-1}(y)}.
	\end{equation}
	% corresponds to the 
\end{enumerate}
\end{lemma}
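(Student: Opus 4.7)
My strategy is to reduce everything to a local computation at a point $y\in \overline{\{x\}}$ using a neat \emph{fs} chart of $X^\dagger$, as recalled in~\ref{subs: kato fan}. Choose an affine open $U\ni y$ equipped with a chart $P\to \Gamma(U,\M_U)$ with $P=\CC_{X,y}$, so that $F(U^\dagger)\cong \spec P$ as sharp monoidal spaces. Under this identification, the point $x\in F(X^\dagger)$ corresponds to a prime ideal $\mathfrak{p}_x\subset P$, with complementary face $F_x\coloneqq P\setminus \mathfrak{p}_x$, and the generisation order on $F(X^\dagger)$ matches the reverse-inclusion order of prime ideals (equivalently inclusion of faces). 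This monoid/fan dictionary drives every part.

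For (i), apply the same description at $x$ itself: a neat chart by $\CC_{X,x}$ identifies a neighbourhood of $x$ in $F(X^\dagger)$ with $\spec \CC_{X,x}$. Since generisations of $x$ parametrise log-strata whose closure contains the generic point $x$ of $E_x$ (equivalently, strata containing $E_x$), the identification $\spec \CC_{X,x}\cong \{\text{generisations of }x\}$ yields (i). For (iv), the cospecialisation $\tau_{y,x}\colon \CC_{X,y}\to \CC_{X,x}$ is identified with the natural monoid map $P\to (P_{\mathfrak{p}_x})^\sharp$, which is the composition of localisation at $F_x$ with sharpification; this is surjective and its set-theoretic preimage of $0$ is exactly the face $F_x$.

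For (ii) and (v), I would first show that, locally at $y$, the ideal $(\mathfrak{p}_x)\subseteq \O_{X,y}$ is prime and cuts out $E_x$. This uses log-regularity of $X^\dagger$: the logarithmic parameters $\I_{X,y}$ cut out a regular subscheme of codimension $\mathrm{rank}(P)$, and one can split a regular system of parameters as $(f_1,\dots,f_r,g_1,\dots,g_s)$ where the $f_i$ come from generators of $\mathfrak{p}_x$ and the $g_j$ from generators of $F_x\setminus\{0\}$, with $r+s=\mathrm{rank}(P)=\mathrm{rank}(F_x)+\mathrm{rank}(P/F_x)$. Restricting the chart $P\to \O_{X,y}$ through the quotient gives an \emph{fs} chart $F_x\to \O_{X,y}/(\mathfrak{p}_x)=\O_{E_x,y}$ whose associated divisorial support is exactly $D_x=E_x\setminus E_x^\circ$, identifying the induced log-structure with $E_x(\log D_x)$. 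Then $\I_{E_x,y}$ is generated by $F_x\setminus\{0\}$, cutting out a regular subscheme of codimension $\mathrm{rank}(F_x)$ by the above splitting, which proves log-regularity (ii) and simultaneously shows $\CC_{E_x^\dagger,y}=F_x=\ker\tau_{y,x}$, giving (v). For (iii), by (ii) the fan $F(E_x^\dagger)$ is defined, and locally around $y$ is identified with $\spec F_x$; the bijection between faces of $F_x$ and faces of $P$ contained in $F_x$, equivalently primes of $P$ containing $\mathfrak{p}_x$, matches $F(E_x^\dagger)$ with $\overline{\{x\}}\subset F(X^\dagger)$. The inclusion $E_x\hookrightarrow X$ yields continuity of $\iota$, and comparing the induced topologies on the two local monoid spectra upgrades the bijection to a homeomorphism.

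The main obstacle is the local analysis in (ii): namely, checking that $(\mathfrak{p}_x)$ defines $E_x$ as a scheme and that the restricted log-structure coincides with the divisorial $E_x(\log D_x)$, and organizing a regular system of parameters at $y$ compatibly with the face decomposition $P=F_x\cdot (\text{transversal generators})$. Once this local picture is secured, the fan/monoid dictionary converts each remaining assertion into a routine statement about prime ideals and faces of $P$.
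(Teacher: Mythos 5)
Your overall strategy -- reduce to a neat chart $P=\CC_{X,y}$ at $y$, identify $x$ with a prime $\p_x\subset P$ and its face $F_x=P\setminus\p_x$, and translate everything into the prime/face dictionary -- is a legitimate and fairly standard local route (close in spirit to Kato's and Gabber--Ramero's treatment), and parts (i), (iii) and (iv) come out essentially as in the paper. The paper proves (ii) and (v) differently: it works globally with the irreducible components $D_1,\dots,D_m$ of $D_X$, bounds $\mathrm{rank}_{E^\dagger}(y)$ by the number of components through $E_y$, and then invokes the inequality $\mathrm{rank}\le\mathrm{codim}$ (with equality characterising log-regularity, \cite[\S2.3]{K94}) together with $\mathrm{rank}_{X^\dagger}(y)=\mathrm{codim}_X(E_y)$; for (v) it identifies both sides with monoids of effective Cartier divisors and combines a surjective restriction map with a rank count. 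This deliberately avoids the question of how many elements are needed to generate the relevant ideals.

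That question is exactly where your argument breaks. You propose to ``split a regular system of parameters as $(f_1,\dots,f_r,g_1,\dots,g_s)$ where the $f_i$ come from generators of $\p_x$ and the $g_j$ from generators of $F_x\setminus\{0\}$, with $r+s=\mathrm{rank}(P)$.'' Since $y\in F(X^\dagger)$ we have $\I_{X,y}=\m_{X,y}$ and $\dim\O_{X,y}=\mathrm{rank}(P)$, but $\O_{X,y}$ is only log-regular (normal, Cohen--Macaulay), not regular, so no regular system of parameters of length $\mathrm{rank}(P)$ need exist, and neither $\p_x$ nor $F_x^+$ need be generated by $\mathrm{rank}(P/F_x)$ resp.\ $\mathrm{rank}(F_x)$ elements. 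Concretely, for the quadric cone $\O_{X,y}=k[s,t,u,v]/(sv-tu)$ with its toric log structure, $\mathrm{rank}(P)=3$ while $\m_{X,y}$ needs four generators, and the height-one prime $\p_x=(u,v)$ cutting out a log stratum is not principal, so the corresponding stratum is not a complete intersection and your $f_1,\dots,f_r$ with $r=1$ cannot exist. The conclusions you want from this splitting -- that $(\p_x)\O_{X,y}$ has height $\mathrm{rank}(P/F_x)$, hence $\dim\O_{E_x,y}=\mathrm{rank}(F_x)$, and that $F_x\to\O_{E_x,y}$ is a chart inducing the divisorial log structure $E_x(\log D_x)$ -- are all true, but they need a different justification (Kato's structure theorem for complete log-regular rings, or the codimension bookkeeping via the components of $D_X$ as in the paper). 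As written, the crux of (ii), and with it the identification $\CC_{E_x^\dagger,\iota^{-1}(y)}=F_x$ in (v), is not established.
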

\begin{proof} % (see~\ref{regular log-regular}) 

Part (i) follows from the existence of neat charts (\ref{log-sch}) and the descriptions of $F(X^{\dagger})$ in \ref{subs: kato fan} and \ref{log-strata}.

Part (ii): For simplicity write $E=E_x$ and $D=D_{X}$. Let $D_1,\dots, D_m$ be the irreducible components of $D$. Suppose $E=D_1\cap \dots \cap D_r$ where $r=\mathrm{rank}_{X^{\dagger}}(x)$. Then $E^\dagger=E(\log(D_x))$ and $D_x=E\cap (D_{r+1}\cup \dots \cup D_m)$. The strata of $D_x$ are the strata of $D$ contained in $E$, in particular these are regular. For any $y\in E$, the rank $\mathrm{rank}_{E^\dagger}(y)$ is bounded by the number of components $D_i$ containing $E_y$, where $r<i\le m$; this implies \begin{equation}
	\mathrm{rank}_{E^\dagger}(y)\le \mathrm{rank}_{X^\dagger}(y)-r.\label{eqn: ranks add up}\end{equation}
Since $X^\dagger$ is log-regular, we have $$\mathrm{rank}_{X^\dagger}(y)=\mathrm{codim}_{X}(E_y)=\mathrm{codim}_{E}(E_y)+\codim_{X}(E)=\mathrm{codim}_{E}(E_y)+r.$$ Together, this gives $\mathrm{rank}_{E^\dagger}(y)\le \mathrm{codim}_{E}(E_y)$. By \cite[\S2.3]{K94} it follows that equality holds, therefore $E^\dagger$ is log-regular.

Part (iii): The natural map $\iota:F(E^\dagger)\to F(X^\dagger)$ corresponds to the inclusion of the generic points of log-strata of $X^\dagger$ contained in $E$. Since $\iota$ %the log-strata of $X^\dagger$ contained in $E$, i.e. the specialisations of $x$ in $F(X^\dagger)$. Since this correspondence
 preserves specialisations it is a homeomorphism onto its image. %, whichwe obtain a homeomorphism $F(E_y^\dagger)\cong \overline{\{y\}}$. 

%by flatness $\M_{x}\to\M_{y}$ is injective! 
Part (iv): Recall the notion of prime ideals and faces of a monoid from~\ref{ideals}. The cospecialisation $\M_{X,y}\to \M_{X,x}$ is obtained by localising $\M_{X,y}$ at the prime ideal corresponding to $x$, which is $\p=\{s\in \M_{X,y}: s(x)=0\}$. 
 So $$\CC_{X,x}\cong\left(\left(\M_{X,y}\right)_\p\right)^\#\cong\M_{X,y}/(\M_{X,y}\setminus \p)$$ and this implies $\tau_{y,x}$ is surjective. In fact, $\tau_{y,x}$ is the quotient of $\CC_{X,y}$ by the submonoid $\CC_{X,y}\setminus \p^\#$, i.e. the face of $\CC_{X,y}$ associated to $x$. %s(y)\ne 0 is a submonoid! 

 % $\mathrm{ker}(\tau_{x,y})$ is the face (see~\ref{ideals}) of $\CC_{\XX,x}$ corresponding to $y$: indeed, A face $F$ of a monoid $M$ is a submonoid which is the complement of a prime ideal $\mathfrak{p}$ (see \cite{ideals}) and $M_{\mathfrak{p}}^\sharp=M/(M\setminus \mathfrak{p})$.

Part (v): the monoid $\CC_{X,x}$ (resp. $\CC_{X,y}$) is naturally isomorphic with the monoid of effective Cartier divisors on $\spec \O_{X,x}$ (resp. $\spec \O_{X,y}$) supported on the inverse image of $D$. It follows that $\ker\tau_{y,x}$ is isomorphic to the monoid of effective Cartier divisors $\spec \O_{X,y}$ supported on the inverse image of $D_{r+1}\cup\dots\cup D_{m}$, borrowing notation from (ii). On the other hand $\CC_{E,\iota^{-1}(y)}$ is the monoid of effective Cartiers divisors on $\spec\O_{E,y}$ supported on the preimage of $D_y$. %For each $r<i\le m$ let $t_i\in \O_{X,y}$ be a local equation for $D_i$ at $y$. Then the image $\tilde{t_i}$ in $\O_{E,\iota^{-1}(y)}$ is a local equation for $D_i\vert_E$ at $y$ 
Restriction of local equations yields a natural surjective monoid morphism $\psi:\ker\tau_{y,x}\to \CC_{E,\iota^{-1}(y)}$. A rank count, using (iii) and \eqref{eqn: ranks add up} then shows that $\psi^{\gp}$ is injective, and hence $\psi$ is injective too. %, and this finishes the proof. %A comparison of ranks, using part (i), then shows that $\psi^{\gp}$ is injectiveTo see that $\psi$ is injective,  which is injective because the local equations for $D_i$, $i>r$ specialise to local equations for $D_i\cap E$

%natural map $\CC_{X,\iota(y)}\to \CC_{E,y}$ also surjects and splits the sequence
\end{proof}
\begin{remark}\label{snc analogy}
	To bring out the analogy with \emph{snc} divisors even more, one can use ideal quotients of monoids (see~\ref{app ideal quotient}) and rephrase \eqref{eq ker tau} as
 $ \CC_{X,y}/\p^\sharp\cong \CC_{E,\iota^{-1}(y)}^0$. We found this viewpoint helpful for describing compactifications of skeleta (see \ref{retract to compact skeleton}), but it will not be used in an essential way.
%Alternatively, $\CC_{E,\iota^{-1}(y)}\cup 0\cong \CC_{X,y}/\p^\sharp$, see~\ref{app ideal quotient}.
\end{remark}
% \begin{block}
% \end{block}
% \begin{remark}[Viewpoint using monoids with 0.]

% 	Alternatively the isomorphism $\psi:\ker\tau_{y,x}\to \CC_{E,\iota^{-1}(y)}$ in Lemma~\ref{lemma balanced cospec} (iv) can be obtained by is the following. View $\CC_{X,y}\cup \{0\}$ as a multiplicative monoid, so we add an absorbing element $0$. Then $\p\cup\{0\}$ is a prime ideal of $\CC_{X,y}\cup\{0\}$ as monoids with $0$, and the cokernel of $\p\cup \{0\}\to \CC_{\X,y}\cup\{0\}$ as morphisms of pointed $\CC_{X,y}$-sets is denoted $\CC_{X,y}\cup\{0\}/\p$, and the latter coincides with $\CC_{E,\iota^{-1}(y)}\cup\{0\}$ as monoids. A reference for the theory of ideal quotients of pointed monoids is \cite[\S6]{GR}.
% \end{remark}
% \begin{block}	
% Proposition~\ref{toroidal resolutions} below shows that log-regular log-schemes admit \emph{toroidal resolutions}, which loosely speaking are log-smooth resolutions coming from fan subdivisions. 
% \end{block}

\begin{proposition}[Toroidal resolutions]
  \label{toroidal resolutions}
Let $X^\dagger$ be a log-regular log-scheme with fan $F=F(X^\dagger)$, and let $F'\to F$ be a proper subdivision of $F$ (Definition \ref{def subdivision}) with $F'$ locally fs. Then there exists a final object $X'^\dagger$ among all log-schemes above $X^\dagger$ that admit a morphism of monoidal spaces $(X'^\dagger)^\sharp\to F'$ such that the square of characteristic morphisms $$\begin{tikzcd}
(X',\M_{X'}^\sharp) \arrow[r, ""] \arrow[d, ""] & \arrow[d, ""]F' \\
(X,\M_X^\sharp) \arrow[r]    & F(X^\dagger).                                                 
\end{tikzcd} $$ commutes, we suggestively\footnote{Beware that strictly speaking, this notation is incorrect: such a fibered product does not exist because fans and log-schemes live in other categories.} write $$X'^\dagger=X^\dagger\times_FF'.$$ 
Moreover $X'^\dagger$ is log-regular and $(X'^\dagger)^\sharp\to F'$ is the characteristic morphism. 

If additionally $F'\to F$ is a regular subdivision as in~\ref{existence nice proper subdivision} then $X'\to X$ is a resolution of singularities (i.e. a proper birational morphism such that $h^{-1}(X_{\mathrm{reg}})\overset{\sim}{\lra} X_{\mathrm{reg}}$); In that case we will also call $X'^\dagger\to X^\dagger$ a \emph{toroidal resolution}. 
\end{proposition}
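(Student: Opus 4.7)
The plan is to construct $X'^\dagger$ by a Zariski-local fs base change along neat charts, verify the universal property globally by gluing, deduce log-regularity from log-\'etaleness of fan subdivisions, and finally extract the resolution property from the regular subdivision hypothesis.

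First I would work Zariski-locally on $X$. Choose an open affine cover $\{U_i\}$ of $X$ together with neat fs charts $P_i = \CC_{X,x_i} \to \Gamma(U_i, \M_{U_i})$, yielding by~\ref{subs: kato fan} strict morphisms $U_i^\dagger \to \A_{P_i}$ and identifications $F(U_i^\dagger) \cong (\spec P_i)^\sharp$. Pulling back $F' \to F$, I get a subdivision of $(\spec P_i)^\sharp$ covered by finitely many affines $(\spec Q_{ij})^\sharp$, where each $Q_{ij}$ is an fs monoid satisfying $Q_{ij}^\gp = P_i^\gp$ (subdivisions preserve the ambient lattice). I then form
\[
U_{ij}^\dagger \coloneqq U_i^\dagger \times_{\A_{P_i}}^{\mathrm{fs}} \A_{Q_{ij}}
\]
in the category of fs log-schemes, and propose to glue these pieces to obtain $X'^\dagger$.

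Next I would establish the universal property simultaneously with the gluing. Any fs log-scheme $Z^\dagger$ over $X^\dagger$ equipped with a factorisation $(Z^\dagger)^\sharp \to F'$ of the characteristic morphism locally decomposes, according to which affine $(\spec Q_{ij})^\sharp$ of $F'$ the points of $Z$ hit, into strict lifts to $\A_{Q_{ij}}$ over $\A_{P_i}$. The universal property of the fs fibered product then yields a unique local lift $Z^\dagger \to U_{ij}^\dagger$; uniqueness ensures these lifts glue to a morphism $Z^\dagger \to X'^\dagger$, and, applied to $Z^\dagger = U_{ij'}^\dagger$ on overlaps, this also guarantees that the $U_{ij}^\dagger$ themselves glue to a well-defined global $X'^\dagger$ independent of the chosen charts.

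For log-regularity I would invoke Kato's criterion (\ref{kato criterion}): since $P_i \to Q_{ij}$ groupifies to an isomorphism, both $\#\ker(P_i^\gp \to Q_{ij}^\gp)$ and the torsion of the cokernel are trivial, so $\A_{Q_{ij}} \to \A_{P_i}$ is log-\'etale. Log-\'etaleness is preserved under fs base change, hence $U_{ij}^\dagger \to U_i^\dagger$ is log-\'etale, and by~\cite[\S8.2]{K94} log-\'etale morphisms over log-regular bases yield log-regular log-schemes. The identification $F(X'^\dagger) \cong F'$ then follows from $F(U_{ij}^\dagger) \cong (\spec Q_{ij})^\sharp$, and by construction $(X'^\dagger)^\sharp \to F'$ is the characteristic morphism making the square commute.

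Finally, under the regular subdivision hypothesis each $Q_{ij} \cong \N^{r_{ij}}$ is free, so combining the $r_{ij}$ log-parameters with a regular system of parameters on the regular closed stratum $V(\I_{U_{ij}})$ of codimension $r_{ij}$ produces a regular system of parameters on $U_{ij}$, hence $X'$ is regular. Properness of $X' \to X$ is established Zariski-locally from properness of the subdivision $F' \to F$, which translates into properness of the corresponding toric morphism and persists under fs base change; birationality is clear because both $X$ and $X'$ contain the open log-trivial locus $X \setminus D_X$ as a dense open subset on which $X' \to X$ restricts to an isomorphism. The main subtlety I expect is checking that fs saturation in the base change does not spoil the log-\'etale property nor the identification of the Kato fan, which relies crucially on the lattice-preservation $Q_{ij}^\gp = P_i^\gp$ afforded by the subdivision hypothesis.
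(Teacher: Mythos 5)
Your construction coincides with the one the paper recalls (the paper otherwise defers the proof to Kato \cite[\S9--10]{K94} and Gabber--Ramero \cite[\S6.6]{GR}): locally one forms the fs base change along a chart over the subdivided affine fan, applies Kato's criterion to get log-smoothness/log-\'etaleness and hence log-regularity, and deduces properness from properness of the subdivision and regularity of $X'$ from regularity of $F'$. The only point to watch is that your identity $Q_{ij}^{\gp}=P_i^{\gp}$ holds for the top-dimensional cones of the subdivision (which suffice to cover $F'$), whereas the paper's formula $Q=\{a\in P^{\gp}:\phi^{\gp}(a)\in P'\}$ handles arbitrary affine pieces; otherwise the two arguments are essentially identical.
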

\begin{proof} Omitted, this is the main result of \cite[\S9-10]{K94}, also see \cite[\S6.6]{GR} for a more detailed proof. % for the existence of weak resolutions. %In \cite{Niziol} this was extended to \'etale log-structures. 
%The version we need here is shown in \cite[6.6]{GR}, which gives the preservation of regular locus (i.e. non-weak resolutions). 
  % See the argument in \cite[9.9]{K94} for the existence of the final object $(X')^\dagger$; in summary: 
  For later reference we recall how to construct $(X')^\dagger$: % proof, which is an adaptation of \cite[9.9]{K94}.
   suppose that the proper subdivision $F'\to F$ is affine-locally given by $\spec\phi:\spec P'\to \spec P$ for some morphism $\phi:P\to P'$ of sharp \emph{fs} monoids. Then $X'$ is locally constructed as $$X\times_{\spec \Z[P]}\spec \Z[Q]$$ where $$Q=\{a\in P^{\gp}:\phi^{\gp}(a)\in P'\},$$ and $Q$ provides a chart for the log-structure on $X'$. An application of Kato's criterion %\cite[3.5]{Kato} %\ref{kato criterion} 
   shows that $(X')^\dagger\to X$ is log-smooth, so it follows that $(X')^\dagger$ is log-regular. The morphism $X'\to X$ is proper since $F'\to F$ is a proper subdivision. If $F'$ is regular then $X'$ is regular \cite[12.5.35]{GR}. %and one shows that regularity of the fan detects regularity of the log-regular scheme. 
   For more details, consult the works cited above.
\end{proof}
\begin{remark} 
 Also see the work of Niziol \cite[\S5.8]{Niziol} where log-blowups are discussed and it is shown that one can always toroidally resolve Zariski log-regular schemes via a log-blowup -- we won't use this. %-- these are certain projective log-modifications.
\end{remark}
\begin{block}[The dual cone complex $\Sigma(\XX^\dagger)$.] Recall that locally \emph{fs} (Kato) fans correspond to cone complexes, see Section~\ref{fs fan and cone complex}. Given a log-regular log-scheme $X^\dagger$ we write $\Sigma(X^\dagger)=\Sigma(F(X^{\dagger}))$ for the cone complex associated to $F(X^{\dagger})$ -- its construction is recalled in~\ref{cone complex of a fan}. In summary, it is the geometric realisation of $F(\XX^{\dagger})$ defined by glueing, for all $x\in F(\XX^\dagger)$, the dual cones $$\sigma_{\CC_{F(\XX^\dagger),x}}\coloneqq \Hom(\CC_{F(\XX^\dagger),x},\R_{\ge0})$$ along the cospecialisation maps as face embeddings, see \eqref{eqn: cone maps}. For brevity we will also write $C(x)\coloneqq \sigma_{\CC_{F(\XX^\dagger),x}}\subset \Sigma(\XX^{\dagger})$ for the cone associated to $x\in F(\XX^{\dagger})$. Note that $$\Sigma(\XX^\dagger)=\colim_{x\in F(\XX^\dagger)}C(x)$$ and for any two $x_1,x_2\in F(\XX^\dagger)$ either $C(x_1)\cap C(x_2)=\emptyset$ or we have $C(x_1)\cap C(x_2)=C(x)$ where $x$ is the generic point of $\overline{\{x_1\}}\cap \overline{\{x_2\}}=E_{x_1}\cap E_{x_2}$.

By construction, $\Sigma(\XX^{\dagger})$ is a strictly convex rational polyhedral cone complex. In particular $\Sigma(\XX^{\dagger})$ is a $\Z$-PL complex (see Definition~\ref{def PL space}).  % unique smallest common generisation $x\in F(\XX^{\dagger})$. %-- note that here we make use of the fact that we work in the Zariski topology.
Without danger of confusion we will identify  $\Sigma(\XX^\dagger)$ with its underlying topological space $|\Sigma(\XX^\dagger)|=F(\XX^\dagger)(\R_{\ge0})$.
%recall from~\ref{block: class fans vs kato fans} that $\Sigma(\XX^\dagger)$ is glued from the dual cones $\sigma_{\M_{F(\XX^\dagger),x}}$ along various cospecialisation maps. 

Because the construction is functorial, we obtain for any morphism $\YY^{\dagger}\to\XX^{\dagger}$ of log-regular schemes a morphism of cone complexes $\Sigma(\YY^\dagger)\to\Sigma(\XX^\dagger)$.
\end{block}

The following lemma shows that proper subdivisions can be pulled back along morphisms of log-regular schemes. 
\begin{lemma}[pullback of a subdivision] \label{lemma pullback subdivision} Let $f:X'^\dagger\lra X^\dagger$ be a morphism of log-regular log-schemes. Then for any proper subdivision $\Sigma_0\to \Sigma(X^{\dagger})$ (Definition \ref{def cone complex}), there exists a cartesian square (in the \emph{fs} category) of log-regular log-schemes  \begin{equation}
		\label{diagram: pullback subdivision}\begin{tikzcd}
	X_0'^\dagger\arrow[r]\arrow[d]&X_0^\dagger\arrow[d]\\
	X'^\dagger\arrow[r]&X^\dagger
\end{tikzcd}\end{equation}
such that $\Sigma(X_0^\dagger)=\Sigma_0$ and $\Sigma(X_0'^\dagger)\to \Sigma(X'^{\dagger})$ is a proper subdivision. %If $X\to X^\dagger$ is proper (resp. finite) then so is $X_0'\to X_0$.
\end{lemma}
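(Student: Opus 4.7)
The plan is to construct $X_0^\dagger$ directly from the subdivision using Proposition~\ref{toroidal resolutions}, and then obtain $X_0'^\dagger$ as the fs base change of $X_0^\dagger$ along $f$. Concretely, the proper subdivision $\Sigma_0\to \Sigma(X^\dagger)$ corresponds (via the equivalence between locally fs fans and cone complexes recalled in Section~\ref{fs fan and cone complex}) to a proper subdivision $F_0\to F(X^\dagger)$ of locally fs Kato fans. I would then set $X_0^\dagger\coloneqq X^\dagger\times_F F_0$ as in Proposition~\ref{toroidal resolutions}, which guarantees log-regularity of $X_0^\dagger$ and the identification $\Sigma(X_0^\dagger)=\Sigma_0$. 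Finally, define $$X_0'^\dagger\coloneqq X'^\dagger\times^{\mathrm{fs}}_{X^\dagger}X_0^\dagger$$ as the fs fiber product, which by construction sits in a cartesian diagram as in \eqref{diagram: pullback subdivision}.

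Next I would verify that $X_0'^\dagger$ is log-regular and that the morphism $X_0'^\dagger\to X'^\dagger$ has the desired properties. The key observation is that the morphism $X_0^\dagger\to X^\dagger$ is log-étale and proper: properness is immediate from properness of $F_0\to F$, and log-étaleness follows from Kato's criterion (\ref{kato criterion}) applied to the local charts $\phi:P\to Q$ of Proposition~\ref{toroidal resolutions}, since a subdivision of sharp fs monoids induces an isomorphism on groupifications (so $\ker\phi^{\mathrm{gp}}$ and $(\mathrm{coker}\,\phi^{\mathrm{gp}})_{\mathrm{tor}}$ both vanish) and $X_0\to X\times_{\spec\Z[P]}\spec\Z[Q]$ is an isomorphism in the construction. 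Since log-étale morphisms and proper morphisms are stable under fs base change, $X_0'^\dagger\to X'^\dagger$ is log-étale and proper. Then log-smoothness over the log-regular scheme $X'^\dagger$ forces $X_0'^\dagger$ to be log-regular by \cite[\S8.2]{K94}.

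Finally I would check that $\Sigma(X_0'^\dagger)\to \Sigma(X'^\dagger)$ is a proper subdivision with underlying cone complex the pullback $\Sigma(X'^\dagger)\times_{\Sigma(X^\dagger)}\Sigma_0$. Working locally with neat charts, if $P\to Q$ is the local chart for $F_0\to F$ and $P\to P'$ is a neat chart for $f$ at a point $x'\in X'$ lying over $x\in X$, then the characteristic monoid of $X_0'^\dagger$ at points above $x'$ is computed from the pushout $P'\oplus_P Q$ after sharpification and saturation. Since $P\to Q$ is a subdivision, the induced map on dual cones is a subdivision, and taking the fibered product of cones commutes with this construction by a direct inspection of lattices. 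The properness of $\Sigma(X_0'^\dagger)\to \Sigma(X'^\dagger)$ then follows from properness being stable under pullback of subdivisions (Appendix~\ref{sec: polyhedral}).

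The main obstacle I anticipate is the last step: tracking the identification of $\Sigma$ with an fs fiber product. The subtlety is that fs base change can alter the underlying scheme, but at the level of sharpified characteristic monoids this is harmless because subdivisions preserve the ambient lattice; verifying this carefully on charts is the one computation that cannot be skipped.
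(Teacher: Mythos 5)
Your construction is correct, but it takes a genuinely different route from the paper's. The paper works first at the level of Kato fans: it forms $F_0'\coloneqq F(X'^\dagger)\times^{\mathrm{fs}}_{F(X^\dagger)}F_0$, verifies combinatorially that the associated cone complex $\Sigma_0'\to\Sigma(X'^\dagger)$ is a proper subdivision (each cone of $\Sigma(X'^\dagger)$ maps onto a finite union of cones of $\Sigma_0$, and $\Sigma_0'$ is glued from the preimages), and only then applies Proposition~\ref{toroidal resolutions} to \emph{both} $X^\dagger$ and $X'^\dagger$, obtaining the cartesian square from the universal property of that construction; the identification $\Sigma(X_0'^\dagger)=\Sigma_0'$ then comes for free, since Proposition~\ref{toroidal resolutions} asserts that $(X_0'^\dagger)^\sharp\to F_0'$ is the characteristic morphism. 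You instead build only $X_0^\dagger$ and take the fs fiber product of log-schemes; this makes cartesianness tautological and gives log-regularity cleanly (subdivisions are log-\'etale, log-\'etaleness is stable under fs base change, and log-smooth over log-regular is log-regular), but it shifts the entire burden onto identifying the fan of the fs fiber product with the fiber product of fans --- the chart computation you rightly flag as the step that cannot be skipped, and which is precisely what the paper's ordering avoids. Two small imprecisions: the blanket claim that a subdivision of sharp fs monoids induces an isomorphism on groupifications is false at lower-dimensional cones (where $M_\sigma\to M_{\sigma_0}$ has nontrivial kernel); what is true, and what you actually need, is that the specific chart $Q=\{a\in P^{\gp}:\phi^{\gp}(a)\in P'\}$ of Proposition~\ref{toroidal resolutions} satisfies $Q^{\gp}=P^{\gp}$. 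Also, the statement that properness of subdivisions is stable under pullback is not in Appendix~\ref{sec: polyhedral}; it is exactly the cone-by-cone argument the paper carries out inside this lemma, so you would have to supply it yourself in your final step.
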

\begin{proof} By~\ref{defn fan} locally fs fans admit fibered products, so there is a cartesian square
$$	\begin{tikzcd}
	F_0'\coloneqq F(X'^\dagger)\times^{\mathrm{fs}}_{F(X^\dagger)}F_0\arrow[r]\arrow[d]&F_0\arrow[d]\\
	F(X'^\dagger)\arrow[r]&F(X^\dagger)
\end{tikzcd}$$
where $F_0=F(\Sigma_0)$ is the Kato fan associated to $\Sigma_0$. Let $\Sigma'_0$ denote the cone complex associated to $F_0'$ and let $\varphi:\Sigma(X'^{\dagger})\to \Sigma(X^{\dagger})$ denote the associated morphism of cone complexes.
For each cone $\sigma'$ of $\Sigma(X^{\dagger})$, the cone $\varphi(\sigma')$ is a finite union $\cup \sigma_i$ of cones $\sigma_i$ in $\Sigma_0$, because $\Sigma_0\to \Sigma(X^{\dagger})$ is a proper subdivision. By construction $\Sigma'_0$ is obtained by glueing the cones $\phi^{-1}\sigma_i$. So by definition (see \ref{def cone complex}) $\Sigma_0'\to\Sigma(X'^{\dagger})$ is a proper subdivision and so is $F_0'\to F'$. %of $\sigma'$ satisfying the universal property of a fibered product, and this construction glues to a proper subdivision $\Sigma_0'\to\Sigma'$. 
	% is a cone, and Subdivide each cone $\sigma'$ of $\Sigma'$ by the subcones $\phi^{-1}\sigma_0$  by glueing the cones $\varphi^{-1}\sigma_0$ for each cone $\sigma_0$ of $\Sigma_0$. Then $\Sigma_0'$ is a proper subdivision of $\Sigma'$. 

	%By the correspondence of~\ref{fs fan and cone complex} we obtain a square of fans whose vertical arrows are proper subdivisions,

Now by~\ref{toroidal resolutions} we have log-regular schemes $X_0'^\dagger=X'^\dagger\times_{F}F_0'$ and $X_0^\dagger=X^\dagger\times_{F}F_0$ with fans $F_0'$ and $F_0$ respectively. By the universal property of~\ref{toroidal resolutions} we obtain a morphism $(X_0')^\dagger\to X^\dagger$ such that~\ref{diagram: pullback subdivision} is cartesian.
\end{proof}
\section{Balanced covers of dual complexes}
\label{sec:dual complex}
\begin{block}
In this section, we study dual complexes of toroidal $S$-schemes. We view these as $\Z$-PL complexes and and prove that finite covers are balanced (Theorem~\ref{thm: balanced}).
\end{block}
\begin{block}[Setup] %Let us now use the concepts of Section 2 in the case of schemes over a DVR. 
\label{setup dual complex}
Recall the running assumptions of Conventions~\ref{conventions}. The scheme $S=\spec k^{\circ}$ is endowed with the standard log-structure $S^\dagger$ given by the closed point.\end{block}
\begin{definition}[Toroidal $S$-schemes]
	\label{def toroidal scheme}
We define a \emph{toroidal $S$-scheme} as a log-scheme $\XX^{\dagger}=(\XX,\M_{\XX})$ such that $\XX$ is an integral flat separated finite type $S$-scheme and $\XX^{\dagger}$ is log-regular.

The flatness assumption is in fact superfluous; indeed, an integral log-scheme $\XX^{\dagger}$ over $S^{\dagger}$ is log-regular at the generic point if and only if the log-structure is generically trivial, and any log-scheme $X^{\dagger}$ over $S^{\dagger}$ satisfies $D_X\subset \XX_s$, where $D_\XX$ denotes the support (see \ref{log parameters}).
\end{definition}
\begin{block}[Assumption]\label{assumption toroidal cover} Throughout, unless explicitly stated otherwise, $$f:\YY^\dagger\to\XX^\dagger$$ denotes a finite morphism of toroidal $S$-schemes, which we will also call a \emph{toroidal cover}. Note this implies $f^{-1}D_{\XX}\subset D_{\YY}$. \label{assume f finite cover toroidal scheme}
\end{block}
\begin{example} \label{examples f}To justify our choice of setting in \ref{assumption toroidal cover}, let us show that in nature, one can often encounter morphisms $f:\YY^\dagger\to\XX^\dagger$ as in~\ref{assume f finite cover toroidal scheme}; we give four situations below. Let $\XX^\dagger$ be a toroidal $S$-scheme.
\begin{enumerate}
	\item Suppose $\XX^{\dagger}\to S^{\dagger}$ is log-smooth, and $S'^{\dagger}\to S^{\dagger}$ is a finite morphism of log-schemes, then $\YY^\dagger\coloneqq\XX^\dagger\times^{\mathrm{fs}}S'^\dagger\to \XX^\dagger,$ is log-smooth, and hence $\YY^{\dagger}$ is log-regular. In particular the underlying scheme $\YY$ is normal and coincides with the normalisation of the base change $\XX_{S'}=\XX\times_SS'$.
	\item Let $S'^\dagger\to S^\dagger$ be a finite log-\'etale morphism of log-schemes. Then the \emph{fs} base change $f^{\dagger}:\YY^\dagger\coloneqq\XX^\dagger\times_{S^\dagger}^{\mathrm{fs}}S'^\dagger\to \XX^\dagger$ is also log-\'etale. Therefore, since $X^{\dagger}$ is log-regular, so is $\YY^\dagger$. Similarly the underlying scheme $\YY$ is the normalisation of $\XX_{S'}$. %, see \cite[\S3.2.1]{JN}. 
	%zariski-nagata purity: see 0BJE stacks, reduce to https://stacks.math.columbia.edu/tag/0BM7 by induction, also see https://mathoverflow.net/questions/282255/a-slick-proof-of-zariski-nagata-purity-in-characteristic-p
	%GR view log-purity as generalisation of abhyankars lemma, see exposition of kuhlman https://www.fvkuhlmann.de/Abhylemmafinal.pdf
	\item Suppose $\YY^\dagger$ is a fine log-scheme equipped with a finite morphism $f:\YY^\dagger\to\XX^{\dagger}$ such that $f$ is \'etale away from $D_{\XX}$ and tamely ramified over the generic points of $D_{\XX}$. %by going down theorem  f^{-1}D_X union of irreducible closed subschemes of codimension 1.  
	Then $f$ is log-\'etale by the log-purity theorem of Kato-Mochizuki \cite[Theorem B]{Mo} (also see \cite[\S13.3.43]{GR} for a refinement), and hence $\YY^{\dagger}$ is log-regular. %\footnote{See for instance \emph{In the sense that for every pair $x'\mapsto x$ of codimension $1$ points the extension of DVRs $\O_{\YY,x'}/\O_{\XX,x}$ is tamely ramified and unramified if $x$ is a generic point of $D_{\XX}$} $x'\in \YY$ we have . By the logarithmic Abhyankar lemma, see for instance \cite{
	
	\item Suppose $\XX$ is regular \emph{of dimension $2$} and write $U=\XX\setminus D_{\XX}$. Suppose $U'\to U$ is a finite \'etale cover which is \emph{cyclic of degree $p$}. Then by Abhyankar's theorem  \cite{abhyankar} on $p$-cyclic covers of Jungian domains -- see Kato's comments in \cite[\S12.4]{K94} and the references cited there -- it follows that there exists a modification $g:\XX_0\to\XX$ such that the following conditions (i)-(iii) hold:
	\begin{enumerate}[label=(\roman*)]
		\item Let $D_{\XX_0}=g^{-1}D_\XX$. Then the log-scheme $\XX_0^\dagger=\XX_0(\log\,D_{\XX_0})$ is log-regular.
		\item The modification $g$ is an isomorphism when restricted to $\XX_0\setminus D_{\XX_0}$.
		\item Let $f:\YY\to\XX_0$ be the normalisation of $\XX_0$ in the function field of $U'$. Then $\YY^\dagger=\YY(\log\,f^{-1}D_{\XX_0})$ is log-regular. 
	\end{enumerate} 
If (i)-(iii) hold then $f$ is a finite morphism of $2$-dimensional toroidal $S$-schemes (the morphism $f$ is finite because $\XX_0$ is excellent).
	
\end{enumerate}
\end{example}

\begin{remark}[Kato's question on simultaneous toroidal resolution]
We mention that Kato \cite[\S12.4.1]{K94} has asked the question whether the assertion of \ref{examples f} (4) remains true for \emph{any finite \'etale cover $U'\to U$ and in arbitrary dimension}. By the existence of toroidal resolutions (\ref{toroidal resolutions}) and Lemma \ref{lemma pullback subdivision}, we may additionally require $\XX_0$ to be regular (as Kato does) without changing the outcome of this question. As far as the author is aware Kato's question is still wide open. %seems possible that the almost purity theorem gives something like this after an alteration.
\end{remark}

\begin{block}[Fans] For the remainder of the paper $f:\XX^{\dagger}\to \YY^{\dagger}$ is as in Assumption~\ref{assume f finite cover toroidal scheme}, sometimes additional assumptions are imposed and then this is indicated in the beginning of a new section. 
The constructions of Section~\ref{subs: kato fan} yield a morphism of Kato fans $F(\YY^\dagger)\to F(\XX^\dagger)$. Since the morphism $\XX^\dagger\to S^\dagger$ provides structural maps $\N\to \CC_{F(\XX^\dagger),x}$, sending $1$ to the class of the uniformiser $[\varpi]\in\CC_{F(\XX^\dagger),x}$, we can and will view $F(\XX^\dagger)$ as a fan over $\spec\N$, and $F(\YY^\dagger)\to F(\XX^\dagger)$ as a morphism of fans over $\spec\N$. To make the notation somewhat less heavy, we will also write $$M_x\coloneqq \CC_{F(\XX^\dagger),x}^{\gp}.$$ By definition, $M_x$ is a lattice and $\mathrm{rank}(M_x)=\mathrm{rank}(x)$.
\end{block}
\begin{lemma} 
	\label{lem: inclusion lattices of pl-functions}
Let $x'\in F(\YY^\dagger)$ and write $x=f(x')$. Then $M_x\to M_{x'}$
%\CC_{F(\XX^\dagger),x}^{\gp}\to\CC_{F(\YY^\dagger),x'}^{\gp}
is injective and $\mathrm{rank}(M_x)=\mathrm{rank}(M_{x'})$.
 \end{lemma}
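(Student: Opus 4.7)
The plan has two independent parts: an algebraic dimension count for the rank equality, and a soft cone-duality argument for injectivity. The main obstacle will be establishing the dimension equality of local rings from finiteness of $f$; once that is in place, the injectivity step is formal.

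For the rank equality, since $x\in F(\XX^\dagger)$ we have $\I_{\XX,x}=\mathfrak m_{\XX,x}$, so $V(\I_{\XX,x})$ is the closed point of $\spec\O_{\XX,x}$, and log-regularity (Definition~\ref{defn: log reg}) reduces to $\mathrm{rank}(x)=\dim\O_{\XX,x}$; analogously at $x'$. Because $f$ is finite and $\YY$ is integral dominating $\XX$, the local ring $\O_{\YY,x'}$ is the localization, at a prime lying over $\mathfrak m_x$, of the finite integral $\O_{\XX,x}$-algebra $(f_*\O_\YY)_x$---which is a domain by integrality of $\YY$. Integrality of the extension forces this prime to be maximal, and Cohen--Seidenberg then yields $\dim\O_{\YY,x'}=\dim\O_{\XX,x}$. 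Hence $\mathrm{rank}(M_{x'})=\mathrm{rank}(M_x)=:r$.

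For the injectivity, I would pass to dual cones. The fact that $F(f):F(\YY^\dagger)\to F(\XX^\dagger)$ sends the fan point $x'$ to the fan point $x$---equivalently, that $f^\#:\O_{\XX,x}\to\O_{\YY,x'}$ is a local ring homomorphism---translates into the statement that the induced monoid morphism $\varphi:\CC_{\XX,x}\to\CC_{\YY,x'}$ maps $\CC_{\XX,x}\setminus\{0\}$ into $\CC_{\YY,x'}\setminus\{0\}$. Applying $\Hom(-,\R_{\ge 0})$ produces the dual cone map $\varphi^\vee:C(x')\to C(x)$, and the previous observation gives that $\varphi^\vee$ sends the relative interior of $C(x')$ into the relative interior of $C(x)$. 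Since $\dim C(x)=\dim C(x')=r$ by the previous paragraph, the image cone $\varphi^\vee(C(x'))$ contains interior points of $C(x)$ and is therefore full-dimensional in $N_x\otimes\R$; hence the underlying linear map $N_{x'}\otimes\R\to N_x\otimes\R$ is surjective, and by duality $M_x\otimes\R\to M_{x'}\otimes\R$ is injective. Since $M_x$ is a torsion-free lattice (the group of Cartier divisors on $\spec\O_{\XX,x}$ supported on $D_{\XX,x}$, cf.~\ref{regular log-regular}), this forces $M_x\to M_{x'}$ itself to be injective.
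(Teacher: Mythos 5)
Your proof of the rank equality is essentially the paper's own argument: the paper likewise deduces $\dim\O_{\YY,x'}=\dim\O_{\XX,x}$ from finiteness of $f$ and converts this into $\mathrm{rank}(x')=\mathrm{rank}(x)$ via log-regularity, using that $V(\I_{\XX,x})$ is the closed point of $\spec\O_{\XX,x}$ exactly because $x\in F(\XX^{\dagger})$. The one thing to make explicit is that the height equality you extract from Cohen--Seidenberg uses going-down, which is available here because $\O_{\XX,x}$ is normal (log-regular schemes are normal, \ref{regular log-regular}) and $\O_{\YY,x'}$ is a domain integral over it.

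The injectivity argument, however, has a genuine gap. It is true that $\varphi^{\vee}$ carries the relative interior of $C(x')$ into the relative interior of $C(x)$, but the inference ``the image cone contains interior points of $C(x)$ and is therefore full-dimensional'' is false: a cone of any positive dimension can pass through the interior of a full-dimensional cone. Concretely, the local morphism of sharp \emph{fs} monoids $\varphi:\N^2\to\N^2$, $(a,b)\mapsto(a+b,a+b)$ has equal ranks on both sides, its dual $(s,t)\mapsto(s+t,s+t)$ sends the interior of the first quadrant into itself, and yet $\varphi^{\gp}$ is not injective -- the image of the dual cone is just the ray through $(1,1)$. So locality of $\CC_{\XX,x}\to\CC_{\YY,x'}$ plus equality of ranks, which is all your argument uses, does not formally imply the conclusion; a genuinely geometric input is required. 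The cleanest repair is to prove that the monoid map $\CC_{\XX,x}\to\CC_{\YY,x'}$ is itself injective, which for integral monoids passes to groupifications: identifying $\CC_{\XX,x}$ with effective Cartier divisors on $\spec\O_{\XX,x}$ supported on $D_{\XX,x}$ (\ref{regular log-regular}), if $u,v\in\M_{\XX,x}$ have $u/v\in\O_{\YY,x'}^{\times}$, then $u/v$ and $v/u$ lie in $\O_{\YY,x'}\cap K(\XX)=\O_{\XX,x}$ by normality of $\O_{\XX,x}$ and integrality of the extension, so $u/v\in\O_{\XX,x}^{\times}$ and the two divisors already coincide on $\XX$. (For comparison, the paper's proof asserts that a local morphism of sharp monoids is automatically injective, citing \ref{local morphisms of monoids}; the example above shows that this step, too, secretly relies on the same normality input rather than on sharpness alone.)
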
 
 \begin{proof}
 	The map $\CC_{F(\XX^\dagger),x}\to\CC_{F(\YY^\dagger),x'}$ is \emph{local} morphism of sharp monoids, which by sharpness is the same as an injective monoid morphism (\ref{local morphisms of monoids}).  %m->m' local if m^x in preimage of m'^x.
 	 By the assumption that $x'\in F(\YY^\dagger)$ and $f:\YY\to\XX$ is finite, we have a finite morphism of log-strata $$E_{x'}=\overline{\{x'\}}\to E_x=\overline{\{x\}}$$ of dimension $\dim \O_{\YY,x'}=\dim \O_{\XX,f(x)}$, which by log-regularity gives $\mathrm{rank}(x')=\mathrm{rank}(x)$. %So~\ref{eqn: inclusion lattices of pl-functions} is indeed an inclusion of lattices of the same rank.
 \end{proof}

%Note that for every $x\in F(\XX^\dagger)$, the stalk $\CC_{F(\XX^\dagger),x}$ can be viewed as the monoid of effective Cartier divisors on $\spec\O_{\XX,x}$ supported on the closed fiber. 

\begin{block}[The dual complex $\D(\XX^\dagger)$] \label{construction dual complex}
%Informally, the \emph{dual complex} $\D(\XX^\dagger)$ of $\XX^\dagger$ is defined as the ``$[\varpi]\mapsto 1$-slice'' of $\Sigma(\XX^\dagger)$. 

 The \emph{dual complex} $\D(\XX^\dagger)$ of $\XX^\dagger$ is defined as the $\Z$-PL subspace of $\Sigma(\XX^\dagger)$ defined by glueing the polyhedra $$\sigma_x\coloneqq\{a\in\sigma_{\CC_{F(\XX^\dagger),x}}:\alpha([\varpi])=1\}\subset C(x)$$ by restricting the face maps of $\Sigma(\XX^\dagger)$. % embeddings %\eqref{eqn: cone maps} restricted to $\sigma_x$. 
 This defines functor from the category of log-regular log-schemes over $S^\dagger$ to $\Z$-PL complexes and thus functor factors through the category of locally \emph{fs} fans. Throughout we denote by $$\phi:\D(\YY^\dagger)\to\D(\XX^\dagger)$$ the induced morphism of $\Z$-PL complexes. %Observe that a dual complex $\D(\XX^\dagger)$ is strictly convex. 
The following are equivalent:
\begin{enumerate}[label=(\roman*)]
 	\item The polyhedron $\sigma_x$ is nonempty.
 	\item The element $[\varpi]\in \CC_{F(\XX^{\dagger}),x}$ is nonzero.
 	\item The point $x$ lies in the special fiber $\XX_s$.
 \end{enumerate} 
 Moreover, $\sigma_x$ is bounded if and only if the closed stratum $E_x$ associated to $x$ (see~\ref{log-strata}) is an intersection of \emph{vertical components} of $D_{\XX}$, i.e. components of $\XX_s$%, where by a vertical component we any component of $D_{\XX}$ contained in $\XX_s$.

\end{block}
\begin{remark} Informally speaking, the points of $\sigma_x$ are ``$k$-valuations'' on the monoid $\CC_{F(\XX^\dagger),x}$, in the sense that they define monoid morphisms $\CC_{F(\XX^{\dagger}),x}\to\R_{\ge0}$ which are compatible with the normalised valuation of $k$. %This will  anticipation of skeleta (Section~\ref{sec: harmonicity}) are 
\label{rmk monoid valuations}
\end{remark}

\begin{proposition}[Multiplicity formula]\label{proposition: multiplicity formula} Let $f:\YY^{\dagger}\to \XX^{\dagger}$ be a toroidal cover (\ref{assumption toroidal cover}. 
	Let $y'\in F(\YY^\dagger)$, write $y=f(y)$ and let $x\in \overline{\{y\}}$ be a specialisation of $y$. Then \begin{equation}
		\label{eqn:mu}
\left[M_{y'}:M_{y}\right]\cdot\left[\kappa(y'):\kappa(y)\right]=\sum_{x'\in \overline{\{y'\}}\cap f^{-1}(x)}\left[M_{{x'}}:M_{x}\right]\cdot\left[\kappa(x'):\kappa(x)\right].
	\end{equation}
\end{proposition}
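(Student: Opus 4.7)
The plan is to prove \eqref{eqn:mu} by a two-step reduction. First, a lattice-theoretic factorisation using the short exact sequences from Lemma~\ref{lemma balanced cospec}(v) reduces the identity to the special case where $y,y'$ are the generic points of $\XX,\YY$. Second, in this reduced case, a length computation in the local toric model provided by Kato's structure theorem establishes the formula.

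For the first step, I would apply Lemma~\ref{lemma balanced cospec}(v) twice, to the specialisations $x\in\overline{\{y\}}$ in $\XX$ and $x'\in\overline{\{y'\}}$ in $\YY$, to produce a commutative ladder of short exact sequences of lattices
\[
\begin{tikzcd}[column sep=small]
0 \ar[r] & \ker(M_x\twoheadrightarrow M_y) \ar[r] \ar[d] & M_x \ar[r] \ar[d, hook] & M_y \ar[r] \ar[d, hook] & 0 \\
0 \ar[r] & \ker(M_{x'}\twoheadrightarrow M_{y'}) \ar[r] & M_{x'} \ar[r] & M_{y'} \ar[r] & 0,
\end{tikzcd}
\]
in which the middle and right vertical injections come from Lemma~\ref{lem: inclusion lattices of pl-functions}, and the left-hand kernels are identified with the groupified characteristic monoids $\CC_{E_y^\dagger,\iota^{-1}(x)}^{\gp}$ and $\CC_{E_{y'}^\dagger,\iota^{-1}(x')}^{\gp}$ of the restricted log-regular schemes $E_y^\dagger, E_{y'}^\dagger$. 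A brief diagram chase using injectivity of $M_y\hookrightarrow M_{y'}$ gives $M_x\cap\ker(M_{x'}\twoheadrightarrow M_{y'})=\ker(M_x\twoheadrightarrow M_y)$ inside $M_{x'}$; multiplicativity of lattice indices in short exact sequences then yields
\[[M_{x'}:M_x] \;=\; [M_{y'}:M_y]\cdot\bigl[\ker(M_{x'}\twoheadrightarrow M_{y'}):\ker(M_x\twoheadrightarrow M_y)\bigr].\]
Plugging this into \eqref{eqn:mu} and cancelling $[M_{y'}:M_y]$, the claim reduces to \eqref{eqn:mu} applied to the restricted finite morphism of log-regular log-schemes $\bar{f}^\dagger:E_{y'}^\dagger\to E_y^\dagger$ (provided by Lemma~\ref{lemma balanced cospec}(ii)-(iii)), with $y,y'$ replaced by the generic points of $E_y,E_{y'}$. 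Since the proof uses only log-regularity and not the ambient $S^\dagger$-structure, this leaves us in the case $y=\eta_\XX$, $y'=\eta_\YY$, where $M_y=M_{y'}=0$ and $[\kappa(y'):\kappa(y)]=\deg f$.

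For the second step, the target identity becomes $\deg f=\sum_{x'\in f^{-1}(x)}[M_{x'}:M_x]\cdot[\kappa(x'):\kappa(x)]$. I would pass to completions $\hat R:=\hat{\O}_{\XX,x}$ and $\hat R_{x'}:=\hat{\O}_{\YY,x'}$; finiteness of $f$ gives $\prod_{x'\in f^{-1}(x)}\hat R_{x'}\cong\hat R\otimes_{\O_{\XX,x}}(f_*\O_\YY)_x$. By Kato's structure theorem \cite[\S3.2]{K94}, $\hat R$ and each $\hat R_{x'}$ admit descriptions as completions of monoid algebras $\Lambda[[\CC_x]]$ and $\Lambda'[[\CC_{x'}]]$ over formally smooth complete regular local bases (with residue fields $\kappa(x),\kappa(x')$). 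The crucial toric input is that, for a finite inclusion $\CC_x\hookrightarrow\CC_{x'}$ of fs sharp monoids of equal rank, the associated monoid-algebra extension is free of rank $[\CC_{x'}^{\gp}:\CC_x^{\gp}]$, as witnessed by coset representatives of $\CC_{x'}^{\gp}/\CC_x^{\gp}$ lying inside $\CC_{x'}$. Combining this with the finite residue extension $\Lambda\hookrightarrow\Lambda'$ on the horizontal base yields that $\prod_{x'}\hat R_{x'}$ is a free $\hat R$-module of total rank $\sum [M_{x'}:M_x]\cdot[\kappa(x'):\kappa(x)]$. By faithful flatness of completion, this rank also equals the generic rank of $(f_*\O_\YY)_x$ over $\O_{\XX,x}$, namely $\deg f$, and the desired equality drops out.

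The main obstacle will be carrying through the toric freeness argument in mixed characteristic, where Kato's structure theorem for $\hat R$ introduces an additional relation enforced by the uniformiser $\varpi\in\CC_x$. One must verify that freeness of the monoid-algebra extension survives after imposing this relation; I expect this to follow by a coset-representative argument together with the observation that $\varpi$ lies in a face of $\CC_x$ inherited by $\CC_{x'}$. A secondary subtlety is that a finite morphism of log-regular schemes need not be flat, so flatness (and hence the equality of generic rank and fibre length) must be extracted from the toric model rather than assumed on the nose.
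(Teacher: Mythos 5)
Your first step is sound and is in substance the paper's own argument: the ladder of exact sequences coming from Lemma~\ref{lemma balanced cospec}(iv)--(v) together with Lemma~\ref{lem: inclusion lattices of pl-functions} gives exactly the factorisation $[M_{x'}:M_x]=[M_{y'}:M_y]\cdot[\ker(M_{x'}\to M_{y'}):\ker(M_x\to M_y)]$, which is equation \eqref{eqn:l}. The gap is in your second step. The ``crucial toric input'' --- that for a finite inclusion $P\hookrightarrow Q$ of equal-rank sharp \emph{fs} monoids the extension $\Lambda[P]\to\Lambda[Q]$ is free of rank $[Q^{\gp}:P^{\gp}]$ on coset representatives --- is false. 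Take $Q=\N^2$ and $P=\langle(2,0),(1,1),(0,2)\rangle$, so that $k[P]=k[u^2,uv,v^2]\subset k[Q]=k[u,v]$ and $[Q^{\gp}:P^{\gp}]=2$. The fibre $k[u,v]/(u^2,uv,v^2)$ has length $3$, so the extension is not flat, let alone free of rank $2$; and the coset representatives $1,u$ do not even span, since $v\notin k[P]+k[P]\cdot u$. This inclusion does arise from a toroidal cover: the $\mu_2$-quotient $\A^2_{k^{\circ}}=\spec k^{\circ}[u,v]\to\spec k^{\circ}[u^2,uv,v^2]$ (with the toric boundaries plus the special fibres as log-structures, $p\ne 2$) is a finite cover of toroidal $S$-schemes realising exactly this monoid inclusion at the deepest stratum. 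There the multiplicity formula holds ($2=2\cdot 1$), but your computation would return the fibre length $3$. So the final step --- equating the length of $\prod_{x'}\hat R_{x'}\otimes\kappa(x)$ with $\sum_{x'}[M_{x'}:M_x][\kappa(x'):\kappa(x)]$ and then with the generic rank $\deg f$ --- is invalid: the non-flatness you flag at the end is not a technicality to be extracted from the toric model, it is genuinely present whenever $\XX$ is singular at $x$, and it is precisely what separates the fibre length from the degree.

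The paper avoids this by never working at a stratum of codimension $\ge 2$: it reduces by induction to the case $\dim_x\XX=\dim_y\XX+1$, so that $x$ and $x'$ are codimension-one points of the normal (log-regular) strata $E_y$ and $E_{y'}$. There $\O_{E_y,x}$ is a DVR, finite torsion-free modules over it are automatically free, and the classical formula $[\kappa(y'):\kappa(y)]=\sum_{x'}e(\O_{E_{y'},x'}/\O_{E_y,x})\,[\kappa(x'):\kappa(x)]$ applies; Lemma~\ref{lemma extension dvr balanced} then identifies the ramification index with the lattice index of the kernels appearing in \eqref{eqn:l}. If you want to keep your direct reduction to generic points, you would need to replace the fibre-length count by a branch-by-branch computation of the \emph{generic} rank of $\hat R_{x'}$ over $\hat R_x$ (which is insensitive to non-flatness), or simply restore the codimension-one induction.
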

\begin{proof} By induction, we can assume that $\dim_x\XX=\dim_{y}\XX+1$. We have induced morphisms of closed strata 
$$\begin{tikzcd}E_{x'}\arrow[r]\arrow[d]&\arrow[d]E_x\\E_{y'}\arrow[r]&E_{y}	
\end{tikzcd},$$
and we can view $x'$ and $x$ as codimension $1$ points of $E_{y'}$ and $E_y$ respectively. 
	By the usual multiplicity formula, see for instance \cite[\S7.1.38]{Liu}, it follows that \begin{equation}\label{eqn:ag}
		\left[\kappa(y'):\kappa(y)\right]=\sum_{x'\in \overline{\{y'\}}\cap f^{-1}(x)}e(\O_{E_{y'},x'}/\O_{E_{y},x})\cdot\left[\kappa(x'):\kappa(x)\right].
	\end{equation}

	By Lemma~\ref{lemma balanced cospec} (iv)-(v) we have a diagram with exact rows
	%Equip $E_y$ and $E_{y'}$ with the divisorial log-structure by
	%Equip $\spec\O_{\XX,x}$ and $\spec\O_{\YY,x'}$ with the log-structure given by the inverse imageLemma~\ref{lemma balanced cospec} shows that the kernel of $\CC_{X,x}\to\C_{Y,y}$ coEquip $\spec\O_{X,y}$ with the log-struc
	%By Lemma~\ref{lemma balanced cospec}, the restriction of the log-structure of $\YY$ to $E_{y'}$ we have exact sequences 

	%Equip $E_{y}$ with the log-structure $E_{\tau}^\dagger=E_{\tau}(\log(\tilde{\sigma}))$. By Lemma~\ref{lemma balanced cospec} below, it follows that we have an exact sequence of lattices $$0\lra \CC_{E_\tau,\tilde{\sigma}}^{\gp}\lra \CC_{\XX,x_\sigma}^{\gp}\lra \CC_{\XX,x_\tau}^{\gp}\lra 0,$$
	%where $x_{\sigma}$ and $x_{\tau}$ denote the points of $F(\XX^\dagger)$ correpsonding to $\sigma$ and $\tau$ respectively.

$$
\begin{tikzcd}
	0 \arrow[r] &\CC_{E_y,x}^{\gp}\arrow[r]\arrow[d] &\CC_{\XX,x}^{\gp}\arrow[r]\arrow[d] & \CC_{\XX,y}^{\gp}\arrow[r]\arrow[d] & 0\\
		0 \arrow[r] &\CC_{E_{y'},x'}^{\gp}\arrow[r] &\CC_{\YY,x'}^{\gp}\arrow[r] & \CC_{\YY,y'}^{\gp}\arrow[r] & 0\\
\end{tikzcd}
$$

where $E_y$ and $E_{y'}$ are equipped with the log-structure obtained by restricting the log-structure of $\XX^\dagger$ and $\YY^\dagger$ respectively. By Lemma \ref{lem: inclusion lattices of pl-functions} and the snake lemma we obtain \begin{equation}
	\left[M_{{x'}}:M_{x}\right]=\left[M_{{y'}}:M_{{y}}\right]\cdot\left[\CC_{E_{y'},x'}^{\gp}:\CC_{E_y,x}^{\gp}\right].\label{eqn:l}
\end{equation}
The desired formula now follows from Lemma \ref{lemma balanced cospec} (ii), Equations \eqref{eqn:ag} and \eqref{eqn:l} and Lemma~\ref{lemma extension dvr balanced} below.
\end{proof}
\begin{lemma}
	\label{lemma extension dvr balanced} Let $R'/R$ be a finite extension of DVRs with ramification index $e(R'/R)$. Equip $S'=\spec R'$ and $S=\spec R$ with the standard log-structures $S'^\dagger$ and $S^\dagger$. Then $$[\CC^{\gp}_{S,s}:\CC_{S,s'}^{\gp}]=e(R'/R).$$
\end{lemma}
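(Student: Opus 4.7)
The plan is to explicitly identify both characteristic monoids and then unwind the map induced by $R\hookrightarrow R'$. For a DVR $R$ with uniformiser $\varpi$ equipped with its standard log-structure, every nonzero element factors uniquely as $u\varpi^n$ with $u\in R^\times$ and $n\in\N$, so the characteristic monoid at the closed point is
\[
\CC_{S,s}=(R\setminus 0)/R^\times\;\cong\;\N\cdot[\varpi],
\]
and hence $\CC_{S,s}^{\gp}\cong\Z\cdot[\varpi]$. The same computation for $R'$ with uniformiser $\varpi'$ gives $\CC_{S',s'}^{\gp}\cong\Z\cdot[\varpi']$.

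Now, by definition of the ramification index $e=e(R'/R)$, we have in $R'$ an equality $\varpi=u\,(\varpi')^e$ for some unit $u\in (R')^\times$. The morphism of log-structures induced by $R\hookrightarrow R'$ at the level of characteristic monoids sends $[\varpi]$ to the class of $u(\varpi')^e$, and since $u\in(R')^\times$ becomes trivial in $\CC_{S',s'}$, this class is exactly $e\cdot[\varpi']$. Thus the inclusion $\CC_{S,s}^{\gp}\hookrightarrow\CC_{S',s'}^{\gp}$ is identified with the map $\Z\to\Z$ of multiplication by $e$, whose cokernel has order $e$.

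Consequently $[\CC_{S',s'}^{\gp}:\CC_{S,s}^{\gp}]=e(R'/R)$, as desired. (We note that the statement in the lemma has the indices in the quotient written in the opposite order from this computation; the point is simply that the map of characteristic groups induced by $R\hookrightarrow R'$ is multiplication by $e$ on $\Z$.) There is no real obstacle here: the only content is that the ramification index, which is defined geometrically via valuations, coincides with the lattice index coming from the log-structure, and this coincidence is immediate once one checks that the characteristic monoid of the standard log-structure on a DVR is $\N$ generated by the uniformiser class.
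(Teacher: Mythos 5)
Your proof is correct and takes essentially the same route as the paper's (much terser) argument: identify both characteristic monoids as $\N$ generated by the class of the respective uniformiser, observe that the induced map on groupifications is multiplication by $e(R'/R)$, and read off the index. Your parenthetical remark about the ordering of the subscripts in the stated index is also well taken; the intended quantity is the index of $\CC_{S,s}^{\gp}$ inside $\CC_{S',s'}^{\gp}$, consistent with the paper's convention for lattice indices of covers.
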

\begin{proof} Pick a uniformiser $\xi'$ of $R'$, then $\xi\coloneqq\xi'^{e(R'/R)}$ uniformises $R$, and $\CC_s$ (resp. $\CC_{s'}$) is generated by the class of $\xi$ (resp. $\xi'$).
\end{proof}
\begin{definition}[Finite covers of PL spaces]
\label{cover pl spaces}
	Let $\varphi:\Sigma'\to \Sigma$ be a morphism of PL spaces. 
	We call $\varphi$ a (branched) \emph{finite cover} if $\varphi$ is surjective with finite fibers and for each polyhedron $\sigma'$ of $\Sigma'$ we have $\dim\varphi(\sigma')=\dim\sigma'$. %\varphi(\sigma)$ is polyhedron by definition. Note that necessarily $\dim \Sigma'=\dim \Sigma$. %additionally the following holds. For each facet 
	\end{definition}
	\begin{block}[Lattice index] \label{lattice index} Suppose $\varphi:\Sigma'\to\Sigma$ is a finite cover of $\Z$-PL complexes. Let $\sigma'$ be a polyhedron of $\Sigma'$ and let $\sigma=\varphi(\sigma')$. Then by assumption that $\dim \sigma=\dim\sigma'$ it follows that $$N_{\sigma'}\to N_{\sigma}$$ is injective, where recall $N_{\sigma}$ is our notation for the lattice of integral points of $\R\sigma$ (as in \ref{def polyhedral complex}). The \emph{lattice index} of $\varphi$ at $\sigma'$ is defined as $$\left[N_{\sigma}:N_{\sigma'}\right].$$

	%\add{perhaps redefine $\CC_{\sigma}^{\gp}=L_\sigma^*\vert_{\sigma_x}$, then always $\rank \sigma=\rank\CC_{\sigma}^{\gp}=\dim \sigma$: %eg s^mt^n=\pi: sigma interval (1/m,0) to (0,1/n), CC_sigma gend by s^v and t^v, general element g=g_(a,b):(a,b)\to am+bn. the \to ma+bn=1 (1/m,0) .}

A computation\footnote{For any injective group morphism of same-rank lattices $\psi:L'\to L$, the dual $\psi^*:L^*\to(L')^*$ is also injective and $\mathrm{coker}\psi^*\cong\mathrm{coker}\psi$ canonically by the long exact sequence $$0=\mathrm{coker}(\psi)^*\to L^*\to(L')^*\to\mathrm{Ext}^1(\mathrm{coker}(\psi),\Z)=\mathrm{coker}(\psi)\to \mathrm{Ext}^1(L,\Z)=0.$$ 
So it follows that $[L:L']=\#\mathrm{coker}\,\psi=\#\mathrm{coker}\,\psi^*=[L'^*:L^*]$.} shows that \begin{equation}\label{eq: dual lattice index}
\left[M_{\sigma'}:M_{\sigma}\right]=\left[N_{\sigma}:N_{\sigma'}\right].
\end{equation}
Informally speaking, the lattice index measures the change of $\Z$-PL functions along $\sigma'\mapsto \sigma$. 

Note that $\sigma$ is strictly convex if and only if $\sigma'$ is so. In that case $(\CC_{\sigma})^{\gp}=M_{\sigma}$ where $\CC_{\sigma}$ denotes the monoid of non-negative $\Z$-affine functions on $\sigma$ (see~\ref{lemma: props Z-pl function on polyhedra}). %\add{this terminology might be confusing: perhaps use ``semipositive $\Z$-affine'' instead of ``$\N$-affine''} % the lattice index at $\sigma'$ is also equal to $\left[(M_{\sigma'}^+){\mathrm{gp}}:(M_{\sigma}^+){\gp}\right]$.
	\end{block}
	\begin{definition}[facet-ridge pairs $(\sigma,\tau)$]\label{defn facet-ridge}
	Let $\sigma,\tau$ be polyhedra of a PL-space $\Sigma$. Then we write $\sigma>\tau$ if and only if $\tau$ is a proper face of $\sigma$. We will also call $\sigma$ an \emph{adjacent} polyhedron of $\tau$.
Suppose $n=\dim\Sigma\ge 1$. A \emph{facet} is defined as a $n$-dimensional polyhedron and a \emph{ridge} is defined as a $(n-1)$-dimensional polyhedron. By a \emph{facet-ridge pair} $(\sigma,\tau)$ we mean a pair of a facet $\sigma\in \Sigma(n)$ and a {ridge} $\tau\in\Sigma(n-1)$ such that $\sigma>\tau$. 
\end{definition}

	\begin{definition}[Balanced cover of $\Z$-PL complexes]
		\label{balanced cover Z-pl spaces}
		Suppose $\varphi:\Sigma'\to\Sigma$ is a finite cover of $\Z$-PL complexes. 
	Let $\tau'$ be a ridge of $\Sigma'$. Then we call $\varphi$ \emph{balanced at $\tau'$} if the following condition holds. 
		For each facet $\sigma$ adjacent to $\tau$, the following expression is independent of the choice of $\sigma$:
		\begin{equation}\label{eqn: local degree}
\deg_{\tau'}\varphi\coloneqq \sum_{\substack{\text{facets }\, \sigma'\\(\sigma',\tau')\mapsto (\sigma,\tau)}}[M_{\sigma'}:M_{\sigma}],
		\end{equation}
		the sum is taken over all facets $\sigma'$ adjacent to $\tau'$ such that $\varphi:(\sigma',\tau')\mapsto(\sigma,\tau)$, see Figure~\ref{fig: cover pl spaces} for a picture. We call $\varphi$ a balanced finite cover of $\Z$-PL complexes if it is balanced at every ridge $\tau'$ of $\Sigma'$.
		
		If $\varphi$ is balanced at every ridge $\tau'$ of $\Sigma'$ and $\Sigma$ is connected then we define the \emph{degree} $\deg\phi$ of $\phi$ as follows. Choose any ridge $\tau$ of $\Sigma$ and set \begin{equation}
	\label{intro eqn: degree}
\deg\varphi\coloneqq\sum_{\tau'\mapsto \tau}\deg_{\tau'}\varphi,
\end{equation}
where we sum over ridges $\tau'$ above $\tau$. It follows from the definitions and the assumption that $\Sigma$ is connected that $\deg\phi$ is independent of the choice of $\tau$.
\end{definition} 
\begin{block} The balancing condition of \ref{balanced cover Z-pl spaces} behaves well with respect to proper subdivisions in the following sense. If $\Sigma'\to \Sigma$ is a balanced finite cover of $\Z$-PL complexes and $\Sigma_0\to \Sigma$ is a proper subdivision, then $\Sigma'_0=\Sigma'\times_{\Sigma}\Sigma_0\to \Sigma_0$ is a balanced finite cover of $\Z$-PL complexes and $\Sigma'_0\to\Sigma'$ is a proper subdivision.
\end{block}

\begin{theorem}[Balancing condition] \label{thm: balanced} Let $f:\YY^{\dagger}\to\XX^{\dagger}$ be a toroidal cover. Then $\phi:\D(\YY^\dagger)\to\D(\XX^\dagger)$ is a balanced finite cover of $\Z$-PL complexes and $\deg\phi=\deg f$.
\end{theorem}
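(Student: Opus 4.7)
The plan is to apply Proposition~\ref{proposition: multiplicity formula} twice: once ``at the level of a ridge'' to extract the balancing condition, and once ``at the generic point of $\YY$'' to compute the total degree. The key input that collapses the multiplicity formula into the desired form is the standing assumption that $\tilde{k}$ is algebraically closed, which forces residue field extensions to be trivial at closed points.

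First I verify that $\phi$ is a finite cover per Definition~\ref{cover pl spaces}: finiteness of fibers follows from finiteness of $f$, and the equality $\dim\phi(\sigma')=\dim\sigma'$ is Lemma~\ref{lem: inclusion lattices of pl-functions} (surjectivity will fall out of the degree calculation below). For the balancing, fix a ridge $\tau'$ of $\D(\YY^\dagger)$ corresponding to $y'\in F(\YY^\dagger)$, set $y=f(y')$ (corresponding to $\tau=\phi(\tau')$), and pick a facet $\sigma$ adjacent to $\tau$ with generic point $x$. Since $x$ has maximal rank, $E_x$ is zero-dimensional and $x$ is a closed point of $\XX_s$; by Lemma~\ref{lem: inclusion lattices of pl-functions} the same is true for each $x'\in\overline{\{y'\}}\cap f^{-1}(x)$, so $\kappa(x)=\kappa(x')=\tilde{k}$. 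By Lemma~\ref{lemma balanced cospec}~(iii) these $x'$ are precisely the generic points of facets $\sigma'>\tau'$ with $\phi(\sigma',\tau')=(\sigma,\tau)$. Substituting $[\kappa(x'):\kappa(x)]=1$ into \eqref{eqn:mu} yields
\[
\deg_{\tau'}\phi \;=\; \sum_{(\sigma',\tau')\mapsto(\sigma,\tau)} [M_{\sigma'}:M_{\sigma}] \;=\; [M_{y'}:M_y]\cdot[\kappa(y'):\kappa(y)],
\]
which is manifestly independent of $\sigma$, proving the balancing together with a closed form for the local degree.

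For $\deg\phi=\deg f$, I apply Proposition~\ref{proposition: multiplicity formula} with $y'=\xi'$ the generic point of $\YY$, which lies in $F(\YY^\dagger)$ since the log-structure is trivial there (so $\I_{\YY,\xi'}=0=\m_{\YY,\xi'}$); similarly $\xi=f(\xi')\in F(\XX^\dagger)$. Letting $x=y_0$ be the point corresponding to any chosen ridge $\tau$ of $\D(\XX^\dagger)$, and using $M_\xi=M_{\xi'}=0$ together with $[\kappa(\xi'):\kappa(\xi)]=\deg f$, the formula reads
\[
\deg f \;=\; \sum_{y_0'\in\overline{\{\xi'\}}\cap f^{-1}(y_0)} [M_{y_0'}:M_{y_0}]\cdot[\kappa(y_0'):\kappa(y_0)].
\]
Since $\overline{\{\xi'\}}=\YY$, this sum ranges over all of $f^{-1}(y_0)\cap F(\YY^\dagger)$; by Lemma~\ref{lem: inclusion lattices of pl-functions} each such point has rank $n$, and it lies in $\YY_s$ because $y_0\in\XX_s$, so it is the generic point of some ridge $\tau'\mapsto\tau$. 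Combining with the closed-form formula from the balancing step gives $\deg f=\sum_{\tau'\mapsto\tau}\deg_{\tau'}\phi=\deg\phi$. Applying the same identity with $y_0$ replaced by an arbitrary $x\in F(\XX^\dagger)$ yields $f^{-1}(x)\cap F(\YY^\dagger)\neq\emptyset$, providing the surjectivity left over from the first step.

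The main subtlety I anticipate is bookkeeping between points of Kato fans and polyhedra of $\D$: one must check that the sum in Proposition~\ref{proposition: multiplicity formula} over $x'\in\overline{\{y'\}}\cap f^{-1}(x)$ matches the sum over facets $\sigma'>\tau'$ above $\sigma$, and that the proposition's implicit restriction to $F(\YY^\dagger)$ is compatible with the generic-point application. Both are consequences of Lemma~\ref{lemma balanced cospec} and Lemma~\ref{lem: inclusion lattices of pl-functions} but warrant care.
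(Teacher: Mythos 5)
Your proof is correct and follows essentially the same route as the paper: both the balancing statement and the degree statement are read off from Proposition~\ref{proposition: multiplicity formula}, using that residue-field extensions are trivial at facet points (you make explicit the role of $\tilde{k}$ being algebraically closed here, which the paper leaves implicit). The only cosmetic difference is in the degree computation, where the paper first applies the classical multiplicity formula \eqref{eqn:ag} at a vertex and then Proposition~\ref{proposition: multiplicity formula} to pass from vertices to ridges, whereas you apply the Proposition in one step from the generic point of $\YY$ down to a ridge point -- the two are equivalent since \eqref{eqn:ag} is the base case of the Proposition's induction.
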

\begin{proof} The map $\phi$ is a finite cover of $\Z$-PL complexes by Lemma \ref{lem: inclusion lattices of pl-functions}.  % (Definition~\ref{cover pl spaces}). 
Let $(\sigma,\tau)$ be a facet-ridge pair of $\D(\XX^\dagger)$, then $(\sigma,\tau)=(\sigma_x,\sigma_y)$ for points $x,y\in F(\XX^\dagger)$ with $x\in\overline{\{y\}}$. By Proposition~\ref{proposition: multiplicity formula} it follows that for any ridge $\tau'$ above $\tau$ we have $\deg_{\tau'}\phi=[L_{\tau'}:L_{\tau}][\kappa(\tau'):\kappa(\tau)]$, hence $\phi$ is balanced. 

It remains to prove that $\deg\phi=\deg f$. For any vertex $v\in \D(\XX^\dagger)(0)$, Lemma~\ref{lemma extension dvr balanced} and equation \eqref{eqn:ag} (applied to the generic point of $\YY$) yield the formula $$\deg f=\sum_{v'\in \phi^{-1}(v)}[M_{v'}:M_v]\cdot[\kappa(v'):\kappa(v)].$$ Proposition~\ref{proposition: multiplicity formula} then shows that the right-hand side equals $\deg \phi$.
\end{proof}
\begin{remark} The assumption that $\tilde{k}$ is algebraically closed is not essential in the proof of  Theorem~\ref{thm: balanced}. If we omit this hypothesis then one additionally decorates each facet $\sigma=\sigma_x$ of $\D(\XX^\dagger)$ with the degree $\deg_{\widetilde{k}}{\sigma}\coloneqq\left[\kappa(x):\widetilde{k}\right]$ and the balancing condition is changed to $\deg_{\tau'}\phi=\sum \deg_{\sigma'}\phi$ where $\deg_{\sigma'}\phi\coloneqq[M_{\sigma'}:M_{\sigma}]\deg_{\widetilde{k}}{\sigma'}/\deg_{\widetilde{k}}{\sigma}$, in accordance with Proposition~\ref{proposition: multiplicity formula}. %With these changes the result of Theorem~\ref{thm: balanced} then follows verbatim without the hypothesis that $\tilde{k}$ is algebraically closed.
\end{remark}

\section{Metrisation of $\Z$-PL complexes}
\label{sec: metrisation of pl}
\begin{block}[Overview]
	In this section we show that any $\Z$-PL complex $\Sigma$ is endowed with a canonical piecewise Lebesgue measure $\mu_{\Sigma}$, that we call the \emph{conformal} measure, such that isometries are isomorphisms. We give explicit descriptions of $\mu_{\Sigma}$ in the case where $\Sigma$ is a dual complex. % dilation factors along covers of same-dimension polyhedra are given by the lattice index. 
	%We call $\mu_{\Sigma}$ the conformal measure. %We show that this implies that balanced covers are conformal (Proposition~\ref{prop: balanced covers are conformal}). 

\end{block}
\begin{block}[First attempt] Let $\sigma$ be a polyhedron; recall that by convention $\sigma$ carries a $\Z$-affine structure: $\sigma=(|\sigma|,N_{\sigma})$ where $N_{\sigma}$ is the lattice of $\Z$-affine functions.
	Let $H_0$ be the real vector space spanned by the differences of elements of $\sigma$. Then $H_0$ contains the maximal lattice $N_{\sigma}\cap H_0$ and therefore $\sigma$ inherits an integral structure  by translation .
	% by translation,  the affine span of $\sigma$ inherits a canonical $\Z$-PL structure, by translating the lattice $N\cap $. 
For many purposes this is already a good procedure to equip $\sigma$ with a Lebesgue measure which only depends on the $\Z$-affine structure, it is for instance used in the works \cite{CD,JN}. However for our purposes it is more natural to work with a slightly different construction that can distinguish isomorphism types of polyhedra: see Example~\ref{example zpl} below.
\end{block}
\begin{lemma} Suppose $A\in \Z^{n\times m}$.
	\label{integral linear algebra}
\begin{enumerate}[label=(\roman*)]
	\item $\{x\in\Z^n:Ax=0\}$ is a full-rank lattice in $\{x\in\R^n:Ax=0\}$. 
	\item Let $b\in \Q^n\setminus \{0\}$ and suppose $\{x\in \R^m:Ax=b\}$ is nonempty. Then there exists a minimal positive rational number $\rho\in\Q_{>0}$ such that $Ax=\rho b$ admits an integral solution $x\in \Z^n$.  
\end{enumerate}
\end{lemma}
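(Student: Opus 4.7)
My plan is to reduce both parts to standard discreteness facts about lattices of integer points: (i) via a rationalised basis of the real kernel, and (ii) via discreteness of the image lattice $A(\Z^m)$. Throughout I treat $A$ as a linear map $\R^m \to \R^n$; I read the set in (i) as $\{x \in \Z^m : Ax = 0\}$, which appears to contain a typographical mismatch in the statement as written.

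For (i), the first step is to apply Gaussian elimination to $A$ over $\Q$. Since $A$ has integer (hence rational) entries, this yields a basis of the rational kernel $\{x \in \Q^m : Ax = 0\}$ consisting of vectors in $\Q^m$. These same vectors form an $\R$-basis of $\{x \in \R^m : Ax = 0\}$, and after clearing denominators coordinate-by-coordinate I obtain an $\R$-basis consisting of elements of $\{x \in \Z^m : Ax = 0\}$. Finally, since $\{x \in \Z^m : Ax = 0\}$ is a subgroup of $\Z^m$ it is automatically discrete, so the previous step gives full rank in its $\R$-span, as desired.

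For (ii), I would introduce the image lattice $L \coloneqq A(\Z^m)$, a finitely generated subgroup of $\Z^n$, hence discrete in $\R^n$. The intersection $L \cap \R b$ is then a discrete subgroup of the one-dimensional real subspace $\R b$, so there exists $\rho_0 \ge 0$ with
\[
\{\rho \in \R : \rho b \in L\} \;=\; \rho_0 \Z.
\]
To show $\rho_0 > 0$ and $\rho_0 \in \Q$, I would invoke the hypothesis that $Ax = b$ has a real solution: since $A$ has integer entries and $b$ is rational, the augmented system has the same rank over $\Q$ as over $\R$, so there exists a rational solution $x_0 \in \Q^m$. Clearing denominators produces a positive integer $d$ with $d \cdot b = A(d x_0) \in L$, proving $\rho_0 > 0$; and the membership $d \in \rho_0 \Z$ then forces $\rho_0 = d/N \in \Q_{>0}$ for some positive integer $N$. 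By construction, $\rho_0$ is the minimal positive (a fortiori rational) value of $\rho$ with $\rho b \in A(\Z^m)$, giving (ii).

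The only real subtlety is ensuring the minimising $\rho_0$ is \emph{rational} rather than merely a positive real; this is the one place where discreteness of $L$ and rationality of $A, b$ must be combined, and it is delivered automatically by the divisibility relation $d \in \rho_0 \Z$ above. The rest of the argument is routine linear algebra over $\Z$.
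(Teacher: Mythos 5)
Your proof is correct and follows essentially the same route as the paper's: part (i) is the same Gaussian-elimination-plus-clearing-denominators argument, and part (ii) rests on the same two facts the paper uses, namely rational solvability of $Ax=b$ and discreteness forcing a minimum. Your packaging of (ii) via the image lattice, i.e. $\{\rho:\rho b\in A(\Z^m)\}=\rho_0\Z$, is just a slightly more structured phrasing of the paper's observation that the set of admissible $\rho$ is well-ordered (with the small bonus of identifying that set as a cyclic group), and you correctly flag the $\Z^n$ versus $\Z^m$ typo in the statement.
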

\begin{proof} 
(i) By Gaussian elimination we can assume $A$ is upper triangular and so there exists a basis of rational solutions of $Ax=0$. 
(ii) By Gaussian elimination the system $Ax=b$ admits a rational solution $x\in \Q^m$, so by clearing the denominators of $x$ there exists at least one such $\rho\in \Q_{>0}$. Since $\rho b\in \Z^n$ the set of such $\rho$ is well-ordered, so there exists a minimum. 
\end{proof}
\begin{construction}[Conformal measure] \label{construction: z-pl measure} Let $\sigma=(|\sigma|,N_{\sigma})$ be a polyhedron and suppose $0\not\in\sigma$. let $$H_\sigma=\{\lambda(x-y):x,y\in|\sigma|,\lambda\in\R\}$$ denote the $\Z$-affine plane in $N_{\R}$ spanned by $|\sigma|$. By Lemma~\ref{integral linear algebra} (ii) there exists a minimal positive rational number $\rho=\rho_{\sigma}\in \Q_{>0}$ such that $$H_{\rho\sigma}\cap N\ne\emptyset,$$ where we write $\rho\sigma=\{\rho x: x\in\sigma\}$ for the $\rho$-scaling of $\sigma$. Then $H_{\rho\sigma}\cap N$ is a full-rank lattice in $H_{\rho\sigma}$ by Lemma~\ref{integral linear algebra} (i). 
Let $\mu_{H_{\rho\sigma}}$ be the unique Lebesgue measure on $H_{\rho\sigma}$ such that $N\cap H_{\rho\sigma}$ has covolume $1$. We write $\mu_{\rho\sigma}$ for the restriction to $\rho\sigma$. Then we define the \emph{conformal measure} on $\sigma$ as \begin{equation}
	\mu_\sigma\coloneqq \frac{\mu_{\rho\sigma}}{\rho^{\dim\sigma+1}};\label{def zpl measure}
\end{equation}

If $0\in\sigma$ we define $\mu_{\sigma}$ as the restriction of the unique Lebesgue measure on $(N_{\sigma})_{\R}$ such that $N_{\sigma}$ has covolume $1$.%note 0\in \sigma well-defined up to iso
 The name ``conformal measure'' is justified by Lemma \ref{lemma z-pl mu on polyhedron} below.
\end{construction}
\begin{example}\label{example zpl}
	For instance let $I\subset \R^n$ be the line segment given by the equations $2x+2y=1$ and  $x,y\ge 0$. Then $I$ is $\Z$-PL and the length of $I$ with respect to $\mu_I$ is $1/4$.

	For another example, the line segment $J\subset\R^n$ given by $x+2y=1$ and $x,y\ge 0$ has length $1/2$ with respect to $\mu_J$. The map $\phi:(x,y)\mapsto (2x,y)$ sends $I$ to $J$, and rescales the length by a dilation factor $2$, which is precisely the determinant of $\phi$. This will be generalised in Lemma~\ref{lemma z-pl mu on polyhedron} below and motivates our choice of the exponent $\dim\sigma+1$ in Equation \eqref{def zpl measure}. Indeed, if instead we had chosen the perhaps more natural-looking exponent $\dim\sigma$, then in the above we obtain something that we do not want: $I\to J$ would be an isometry, but $\phi$ is not unimodular (i.e. of determinant 1), so not a $\Z$-PL isomorphism. 
\end{example}
% \begin{remark}  % At first sight our choice of $\mu$ may seem unmotivated, since the expression Beware that even though the measure $\frac{\mu_{\rho\sigma}}{\rho^{\dim\sigma}}$ is invariant under scaling by $\rho$ and thus potentially a better candidate. However,   turns out that this measure has the annoying property s not invariant under isomorphisms, see Lemma~\ref{lemma z-pl mu on polyhedron}.
% \end{remark}
\begin{lemma} \label{lemma z-pl mu on polyhedron}Let $\phi:\sigma'\to\sigma$ be a morphism of polyhedra (in the sense of Appendix~\ref{def polyhedral complex}) with $\dim\sigma=\dim\sigma'$, then we have that $$\phi_*\mu_{\sigma'}=\frac{1}{[M_{\sigma'}:M_{\sigma}]}\mu_{\sigma}.$$
In particular $\phi$ is a $\Z$-PL isomorphism (i.e. a unimodular bijection) if and only if it is an isometry with respect to the conformal measure, and it is strictly expanding otherwise.
\end{lemma}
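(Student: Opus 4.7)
The approach is to lift $\phi$ to a $\Z$-linear isomorphism of ambient lattices and decompose the pushforward of the conformal measure into a ``parallel'' contribution (along the affine span $H_\sigma$) and a ``transverse'' (height) contribution, which multiply to give the prescribed lattice index. For Step~1, I would note that a morphism of polyhedra $\phi:\sigma'\to\sigma$ with equal dimension $d$ is the restriction of a $\Z$-linear map $\Phi:N_{\sigma'}\to N_\sigma$ of the ambient rank-$(d+1)$ lattices (in the main case $0\notin\sigma,\sigma'$; the case $0\in\sigma$ is a simpler analogous statement about linear maps of full-rank lattices). Since $\Phi$ sends $\sigma'$ onto $\sigma$ and the two ambient spaces have the same dimension, $\Phi$ is an $\R$-isomorphism, and the duality \eqref{eq: dual lattice index} gives $[N_\sigma:\Phi(N_{\sigma'})]=[M_{\sigma'}:M_\sigma]$.

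For Step~2, I would recast the conformal measure as $\mu_\sigma=\mu_{H_\sigma}/\rho_\sigma$, where $\mu_{H_\sigma}$ is the translation-invariant Lebesgue measure on $H_\sigma$ normalized so that the direction lattice $N_\sigma\cap L_\sigma$ (with $L_\sigma$ the linear subspace parallel to $H_\sigma$) has covolume $1$. The equivalence with Construction~\ref{construction: z-pl measure} is a short scaling calculation: $\rho\sigma$ is a scaling of $\sigma$ by $\rho$ within its $d$-dimensional affine hyperplane, so $\mu_{H_{\rho\sigma}}(\rho\sigma)=\rho^{d}\mu_{H_\sigma}(\sigma)$, and dividing by $\rho_\sigma^{d+1}$ as in~\eqref{def zpl measure} yields the asserted factor $1/\rho_\sigma$. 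The same recasting applies to $\sigma'$.

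For Step~3, let $\ell\in M_\sigma$ be the primitive integral linear form with $\ker\ell=L_\sigma$ and $\ell(\sigma)>0$; by minimality of $\rho_\sigma$ one has $\ell(\sigma)=1/\rho_\sigma$, and likewise $\ell'(\sigma')=1/\rho_{\sigma'}$ for the analogous $\ell'\in M_{\sigma'}$. Since $\Phi(\sigma')=\sigma$, I deduce $\ell\circ\Phi=(\rho_{\sigma'}/\rho_\sigma)\ell'$, and integrality of $\ell\circ\Phi$ on $N_{\sigma'}$ forces $m\coloneqq\rho_{\sigma'}/\rho_\sigma$ to be a positive integer. A snake-lemma chase on the diagram of short exact sequences $0\to N_\bullet\cap L_\bullet\to N_\bullet\overset{\ell_\bullet}{\to}\Z\to 0$ for $\sigma$ and $\sigma'$, whose right-hand vertical map is multiplication by $m$, then yields the decomposition
\[
[N_\sigma:\Phi(N_{\sigma'})]\;=\;m\cdot[N_\sigma\cap L_\sigma:\Phi(N_{\sigma'}\cap L_{\sigma'})].
\]

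Finally, in Step~4, a standard Jacobian computation for the $\R$-linear isomorphism $\Phi|_{L_{\sigma'}}:L_{\sigma'}\to L_\sigma$ between full-rank lattices gives $\Phi_*\mu_{H_{\sigma'}}=\mu_{H_\sigma}/[N_\sigma\cap L_\sigma:\Phi(N_{\sigma'}\cap L_{\sigma'})]$. Combining with the height factor $1/m=\rho_\sigma/\rho_{\sigma'}$ from Step~2 and the decomposition from Step~3 gives
\[
\phi_*\mu_{\sigma'}\;=\;\frac{1}{m\cdot[N_\sigma\cap L_\sigma:\Phi(N_{\sigma'}\cap L_{\sigma'})]}\mu_\sigma\;=\;\frac{1}{[M_{\sigma'}:M_\sigma]}\mu_\sigma,
\]
as claimed. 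The last sentence is then immediate: $\phi$ is a $\Z$-PL isomorphism iff $\Phi$ is unimodular iff $[M_{\sigma'}:M_\sigma]=1$ iff $\phi_*\mu_{\sigma'}=\mu_\sigma$; otherwise $[M_{\sigma'}:M_\sigma]\ge 2$ and $\phi$ strictly expands the conformal measure. I expect the main technical hurdle to lie in Step~2, where the exponent $\dim\sigma+1$ in~\eqref{def zpl measure} is essential (Example~\ref{example zpl}) and the bookkeeping between the scaling of $\sigma$, the scaling of the direction lattice, and the denominator $\rho_\sigma$ must be tracked with care.
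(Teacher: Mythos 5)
Your proof is correct and follows essentially the same route as the paper's: both decompose the lattice index $[N_{\sigma}:N_{\sigma'}]$ into the height ratio $\rho_{\sigma'}/\rho_{\sigma}$ times the index of the direction lattices, and match these two factors against the normalisation $\mu_{\sigma}=\mu_{H_{\sigma}}/\rho_{\sigma}$ coming from the exponent $\dim\sigma+1$. Your snake-lemma argument and the identity $\ell(\sigma)=1/\rho_{\sigma}$ simply make explicit the index decomposition that the paper's one-line computation asserts without proof.
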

\begin{proof} We can assume $0\not\in\sigma$. 
	%The assumption implies  $\mathrm{rank}N'=\mathrm{rank}N$ and so $\phi:N'\to N$ is injective and $\phi^*:M_{\sigma}\to M_{\sigma'}$ is so too.
Let $\rho$ (resp. $\rho'$) be the minimal $\rho\in \Q_{>0}$ such that $H_{\rho\sigma}\cap N_{\sigma}\ne\emptyset$ (resp. $H_{\rho'\sigma'}\cap N_{\sigma'}\ne\emptyset$). %Since $\phi$ restricts to a map $H_{\rho'\sigma'}\to H_{\rho'\sigma}$ it follows that $\rho'$ is a multiple of $\rho$. 
 Then we have $$\left(\frac{\rho'}{\rho}\right)^{\dim\sigma+1}\mu_{H_{\rho\sigma}}=\frac{\rho'}{\rho}\mu_{H_{\rho'\sigma}}\overset{\mathrm{def}}{=}\frac{\rho'}{\rho}[N\cap H_{\rho'\sigma}:N'\cap H_{\rho'\sigma'}]\phi_*\mu_{H_{\rho'\sigma'}}=[N:N']\phi_*\mu_{H_{\rho'\sigma'}}.$$ The result now follows by rescaling both sides and using Equation \eqref{eq: dual lattice index}.
\end{proof}
% \begin{remark}
% 		As a corollary of Lemma~\ref{lemma z-pl mu on polyhedron} we have the following change of variables formula: for every $\phi:\sigma'\to \sigma$ as in the Lemma and every integrable function $F:\sigma\to \R$ we have $\int_{\sigma}F\mu=[M_{\sigma'}:M_{\sigma}]\int_{\sigma'}(\phi^*F)\mu'.$
% \end{remark}
\begin{block}[The conformal measures $\mu_{\Sigma}$] 
Let $\Sigma$ be a $\Z$-PL space. By Lemma~\ref{lemma z-pl mu on polyhedron} the construction of~\ref{construction: z-pl measure} is compatible with isomorphisms and in particular proper subdivisions. Therefore we can glue the measures $\mu_{\sigma}$ for facets $\sigma$ of $\Sigma$ to the \emph{conformal measure} $\mu_{\Sigma}$ on $\Sigma$.
\end{block}

\label{sec: conformal}
\begin{block}
For the remainder of this section we explicitise the construction of $\mu_{\Sigma}$ in the case where $\Sigma=D(\XX^\dagger)$ is a dual complex of a toroidal $S$-scheme $\XX^{\dagger}$. %In the case of a dual complex $\Sigma=\D(\XX^{\dagger})$ 
In summary, for any $x\in F(\XX^{\dagger})$ the conformal measure $\mu_x$ equals $\frac{1}{\rho_x}\lambda_x$ where $\lambda_x$ is the integral Lebesgue measure of Jonsson-Nicaise \cite[\S5.1]{JN} and the scaling constant $\rho_x$ is the root index (see \ref{defn root index}).
\end{block}
\begin{block}[Notation] 
 \label{notation dual complex} Recall the notation from Section \ref{sec:dual complex}: for $x\in F(\XX^\dagger)$, we write $\CC_{x}\coloneqq \CC_{F(\XX^\dagger),x}=\M_{\XX,x}^\#$ for the characteristic monoid, $M_x=\CC_x^{\gp}$ for the characteristic lattice  %, and that we write $(\cdot)^\vee=\Hom_{\mathbf{Mon}}(\cdot,\N)$ for the monoid dual and $(\cdot)^*=\Hom_{\mathbf{Ab}}(\cdot,\Z)$ for the group dual. 
 $$N_x\coloneqq M_x^*=(\M_{\XX,x}^\vee)^{\gp}$$ for the dual lattice, $r(x)\coloneqq \mathrm{rank}(x)=\mathrm{rank}_{\Z}M_x$ for the rank, $$C(x)\coloneqq \sigma_{\CC_{x}}=\R_{\ge0}\CC_{x}^\vee$$ for the dual cone %of the cone complex $\Sigma(\XX^\dagger)$ 
 and $$\sigma_{x}=\{\alpha\in C(x):\alpha([\varpi])=1\}$$ for the polyhedron of $\D(\XX^\dagger)$ associated to $x$. Note $\dim\sigma_x=r(x)-1$ and $M_{\sigma_x}=M_x$. For any $\rho\in\R$ we write $H_{\rho,x}$ for the affine hyperplane in $(N_x)_{\R}$ defined by $[\varpi]\mapsto \rho$, so by definition $\sigma_x=\sigma_{\CC_{x}}\cap H_{1,x}$. We write $N_{\rho,x}=N_x\cap H_{\rho,x}$. See Figures~\ref{fig:slice} and~\ref{fig:slice2} for pictures. We have the following properties:
 \begin{enumerate}[label=(\roman*)]
  	\item The hyperplane $H_{0,x}$ is the orthogonal complement of $[\varpi]$ under the natural pairing $$\langle\cdot,\cdot\rangle: \left(M_x\right)_\R\times(N_x)_{\R}\to \R.$$
  	\item The hyperplanes $H_{\rho,x}$ are parallel translates for varying $\rho\in \R$.
  	\item The hyperplane $H_{\rho,x}$ is $\Z$-affine with respect to $N_x$ if and only if $\rho\in\Z$. %  is $\Z$-PL (with respect to $\C$We have $\rho\in\Z$ if and only if $H_{\rho,x}$ is $\Z$-PL. 
\end{enumerate}
  	\end{block}
\begin{block}[Root index] \label{defn root index}The \emph{root index} $\rho_x$ at $x$ is defined as the largest positive integer $\rho$ such that the image $[\varpi]$ of $\varpi$ in $\CC_{x}$ (equivalently, in $M_x$) is divisible by $\rho$. It was introduced in \cite[\S2]{BN20}.

A computation using B\'ezout-Bachet shows there is a short exact sequence
\begin{equation}
	0\lra N_{x,0}\lra N_x\overset{\langle[\varpi],\cdot\rangle}{\lra}\rho_x\Z\lra 0\label{eq: bezout}
\end{equation}
So $N_{\rho,x}\ne \emptyset$ if and only if $\rho\in \rho_x\Z$.

If $x,y\in F(X^{\dagger})$ and $y$ generalises $x$ then we have $\rho_x\mid \rho_y$ because $\CC_{x}\to\CC_{y}$ preserves $[\varpi]$. % by Lemma~\ref{lemma balanced cospec} (ii).
\label{props root index}
\end{block}
\begin{block}[Recession cone]
We denote by $\rec(\sigma_x)$ the \emph{recession cone} of $\sigma_x$, i.e. the cone of vectors $v\in (N_x)_\R$ for which $\sigma_x+\R_{\ge0}v\subset \sigma_x$ (also see \ref{block recession fan})
The following properties are immediate to verify.
\begin{enumerate}[label=(\roman*)]
	\item The cone $\rec(\sigma_x)$ is a rational polyhedral cone with associated lattice $N_{0,x}\cap |\rec(\sigma_x)|_{\R}$. 
	\item The cone $\rec(\sigma_x)$ is trivial if and only if $\sigma_x$ is bounded.
	\item The rays of $\rec(\sigma_x)$ are in one-to-one-correspondence with the rays of $C(x)$ that lie in $H_{0,x}$.
\end{enumerate}
\end{block}
\begin{block}[Rays of $C(x)$]  \label{rays discussion}
Let $x\in F(X^{\dagger})$. Let $r\in C(x)(1)$ be a ray of $C(x)$. Then $r$ corresponds to a unique component $E_r$ of $D_{\XX}$ containing $E_x$, by Lemma~\ref{lemma balanced cospec} (i). We have two cases: 
\begin{enumerate}[label=(\roman*)]
	\item Either $r$ does not lie in $H_{0,x}$, in which case $E_r$ is vertical (i.e. a component of $\XX_s$), and $r$ defines a ray in $\rec(\sigma_x)$
	\item Or $r$ defines a unique vertex $v_r\coloneqq H_{0,x}\cap r\in \sigma_x(0)$ and $E_r$ is a horizontal component (i.e. not a vertical component, equivalently $E_r$ is flat over $k^{\circ}$). We also write $E_{v_r}=E_r$. 
     The \emph{multiplicity} $m_{v_r}\coloneqq \mathrm{mult}_{E_r}\XX_s$ of $E_r$ equals $\langle [\varpi],n_r\rangle$, where $n_r$ is the primitive ray generator of $r$, because $\CC_{E_{r}}\cong \N$ is generated by the class of a local equation of $E_r$.
\end{enumerate}\end{block}
\begin{block}
We picture the situation in Figure \ref{fig:slice} and Figure \ref{fig:slice2} below, where we write $s(x)\coloneqq\#C(x)(1)$ for the number of rays of the cone $C(x)$. Note $s(x)\ge r(x)$ with equality if and only if $C(x)$ is simplicial. Let $n_1,\dots,n_{s(x)}\in C(x)(1)$ denote the primitive ray generators of $C(x)$ and let $b(x)=\#\sigma_x(0)$ denote the number of vertices of $\sigma_x$. After reordering we can assume $n_1,\dots,n_{b(x)} \not\in H_{0,x}$ (corresponding to the vertices of $\sigma_x$) and $n_{b(x)+1},\dots,n_{s(x)}\in H_{0,x}$ (corresponding to the rays of $\rec(\sigma_x)$). 
 \end{block}

\refstepcounter{equation}
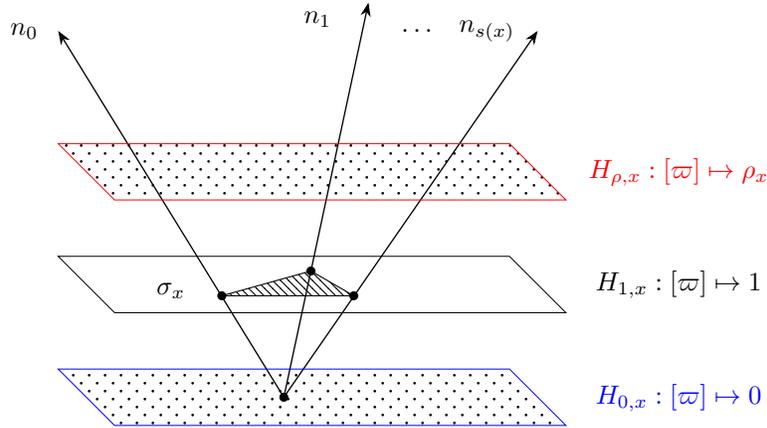
\begin{figure}[!ht]
\centering
\begin{tikzpicture}[scale=1.5]
 \usetikzlibrary{patterns}

\pgfdeclarepatternformonly{my crosshatch dots}{\pgfqpoint{-1pt}{-1pt}}{\pgfqpoint{5pt}{5pt}}{\pgfqpoint{6pt}{6pt}}%
{
    \pgfpathcircle{\pgfqpoint{0pt}{0pt}}{.5pt}
    \pgfpathcircle{\pgfqpoint{3pt}{3pt}}{.5pt}
    \pgfusepath{fill}
}
\draw [line width=0.5pt, ->, >=Stealth] (7,9.25) -- (7.75,12.75);
\draw [line width=0.5pt, ->, >=Stealth] (7,9.25) -- (5,12.5);
\draw [line width=0.5pt, ->, >=Stealth] (7,9.25) -- (9.25,12.5);

\node at (7.3,12.6) {$n_1$};
\node at (4.7,12.5) {$n_0$};
\node at (8.2,12.5) {$\dots$};
\node at (8.8,12.5) {$n_{s(x)}$};
\draw [ line width=0.2pt ] (5,10.5) -- (9,10.5) -- (9.5,10) -- (5.5,10) -- cycle;
\draw [color=blue ,pattern=my crosshatch dots,  line width=0.2pt ] (5,9.5) -- (9,9.5) -- (9.5,9) -- (5.5,9) -- cycle;
\draw [color=red, pattern=my crosshatch dots, line width=0.2pt]  (5,11.5) -- (9,11.5) -- (9.5,11) -- (5.5,11) -- cycle;

\draw[fill] (6.45,10.15) circle (1pt);
\draw[fill] (7.24,10.37) circle (1pt);
\draw[fill] (7.62,10.15) circle (1pt);
\draw [pattern=north west lines] (6.45,10.15) -- (7.24,10.37) -- (7.62,10.15) -- cycle;
\node at  (6,10.2) {$\sigma_x$};
\draw[fill] (7,9.25) circle (1pt);
\node [color=blue] at (10.5,9.25)  {$H_{0,x}:[\varpi]\mapsto 0$};

\node 				at (10.5,10.25)  {$H_{1,x}:[\varpi]\mapsto 1$};
\node [color=red] at (10.5,11.25)  {$H_{\rho,x}:[\varpi]\mapsto \rho_x$};

\end{tikzpicture}
\label{fig:slice}
\caption{Picture of a \emph{bounded} face $\sigma_x$ as the intersection of the dual cone $C(x)$ and the plane $H_{1,x}$.}
\end{figure}
\refstepcounter{equation}
\begin{figure}[!ht]
\centering
\begin{tikzpicture}[scale=1.5]
 \usetikzlibrary{patterns}
\draw [line width=0.5pt, ->, >=Stealth] (7,9.25) -- (7.75,12.75);
\draw [line width=0.5pt, ->, >=Stealth] (7,9.25) -- (5,12.5);
\draw [line width=0.5pt, ->, >=Stealth] (7,9.25) -- (9.6,9.25);

\node at (7.3,12.6) {$n_{b(x)}$};
\node at (4.7,12.5) {$n_1$};
\node at (6.2,12.5) {$\dots$};
\node at (9,9.7) {$\ddots$};
\node at (9.4,9.55) {$n_{s(x)}$};
\draw [ line width=0.2pt ] (5,10.5) -- (9,10.5) -- (9.5,10) -- (5.5,10) -- cycle;
\draw [color=blue ,pattern=my crosshatch dots,  line width=0.2pt ] (5,9.5) -- (9,9.5) -- (9.5,9) -- (5.5,9) -- cycle;
\draw [color=red, pattern=my crosshatch dots, line width=0.2pt]  (5,11.5) -- (9,11.5) -- (9.5,11) -- (5.5,11) -- cycle;

\draw[fill] (6.45,10.15) circle (1pt);
\draw[fill] (7.24,10.37) circle (1pt);
%\draw[fill] (9,10.15) circle (1pt);
\draw [pattern=north west lines] (6.45,10.15) -- (7.24,10.37) -- (9.12,10.37) -- (9.35,10.15) -- cycle;
\node at  (6,10.2) {$\sigma_x$};
\draw[fill] (7,9.25) circle (1pt);
\node [color=blue] at (10.5,9.25)  {$H_{0,x}:[\varpi]\mapsto 0$};

\node 				at (10.5,10.25)  {$H_{1,x}:[\varpi]\mapsto 1$};
\node [color=red] at (10.5,11.25)  {$H_{\rho_x,x}:[\varpi]\mapsto \rho_x$};

\end{tikzpicture}
\label{fig:slice2}
\caption{Picture of an \emph{unbounded face} $\sigma_x$ as the intersection of the dual cone $C(x)$ and the plane $H_{1,x}$. The number of vertices of $\sigma_x$ is $b(x)$, for some $1\le b(x)<s(x)$.}
\end{figure}
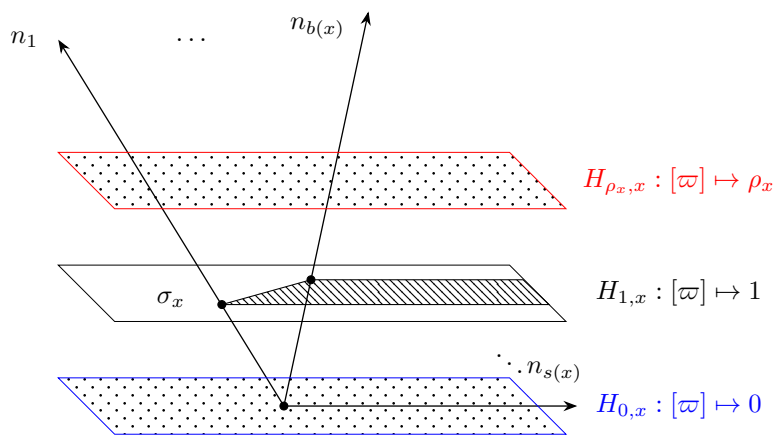

\begin{example} It follows from the definitions that $\rho_x$ divides $\gcd(m_1,\dots,m_{b(x)})$. If $\XX$ is moreover regular, then $\sigma_{\CC_{x}}$ is a regular cone and therefore $\rho_x=\gcd(m_1,\dots,m_{b(x)})$. In general, $\rho_x$ can be a strict divisor of $\gcd(m_1,\dots,m_{b(x)})$ as the following example\footnote{The example given in \cite[\S4.3.7]{JN} is unfortunately not entirely correct: the multiplicites are equal to 1 instead of $2$ and the monoid is generated by $(1,0), (1,1)$ and $(1,2)$. So in the example of loc. cit. $\rho_x=\gcd(m_1,\dots,m_{b(x)})$ contrary to what is claimed.} shows. Suppose $\XX^\dagger$ contains a subscheme isomorphic to $(\spec R)^{\dagger}$ where $$R=k^{\circ}[s,t,u,v]/(s\varpi=t^2,v\varpi=u^2,ut=sv=\varpi^2)$$ and the log-structure is the standard one induced by $\varpi$. A computation shows that $\spec R$ is of dimension $2$ and the special fiber consists of $2$ components given by $(s,t,\varpi)$ and $(u,v,\varpi)$ and these intersect in the singular point $x=(s,t,u,v,\varpi)$. The monoid $\CC_x$ at $x$ is isomorphic to the monoid generated by $[s]=(1,2),[t]=(1,1),[\varpi]=(1,0),[u]=(1,-1),[v]=(1,-2)$ in $\Z^2$, in particular $\XX^\dagger$ is log-regular at $x$. The dual cone $C(x)$ is isomorphic to the cone in $\R^2$ generated by $[s]^\vee=(2,-1)$ and $[v]^\vee=(2,1)$. The affine line $H_{1,x}$ is given by $y=1$. It follows that $\rho_x=1$ and $m_1=(2,-1)\cdot(1,0)=2,m_2=(2,1)\cdot(1,0)=2$, so that $\rho_x=1\ne 2=\gcd(m_1,m_2)$.
\end{example}
\begin{example} In general $C(x)$ need not be simplicial: consider for instance the scheme $\XX=\spec k^{\circ}[t,u,v,w]/(\varpi=tu=vw)$ equipped with the standard log-structure, then a computation shows that for the point $x=(t,u,v,w)$ we have $s(x)=4>r(x)=3$ and $\sigma_x$ is a rectangle.
\end{example}
\begin{definition}[determinant of $\sigma_x$] Suppose $C(x)$ is simplicial. Then we define the \emph{determinant} of $\sigma_{x}$ as the integer $\det(\sigma_x)=|\det(n_1,\dots,n_{r(x)})|$. 
% \end{definition}
% \begin{remark}
In the literature on toric geometry \cite{fulton,kkmsd,oda} it is also known as the \emph{index} or \emph{multiplicity} of $C(x)$.  \label{det sigma}
\end{definition}
\begin{definition}[$\sigma_x^-$]\label{def -} For every component $E_i$ of $D_{\XX}$, choose a generator $\gamma_i$ of $\CC_{\XX,E_i}\cong \N$ (i.e. the class of a local equation of $E_i$). Then we let $$\sigma_x^-=\{\alpha\in \sigma_x:\langle \gamma_i,\alpha\rangle \le 1\}.$$ In other words $$\sigma_x^{-}=\left\{\sum_{1\le i \le b(x)}a_in_i+\sum_{i<b(x)\le s(x)}b_in_i\ :\ a_i\in\R_{\ge0},\,\sum_{1\le i\le b(x)}a_i=1,\ b_i\in [0,1]\right\}.$$
\label{defn sigma -}
By construction, $\sigma_x^{-}$ is a polytope and we have $\sigma_x^{-}\subset \sigma_x$ with equality if and only if $\sigma_x$ is bounded. Informally speaking $\sigma_x^{-}$ is obtained by ``cutting'' $\sigma_x$ with the unique hyperplane parallel to $v_1,\dots,v_{b(x)}$ which passes through $n_{b(x)+1},\dots,n_{s(x)}$
\end{definition}

\begin{proposition}\label{prop computation normalised volume via determinant} Suppose $C(x)$ is simplicial. Then we have \begin{equation}
  	\label{eqn: volume det}
  \mathrm{vol}_{\mu_x}\sigma_x^-=\frac{\det(\sigma_x)}{m_1\dots m_{b(x)}}.
  \end{equation}
  %where $\det\CC_x=|\det(n_1,\dots,n_{d+1})|$.
\end{proposition}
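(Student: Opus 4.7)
The plan is to compute $\vol_{\mu_x}(\sigma_x^-)$ by pulling back to $\R^{r(x)}$ via the linear isomorphism $\phi:\R^{r(x)}\to(N_x)_\R$ sending $e_i\mapsto n_i$. By Definition~\ref{det sigma}, $|\det\phi|=\det(\sigma_x)$ and $\phi$ sends $\Z^{r(x)}$ to the sublattice $L=\sum_i\Z n_i\subset N_x$ of index $\det(\sigma_x)$. Under $\phi^{-1}$, the polytope $\sigma_x^-\subset H_{1,x}$ corresponds to the product polytope $Q=\Delta\times[0,1]^{r(x)-b(x)}$ inside the hyperplane $\{c\in\R^{r(x)}:L_b(c):=\sum_{i\le b(x)}c_im_i=1\}$, where $\Delta$ is the simplex in $\R^{b(x)}$ with vertices $e_i/m_i$. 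This product structure---a simplex times a cube---is what makes an explicit volume computation feasible.

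Next I would convert the conformal measure via $\phi$. By Construction~\ref{construction: z-pl measure} and the simplicial hypothesis one has $\vol_{\mu_x}(\sigma_x^-)=\vol_{H_{\rho_x,x}}(\rho_x\sigma_x^-)/\rho_x^{r(x)}$, where $\vol_{H_{\rho_x,x}}$ is the Lebesgue measure on the affine hyperplane $H_{\rho_x,x}\subset(N_x)_\R$ normalised so that the affine lattice $N_{\rho_x,x}$ has unit covolume, equivalently so that the translation lattice $N_{0,x}\subset H_{0,x}$ has unit covolume. To compare this with the standard Lebesgue on the corresponding $\R^{r(x)}$-hyperplane normalised by $\Z^{r(x)}\cap\{L_b=0\}$, I would apply the snake lemma to the rows
\begin{equation*}
0\to N_{0,x}\to N_x\to\rho_x\Z\to 0,\qquad 0\to L\cap H_{0,x}\to L\to d\Z\to 0,
\end{equation*}
where $d=\gcd(m_1,\ldots,m_{b(x)})$; this yields $[N_{0,x}:L\cap H_{0,x}]=\det(\sigma_x)\rho_x/d$, the conversion factor between the two normalisations.

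Finally I would compute the Lebesgue volume of $\rho_xQ=\rho_x\Delta\times[0,\rho_x]^{r(x)-b(x)}$ on the $\R^{r(x)}$ side using the product structure. The cube factor contributes $\rho_x^{r(x)-b(x)}$. For the simplex factor $\rho_x\Delta$ I would use the coarea/Fubini formula to reduce its $(b(x)-1)$-dimensional volume to the $b(x)$-dimensional simplex $\{c'\in\R_{\ge0}^{b(x)}:\sum c_im_i\le\rho_x\}$ of Euclidean volume $\rho_x^{b(x)}/(b(x)!\prod_{i\le b(x)}m_i)$, and then convert from Euclidean to lattice-normalised measure on the hyperplane via the Cauchy--Binet factor $d/\|(m_1,\ldots,m_{b(x)})\|$. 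Combining the three pieces above with the $1/\rho_x^{r(x)}$ from the definition of $\mu_x$ should, after cancellation, produce the stated formula. The main obstacle is the bookkeeping: the auxiliary constants $\rho_x$, $d$, $\|(m_1,\ldots,m_{b(x)})\|$ and the combinatorial factor coming from the simplex slice must all disappear from the final expression, and verifying this cancellation in every corner case (bounded or unbounded $\sigma_x$, non-trivial $\rho_x$) is the technical heart of the argument.
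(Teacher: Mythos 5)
Your strategy is the same as the paper's: rescale by the root index $\rho_x$ so that the slicing hyperplane meets the lattice, use simpliciality to reduce the volume to a determinant/lattice-index computation, and divide by $\rho_x^{r(x)}$. The preparatory steps you add are correct: the coordinate change $e_i\mapsto n_i$ identifies $\sigma_x^-$ with a product of a simplex and a cube, and the snake-lemma comparison of the two exact sequences does give $[N_{0,x}:L\cap H_{0,x}]=\det(\sigma_x)\rho_x/d$ with $d=\gcd(m_1,\dots,m_{b(x)})$. The paper's own proof is a compressed version of this: it identifies the vertices of $\sigma_{\rho_x,x}^-$ as $n_i'=\rho_x n_i/m_i$ and writes the volume as the single $r(x)\times r(x)$ determinant $|\det(n_1',\dots,n_{b(x)}',\rho_x n_{b(x)+1},\dots,\rho_x n_{r(x)})|$.

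The gap is exactly the step you defer to the end: the cancellation does not close as you hope. Carrying out your plan, the index conversion contributes $\det(\sigma_x)\rho_x/d$, the cube contributes $\rho_x^{r(x)-b(x)}$, and the simplex slice --- after the coarea reduction to the solid simplex of volume $\rho_x^{b(x)}/(b(x)!\prod_{i\le b(x)}m_i)$ and the conversion factor $d/\|(m_1,\dots,m_{b(x)})\|$ --- comes out to $d\,\rho_x^{b(x)-1}/\bigl((b(x)-1)!\,m_1\cdots m_{b(x)}\bigr)$. Multiplying these and dividing by $\rho_x^{r(x)}$, the auxiliary constants $\rho_x$, $d$ and $\|(m_1,\dots,m_{b(x)})\|$ do all cancel, but the combinatorial factor $(b(x)-1)!$ from the simplex does not: you land on $\det(\sigma_x)/\bigl((b(x)-1)!\,m_1\cdots m_{b(x)}\bigr)$ rather than the right-hand side of \eqref{eqn: volume det}. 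A sanity check makes the tension visible: for $\mathrm{conv}(e_1,e_2,e_3)\subset\{x+y+z=1\}\subset\R^3$ with all $m_i=1$, the covolume-one normalisation of Construction~\ref{construction: z-pl measure} gives area $1/2$, while \eqref{eqn: volume det} asserts $1$. The paper's determinant identity for $\vol_{\lambda_{\rho,x}}(\sigma_{\rho,x}^-)$ is the one valid for the \emph{normalised} volume of toric geometry (in which the unimodular simplex has volume one), not for the fundamental-parallelepiped normalisation your Fubini computation uses. So to finish you must either redo the simplex step in that normalised-volume convention (after which everything cancels and you recover \eqref{eqn: volume det}), or accept a $(b(x)-1)!$ in the denominator of the statement; your proposal, as written, does not decide between these, and that decision is the entire content of the proof.
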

\begin{proof} Let $\rho=\rho_x$ be the root index and equip $H_{\rho,x}$ with the Lebesgue measure $\lambda_{\rho,x}$ induced by $N_{\rho,x}=N_x\cap H_{\rho,x}$. The vertices of the polyhedron $$\sigma_{\rho,x}\coloneqq C(x)\cap H_{\rho_x,x}$$ coincide with the vectors $n_i'\coloneqq \rho n_i/m_i\in N_{\rho,x}$ for $i=1,\dots,b(x)$. The polytope $\sigma_{\rho,x}^-$ is obtained by scaling $\sigma_x^-$ along the cone $C(x)$ with scaling factor $\rho$. Hence we compute that $$\vol_{\mu_{x}}\sigma_x^-={\rho^{-\dim\sigma_x-1}}\vol_{\lambda_{\rho,x}}(\sigma_{\rho,x}^-)={\rho^{-r(x)}}|\det(n_1',\dots,n_{b(x)}',\rho n_{b(x)+1},\dots,\rho n_{r(x)})|=\frac{|\det(n_1,\dots,n_{r(x)})|}{ m_1\dots m_{b(x)}}.$$
 %See Figure~\ref{fig:slice} for some visual aid. 
%\ref{parallel plane} %the lattice points of $N_x$ are captured by the hyperplane $H_{0,x}$ and its parallel translates hyperplane $H_{\rho,x}:\varpi\mapsto \rho_x$ is a plane parallel to $H_{0,x}$ such that any point on $H_{\rho,x}\cap N_x$ generates $N_x$ modulo $H_{0,x}\cap N_{x}$. That is, among the planes parallel to $H_{0,x}$, the plane $H_{\rho,x}$ is the plane parallel and of minimal distance to $H_{0,x}$ containing lattice points of $N_x$. %which is of minimal nonzero distance to $H_{0,x}$. 
\end{proof}
\begin{example}
 \label{example regular vol}		In case $\XX^{\dagger}$ is regular, each of the cones $C(x)$ is regular, so Proposition~\ref{prop computation normalised volume via determinant} reads $$\vol(\sigma_x^-)=\left(m_1\dots m_{b(x)}\right)^{-1}.$$

		In the case of relative curves ($\dim\XX=2$) it follows that the conformal measure induces the potential metric introduced in \cite[\S2]{BN}. %We return to metrics on analytic curves in Section~\ref{sec: curves}. %justifying our terminology
\end{example}
\section{Specialisation of Cartier divisors}
%weil div is a balanced divisor, roughly a condition in codim 2. eg spec(D).
%car: cartier div is locally div(f), Q-cartier: multiple is cartier
%weil divisor is Q-cartier in codim \le 2 
%ineq: h^0(D)\le h^0(spec D)
\label{sec: harmonicity}

\begin{block}[Overview]
In this section we study Cartier divisors $\DD$ on $\XX$ that are supported on $D_{\XX}$, these can be studied via the PL functions on $\D(\XX^{\dagger})$ they induce. For later use we compute some intersection numbers (Proposition \ref{prop: computation ev.et}) and we introduce the specialisation $\mathrm{sp}_*\DD$. The remainder of the section is devoted to an analytic view on specialisations, by means of model metrics. This leads to a natural compactification $\overline{\D(\XX^{\dagger})}$ of the dual complex, which we first introduce in an ad hoc fashion.
\end{block}

\begin{block}[Assumption] Throughout this section fix a \emph{proper} toroidal $S$-scheme $\XX^{\dagger}$.
\end{block}
\begin{block}[Support functions] \label{M Sigma}
Recall that for each $x\in F(\XX^{\dagger})$, the lattice $M_{x}=M_{\sigma_x}=\CC_{\XX,x}^{\gp}$ is naturally identified (see~\ref{regular log-regular}) with the lattice of Cartier divisors on $\spec \O_{\XX,x}$ supported on $D_{\XX}$. On the other hand $M_{\sigma_x}$ is the lattice of $\Z$-affine functions on the dual cone $C(x)$. As in~\ref{def PL space} let us write $$M_{\Sigma(\XX^{\dagger})}=\lim_{x\in F(\XX^{\dagger})}M_{\sigma_x},$$ this group coincides with the group of $\Z$-PL functions $\Sigma(\XX^{\dagger})$ that are linear on the cones of $\Sigma(\XX^{\dagger})$. For simplicity elements of $M_{\Sigma(\XX^{\dagger})}$ are also called \emph{$\Z$-linear functions} on $\Sigma(\XX^{\dagger})$. If $\DD$ is a Cartier divisor supported on $D_{\XX}$, then the associated $\Z$-linearfunction is denoted by $F_{\DD}$ and is called the \emph{support function} of $\DD$. More explicitly it is described as follows: locally near a stratum $x\in F(\XX^{\dagger})$ there exists a $m\in M_{x}$ such that $\DD=\div m$, and so we have $$F_\DD\vert_{\sigma_x}=\langle [m],\cdot\rangle$$ on $\sigma_{\CC_{x}}$. Conversely we can reconstruct $\DD$ near $x$ from the image of $[m]$ in $M_x$ because a choice of section $M_x=\CC_x^{\gp}\to \M_{\XX,x}^{\gp}$ is canonical up to a unit in $\O_{\XX,x}^{\times}$ and therefore produces the same divisor $\div m$.
\end{block}
\begin{lemma}[Cartier divisors and support functions] \label{lem correspondence cartier and pl functions}
		The map $\DD\mapsto F_\DD$ establishes a group isomorphism between Cartier divisors supported on $D_{\XX}$ (respectively $\Q$-Cartier divisors supported on $D_{\XX}$) and the group $M_{\Sigma(\XX^{\dagger})}$ (respectively $(M_{\Sigma(\XX^{\dagger})})_{\Q}$).

		Moreover, effective Cartier divisors (respectively effective $\Q$-cartier divisors) correspond to elements of $M_{\Sigma(\XX^{\dagger})}^+\coloneqq \lim_{x\in F(\XX^{\dagger})}\CC_{\XX,x}$ (respectively $(M_{\Sigma(\XX^{\dagger})}^+)_{\Q_{>0}}=\lim_{x\in F(\XX^{\dagger})}\CC_{\XX,x}^{\Q_{>0}}$).
\end{lemma}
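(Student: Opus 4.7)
The plan is to construct an explicit inverse to the map $\DD \mapsto F_\DD$ and check it lands where claimed, using the local description of log-regular schemes recalled in~\ref{regular log-regular} together with the existence of neat charts (\ref{log-sch}).

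First I would verify that $\DD \mapsto F_\DD$ is a well-defined group homomorphism. The construction in~\ref{M Sigma} produces, for each $x \in F(\XX^{\dagger})$, an element $[m] \in M_x$ from a local equation $m \in \M_{\XX,x}^{\gp}$ of $\DD$ near $x$; different choices of $m$ differ by a unit in $\O_{\XX,x}^{\times}$, hence descend to the same class in $M_x = \M_{\XX,x}^{\gp}/\O_{\XX,x}^{\times}$. Compatibility under cospecialisation $\tau_{y,x} \colon M_y \to M_x$ for $y \rightsquigarrow x$ follows because a local equation of $\DD$ at $y$ restricts to a local equation at $x$; so the family $([m])_x$ indeed defines an element $F_\DD$ of the limit $\lim_{x \in F(\XX^{\dagger})} M_x = M_{\Sigma(\XX^{\dagger})}$. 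Additivity is immediate from $\div(mm') = \div m + \div m'$.

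Next I would construct the inverse. Given $F = (m_x)_x \in M_{\Sigma(\XX^{\dagger})}$, fix $x \in F(\XX^{\dagger})$ and a neat chart $\CC_{\XX,x} \to \M_{\XX,x}$ over some open $U_x$; this identifies the image of $m_x$ with a genuine section $\tilde m_x \in \Gamma(U_x,\M_{U_x}^{\gp})$, which maps to an element of $\mathrm{Frac}(\O_{\XX,x})^{\times}$ (by integrality of $\XX$) and hence defines a Cartier divisor $\DD_x \coloneqq \div \tilde m_x$ on $U_x$. Two lifts differ by a section of $\O_{U_x}^{\times}$, so $\DD_x$ is intrinsic. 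On overlaps $U_x \cap U_y$ with $y \rightsquigarrow x$, the compatibility $\tau_{y,x}(m_y) = m_x$ together with the uniqueness of neat charts up to units shows that $\DD_x$ and $\DD_y$ coincide there; this glues the local divisors to a global Cartier divisor $\DD_F$ on a neighbourhood of $D_\XX$. Away from $D_\XX$ the log-structure is trivial, so we extend $\DD_F$ by zero to all of $\XX$. By construction $F_{\DD_F} = F$ and $\DD_{F_\DD} = \DD$, establishing the group isomorphism.

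For the effective case, \ref{regular log-regular} identifies $\CC_{\XX,x}$ with the monoid of \emph{effective} Cartier divisors on $\spec \O_{\XX,x}$ supported on $D_{\XX,x}$. Under our correspondence, $\DD$ is effective in a neighbourhood of $x$ exactly when $[m_x] \in M_x$ lies in the submonoid $\CC_{\XX,x}$, and these conditions patch, yielding the claimed effective correspondence with $M_{\Sigma(\XX^{\dagger})}^+ = \lim_x \CC_{\XX,x}$. The $\Q$-Cartier statement follows by tensoring the whole construction with $\Q$, since a $\Q$-Cartier divisor is by definition one whose some positive integral multiple is Cartier, corresponding precisely to a $\Q$-linear function on the cones.

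The only mildly subtle point is the independence of the local Cartier divisor $\DD_x$ from the choice of neat chart and lift, but this reduces to the standard fact that two neat charts at $x$ differ by a unit-valued translation, an observation built into the definition of log-structures; so no serious obstacle remains.
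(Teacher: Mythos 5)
Your proposal is correct and takes essentially the same route as the paper: the paper's proof is a one-line reference back to the discussion in~\ref{M Sigma}, which contains precisely the local correspondence you spell out (a local equation of $\DD$ near $x\in F(\XX^{\dagger})$ is well-defined up to a unit, hence gives a class in $M_x=\CC_{\XX,x}^{\gp}$, with effectivity detected by membership in $\CC_{\XX,x}$ via~\ref{regular log-regular}). Your elaboration of the gluing of the local divisors $\DD_x$ and the chart-independence is exactly the content the paper leaves implicit, so there is nothing to correct.
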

\begin{proof}
		This follows from the discussion in~\ref{M Sigma}.
\end{proof}
\begin{remark} See \cite[\S11]{K94} for a similar correspondence as in Lemma \ref{lem correspondence cartier and pl functions} via coherent fractional ideals on fans.
\end{remark}
\begin{block}[Notation] \label{qlaql}By definition $\D(\XX^{\dagger})$ and $\Sigma(\XX^{\dagger})$ have the same sets of $\Z$-PL functions and so we let $M_{\D(\XX^{\dagger})}=M_{\Sigma(\XX^{\dagger})}$, this is the group of functions on $\D(\XX^{\dagger})$ that are $\Z$-affine on polyhedra. 

Given an element $F\in M_{\D(\XX^{\dagger})}$ we write $\DD_F$ for the associated Cartier divisor as in Lemma \ref{lem correspondence cartier and pl functions}. For all $F,F'\in M_{\D(\XX^{\dagger})}$ we have $\DD_{F+F'}=\DD_{F}+\DD_{F'}$ by Lemma \ref{lem correspondence cartier and pl functions}.
\end{block}

\begin{lemma} 
		\label{lem computation weil divisor}
Suppose $F\in M_{\D(\XX^{\dagger})}$ and let $\DD_F$ as in \ref{qlaql}.% Write $\DD_F$ for the associated Cartier divisor as in Lemma~\ref{lem correspondence cartier and pl functions}. 
Then we have an equality of Weil divisors %is a $\Q$-Cartier divisor of $\XX^{\dagger}$ supported on $D_{\XX}$.

$$\mathrm{cyc}\,\DD_F=\sum_{r\in \Sigma(\XX^{\dagger})(1)} F_{\DD}(n_r) [E_r]=\sum_{v\in \D(\XX^{\dagger})(0)} m_vF_{\DD}(v)[E_v]+\sum_{r\in \mathrm{rec}(\D(\XX^{\dagger}))(1)} \partial_rF[E_r],$$

 where we write:
 \begin{itemize}
 \item $\mathrm{cyc}\DD_F$ for the Weil divisor associated to $\DD_F$,
  	\item  $E_r$ for the unique component of $D_{\XX}$ associated to a ray $r\in \Sigma(\XX^{\dagger})(1)$ (see \ref{rays discussion}) 
  	\item $n_r\in N_r\cap r$ for the primitive ray generator of a ray $r\in \Sigma(\XX^{\dagger})(1)$,
  	\item $E_v$ for the unique component associated to a vertex $v\in \D(\XX^{\dagger})(0)$,
  	\item $m_v\coloneqq \mult_{E_v}\XX_s$ for the \emph{multiplicity} of a vertex $v\in \D(\XX^{\dagger})(0)$,
  	\item $\partial_rF\coloneqq F(n_r)$ for any ray ${r\in \mathrm{rec}(\D(\XX^{\dagger}))(1)}$ of the recession cone complex. 
  \end{itemize} 
In addition the same assertion holds for $(M_{\D(\XX^\dagger)})_{\Q}$ and $\Q$-Weil divisors.
\end{lemma}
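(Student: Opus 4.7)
The plan is to prove the first equality by a local computation at the generic point of each component of $D_\XX$, and then to prove the second equality by a bookkeeping argument that splits the rays of $\Sigma(\XX^\dagger)$ according to whether they lie in the hyperplane $H_{0,x}$ or not.

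For the first equality, since $\DD_F$ is supported on $D_\XX$ and $D_\XX$ has pure codimension one (\ref{regular log-regular}), its support is a union of components $E_r$, each corresponding to a ray $r \in \Sigma(\XX^\dagger)(1)$ by Lemma~\ref{lemma balanced cospec}(i). It therefore suffices to show that the multiplicity of $E_r$ in $\mathrm{cyc}\,\DD_F$ equals $F_{\DD}(n_r)$. First I would localise at the generic point $\eta_r$ of $E_r$. Then $\O_{\XX,\eta_r}$ is a DVR whose characteristic monoid $\CC_{\XX,\eta_r} \cong \N$ is generated by the class of a local equation of $E_r$, and $N_{\eta_r} = \Z$ is generated by the primitive ray generator $n_r$. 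By \ref{M Sigma}, near $\eta_r$ we have $\DD_F = \div m$ for some lift $m \in \M_{\XX,\eta_r}^{\gp}$ of the class $[m] = F|_{C(\eta_r)} \in M_{\eta_r}$. The valuation of $m$ at $\eta_r$ equals $\langle [m], n_r \rangle = F(n_r)$, by definition of the pairing. Since this lift is canonical up to a unit (as noted in~\ref{M Sigma}), the valuation is well-defined, giving the coefficient $F(n_r)$.

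For the second equality, split the rays of $\Sigma(\XX^\dagger)$ locally at each $C(x)$ into two classes according to \ref{rays discussion}. For a ray $r$ of $C(x)$ not contained in $H_{0,x}$, $r$ meets $H_{1,x}$ in a unique vertex $v_r \in \sigma_x(0)$, and the associated component $E_r = E_{v_r}$ is a vertical component with multiplicity $m_{v_r} = \langle [\varpi], n_r \rangle$. Since $v_r$ and $n_r$ both lie on the ray $r$ with $\langle [\varpi], v_r \rangle = 1$ and $\langle [\varpi], n_r \rangle = m_{v_r}$, we have $n_r = m_{v_r} v_r$ in $(N_x)_{\R}$. Because $F$ is $\Z$-linear on $C(x)$, linearity gives $F(n_r) = m_{v_r} F(v_r)$, which accounts for the vertical contribution. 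For a ray $r$ contained in $H_{0,x}$, by \ref{rays discussion}(i) it defines a ray of the recession cone $\rec(\sigma_x)$, the component $E_r$ is horizontal, and by definition $\partial_r F = F(n_r)$. Summing over all rays in both classes, globally over $\Sigma(\XX^\dagger)$, reorganises the first sum into the second.

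The $\Q$-coefficient case follows by clearing denominators and using the fact that $M_{\D(\XX^\dagger)} \otimes_\Z \Q \to \{$$\Q$-Cartier divisors supported on $D_\XX\}$ is the $\Q$-linear extension of the isomorphism from Lemma~\ref{lem correspondence cartier and pl functions}, so both sides scale linearly. I do not expect serious obstacles: the only delicate point is to make sure that the identification $n_r = m_{v_r} v_r$ is carried out with the correct scalings (so that the multiplicities $m_v$ end up in the right place), and that the two splittings ``vertical vs horizontal rays of $\Sigma(\XX^\dagger)$'' and ``vertices vs recession rays of $\D(\XX^\dagger)$'' are matched without double-counting when rays are shared between adjacent cones $C(x)$ and $C(y)$ -- this is automatic from the functoriality of the construction of $\D(\XX^\dagger)$ as a colimit.
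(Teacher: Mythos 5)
Your proposal is correct and follows essentially the same route as the paper: the first equality is obtained by the same local computation at the generic point of $E_r$ via the pairing $\langle [m], n_r\rangle = F(n_r)$, and the second by the same dichotomy of rays according to whether they lie in $[\varpi]^\perp$, using $n_r = m_{v_r} v_r$ and linearity of $F$ on cones. The extra care you take about matching rays shared between adjacent cones is harmless but unnecessary, since the sum is taken over the global set $\Sigma(\XX^\dagger)(1)$.
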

\begin{proof}
Let $r$ be a ray of $\Sigma(\XX^{\dagger})$. Choose a local equation $\DD_F=\div m$ near $E_r$, for some $m\in M_r$. Since $M_r$ is a $1$-dimensional lattice dual to $n_r\Z$, the description of~\ref{M Sigma} implies $F(n_r)=\langle m,n_r\rangle=\mult_{E_r}\DD_F$. This gives the first equality. 

For the second equality: by \ref{rays discussion} any ray $r$ of $\Sigma(\XX^{\dagger})$ is either contained in $[\varpi]^\perp$ or not, giving rise to a ray of the recession cone complex of $\D(\XX^{\dagger})$ or a vertex $v\in \D(\XX^{\dagger})$ respectively. In the second case $[E_r]=[E_v]$ and $F(n_r)=F(v)\langle [\varpi],n_r\rangle=F(v)\mult_{E_v}\div[\varpi]=F(v)m_v$.
\end{proof}
\begin{block} %Notation as in Lemma~\ref{lem computation weil divisor}. 
By Lemma \ref{lem correspondence cartier and pl functions} the following assertions are equivalent: 
\begin{enumerate}[label=(\roman*)]
	\item The cone complex $\Sigma(\XX^{\dagger})$ is \emph{simplicial}.
	\item For every set of rational numbers $\{q_r\}_{r\in \Sigma(\XX^{\dagger})(1)}$ there exists a function $F\in (M_{\D(\XX^{\dagger})})_{\Q}$ such that $F(n_r)=q_r$.
	\item Any $\Q$-Weil divisor supported on $D_{\XX}$ is $\Q$-Cartier. 
\end{enumerate}
	In particular if $\XX$ is $\Q$-factorial then $\Sigma(\XX^{\dagger})$ is simplicial. \label{rmk qfactoriality}
\end{block}
\begin{block}[Reminder on intersection products] Suppose $V$ is any normal Cohen-Macaulay scheme, $\DD$ is a Cartier divisor and $C$ is a normal curve in $V$ defined over a field $\kappa$. Then $V$ contains no associated points \cite[\S8.2.15]{Liu} and so the restriction of Cartier divisors $D\vert_{C}$ is well-defined \cite[\S7.1.29]{Liu}. We can therefore define the intersection\footnote{One can develop a general refined intersection theory of ($\Q$-)Cartier divisors and cycles \cite[\S20]{fulton intersection}, but not intersection products of two arbitrary cycles unless $\XX$ is smooth. We won't need this.} product  $\DD\cdot C\coloneqq \deg_{\kappa} \DD\vert_C\in \Z$, and if $\DD$ is only assumed $\Q$-Cartier instead of Cartier we let $\DD\cdot C=\frac{1}{e}(e\DD)\cdot C$ where $e\in\Z_{\ge1}$ is chosen such that $e\DD$ is Cartier. 
 %use some 	Note that $\DD\vert_E$ is a well-defined Cartier divisor since $\supp\DD$ contains no embedded points (so that $\DD\vert_E$ is well-defined; this is always the case for toroidal $S$-schemes since these are Cohen-Macaulay \cite[\S4]{K94}) 
We have the usual rules $(\DD+\DD')\cdot C=\DD\cdot C+\DD'\cdot C$, and $\div(f)\cdot C=0$, so that $\DD\cdot C$ only depends on the linear equivalence class of $\DD$. %and so we obtain a $\Q$-bilinear intersection product pairing between $\Q$-Cartier divisors on $V$ and rational $1$-cycles $Z_1(V)_{\Q}$
 %for a general refined intersection product of cartier divisors with cycles.}.

 %one can define chow groups and operation on cycles such as proper push-forwards, flat pull-backs and proper intersections with Cartier divisors  %not $\mathrm{ch}^1(Z)_\Q$ yet (beware of rational equivalence)
%\begin{block}[Projection formula] 
%Resume the assumptions of \ref{pullback support}.
We will mainly be interested in the case where $V=\XX$ is a toroidal $S$-scheme (which is Cohen-Macaulay by \cite[\S4.1]{K94}), $\kappa=\tilde{k}$ and $C=E_\tau$ for some vertical ridge $\tau$ of $\D(\XX^{\dagger})$. Note that $\DD$ intersects $E_\tau$ properly if and only if $F_\DD\vert_\tau$ is the zero function. Since $\tau$ is vertical and $\XX$ is assumed proper, $E_{\tau}$ is a proper $\tilde{k}$-curve  and $\DD\cdot E_\tau$ only depends on the linear equivalence class of $\DD$. % $E_{\tau}$ is proper and $\DD$ is principal,

We will later need the following \label{proj formula} outcome of the projection formula (see \cite[\S20]{fulton intersection}): let $g:V'\to V$ be a proper flat dominant morphism of integral normal $S$-schemes, let $\DD$ be a $\Q$-cartier divisor. Suppose $c'\in Z_1(V')$ is a \emph{vertical} $1$-cycle on $V'$ (i.e. a linear combination of curves contained in $V'_s$). Then using that $\tilde{k}$ is algebraically closed we have $$g^*{\DD}\cdot c'=\DD \cdot g_*c'.$$ %here push-forward of cycles is defined as usual: g.
%usually take g_* of lhs, but not needed
%Suppose $C'$ is an integral regular curve in $\YY$ and write $C=f(C')$. Either (i) $C$ is a point, and $C'\cdot f^*\DD=0$ or (ii) $C$ is a regular curve. By the projection formula, see , we have $f_*(C'\cdot f^*\DD)=[k(C'):k(C)](C\cdot \DD)$. % Note if $\tilde{k}=\tilde{k}^a$, then $f_*(C'\cdot f^*\DD)=C'\cdot f^*\DD$. %usually lhs f_*, but by the assumption

\end{block}
\begin{example}\label{eg special fiber trivial divisor}
For instance if $\DD=\div(\varpi)$ we have $\cyc\DD=[X_s]=\sum_{v\in \D(\XX^{\dagger})(0)} m_v[E_v]$ by~\ref{lem computation weil divisor}. If additionally $\XX$ is $\Q$-factorial then the divisors $[E_v]$ for $v\in \D(\XX^{\dagger})(0)$ are $\Q$-Cartier, so for every vertical 1-cycle $c\in Z_1(\XX)$ we obtain by the projection formula discussed in \ref{proj formula} the useful relation $$0=\sum_{v\in \D(\XX^{\dagger})(0)} m_v [E_v]\cdot c.$$

%intersection with horizontal curve: On the other hand, if $c=[E_{\tau}]$ is the $1$-cycle associated to a horizontal $1$-dimensional log-stratum, then $h:E_\tau\to S$ is flat and we have $[X_s]\cdot E_{\tau}=\deg(h_*(X_s\vert_{E_\tau}))=\deg(h_*h^*[s])=\deg(h)=[k(E_{\tau}):k]$.
\end{example}
\begin{block}[Normal vector] \label{normal vector} Let $(\sigma,\tau)$ be a facet-ridge pair of a $\Z$-PL complex $\Sigma$. Write $m_{\sigma/\tau}$ for the unique generator of $\ker(M_{\sigma}\to M_{\tau})$ such that $\langle m_{\sigma/\tau},\cdot\rangle$ is positive on $\sigma\setminus \tau$. Since $m_{\sigma/\tau}$ kills $N_{\tau}$, the map $\langle m_{\sigma/\tau},\cdot\rangle$ descends to a map $N_{\sigma}/N_{\tau}\to \Z$. Let $n_{\sigma/\tau}$ denote the unique generator of $N_{\sigma}/N_{\tau}$ such that $\langle m_{\sigma/\tau},n_{\sigma/\tau}\rangle >0$. We will also view $n_{\sigma/\tau}$ as a vector of $N_{\tau}$, well-defined up to an element of $N_{\tau}$, and call it the \emph{primitive normal vector} of $\sigma$ relative to $\tau$. 
\end{block}
% \begin{block}
% 	Given a polyhedron $\sigma$ of $\D(\XX^{\dagger})$ we write $C(\sigma)$ for the cone generated by $\sigma$ in the cone complex $\Sigma(\XX)$. We also use the following notation: given a polyhedron $\sigma$ we write $\sigma(n)$ for the $n$-dimensinal faces of $\sigma$. 
% \end{block}
\begin{block}[Notation]\label{tau+r}
Recall the notion of stars and links from \ref{stars and links}.
Suppose $\tau$ is a ridge of $\D(\XX^{\dagger})$ and $r\in \overline{\star}(C(\tau))(1)$ is a ray of $\Sigma(\XX^{\dagger})$ adjacent to $C(\tau)$, then we write $\sigma\coloneqq \tau+r$ for the unique facet adjacent to $\tau$ such that $r\in C(\sigma)(1)$.
\end{block}
\begin{proposition} 
		Suppose $\tau$ is a ridge of $\D(\XX^{\dagger})$. Let $r\in \link(C(\tau))(1)$ and $\sigma=\tau+r$. Suppose that the component $E_r$ associated to $r$ defines a $\Q$-Cartier divisor (eg. $\star(r)$ is simplicial, see Remark~\ref{rmk qfactoriality}). Then
		\begin{equation}
		[E_r]\cdot E_\tau=\frac{1}{[N_{\sigma}:N_{\tau}+N_r]}.
			\label{eqn:ev.et}
		\end{equation}
		Additionally, if $\sigma$ is simplicial, this also equals $\det\tau/\det\sigma$.
		\label{prop: computation ev.et}
\end{proposition}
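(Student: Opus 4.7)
The plan is a purely local computation at the closed point $x_\sigma\in F(\XX^\dagger)$ corresponding to the facet $\sigma$; write $y$ for the generic point of the curve $E_\tau$. First I would show that $E_r\cap E_\tau=\{x_\sigma\}$ set-theoretically. Indeed, the log-stratification expresses this intersection as a union of closed strata $E_z$ whose associated cone $C(z)$ both contains the ray $r$ and admits $C(\tau)$ as a face. Together these conditions force $C(z)\supseteq C(\tau)+\R_{\geq 0}n_r=C(\sigma)$, and since $\sigma$ is a facet, $C(\sigma)$ is already maximal in $\Sigma(\XX^\dagger)$, so $z=x_\sigma$. Because $\kappa(x_\sigma)=\tilde k$, the computation of $[E_r]\cdot E_\tau$ is reduced to a local intersection multiplicity at the single point $x_\sigma$.

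Next I would identify the minimal $e\in\Z_{>0}$ such that $eE_r$ is Cartier at $x_\sigma$ with the lattice index $[N_\sigma:N_\tau+N_r]$. By Lemma~\ref{lem correspondence cartier and pl functions}, $eE_r$ corresponds to an element $m\in M_\sigma$ characterised by $\langle m,n_r\rangle=e$ and $\langle m,n_{r'}\rangle=0$ for every other ray $r'$ of $C(\sigma)$. The $\Q$-Cartier hypothesis on $E_r$ is equivalent to the solvability of this linear system over $\Q$, which forces every ray of $C(\sigma)$ distinct from $r$ to lie in $N_\tau$. Hence $m$ lies in the rank-one lattice $N_\tau^\perp\cap M_\sigma=\Hom(N_\sigma/N_\tau,\Z)$, and the pairing $\langle\cdot,n_r\rangle$ identifies this lattice with $\Z$ sending the positive generator to $[N_\sigma:N_\tau+\Z n_r]$. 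The minimal positive value of $\langle m,n_r\rangle$ is therefore exactly $e=[N_\sigma:N_\tau+N_r]$.

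Then I would compute $(eE_r)\cdot E_\tau$ using the log-structure of $E_\tau=E_y$. By Lemma~\ref{lemma balanced cospec}(ii,v), $E_y^\dagger=E_y(\log D_y)$ is log-regular at $x_\sigma$ with $\CC_{E_y,x_\sigma}\cong\ker(\CC_{\XX,x_\sigma}\to\CC_{\XX,y})\cong\N$; in particular $\O_{E_y,x_\sigma}$ is a DVR. The isomorphism of Lemma~\ref{lemma balanced cospec}(v) is realised concretely by restriction of local equations, so it sends the generator $m$ of the kernel (i.e.\ the class of a local equation $f$ of $eE_r$) to the generator of $\CC_{E_y,x_\sigma}=\N$. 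Thus $f\vert_{E_y}$ is a uniformiser of $\O_{E_y,x_\sigma}$, whence $(eE_r)\cdot E_\tau=\mathrm{ord}_{x_\sigma}(f\vert_{E_y})\cdot[\kappa(x_\sigma):\tilde k]=1$, and $[E_r]\cdot E_\tau=1/e=1/[N_\sigma:N_\tau+N_r]$. For the simplicial addendum, with $n_1=n_r$ and $n_2,\dots,n_d$ the rays of $\tau$, multiplicativity of indices in the chain $\Z\langle n_1,\dots,n_d\rangle\subseteq N_\tau+\Z n_r\subseteq N_\sigma$ gives $[N_\sigma:N_\tau+N_r]\cdot\det(\tau)=\det(\sigma)$, yielding $\det(\tau)/\det(\sigma)$.

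The principal subtlety I anticipate is faithfully tracking the generator of $\ker(\CC_{\XX,x_\sigma}\to\CC_{\XX,y})$ through the isomorphism of Lemma~\ref{lemma balanced cospec}(v) and confirming that it corresponds precisely to the class of the minimal Cartier multiple $eE_r$ with $e=[N_\sigma:N_\tau+N_r]$. Once that identification is pinned down, the rest of the argument is a formal length computation in a DVR coupled with the lattice index calculation above.
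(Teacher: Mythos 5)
Your proposal is correct and follows essentially the same route as the paper: both reduce to a local computation at the zero\nobreakdash-dimensional stratum $E_\sigma$, use Lemma~\ref{lemma balanced cospec} to identify $\O_{E_\tau,x_\sigma}$ as a DVR whose order function is pairing against the normal vector $n_{\sigma/\tau}$, and conclude via the relation $n_r\equiv[N_\sigma:N_\tau+N_r]\,n_{\sigma/\tau}\bmod N_\tau$ (the paper works with an arbitrary Cartier multiple $l E_r$ rather than the minimal one, which is only a cosmetic difference, and your explicit verification that the $\Q$-Cartier hypothesis forces the remaining rays of $C(\sigma)$ into $N_\tau$ is a welcome elaboration of a step the paper leaves implicit). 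The only nitpick is that the identity $C(\tau)+\R_{\ge0}n_r=C(\sigma)$ invoked in your first step need not hold for a non-simplicial facet before the $\Q$-Cartier hypothesis is brought in; but the conclusion $z=x_\sigma$ follows anyway from the minimality of $\tau+r$ together with maximality of the facet $\sigma$, so nothing breaks.
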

\begin{proof} This resembles a classical fact of toric geometry, see for instance \cite[\S5.1, p. 98,(2)]{fulton}.

Let $n_{\sigma/\tau}$ be the normal vector as in~\ref{normal vector}. Then $N_{\sigma}=n_{\sigma/\tau}\Z+N_{\tau}$, so \begin{equation}
	\label{Msigma :Mtau +Mr}
[N_{\sigma}:N_{\tau}+N_r]n_{\sigma/\tau}\equiv n_r \bmod{N_{\tau}}.
\end{equation}

Suppose $\DD=l[E_r]$ is Cartier for some $l\in \Z_{>0}$. Note that $\mathrm{supp}\DD\cap E_\tau=E_{\sigma}$ set-theoretically.
So $l[E_r]\cdot E_{\tau}=\DD\cdot E_{\tau}=\ord_{E_{\sigma}}\O(\DD)\vert_{E_{\tau}}$.
 The support function $F_{\DD}$ is given on $\sigma$ by $\langle m,\cdot\rangle$ for some $m\in \ker((M_{\sigma})_\Q\to (M_{\tau})_\Q)$, and we have $\langle m,n_r\rangle =l$. It follows from Lemma \ref{lemma balanced cospec} that $E_{\tau}^{\dagger}$ is log-regular with $\CC_{E_{\tau},E_{\sigma}}^{\gp}\cong N_{\sigma}/N_{\tau}=n_{\sigma/\tau}\Z$.
So by restriction of local equations we obtain that $$\ord_{E_{\sigma}}\O(\DD)\vert_{E_{\tau}}=\langle m,n_{\sigma/\tau}\rangle.$$ %argue as in~\ref{lem computation weil divisor}.
Hence by the previous points we can compute $$[E_r]\cdot E_\tau=\frac{\langle m,n_{\sigma/\tau}\rangle}{l}=\frac{\langle m,n_r\rangle}{l[M_{\sigma}:M_{\tau}+M_r]}=\frac{1}{[M_{\sigma}:M_{\tau}+M_r]}.$$ 
 The final assertion follows immediately from Equation \eqref{Msigma :Mtau +Mr} and Definition \ref{det sigma}. % If $\sigma$ is simplicial then $n_{\sigma$ $\det \sigma=[M_{\sigma}:M_{\tau}+M_r]\det \tau$.
 %$\langle m_{\sigma/\tau},v\rangle=[M_{\sigma}:M_{\tau}+M_v]$.

	%Finally, the second equality of~\ref{eqn: computation ev.et} follows from the fact that $$a computation with determinants.
\end{proof}
% \begin{remark} In the situation of Proposition \ref{prop: computation ev.et}, then hypothesis that $[E_r]$ is $\Q$-Cartier is for instance satisfied if .
% \end{remark}
\begin{block}[Pullback of support functions] \label{pullback support}Let $f:\YY^{\dagger}\to\XX^{\dagger}$ be a finite morphism of \emph{proper} toroidal $S$-schemes and write $\phi:\D(\YY^{\dagger})\to\D(\XX^{\dagger})$ for the associated morphism of dual complexes. Let $\DD$ a $\Q$-Cartier divisor supported on $D_{\XX}$ with support function $F_{\DD}:\DD(\XX^{\dagger})\to\R$. Then $f^*\DD$ is supported on $D_{\YY}$ and has support function $$\phi^*F_{\DD}\coloneqq F_{f^*\DD}.$$ 
\end{block}
\begin{lemma} \label{lem pullback support}
	Keep notations and assumptions of~\ref{pullback support}, then $\phi^*F_{\DD}=F_{\DD}\circ \phi$.
\end{lemma}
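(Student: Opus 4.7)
The plan is to reduce the claim to a pointwise verification on each polyhedron of $\D(\YY^\dagger)$ and then to unwind the definitions of support functions and of $\phi$ from Section~\ref{sec:dual complex}. Both sides of the desired equality are $\Z$-affine on each polyhedron $\sigma_{y'}$ of $\D(\YY^\dagger)$, so it suffices to check agreement on an arbitrary face $\sigma_{y'}$ with $y' \in F(\YY^\dagger)$, or equivalently on the corresponding cone $C(y')\subset \Sigma(\YY^\dagger)$. Set $y = f(y')$.

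First I would spell out the local data. Using~\ref{M Sigma}, choose $m \in \M_{\XX,y}^{\gp}$ whose image $\bar m \in M_y = \CC_{\XX,y}^{\gp}$ satisfies $\DD = \div m$ near $y$, so that $F_{\DD}\vert_{C(y)} = \langle \bar m, \cdot\rangle$. Then $f^\flat m \in \M_{\YY,y'}^{\gp}$ is a local equation for $f^*\DD$ near $y'$; its image in $M_{y'} = \CC_{\YY,y'}^{\gp}$ is, by functoriality of the characteristic monoid, the image $\bar m'$ of $\bar m$ under the cospecialisation/transition morphism $\psi\colon \CC_{\XX,y}^{\gp}\to \CC_{\YY,y'}^{\gp}$ induced by $f$. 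Hence $F_{f^*\DD}\vert_{C(y')} = \langle \bar m', \cdot\rangle$, and by~\ref{pullback support} this is exactly $\phi^*F_{\DD}\vert_{C(y')}$.

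Next I would recall the description of $\phi$ on cones. By construction (see~\ref{construction dual complex} and the discussion in~\ref{subs: kato fan}), the map $C(y')\to C(y)$ induced by $\phi$ is precisely the dual of $\psi$: a functional $\alpha\in C(y') = \Hom(\CC_{\YY,y'},\R_{\ge 0})$ is sent to $\alpha\circ\psi \in C(y)$. Therefore for any $\alpha\in C(y')$,
\begin{equation*}
  (F_{\DD}\circ\phi)(\alpha) \;=\; \langle \bar m,\, \alpha\circ \psi\rangle \;=\; \alpha(\psi(\bar m)) \;=\; \langle \bar m',\alpha\rangle \;=\; F_{f^*\DD}(\alpha) \;=\; (\phi^*F_{\DD})(\alpha),
\end{equation*}
which is the desired equality on $C(y')$. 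Intersecting with the slice $H_{1,y'}$ (i.e. restricting to $\sigma_{y'}$) gives the claim on $\D(\YY^\dagger)$.

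The only non-routine point is the compatibility of local equations in the second step: one must check that pulling back a local equation $m$ of $\DD$ under the log-structure morphism $f^\flat\colon f^{-1}\M_{\XX}\to \M_{\YY}$ yields a local equation for the Cartier-theoretic pullback $f^*\DD$, and that passing to characteristic quotients is compatible. Under our conventions, Cartier divisors supported on $D_{\XX}$ are identified with sections of $\M_{\XX}^{\gp}/\O_{\XX}^\times = \CC_{\XX}^{\gp}$ (see~\ref{regular log-regular}), so this compatibility is built into the construction and only requires checking that $f^\flat$ respects units, which is immediate. For $\Q$-Cartier divisors one clears denominators by choosing $e\ge 1$ with $e\DD$ Cartier, applies the above, and divides by $e$, using the $\Q$-linearity of both sides in $\DD$.
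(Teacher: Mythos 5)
Your proof is correct, but it follows a different route from the paper. The paper's proof works at the level of Weil divisors: it computes $\mathrm{cyc}(f^*\DD)=f^*\mathrm{cyc}(\DD)$, reads off the multiplicity of $f^*\DD$ along each component $E_{r'}$ as $e(E_{r'}/E_r)\cdot F_{\DD}(n_r)$ using Lemma~\ref{lem computation weil divisor}, and then concludes via the relation $n_{r'}=e(E_{r'}/E_r)\,n_r$ (Lemma~\ref{lemma extension dvr balanced}) that the support function of $f^*\DD$ agrees with $F_{\DD}\circ\phi$ on ray generators, hence everywhere by linearity on cones. You instead verify the identity cone by cone, using that a local equation of $\DD$ at $y=f(y')$ pulls back under $f^\flat$ to a local equation of $f^*\DD$ at $y'$, and that the map $C(y')\to C(y)$ is by construction dual to $\CC_{\XX,y}^{\gp}\to\CC_{\YY,y'}^{\gp}$; the equality $(F_{\DD}\circ\phi)(\alpha)=\alpha(\psi(\bar m))=F_{f^*\DD}(\alpha)$ is then tautological. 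Your argument is more elementary in that it bypasses the cycle-theoretic input (the pullback formula for cycles and the multiplicity computation) and rests only on the functoriality of the fan/cone-complex construction; the paper's version has the advantage of running entirely through codimension-one data, which ties in with the Weil-divisor descriptions used elsewhere in Section~\ref{sec: harmonicity}. One terminological nitpick: the transition map $\psi\colon\CC_{\XX,y}^{\gp}\to\CC_{\YY,y'}^{\gp}$ you use is the one induced by $f$, not a cospecialisation map in the sense of Lemma~\ref{lemma balanced cospec}; its injectivity and behaviour are governed by Lemma~\ref{lem: inclusion lattices of pl-functions}, but for your argument you only need its existence and compatibility with $u$ and units, which is indeed immediate.
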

\begin{proof}
Using Lemma~\ref{lem computation weil divisor} we have $$\mathrm{cyc}(f^*\DD)=f^*\mathrm{cyc}(\DD)=\sum_{r\in \Sigma(\XX^\dagger)(1)}F(n_{r})f^*[E_{r'}]=\sum_{r'\in \Sigma(\YY^\dagger)(1)}F(n_{r})e(E_{r'}/E_r)[E_{r'}].$$%=\sum_{r\in \Sigma(\YY^\dagger)(1)}F(e(E_{r'}/E_rn_r)[E_{r'}].$$
By Lemma \ref{lemma extension dvr balanced} shows $n_{r'}=e(E_{r'}/E_r)n_r$, and so it follows that $f^*\DD$ is the $\Q$-Cartier divisor with support function $F_{\DD}\circ\phi$.
\end{proof}
 %pullback cartier: flat or $f$ dominant & both integral.

\begin{definition}[harmonic and convex functions]\label{def harmonic and convex f}
	Let $\DD$ be a $\Q$-cartier divisor on $\XX^{\dagger}$. We call the support function $F_\DD$ \emph{harmonic}, \emph{resp. convex (or subharmonic)} or \emph{resp. strongly convex} if for all ridges $\tau$ of $\D(\XX^{\dagger})$ we have $\DD\cdot E_{\tau}=0$, $\DD\cdot E_{\tau}\ge 0$ or $\DD\cdot E_{\tau}>0$ respectively.
%graph is upper convex (tent-shaped/usually called concave).
This is for instance the case if $\DD$ is numerically trivial, nef or ample respectively. The terminology is justified in Section~\ref{sec: tropical harmonicity}.
%\footnote{\add{One can ask if the converse is true -- to this end one would need to control the mori cone and use some kind of kleiman criterion for ample case. We don't need it so we ignore this question}}. 

%Cox little shenck chapter 6: divisor class on toric var is nef iff nef on torus-invariant curves. this yields nef cone.
%toric case fulton p68: if maximal cones n-dim, F_DD convex iff O(DD) gen by sections
%F_D strictly convex iff strictly convex. in particular ample is gend by sections
%by stacks \cite[Tag 0892 (ii)]{stacks} pullback of an ample along quasi-affine  (eg finite) remains ample. (a quasi-finite morphisms is quasi-affine iff separated)

%https://www.emis.de/journals/SC/2002/6/pdf/smf_sem-cong_6_249-272.pdf p. 254: on complete toric variety every effective 1-cycle X is numerically equivalent to a N-linear combination of 1-dimensional toric strata (bend & break!).
%N^1 in H^2(X,R) spanned by cartier div, P cone of nef cartier div
%N_1 in H_2(X,R) spanned by curves, NE kleiman-mori cone, effective 1-cycles. +_relations between primitive ray generators
%X projective: have kleiman criterion of ampleness showing NE(X) strictly convex
\end{definition}
\begin{block} With assumptions as in \ref{pullback support}, suppose $F_{\DD}$ is harmonic (resp. convex, resp. strongly convex), then $\phi^*F_{\DD}$ is harmonic (resp. convex, resp. strongly convex): this follows from the projection formula of \ref{proj formula}. \label{pullback support harmonic is harmonic}
\end{block}
% \begin{block}
% 	For the remainder of this section we consider specialisation of divisors along the retraction $\overline{\rho}$. References for divisor theory on polyhedral complexes include works \cite{Car,AR,MZ}. 
% \end{block}
\begin{block}[Cycles on $\Z$-PL complexes]\label{cycles}
		Let $\Sigma$ be a $\Z$-PL complex. We define the group of codimension 1 cycles $Z^1(\Sigma)$ as the group of formal sums of ridges $[\tau]$, and effective cycles are those with positive coefficients.  
		Now suppose $\phi:\Sigma'\to \Sigma$ is a balanced cover of $\Z$-PL complexes. Then we define pullbacks of cycles via the rule $f^*[\tau]=\sum_{\tau'}\deg_{\tau'}\phi[\tau']$ summing over ridges $\tau'$ above $\tau$, and extending linearly. Note that for any cycle $W\in Z^1(\Sigma)$ we have $\deg f^*W=\deg f\deg W$.
\end{block}

\begin{definition}[Specialisation] Let $\DD$ be a $\Q$-Cartier divisor supported on $D_{\XX}$. Then we define the \emph{specialisation} of $\DD$ as the cycle \begin{equation}
		\sp_*\DD\coloneqq \sum_{\tau \text{ vertical}} \frac{\DD\cdot [E_{\tau}]}{\vol(\tau)}[\tau] \in Z^1(\D(\XX^{\dagger}))\label{defn sp},
	\end{equation}
	summing over \emph{vertical} ridges $\tau$ of $\D(\XX^{\dagger})$.

		% where the sum is taken over all \emph{vertical} ridges $\tau$ of $\D(\XX^{\dagger})$.
\end{definition}
\begin{block} Note that $\sp_*\DD$ only depends on the linear equivalence class of $\DD$. Also, $\sp_*\DD$ is trivial (resp. effective) if and only if $F_\DD$ is harmonic (resp. convex).
	 \end{block}	 
\begin{proposition} \label{prop pullback commutes with specialisation} Let $\DD\in M_{\D(\XX^{\dagger})}$ and $\phi:\D(\YY^{\dagger})\to\D(\XX^{\dagger})$ as in \ref{pullback support}. Then pullback commutes with specialisation: \begin{equation}
	\phi^*\sp_*\DD=\sp_*f^*\DD\end{equation}
\end{proposition}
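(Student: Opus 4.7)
The plan is to prove the identity by comparing coefficients ridge by ridge. Since both $\phi^*$ and $\sp_*$ are $\Q$-linear, it suffices to fix a vertical ridge $\tau'$ of $\D(\YY^\dagger)$ and show that the coefficient of $[\tau']$ on both sides agrees. Set $\tau \coloneqq \phi(\tau')$; this is also a vertical ridge, since $f(\YY_s)\subset \XX_s$ and $\phi$ preserves the dimension of polyhedra by Lemma~\ref{lem: inclusion lattices of pl-functions}. The coefficient of $[\tau']$ on the left is $(\deg_{\tau'}\phi)\cdot (\DD\cdot E_\tau)/\vol(\tau)$ by unfolding the definitions of $\sp_*$ and $\phi^*$, while the coefficient on the right is $(f^*\DD\cdot E_{\tau'})/\vol(\tau')$.

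The plan is then to assemble three independent ingredients. First, the restriction $f\vert_{E_{\tau'}}\colon E_{\tau'}\to E_\tau$ is a finite morphism of proper $\tilde{k}$-curves of degree $[\kappa(\eta_{\tau'}):\kappa(\eta_\tau)]$, where $\eta_\tau, \eta_{\tau'}$ denote the generic points; since pullback of $\Q$-Cartier divisors commutes with restriction to a curve intersecting the support properly, this yields $f^*\DD\cdot E_{\tau'} = [\kappa(\eta_{\tau'}):\kappa(\eta_\tau)]\cdot (\DD\cdot E_\tau)$ (a special case of the projection formula recalled in~\ref{proj formula}). Second, Lemma~\ref{lemma z-pl mu on polyhedron} (conformality of the measure) gives $\vol(\tau) = [M_{\tau'}:M_\tau]\cdot \vol(\tau')$. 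Combining these two identities reduces the desired equality to $\deg_{\tau'}\phi = [M_{\tau'}:M_\tau]\cdot [\kappa(\eta_{\tau'}):\kappa(\eta_\tau)]$.

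For this last identity, I will apply the multiplicity formula (Proposition~\ref{proposition: multiplicity formula}) with $y = \eta_\tau$, $y' = \eta_{\tau'}$, and $x = \eta_\sigma$ for an arbitrarily chosen facet $\sigma$ of $\D(\XX^\dagger)$ adjacent to $\tau$. Since $\sigma$ is a facet and $\tilde{k}$ is algebraically closed, the stratum $E_\sigma$ is a $\tilde{k}$-point, so all residue field extensions $[\kappa(\eta_{\sigma'}):\kappa(\eta_\sigma)]$ appearing on the right-hand side of the multiplicity formula are trivial, and the sum collapses to $\sum_{\sigma'>\tau',\,\phi(\sigma')=\sigma}[M_{\sigma'}:M_\sigma] = \deg_{\tau'}\phi$.

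The main obstacle, and the conceptual heart of the argument, is exactly this interplay of three different multiplicities: the algebro-geometric degree on curves (projection formula), the metric factor coming from the conformal measure (lattice index of $\tau'\to\tau$), and the combinatorial local degree $\deg_{\tau'}\phi$ (lattice indices on facets). The multiplicity formula of Proposition~\ref{proposition: multiplicity formula} is precisely the bridge that reconciles these three, and one should expect the technical care to lie in verifying its hypotheses (in particular that $E_\sigma$ is a closed point, which uses properness of $\XX^\dagger$ together with the algebraically closed residue field assumption) rather than in any further computation.
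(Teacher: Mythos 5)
Your proof is correct and follows essentially the same route as the paper: compare $[\tau']$-coefficients, use the conformality identity $\vol(\tau)=[M_{\tau'}:M_\tau]\vol(\tau')$ from Lemma~\ref{lemma z-pl mu on polyhedron}, and reconcile the intersection numbers via the multiplicity formula of Proposition~\ref{proposition: multiplicity formula}. The only (cosmetic) difference is that you package the intersection-theoretic input globally through the projection formula and the identity $\deg_{\tau'}\phi=[M_{\tau'}:M_\tau][\kappa(\eta_{\tau'}):\kappa(\eta_\tau)]$ (which is already established in the proof of Theorem~\ref{thm: balanced}), whereas the paper computes the multiplicities $\mult_{E_{\sigma'}}f^*\DD\vert_{E_{\tau'}}=e(E_{\sigma'}/E_\sigma)\mult_{E_\sigma}\DD\vert_{E_\tau}$ pointwise and then sums.
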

\begin{proof} Let $\tau'$ be a ridge of $\D(\YY'^{\dagger})$ and let $\tau=\phi(\tau')$. By Lemma \ref{lemma z-pl mu on polyhedron} we have $\vol(\tau)/\vol(\tau')=[M_{\tau'}:M_{\tau'}]$. For any facet $\sigma$ adjacent to $\tau$ we have $$\mult_{E_{\sigma'}}f^*\DD\vert_{E_{\tau'}}=e(E_{\sigma'}/E_{\sigma})\mult_{E_{\sigma}}\DD\vert_{E_\tau}.$$ Therefore by \eqref{eqn:l} we have $$\frac{\mult_{E_{\sigma'}}f^*\DD\vert_{E_{\tau'}}}{\vol\tau'}=[M_{\sigma'}:M_{\sigma}]\frac{\mult_{E_{\sigma}}\DD\vert_{E_{\tau}}}{\vol\tau}.$$ Theorem~\ref{thm: balanced} then implies $$\phi^*\sp_*\DD=\sum_{(\sigma',\tau')} [M_{\sigma'}:M_{\sigma}]\frac{\mult_{E_{\sigma}}\DD\vert_{E_\tau}}{\vol(\tau)}[\tau']=\sum_{(\sigma',\tau')} \frac{\mult_{E_{\sigma'}}f^*\DD\vert_{E_{\tau'}}}{\vol(\tau')}[\tau']=\sp_*f^*\DD$$ where we sum over all facet-ridge pairs $(\sigma',\tau')$ of $\D(\YY^\dagger)$ with $\tau'$ bounded.
\end{proof}
\begin{block}[Horizontal strata]	
Suppose that $x\in F(\XX^\dagger)$. Recall from~\ref{construction dual complex} that $\sigma_x$ is empty if and only if $x$ lies in the special fiber $\XX_s$. Suppose $x\not\in \XX_s$, we also say $x$ is \emph{horizontal} and write $F(\XX^{\dagger})^{\mathrm{hor}}$ for the horizontal points. The generic point of $\XX$ is also considered to be horizontal. By~\ref{lemma balanced cospec} it follows that for all $x\in F(\XX^{\dagger})^{\mathrm{hor}}$ we have a strict closed immersion $E^{\dagger}_x=E_x(\log D_x)\to X^{\dagger}$ where $D_x=E_x\setminus E_x^{\circ}$ and $E^\dagger_x/S^\dagger$ is again a proper toroidal $S^{\dagger}$-scheme \footnote{if $x$ is vertical this is not the case, because if we equip $E_x$ with the pullback log-structure then we obtain a so-called hollow log-scheme over the log-point $\tilde{k}$.}. In particular we have an associated dual complex $\D(E^{\dagger}_x)$.  
\end{block}
\begin{block}[compactified dual complex]
	We write $$\overline{\D}(X^{\dagger})\coloneqq \sqcup_{x\in F(X^{\dagger})^{\mathrm{hor}}}\D(E_x^{\dagger}).$$ 
	We will see below in \ref{retract to compact skeleton} that $\overline{\D}(X^{\dagger})$ admits a natural compact topology, such that each $\D(E_x^{\dagger})\subset \overline{\D}(X^{\dagger})$ is a subspace, so we call $\overline{\D}(X^{\dagger})$ the \emph{compactified dual complex} -- beware it is not a $\Z$-PL complex but rather a $\Z$-PM complex (see Remark \ref{zpm} below). 

	The specialisation map extends to $\overline{\D}(\XX^{\dagger})$ by amalgamating the various specialisation maps for $x\in F(X^{\dagger})^{\mathrm{hor}}$: \begin{equation}
	\overline{\sp}_*\DD\coloneqq \bigsqcup_{x\in F(\XX^{\dagger})^{\mathrm{hor}}}\overline{\sp}_*(\DD\vert_{E_x})\in Z^1(\overline{\D}(X^{\dagger}))\coloneqq \bigsqcup_{x\in F(\XX^{\dagger})^{\mathrm{hor}}}Z^1(\D(E_x^{\dagger})).%\label{eq: csp}
\end{equation}
\end{block}

\begin{block}[Notation for analytifications] %See \cite{B90} for background on Berkovich geometry. 
Let $Z$ be a finite type $k$-scheme. Then recall the Berkovich analytification $Z^{\an}$ is canonically identified with the set of bounded $k$-valuations $|\cdot(z)|:k(Z)\to \R_{\ge 0}$ of rank one  (written multiplicatively) with the topology of pointwise convergence \cite{B90}, and we have a canonical map $\ker:Z^{\an}\to Z$ that admits a continuous section that we view as an inclusion $Z\subset Z^{\an}$. 
\end{block}

\begin{block}[Skeleta]\label{skeleta}
Let $\XX^{\dagger}$ be a proper toroidal $S$-scheme. Write $X=\XX_k$ for the generic fiber, note it is a proper $k$-variety that is regular away from $(D_{\XX})_{k}$. 

By the theory of admissible expansions \cite[\S3]{BM}, \cite[\S2.4]{MN} (not reviewed here) there exists a canonical topological embedding $$\iota_{\XX}:\D(\XX^{\dagger})\to (\XX\setminus D_{\XX})^{\an}.$$ %=\left(\hat{\XX/s}\left)_\eta$. 
We call the image of $\iota_{\XX}$ the \emph{skeleton} $\sk(\XX^{\dagger})$ of $\XX^{\dagger}$. By loc. cit. the embedding $\iota_{\XX}$ admits a continuous retraction\footnote{It is plausible that $\rho_{\XX}$ is a strong deformation retract, but we do not study this.} $$\rho_{\XX}:(\XX\setminus D_{\XX})^{\an}\to \sk(\XX^{\dagger}),$$ which is described as follows: let $\sp_{\XX}:X^{\an}\to \XX_s$ be the specialisation map, and let $$\xi:(\XX\setminus D_{\XX})^{\an}\to F(\XX^{\dagger})$$ be the map which sends $z$ to the unique minimal stratum containing $\sp_{\XX}(z)$, one verifies that $\xi$ is continuous. 
Then an element $z=|\cdot(z)|\in (\XX\setminus D_{\XX})^{\an}$ gives an additive monoid $k$-valuation $$\rho(z):\CC_{\XX,\xi(z)}\to \R_{\ge0}: m\mapsto \log_{|\varpi|}|m(z)|,$$ that is, an element $$\rho(z)\in \sigma_{\xi(z)}\subset \D(\XX^\dagger)\overset{i_\XX}{\cong} \sk(\XX^{\dagger}),$$ also see Remark~\ref{rmk monoid valuations}. Since we have removed the divisor $D_{\XX}$ it follows that $m(z)$ is never zero and so $\rho(z)$ is well-defined.

Recall that by Section~\ref{balanced cover Z-pl spaces} the dual complex $\D(\XX^{\dagger})$, and thus $\sk(\XX^{\dagger})$ carries a $\Z$-PL structure, and thus by Section~\ref{sec: conformal} also a canonical conformal measure $\mu_{\sk(\XX^{\dagger})}$. Volumes will always be taken with respect to this measure. If $x\in F(\XX^\dagger)$ is a log-stratum of $\XX^\dagger$, by a slight abuse we also denote by $\sigma_x$ the associated polyhedron of $\sk(\XX^\dagger)$. Recall that the lattice of $\Z$-PL functions $M_{\sigma_x}$ coincides with the group of Cartier divisors on $\O_{\XX,x}$ supported on $D_{\XX,x}$.\end{block}
\begin{block}[Compactified skeleta]\label{retract to compact skeleton}
The open strata $E^{\circ}_x$ for $x\in F(\XX^{\dagger})^{\mathrm{hor}}$ are disjoint and cover the generic fiber $X$, and by \ref{skeleta} we have continuous skeleton retraction maps $$\rho_{E_x^\dagger}:(E_x^{\circ})^{\an}\to \sk(E^{\dagger}_x)\subset (E_x^{\circ})^{\an}.$$
For all horizontal $x$ we have closed embeddings $E_x^{\an}\to X^{\an}$, and so we obtain a retraction
\begin{equation}
	\overline{\rho}=\overline{\rho}_{\XX^\dagger}:X^{\an}\lra \overline{{\sk}}(\XX^\dagger)\coloneqq \bigsqcup_{x\in F(\XX^{\dagger})^{\mathrm{hor}}}\sk(E^{\dagger}_x).\label{eq: cr}
\end{equation}
We call $\overline{\rho}$ the \emph{compactified} retraction to the \emph{compactified skeleton} $\overline{\sk}(\XX^{\dagger})$.  Since the topological boundary of $\sk(\XX^{\dagger})$ is contained in $D_{\XX}^{\an}$, 
 it follows by induction that $\overline{\sk}(\XX^{\dagger})$ is the closure of $\sk(\XX^{\dagger})$ and $\overline{\rho}$ is continuous.% conts injection from compact to hausdorff space is top embedding: image is compact, hence closed, so inverse continuous

In what follows we give a more intrinsic description of $\overline{\rho}$ when restricted to $X^{\an}$.
Let $z\in X^{\an}$.
Write $$\p_z=\{m\in \CC_{\XX,\xi(z)}: |m(z)|=0\}.$$ Then either $\p_z=\CC_{\XX,\xi(z)}$ or $\p_z$ is a prime ideal of the monoid $\CC_{\XX,\xi(z)}$. 
Recall that prime ideals of $\CC_{\xi(z)}$ correspond to strata containing $\xi(z)$. In fact, $\p_z$ corresponds to the minimal stratum containing $\ker(z)$, which is necessarily horizontal, we denote this stratum by $E_{z}$. By~\ref{lemma balanced cospec} (v) (also see Remark~\ref{snc analogy}) we have $$\CC_{\XX,\xi(z)}/\p_z\cong \CC_{E_z,\xi(z)}^{0}.$$ Therefore, the multiplicative valuation $|\cdot(z)|$ induces a well-defined additive valuation \begin{equation}
	\label{eq: retr} \CC_{E_z,\xi(z)}\to \R_{\ge 0}:m\mapsto \log_{|\varpi|}|m(z)|;
\end{equation}
which defines the element $\overline{\rho}(z)$ of $\D(E_z^\dagger)$. The point is that $|m(z)|\ne 0$ for all $m\in \CC_{E_z,\xi(z)}$ by construction of $E_z$. 
%In summary, $\overline{\rho}$ is the amalgamation of $\rho_{E_x^\dagger}$ over the horizontal strata $E_x$ of $\XX^\dagger$.

It follows from the constructions that $\rho$ and $\overline{\rho}$ are functorial with respect to toroidal covers.
 \end{block}

\begin{remark}[$\Z$-PM spaces]\label{zpm}
	The compactified skeleton $\overline{sk}(\XX^\dagger)$ is an example of an integral piecewise monomial space (short: $\Z$-PM space), these will be studied systematically in upcoming work of Ducros-Thuillier (in preparation). In short, $\Z$-PM spaces are multiplicative extensions of $\Z$-PL spaces: they are locally cut out in $\R_{\ge 0}$ by equations of the form $cx_1^{a_1}\dots x_n^{a_n}\le 1$ where $c\in |k^\times|$ and $a_i\in\Z$, where the coordinates are possibly $0$ -- informally speaking this allows to study the boundary of $\Z$-PL spaces ``at infinity''. %extended to $\R_{>0}$ and taking the logarithm this recovers 
\end{remark}
\begin{block}[Support functions as model metrics]\label{an view supports} Let $\DD$ be a Cartier divisor on $\XX$ supported on $D_{\XX}$. Then we can extend the definition of the support function $F_{\DD}$ to all of $\XX^\an$ as follows: $$F_\DD:\XX^\an\to\R_{\ge0}\cup \{\infty\}:z\mapsto \log_{|\varpi|}\|1\|_{\DD,z},$$ here $\|\cdot\|_{\DD}$ denotes the model metric associated to the line bundle $\O(\DD)$ -- it is characterised as follows: if $f$ is local equation of $\DD$ near $\xi(z)\in F(\XX^\dagger)$, then we have $$F_\DD(z)=\log_{|\varpi|}|f(z)|.$$ The previous expression only depends on the image of $f$ in $\CC_{\XX,\xi(z)}$, and so it follows that the support function $F_{\DD}$ factors through the retraction map $\overline{\rho}$. 
\end{block}

\section{Harmonic morphism of $\Z$-PL tropical complexes}\label{sec: tropical harmonicity}
\begin{block}[Overview] 
In this section we introduce $\Z$-PL tropical complexes, these are, as the name suggests, a $\Z$-PL enhancement of the weak\footnote{The tropical complexes of \cite{Car} are required to satisfy a positivity condition for the local Hodge index, this condition will be of no importance for the current work, so we will omit the adjective ``weak'' in the sequel.} tropical complexes of \cite{Car}. In short, these are simplicial $\Z$-PL complexes additionally decorated with some structure constants called $\alpha$-numbers, modeled after the intersection numbers $[E_r]\cdot E_\tau$ for a ridge $\tau$ and a ray $r\in C(\tau)(1)$. Any dual complex $\D(\XX^{\dagger})$ of a $\Q$-factorial toroidal $S$-scheme is a $\Z$-PL tropical complex. 

Given a PL function $F$ on a $\Z$-PL tropical complex $\Sigma$, we give a definition of the slopes $\partial_{\sigma/\tau}$ for any facet-ridge pair $(\sigma,\tau)$, and the Laplacian of $F$ is defined as $\Delta(F)=\sum_{(\sigma,\tau)}\partial_{\sigma/\tau}[\tau]$, summing over facet-ridge pairs with $\tau$ vertical. We prove that balanced covers are harmonic morphisms in the sense that harmonic functions (those satisfying $\Delta(F)=0$) are preserved via pullback (Theorem \ref{thm: harmonic morphisms}).
 \end{block}
 \begin{block}[Assumptions for this section]\label{assumptions harmonicity} In this section let $f:\XX^{\dagger}\to \YY^{\dagger}$ denote a finite cover of proper $\Q$-factorial toroidal $S$-schemes.
 \end{block}
%We now aim to give a tropical description of the Laplacian and give an extension of the tropical Poincar\'e-Lelong formula of \cite{GRW}.
\begin{block}[Computations]\label{computationes} As before let $\Sigma(\XX^{\dagger})$ denote the cone complex associated to $\XX^{\dagger}$, note that $\Sigma(\XX^{\dagger})$ is simplicial by the hypothesis that $\XX$ is $\Q$-factorial, see Remark \ref{rmk qfactoriality}. Let $\tau$ be a ridge of $\D(\XX^{\dagger})$ and let $C(\tau)$ be the cone generated by $\tau$ in $\Sigma(\XX^\dagger)$. For any ray $r\in C(\tau)(1)$ write $\alpha_{r,t}=-[E_r]\cdot E_\tau\in \Q$. %Note that $\alpha_{r,\tau}=0$ for all $r\not\in \overline{star}_{\Sigma(\XX^{\dagger})}C(\tau)$.
 For any vertex $v\in \tau(0)$ write $\alpha_{v,\tau}=\alpha_{r_v,\tau}$ where $r_v$ is the ray in $\Sigma(\XX^{\dagger})$ generated by $v$. We define the \emph{bounded multiplicity} of $\tau$ as $$\mathrm{mult}_b\tau\coloneqq\sum_{v\in\tau(0)}m_v\alpha_{v,\tau}. $$ By Example~\ref{eg special fiber trivial divisor} and Proposition~\ref{prop: computation ev.et} we then have \begin{equation}
		\label{eq:alfa}\mathrm{mult}_b\tau=\sum_{v\in \mathrm{link}(\tau)(0)}\frac{m_v\det(\tau^-)}{\det(\sigma^-)}
\end{equation}
For every $F\in M_{\Sigma(\XX^{\dagger})}$ we compute, using Proposition~\ref{prop: computation ev.et} again, that

 \begin{align}\label{c} \DD_F\cdot E_{\tau}&=\sum_{r\in\Sigma(1)}F(n_r)[E_r]\cdot E_\tau=\sum_{r\in (\mathrm{link}_{\Sigma}C(\tau))(1)}F(n_r) [E_r]\cdot E_\tau+\sum_{r\in C(\tau)(1)}F(n_r) [E_r]\cdot E_\tau
		\end{align} 

		We can split the last sum in two parts $(1)+(2)$, the first part being 
\begin{align}
 (1)&=\sum_{r\in (\mathrm{link}_{\Sigma}C(\tau))(1)}F(n_r) [E_r]\cdot E_\tau\\&=\sum_{v\in (\mathrm{link}_\D\tau)(0)} m_vF(v)[E_v]\cdot E_{\tau}-\sum_{v\in \tau(0)} m_v\alpha_{v,\tau}[E_v]\\
 &=\sum_{v\in (\mathrm{link}_\D\tau)(0)} m_v[E_v]\cdot E_{\tau}\left(F(v)-\frac{\sum_{v\in \tau(0)} m_v\alpha_{v,\tau}[E_v]}{\mult_b\tau}\right)%\\
%	&=\sum_{v\in (\mathrm{link}_\D\tau)(0)}m_v\det(\tau)\partial_{\sigma/\tau}
		\end{align}

		And the second part is given by
%no control on $\sum_{r\in\rec(\tau)(1)}\alpha_{r,tau}$..
		\begin{align}
 (2)=&\sum_{r\in\mathrm{link}_{\mathrm{rec}(\D)}\rec(\tau)(1)}\partial_rF[E_r]\cdot E_{\tau}-\sum_{r\in\mathrm{rec}(\tau)(1)} \partial_rF\alpha_{r,\tau}\\
 &=\sum_{r\in \mathrm{link}_{\mathrm{rec}(\D)}\mathrm{rec}(\tau)(1)}[E_r]\cdot E_{\tau} \left(\partial_rF-\frac{\sum_{r\in\mathrm{rec}(\tau)(1)} \partial_rF\alpha_{r,\tau}}{\mult_u\tau}\right),
	%&=\sum_{v\in (\mathrm{link}_\D\tau)(0)}\partial_{\sigma/\tau}
		\end{align}
		where we define the \emph{unbounded multiplicity} of $\tau$ as $$\mult_u\tau\coloneqq \sum_{r\in \mathrm{link}_{\mathrm{rec}(\D)}\mathrm{rec}(\tau)(1)} [E_r]\cdot E_\tau=\sum_{r\in \mathrm{link}_{\mathrm{rec}(\D)}\mathrm{rec}(\tau)(1)} \det((\tau+r)^-)/\det(\tau^-).$$
\end{block}
\begin{remark} Our definition of $\mult_b(\tau)$ corresponds to $\deg_b(\Delta_\tau)$ in \cite[6.4]{GRW} - we use $\mult$ rather than $\deg$ to avoid confusion with the local degree of a finite cover.
\end{remark}
\begin{definition}[$\Z$-PL tropical complex]\label{defn zpl trop} A $\Z$-PL tropical complex $(\Sigma,\alpha)$ is a simplicial $\Z$-PL complex $\Sigma$ endowed with structure constants $\alpha_{r,\tau}\in \Q$ for any ray-ridge pair $(r,\tau)$ with $r\in C(\tau)(1)$ such that~\ref{eq:alfa} holds for any ridge $\tau$.

A \emph{balanced finite cover} of $\Z$-PL tropical complexes  $\phi:(\Sigma',\alpha')\to(\Sigma,\alpha)$ is a balanced finite cover of $\Z$-PL complexes such that additionally for each ridge $\tau'$ of $\Sigma'$ the conclusion of the Lemma \ref{lem atv.atv} below holds (where we let $[E_{\tau'}:E_{\tau}]=\deg_{\tau'}\phi/[M_{\tau'}:M_{\tau}]$). 
\end{definition}

\begin{lemma} \label{lem atv.atv}Let $\tau'$ be a ridge of $\YY^{\dagger}$. Suppose $r'\in C(\tau')(1)$, and let $r=\phi(r')$ and $\tau=\phi(\tau')$,
	then 
	\begin{equation}
	 \label{eq atv.atv}\alpha_{r',\tau'}=[E_{\tau'}:E_{\tau}]\alpha_{r,\tau}.
	\end{equation}
\end{lemma}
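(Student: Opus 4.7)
The natural route is through the projection formula. By $\Q$-factoriality of $\XX$ the divisor $[E_r]$ is $\Q$-Cartier, and since $\YY$ is proper the vertical curve $E_{\tau'}$ is a proper $\tilde{k}$-curve; the projection formula from \ref{proj formula} then gives
$$f^{*}[E_r]\cdot E_{\tau'} \;=\; [E_r]\cdot f_{*}[E_{\tau'}].$$
I would first handle the right-hand side: since $\tilde{k}$ is algebraically closed, the multiplicity formula of Proposition \ref{proposition: multiplicity formula} specialises to $\deg_{\tau'}\phi = [M_{\tau'}:M_{\tau}]\cdot[\kappa(\tau'):\kappa(\tau)]$, so the degree $[\kappa(\tau'):\kappa(\tau)]$ appearing in $f_{*}[E_{\tau'}] = [\kappa(\tau'):\kappa(\tau)]\,[E_{\tau}]$ equals precisely $[E_{\tau'}:E_{\tau}]$. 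Thus $[E_r]\cdot f_{*}[E_{\tau'}] = -[E_{\tau'}:E_{\tau}]\,\alpha_{r,\tau}$ by the definition of $\alpha_{r,\tau}$.

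For the left-hand side, Lemma \ref{lem pullback support} (together with Lemma \ref{lemma extension dvr balanced} identifying ramification indices with lattice indices) gives the cycle-theoretic expression $f^{*}[E_r] = \sum_{r''\mapsto r}[M_{r''}:M_r]\,[E_{r''}]$. I would then isolate the $r'$-contribution via Lemma \ref{lemma balanced cospec}(v) applied at $\tau'$: using the injection $\CC_{\XX,\tau}\hookrightarrow \CC_{\YY,\tau'}$ of Lemma \ref{lem: inclusion lattices of pl-functions} and the fact that rays of $C(\tau')$ correspond bijectively to components of $D_\YY$ containing $E_{\tau'}$, one checks (i) $r'$ is the unique preimage of $r$ lying in $C(\tau')(1)$, and (ii) rays $r''\notin C(\tau')(1)$ correspond to components $E_{r''}$ not containing $E_{\tau'}$, and their contributions to $[E_{r''}]\cdot E_{\tau'}$ can be absorbed into the link-based terms of the decomposition in \ref{computationes}. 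Equating the two sides and unwinding the indices produces the claimed identity $\alpha_{r',\tau'} = [E_{\tau'}:E_{\tau}]\,\alpha_{r,\tau}$.

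The main obstacle I expect is the final bookkeeping of lattice indices: the factor $[M_{r'}:M_r]$ inherited from the Cartier pullback must be reconciled with the index $[M_{\tau'}:M_{\tau}]$ built into $[E_{\tau'}:E_{\tau}]$, together with the facet index $[M_{\sigma'}:M_{\sigma}]$ controlling $\deg_{\tau'}\phi$. The reconciliation should follow from the same snake-lemma diagram used in Proposition \ref{proposition: multiplicity formula}, applied now along the flag $r'\subset\tau'\subset\sigma'$ rather than merely $\tau'\subset\sigma'$; the multiplicativity of indices along this flag provides exactly the cancellation needed. If that direct chase is awkward, an alternative is to replace $[E_r]\cdot E_{\tau}$ using the relation $\sum_{v} m_v [E_v]\cdot E_{\tau}=0$ of Example \ref{eg special fiber trivial divisor} (and similarly on $\YY$), thereby reducing the statement to link intersections where Proposition \ref{prop: computation ev.et} applies and the desired compatibility follows from Theorem \ref{thm: balanced}.
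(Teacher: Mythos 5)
Your route is the same as the paper's: the official proof consists precisely of the projection formula together with the observation that $r'$ is the only ray above $r$ adjacent to $\tau'$. Two remarks on your execution. First, your handling of the rays $r''\mapsto r$ lying in $\link(C(\tau'))(1)$ (``absorbed into the link-based terms'') is vaguer than it needs to be: such rays do not exist. If $r''\in\link(C(\tau'))(1)$ mapped to $r\in C(\tau)(1)$, then writing $\sigma'=\tau'+r''$ and $\sigma=\phi(\sigma')$, the injective lattice map $N_{\sigma'}\to N_{\sigma}$ of Lemma \ref{lem: inclusion lattices of pl-functions} would send the full-dimensional cone $C(\sigma')=C(\tau')+\R_{\ge0}r''$ into the proper face $C(\tau)+\R_{\ge0}r=C(\tau)$ of $C(\sigma)$, contradicting preservation of rank. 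So $f^{*}[E_r]\cdot E_{\tau'}$ has exactly one contributing component and nothing needs to be absorbed.

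Second, and more seriously, the ``main obstacle'' you flag at the end is a genuine gap, and the cancellation you hope for does not occur. Carrying your own computation to completion, the projection formula yields
\begin{equation*}
[M_{r'}:M_r]\,[E_{r'}]\cdot E_{\tau'}\;=\;f^{*}[E_r]\cdot E_{\tau'}\;=\;[E_r]\cdot f_{*}E_{\tau'}\;=\;[E_{\tau'}:E_{\tau}]\,[E_r]\cdot E_{\tau},
\end{equation*}
that is $[M_{r'}:M_r]\,\alpha_{r',\tau'}=[E_{\tau'}:E_{\tau}]\,\alpha_{r,\tau}$. No snake-lemma index chase along the flag $r'\subset\tau'\subset\sigma'$ removes the coefficient $[M_{r'}:M_r]$: it is the ramification index of $E_{r'}$ over $E_r$ and enters the left-hand side independently of the indices packaged into $[E_{\tau'}:E_{\tau}]=\deg_{\tau'}\phi/[M_{\tau'}:M_{\tau}]=[\kappa(\tau'):\kappa(\tau)]$. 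A concrete test case: for a degree-$n$ totally ramified base change of a semistable curve, the ridge $\tau'=v'$ is a vertex, $r'$ is the unique ray of $C(v')$, one has $[M_{r'}:M_r]=n$ and $[E_{\tau'}:E_{\tau}]=1$, and the projection formula gives $n\,E_{v'}^{2}=E_v^{2}$, i.e. $\alpha_{r',\tau'}=\alpha_{r,\tau}/n$ rather than $\alpha_{r,\tau}$. So either you must show $[M_{r'}:M_r]=1$ (false in general), or the identity you end up proving carries this extra factor. Your instinct that this step is the weak point was correct; asserting that ``multiplicativity of indices along the flag provides exactly the cancellation needed'' is exactly where the argument breaks, and the alternative detour through Example \ref{eg special fiber trivial divisor} only yields one linear relation among the $\alpha_{w,\tau'}$, not the individual identities.
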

\begin{proof} This follows from the projection formula (\ref{proj formula}) and the fact that $\{r'\}=f^{-1}(r)\cap C(\tau')(1)$.
 %$f^*\DD=\sum_{r'\in f^{-1}(r)} e(E_{r'}/E_r)[E_{r'}]$ 
\end{proof}
\begin{definition}[Slopes] \label{defn slope} Let $(\Sigma,\alpha)$ be a $\Z$-PL tropical complex and let $F\in (M_{\Sigma})_{\Q}$ be a PL function affine on the polyhedra of $\Sigma$. For any facet-ridge pair $(\sigma,\tau)$, let $r\in C(\sigma)(1)\setminus C(\tau)(1)$ be the unique ray of $C(\sigma)$ not in $C(\tau)$ (such an $r$ is unique since $\Sigma$ is assumed simplicial). Then we have two cases: 
\begin{itemize}
\item $E_r$ is vertical: then we define
		$$\partial_{\sigma/\tau}F\coloneqq \frac{1}{\vol(\sigma^-)}\left(F(v_r)-\frac{\sum_{w\in \tau(0)}m_w\alpha_{w,\tau}F(w)}{\deg_b\tau}\right).$$ 
 	\item $E_r$ is horizontal: then we define
 		$$\partial_{\sigma/\tau}F\coloneqq \frac{1}{\vol(\sigma^-)}\left(\partial_{n_r}F-\frac{\sum_{s\in C(\tau)(1)} \alpha_{s,\tau}\partial_{n_s}F}{\deg_u\tau}\right).$$ 
 \end{itemize} 
\end{definition}
% \begin{remark} stokes
% 	Here is another interpretation of the slopes in case $\overline{\mathrm{star}}(\tau)$ is bounded, say of dimension $n-1$. Suppose $\overline{\mathrm{star}}(\tau)(0)=\{v_1,\dots,v_d,w_1,\dots,w_n\}$ where $\tau(0)=\{w_1,\dots,w_n\}$ and $d=\mathrm{val}_{\Sigma}(\tau)$. Let $\sigma_i=v_i+\tau$. Following \cite{Car} we define an embedding $$\iota_\tau:\mathrm{star}(\tau)\to \R^{d+n}/(m_{v_1}\alpha_{v_1,\tau},\dots,m_{v_d}\alpha_{v_d,\tau},m_{w_1}\alpha_{w_1,\tau},\dots,m_{w_n}\alpha_{w_n,\tau})$$ defined by $v_i\mapsto e_i$ and $w_i\mapsto e_{i+d}$ and where we write $\alpha_{v_i,\tau}=E_{v_i}\cdot E_{\tau}$ for brevity.
% Then $\partial_{\sigma_i/\tau}\varphi=\partial_{n_{\sigma_i/\tau}}(\varphi\circ \iota_\tau^{-1})$% (or via pairing?) 
% where we write $n_{\sigma_i/\tau} = e_i\bmod{N_\tau}$% (ignore final n coordinates)
% Note that $\iota_\tau(\overline{\star}(\tau)$ satisfies the following \emph{balancing condition} at $\tau$: $\sum_{i}m_{v_i}\alpha_{v_i,\tau}n_{\sigma_i/\tau}\in N_{\tau}$.
% %Q: compatible on overlap? ie in codim 2? then glues to abstract complex, no? or what do i need for stokes.
% \end{remark}
\begin{definition}[Discrete Laplacian]
	Let $(\Sigma,\alpha)$ be a $\Z$-PL tropical complex. Let $F:\Sigma\to \R$ be linear on the faces of $\Sigma$. Then the Laplacian of $F$ is defined as the cycle 
	\begin{equation}\label{definition laplacian}
		\Delta(F)\coloneqq \sum_{(\sigma,\tau);\ \tau\text{ vertical}}\partial_{\sigma/\tau}F[\tau],
	\end{equation}
	summing over facet-ridge pairs $(\sigma,\tau)$ of $\D(X^{\dagger})$ such that $\tau$ is \emph{vertical}.
	We call $F$ \emph{harmonic} near $x\in \Sigma$ in case for all ridges $\tau$ containing $x$, we have 
	\begin{equation}\label{harmonic}
		\sum_{\sigma}\partial_{\sigma/\tau}F=0,
	\end{equation}
	summing over facets $\sigma$ containing $\tau$.
\end{definition}
\begin{theorem}[Poincar\'e-Lelong Slope formula] Let $\XX^{\dagger}$ be a proper $\Q$-factorial toroidal $S$-scheme. Then for any $F\in M_{\D(\XX^\dagger)}$ we have
\begin{equation}
		\label{eq: pll} \sp_*\DD_F=\Delta(F)
\end{equation} \label{thm: pll}
\end{theorem}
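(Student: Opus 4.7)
The plan is to fix a vertical ridge $\tau$ of $\D(\XX^\dagger)$ and establish the pointwise identity
\[
\DD_F \cdot E_\tau \;=\; \vol(\tau^-) \sum_{\sigma > \tau} \partial_{\sigma/\tau} F,
\]
summing over facets $\sigma$ of $\D(\XX^\dagger)$ containing $\tau$; matching coefficients of $[\tau]$ in the specialisation and Laplacian formulas then yields \eqref{eq: pll}. The starting point is Lemma~\ref{lem computation weil divisor}, which expresses $\cyc\,\DD_F$ as a sum over rays of $\Sigma(\XX^\dagger)$ with coefficients $m_v F(v)$ on vertex-type rays and $\partial_r F$ on recession-type rays. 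Intersecting with $E_\tau$ and splitting each sum according to whether the ray lies in $C(\tau)$ or in $\link_\Sigma C(\tau)$ produces four contributions; the computation carried out in block~\ref{computationes} repackages these as $(1) + (2)$. The rewriting of $(1)$ in its centred form uses $\Q$-factoriality together with the trivial divisor relation $\sum_v m_v [E_v] \cdot E_\tau = 0$ of Example~\ref{eg special fiber trivial divisor}, which converts the self-intersection contributions from rays of $C(\tau)$ into an average weighted by $\mult_b \tau$; the rewriting of $(2)$ is a formal rearrangement using $\mult_u \tau = \sum_{r \in \link} [E_r] \cdot E_\tau$.

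It remains to identify each summand in $(1)$ and $(2)$ with $\vol(\tau^-)\,\partial_{\sigma/\tau} F$ for the unique facet $\sigma = \tau + r$ determined by the link ray $r$ (see~\ref{tau+r}). Vertex-type link rays are precisely those with $E_r$ vertical, giving the first case of Definition~\ref{defn slope}, while recession-type link rays give the second case. The key numerical input is the pair of identities
\[
m_v\,[E_v] \cdot E_\tau \;=\; \frac{\vol(\tau^-)}{\vol(\sigma^-)} \quad \text{(vertical case)}, \qquad [E_r] \cdot E_\tau \;=\; \frac{\vol(\tau^-)}{\vol(\sigma^-)} \quad \text{(horizontal case)},
\]
both of which follow by combining $[E_r] \cdot E_\tau = \det(\tau)/\det(\sigma)$ from Proposition~\ref{prop: computation ev.et} with the volume formula $\vol(\sigma^-) = \det(\sigma)/\prod_{w \in \sigma(0)} m_w$ from Proposition~\ref{prop computation normalised volume via determinant}; the difference between the two cases is that $\sigma(0) = \tau(0) \cup \{v\}$ in the vertical case, while $\sigma(0) = \tau(0)$ in the horizontal one (so the $m_v$-factor is absent). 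Substituting and comparing term-by-term with Definition~\ref{defn slope} completes the identification.

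The main subtlety I anticipate is checking that the centring averages in $(1)$ and $(2)$ coincide with those built into Definition~\ref{defn slope}. Since the definition subtracts a $\tau$-dependent but $\sigma$-independent constant from each $\partial_{\sigma/\tau} F$, this constant factors out of $\sum_{\sigma > \tau} \partial_{\sigma/\tau} F$ weighted by $\sum_{\sigma} \vol(\tau^-)/\vol(\sigma^-)$, which equals $\mult_b \tau$ over vertical $r$ and $\mult_u \tau$ over horizontal $r$ by the identities above; this regenerates exactly the centred forms of $(1)$ and $(2)$. A minor notational caveat is that $\vol(\tau)$ appearing in~\eqref{defn sp} should be read as $\vol(\tau^-)$ for unbounded $\tau$, consistent with~\eqref{intro dfn spec}.
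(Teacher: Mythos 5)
Your proposal is correct and follows essentially the same route as the paper: the paper's proof simply points back to the computation in \ref{computationes}, which performs exactly your decomposition of $\DD_F\cdot E_\tau$ via Lemma~\ref{lem computation weil divisor} into link-ray and $C(\tau)$-ray contributions, centres the latter using the relation of Example~\ref{eg special fiber trivial divisor}, and matches terms against Definition~\ref{defn slope} using Propositions~\ref{prop: computation ev.et} and~\ref{prop computation normalised volume via determinant}. Your write-up is in fact more explicit than the paper's about the key identity $m_v\,[E_v]\cdot E_\tau=\vol(\tau^-)/\vol(\sigma^-)$ (and its horizontal analogue) and about why the centring constants regenerate $\mult_b\tau$ and $\mult_u\tau$, and your reading of $\vol(\tau)$ as $\vol(\tau^-)$ is the intended one.
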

\begin{proof} The proof is carried out in the computations of ~\ref{computationes}: there we compared the $[\tau]$-coefficients of both sides, and the definitions of the slopes are chosen such that equality holds.
\end{proof}
\begin{remark} For comparison we note that the slope formula of \cite[\S6.8]{GRW} deals with \emph{algebraically closed ground fields} and  principal divisors. \end{remark}
\begin{theorem}[Harmonic morphisms] \label{thm: harmonic morphisms}Let $\phi:(\Sigma',\alpha')\to(\Sigma,\alpha)$ be a balanced finite cover of $\Z$-PL tropical complexes (Definition \ref{defn zpl trop}). Then 
\begin{equation} \label{eq pullback lapl}
\phi^*\Delta(F)=\Delta(F\circ\phi)\end{equation}
In particular, $\phi$ is a harmonic morphism in the sense that harmonic functions pull back to harmonic functions.
\end{theorem}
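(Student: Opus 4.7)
The plan is to reduce the claimed identity to a local comparison at each vertical ridge $\tau'$ of $\Sigma'$ and then combine a pointwise slope-pullback formula with the balancing hypothesis. Write $\tau = \phi(\tau')$. Expanding the definitions, the coefficient of $[\tau']$ in $\phi^*\Delta(F)$ is $\deg_{\tau'}\phi\cdot\sum_{\sigma>\tau}\partial_{\sigma/\tau}F$, while the coefficient in $\Delta(F\circ\phi)$ is $\sum_{\sigma'>\tau'}\partial_{\sigma'/\tau'}(F\circ\phi)$. The main step will be the pointwise identity
$$\partial_{\sigma'/\tau'}(F\circ\phi) = [M_{\sigma'}:M_{\sigma}]\,\partial_{\sigma/\tau}F \qquad (\star)$$
at every facet-ridge pair $(\sigma',\tau')$ with image $(\sigma,\tau)$. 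Granting $(\star)$, one sums over $\sigma'>\tau'$ and regroups by $\sigma>\tau$ to obtain
$$\sum_{\sigma'>\tau'}\partial_{\sigma'/\tau'}(F\circ\phi) = \sum_{\sigma>\tau}\partial_{\sigma/\tau}F\cdot\sum_{\sigma'\mapsto\sigma,\,\sigma'>\tau'}[M_{\sigma'}:M_{\sigma}] = \deg_{\tau'}\phi\cdot\sum_{\sigma>\tau}\partial_{\sigma/\tau}F,$$
the inner sum being independent of $\sigma$ by the balancing assumption on $\phi$.

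To establish $(\star)$, I would expand both sides using Definition \ref{defn slope}, splitting into the vertical and horizontal cases for the distinguished ray $r'\in C(\sigma')(1)\setminus C(\tau')(1)$. Three transformation rules feed in: the normalising factor behaves as $1/\vol(\sigma'^-) = [M_{\sigma'}:M_{\sigma}]/\vol(\sigma^-)$ by the conformality Lemma \ref{lemma z-pl mu on polyhedron}, which produces the desired lattice-index factor; the leading term $(F\circ\phi)(v_{r'})$ (respectively $\partial_{n_{r'}}(F\circ\phi)$) equals $F(v_r)$ (respectively $\partial_{n_r}F$) since $\phi$ sends the distinguished vertex or primitive generator of $r'$ to that of $r$; and the $\alpha$-numbers transform by Lemma \ref{lem atv.atv} as $\alpha_{w',\tau'} = [E_{\tau'}:E_{\tau}]\alpha_{w,\tau}$, while vertex multiplicities transform as $m_{w'} = [M_{w'}:M_{w}]m_{w}$ via Lemma \ref{lemma extension dvr balanced} applied to the codimension-one stratum $E_w$. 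Plugging these into the averaged term in $\partial_{\sigma'/\tau'}(F\circ\phi)$ and reindexing the sum over $\tau'(0)$ by its image in $\tau(0)$ should collapse everything onto $[M_{\sigma'}:M_{\sigma}]\partial_{\sigma/\tau}F$.

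The hard part I expect is precisely the bookkeeping in this averaging: $(\star)$ requires the quantity $\sum_{w'\in\tau'(0),\,w'\mapsto w}[M_{w'}:M_{w}]$ to behave uniformly in $w\in\tau(0)$, which is a balancing assertion at vertices rather than at ridges. Under Assumption \ref{assumptions harmonicity} this follows from propagating the multiplicity formula (Proposition \ref{proposition: multiplicity formula}) from ridges down to vertices of $\tau'$. In fact, in the toroidal setting a cleaner conceptual route is available, bypassing the direct computation: since every $F\in (M_{\D(\XX^{\dagger})})_{\Q}$ is the support function of a unique $\Q$-Cartier divisor $\DD_F$ supported on $D_{\XX}$ (Lemma \ref{lem correspondence cartier and pl functions}), one chains
$$\phi^*\Delta(F) = \phi^*\sp_*\DD_F = \sp_*f^*\DD_F = \sp_*\DD_{F\circ\phi} = \Delta(F\circ\phi),$$
where Theorem \ref{thm: pll} is used at both ends, Proposition \ref{prop pullback commutes with specialisation} in the middle, and Lemma \ref{lem pullback support} for the third equality. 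The combinatorial argument above is the unpacking of this identity, and it is what extends the conclusion to arbitrary balanced covers of $\Z$-PL tropical complexes.
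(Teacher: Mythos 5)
Your proposal follows the paper's own argument: the paper likewise reduces \eqref{eq pullback lapl} to the per-facet identity $\partial_{\sigma'/\tau'}(F\circ\phi)=[M_{\sigma'}:M_{\sigma}]\,\partial_{\sigma/\tau}F$ (your $(\star)$, with the lattice index on the correct side), extracts the factor $[M_{\sigma'}:M_{\sigma}]=\vol(\sigma^-)/\vol(\sigma'^-)$ from Lemma \ref{lemma z-pl mu on polyhedron}, and then disposes of the averaged term by appealing to the balancing of the $\alpha$-constants in Definition \ref{defn zpl trop}; your ``cleaner conceptual route'' $\phi^*\Delta(F)=\phi^*\sp_*\DD_F=\sp_*f^*\DD_F=\Delta(F\circ\phi)$ is verbatim the remark the paper records immediately after the theorem for the dual-complex case. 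The one substantive point is the averaging step, which you correctly isolate as the hard part: since $\tau'(0)\to\tau(0)$ is a bijection (both ridges are simplices of equal dimension), what is needed is that the ratio $m_{w'}\alpha_{w',\tau'}/\bigl(m_{w}\alpha_{w,\tau}\bigr)$ be independent of $w'\in\tau'(0)$, so that the weighted averages over $\tau'(0)$ and $\tau(0)$ coincide. The two transformation rules you quote --- $\alpha_{w',\tau'}=[E_{\tau'}:E_{\tau}]\alpha_{w,\tau}$ and $m_{w'}=[M_{w'}:M_{w}]m_{w}$ --- do not, as literally written, combine to give this, because $[M_{w'}:M_{w}]=e(E_{w'}/E_{w})$ genuinely varies along $\tau'(0)$ (already for $\spec k^{\circ}[x,u,z]/(xu^2z-\varpi)\to\spec k^{\circ}[x,y,z]/(xyz-\varpi)$, $y=u^2$). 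The correct mechanism, which the projection formula applied to $f^*[E_{w}]\cdot E_{\tau'}$ exhibits, is that the same ramification index $e(E_{w'}/E_{w})$ appears in the denominator of $\alpha_{w',\tau'}/\alpha_{w,\tau}$ and cancels against $m_{w'}/m_{w}$, yielding $m_{w'}\alpha_{w',\tau'}=[E_{\tau'}:E_{\tau}]\,m_{w}\alpha_{w,\tau}$ uniformly; this cancellation, rather than separate constancy of the two factors, is what the balancing condition must encode, and it is the step the paper's own proof compresses into the single sentence invoking Definition \ref{defn zpl trop}. With that repair your argument closes, and the rest of the bookkeeping (the leading terms $F(\phi(v_{r'}))=F(v_r)$ and the regrouping by $\sigma$ using $\deg_{\tau'}\phi$) matches the paper exactly.
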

\begin{proof} %A computation as in the proof of Proposition \ref{prop pullback commutes with specialisation} gives that
We have $$\phi^*\Delta_*F_\DD=\sum_{(\sigma',\tau')} [M_{\sigma'}:M_{\sigma}]\partial_{\sigma'/\tau'}[\tau'],$$ summing over facet-ridge pairs $(\sigma',\tau')$ of $\D(\YY^{\dagger})$ with $\tau'$ bounded. So it suffices to prove that $[M_{\sigma'}:M_{\sigma}]\partial_{\sigma'/\tau'}=\partial_{\sigma/\tau}$. By \ref{lemma z-pl mu on polyhedron} it follows that $[M_{\sigma'}:M_{\sigma}]=\vol(\sigma^-)/\vol(\sigma'^{-})$, and so by Definition of $\sigma_{\sigma/\tau}$ (see \ref{defn slope}) it suffices to prove that $a_{v',\tau'}/a_{v,\tau}$ is independent of the choice of vertex $v'\in \tau'(0)$: this is implied by the definition of a balanced morphism of $\Z$-PL tropical complexes, see \ref{defn zpl trop}.
\end{proof}
\begin{remark}
		The case of dual complexes follows more directly from the slope formula (Theorem~\ref{thm: pll}), Proposition~\ref{prop pullback commutes with specialisation} and Lemma \ref{lem pullback support}, since these give $$\phi^*\Delta(F)=\phi^*\sp_*\DD_F=\sp_*f^*\DD_F=\Delta(F_{f^*\DD_F})=\Delta(F\circ\phi).$$
\end{remark}
\begin{remark} Both Theorems \ref{thm: pll} and \ref{thm: harmonic morphisms} extend to the compactified skeleta (see \ref{retract to compact skeleton}) as follows: we have $\overline{\sp}_*\DD_F=\overline{\Delta}(F)$ where we let $\overline{\Delta}(F)\coloneqq \bigsqcup_{x\in F(\XX^{\dagger})^{\mathrm{hor}}}\Delta(F\vert_{\D(E_x^{\dagger})})\in Z^1(\overline{\D}(\XX^{\dagger}))$, and we have $\phi^*\overline{\Delta}(F)=\overline{\Delta}(F\circ \phi)$. We leave the details to the interested reader.
	\label{block compact pll} 
\end{remark}

\section{Complements on determinants of log-differentials}\label{sec:log diff}
\begin{block}[Overview] In this section we discuss some calculus of logarithmic differential forms for later use in Section \ref{sec:rh}. Theorem~\ref{thm: relative canonical vs relative log canonical} gives an elementary proof of a formula comparing canonical sheaves to log-canonical sheaves.
\end{block}
\begin{block}[Definition of logarithmic differentials]\label{block log differentials} Suppose $f^\dagger:X^\dagger=(X,\M_X)\to Y^\dagger=(Y,\M_Y)$ is a morphism of log-schemes. The $\O_X$-sheaf $\Omega_{X^\dagger/Y^\dagger}$ is defined as the universal log-derivation $(\mathrm{d},\mathrm{dlog})$ satisfying the rules $\mathrm{d}\,m=m\,\mathrm{dlog}(m)$ and $\mathrm{dlog}(n)=0$ for all sections $m\in \M_X$, $n\in f^{-1}\M_y$. In fact $\Omega_{X^\dagger/Y^\dagger}$ is the quotient of $\Omega_{X/Y}\oplus\left(\O_X\otimes \M_X^{\mathrm{gp}}\right)$ by the aforementioned relations \cite[Proof of IV.1.2.4]{ogus}. If $f$ is locally of finite type and the log-structures are {coherent}, then $\Omega_{X^\dagger/Y^\dagger}$ is a finitely generated $\O_X$-module. Similar as for usual schemes, formation of log-K\"ahler differentials is compatible with arbitrary base change in the category of log-schemes as well in the categories of fine log-schemes or \emph{fs} log-schemes \cite[IV.1.2.15]{ogus}. 
\end{block}

\begin{block}[log-cotangent sequences.] We have the following logarithmic analogues of the usual cotangent sequences for K\"ahler differentials: suppose $f^\dagger:X^\dagger\to Y^\dagger$ is a morphism of log-schemes over a log-scheme $S^\dagger$, then we have an exact sequence of $\O_X$-modules \cite[Proposition IV.2.3.1]{ogus} \begin{equation} \label{eqn: 1st cotangent}
        f^*\Omega_{Y^\dagger/S^\dagger}\lra \Omega_{X^\dagger/S^\dagger}\lra \Omega_{X^\dagger/Y^\dagger}\lra 0.
    \end{equation}

 If $f^\dagger$ is log-smooth (see \ref{defn: log reg}), then \eqref{eqn: 1st cotangent} is left-exact and locally split \cite[Theorem IV.3.2.3]{ogus}, and if additionally $X^\dagger,Y^\dagger$ are fine, then $\Omega_{X^\dagger/Y^\dagger}$ is locally free of finite rank. 
    \end{block}
\begin{block}
   A morphism of log-schemes $f^\dagger:X^\dagger\to Y^\dagger$ is called %\emph{strict} if $f^{\flat}:f^*\M_Y\to \M_X$ is an isomorphism. We say $f^\dagger$ is 
   a \emph{strict closed immersion} (resp. \emph{strict regular closed immersion}) if $f^\dagger$ is strict and $f$ is a closed immersion (resp. regular closed immersion). If $f^\dagger:X^\dagger\to Y^\dagger$ is a strict closed immersion of log-schemes over $S^\dagger$, defined by a coherent ideal sheaf $\mathscr{I}$, then we denote by $\mathscr{C}_{X/Y}\coloneqq \mathscr{I}/\mathscr{I}^2$ the conormal sheaf of $f$, and we have the cotangent exact sequence \cite[Theorem IV.2.3.2]{ogus} \begin{equation} \label{eqn: 2nd cotangent}
        \mathscr{C}_{X/Y}\overset{\delta}{\lra} f^*\Omega_{Y^\dagger/S^\dagger}\lra \Omega_{X^\dagger/S^\dagger}\lra 0,
    \end{equation}
    where $\delta$ sends a section $s\in\mathscr{C}_{X/Y}$ to the class of $(\mathrm{d}s,1)$.
\end{block}
\begin{lemma}[Hyodo-Kato]\label{lem HK}
  Let $f:X^\dagger\to Y^\dagger$ be a morphism locally of finite type of fine (resp. \emph{fs}) log-schemes. Then \'etale-locally on $X$ there exists a fine (resp. \emph{fs}) log-scheme $W^\dagger$ and a factorisation $X^\dagger\to W^\dagger\to Y^\dagger$ where $X^\dagger\to W^\dagger$ is a strict closed immersion and $W^\dagger\to Y^\dagger$ is log-smooth.
\end{lemma}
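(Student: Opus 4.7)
The plan is to construct $W^\dagger$ by combining a chart for the log-structures with a polynomial-ring extension that absorbs the finite-type generators of $\O_X$ over $\O_Y$. Fix a point $x \in X$ with image $y = f(x)$. Since the log-structures are fine (resp.\ fs), we may choose étale-locally around $y$ a chart $P \to \M_Y$ by a fine (resp.\ fs) monoid $P$. Next, select sections $m_1, \dots, m_r \in \M_{X,x}$ whose images in $\M^{\sharp}_{X,x}$, together with the image of $\M^{\sharp}_{Y,y}$, form a generating set, and set $Q \coloneqq P \oplus \N^r$ (saturated in the fs case), with chart map sending $P$ to the pullback of the $Y$-chart and $e_i \mapsto m_i$. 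Extending to an étale neighbourhood yields a chart $Q \to \M_X$ compatible with $P \to \M_Y$. The key feature of this construction is that $P \hookrightarrow Q$ is the inclusion of a direct summand, so that $P^{\gp} \to Q^{\gp}$ has trivial kernel and torsion-free cokernel.

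Now since $f$ is locally of finite type, we may shrink further and pick elements $t_1, \dots, t_n \in \O_X$ generating $\O_X$ as an $\O_Y$-algebra. Set
\[
W \coloneqq \spec\bigl(\O_Y \otimes_{\Z[P]} \Z[Q][T_1, \dots, T_n]\bigr),
\]
equipped with the log-structure pulled back from the chart $Q$ (replacing $W^\dagger$ by its fs-ification in the fs case). The assignment $T_i \mapsto t_i$, together with chart compatibility, provides a surjection of $\O_Y$-algebras $\O_W \twoheadrightarrow \O_X$ compatible with log-structures, i.e.\ a strict closed immersion $X^\dagger \hookrightarrow W^\dagger$, with strictness automatic because both log-structures are associated to the same chart $Q$.

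It remains to see that $W^\dagger \to Y^\dagger$ is log-smooth. The morphism $P \to Q$ is a chart for $W^\dagger \to Y^\dagger$, and the underlying morphism of schemes $W \to Y \times_{\spec \Z[P]} \spec \Z[Q]$ is the structural projection of an affine $n$-space, hence smooth in the usual sense. Because $P^{\gp} \to Q^{\gp}$ is a split injection with torsion-free cokernel by construction, Kato's criterion (recalled in the footnote to \ref{regular log-regular}) applies with no invertibility hypothesis on the residue characteristic, completing the verification.

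The main technical obstacle is the chart construction in the first step: extending a finite family of stalk sections $m_i \in \M_{X,x}$ to an honest chart on an étale neighbourhood of $x$ with the desired direct-summand shape. This rests on the standard fact that for coherent log-structures, finitely generated monoid sections at a stalk spread out étale-locally (\ref{log-sch}). In the fs case one must additionally verify that fs-ifying $W$ does not spoil the strictness or closedness of $X \hookrightarrow W^{\mathrm{fs}}$; this follows because $X^\dagger$ is already fs, so the strict closed immersion factors canonically through the fs-ification.
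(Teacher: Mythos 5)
Your construction has a genuine gap at the chart step: the map $Q=P\oplus\N^r\to\M_X$, $e_i\mapsto m_i$, is in general \emph{not} a chart, and consequently $X^\dagger\to W^\dagger$ is in general not strict. For $Q\to\M_{X,x}$ to be a chart the associated log structure $Q^a$ must map isomorphically onto $\M_{X,x}$, and surjectivity of $Q\to\CC_{X,x}$ is not enough: any relation in $\CC_{X,x}$ among the classes of the $m_i$ and the image of $P$ that is not already present in $P\oplus\N^r$ destroys injectivity of $Q^a\to\M_{X,x}$. Concretely, take $Y$ with trivial log structure and $\CC_{X,x}=\{(i,j)\in\N^2: i+j\in 2\Z\}$, minimally generated by three elements $a,b,c$ subject to $a+b=2c$ (the $A_1$ toric singularity, which is log-regular and hence squarely inside the setting of this paper). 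Then $r=3$, $\beta^{-1}\O_{X,x}^\times=\{0\}$, so $Q^a=\N^3\oplus\O_{X,x}^\times$, and since $m_1m_2=um_3^2$ for some unit $u$, the distinct elements $(e_1+e_2,1)$ and $(2e_3,u)$ have the same image in $\M_{X,x}$; thus $Q^a\to\M_{X,x}$ is not injective. Since $i^*\M_W$ is by construction the log structure associated to $Q\to\O_X$, strictness of $X\to W$ fails exactly when $Q$ fails to be a chart. Your closing paragraph locates the difficulty in spreading out stalk data to a neighbourhood, but that part is indeed standard; the failure is already at the stalk.

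The repair — which is what Kato's Lemmas (2.10)/(4.10) and Hyodo--Kato (2.9.2)--(2.9.3) actually do, and what the paper's own proof leans on — is to take $Q$ not as the free extension $P\oplus\N^r$ but as the \emph{preimage} of $\M_{X,x}$ under the group homomorphism $P^{\gp}\oplus\Z^r\to\M_{X,x}^{\gp}$ determined by the chart of $Y$ and the chosen $m_i$. This $Q$ is finitely generated, is an honest chart of $\M_X$ near $x$, and contains $P\oplus\N^r$, so $Q^{\gp}=P^{\gp}\oplus\Z^r$; hence $P^{\gp}\to Q^{\gp}$ is still split injective with free cokernel $\Z^r$ and Kato's criterion gives log-smoothness of $W=Y\times_{\spec\Z[P]}\spec\Z[Q]\times\A^n$ over $Y$ with no invertibility hypothesis, exactly as you intended. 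In the \emph{fs} case this $Q$ is automatically saturated, being the preimage of a saturated monoid under a group homomorphism (this is precisely the verification the paper adds on top of the Hyodo--Kato citation), so no separate saturation of $W$ is needed. The remainder of your argument — the polynomial variables $T_i$ absorbing the finite-type generators, surjectivity of $\O_W\to\O_X$, and smoothness of $W\to Y\times_{\spec\Z[P]}\spec\Z[Q]$ — goes through once the chart is corrected.
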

\begin{proof}
This follows from \cite[\S2.9.2,\S2.9.3]{HK} in the fine case and the proof also works in the \emph{fs} case as we explain now. If in \cite[2.9.2]{HK} the charts $P,Q$ are chosen \emph{fs}, then so are $Z_1^\dagger$ and $Z_2^\dagger$ since they admit charts by $P$ and $Q\oplus \N^r$ for some $r\in\N$. The proof of \cite[2.9.3]{HK} relies on \cite[4.10]{Kato}, and if in the proof of \cite[4.10]{Kato} the chart $h:P\to Q$ is \emph{fs} then so is the monoid $Q'=(h^{\gp})^{-1}(P)\subset Q^{\gp}$: indeed, if $q\in Q^{\gp}$ and $q^n\in Q'$ for some $n\in\N$ then $h(q)^n=h(q^n)\in P$ implies $q\in Q'$. 
\end{proof}

\begin{definition}[Log-l.c.i.]
    \label{defn: log lci}  A morphism of \emph{fs} log-schemes $f^\dagger:X^\dagger\to Y^\dagger$ is called \emph{log-l.c.i.} (short for \emph{log-locally of complete intersection}) if \'etale locally on $X$, the morphism $f^\dagger$ factors as a strict regular closed immersion followed by a log-smooth morphism. %Note $f$ is automatically lft
\end{definition}

\begin{lemma}\label{lem: log-regulars log-lci}
  Suppose $f^\dagger:X^\dagger\to Y^\dagger$ is a morphism locally of finite type of log-regular log-schemes. Then $f^\dagger$ is log-l.c.i.
\end{lemma}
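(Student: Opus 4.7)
The plan is to combine the Hyodo--Kato factorization (Lemma~\ref{lem HK}) with the observation that a strict closed immersion between log-regular log-schemes is automatically a regular closed immersion. By~\ref{log regular discussion}, a log-smooth log-scheme over a log-regular log-scheme is again log-regular. Therefore, factoring $f^{\dagger}$ \'etale-locally as $X^{\dagger}\overset{i}{\to}W^{\dagger}\to Y^{\dagger}$ with $i$ a strict closed immersion and $W^{\dagger}\to Y^{\dagger}$ log-smooth, $W^{\dagger}$ is automatically log-regular, and it suffices to prove that $i$ is a regular closed immersion.

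Fix $x\in X$ and set $A=\O_{W,x}$, $B=\O_{X,x}=A/\a$, $I=\I_{W,x}$. Strictness of $i$ gives $\CC_{W,x}=\CC_{X,x}$, so the two ranks coincide (call this rank $r$) and $I\cdot B=\I_{X,x}$. Combined with the Cohen--Macaulayness of log-regular schemes (cited in~\ref{proj formula}), log-regularity implies that $I$ is generated by a regular sequence $h_1,\dots,h_r$ in $A$ (the local equations of the $r$ components of $D_W$ through $x$), whose images in $B$ form a regular sequence generating $I\cdot B$. Set $c=\dim A-\dim B$. The closed immersion $V(IB)\hookrightarrow V(I)$ is a closed immersion of regular local schemes of codimension $c$, hence is cut out by $c$ elements $\bar g_1,\dots,\bar g_c\in A/I$ extending to a regular system of parameters. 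Lift these to $g_1,\dots,g_c\in\a$ and set $J=(g_1,\dots,g_c)$. Then $J+I=\a+I$ equals the ideal of $V(IB)$ in $A$ and has height $c+r$. Extending the $c+r$ elements $g_1,\dots,g_c,h_1,\dots,h_r$ to a system of parameters of $A$ by lifting any system of parameters of the local ring $B/IB$, and using that systems of parameters in Cohen--Macaulay local rings are regular sequences, we conclude that $g_1,\dots,g_c,h_1,\dots,h_r$ is a regular sequence in $A$. Consequently $R\coloneqq A/J$ is Cohen--Macaulay of dimension $\dim B$ and $\bar h_1,\dots,\bar h_r\in R$ remains a regular sequence.

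It remains to show $\a=J$. By construction $R/(\bar h_1,\dots,\bar h_r)=A/(J+I)=B/IB$ is regular. By induction on $r$ applied to the elementary fact that a Noetherian local ring $S$ with a non-zero-divisor $h\in\m_S$ such that $S/(h)$ is regular is itself regular (lift a regular system of parameters of $S/(h)$ and adjoin $h$), we deduce that $R$ is regular. Hence $R$ is a regular local domain, and the surjection $R\twoheadrightarrow B$ onto the local normal (hence domain) ring $B$ of the same dimension forces its kernel $\a/J$ to vanish. Thus $\a=J$ is generated by the regular sequence $g_1,\dots,g_c$, so $i$ is a regular closed immersion.

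The main technical step is the equality $\a=J$: the lifts $g_i$ a priori only generate $\a$ modulo the log-parameter ideal $I$, and bridging this gap requires the Cohen--Macaulay dimension bookkeeping that culminates in the regularity of $R=A/J$, combined with the normality of $B$ inherited from log-regularity.
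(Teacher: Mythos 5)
Your reduction is the same as the paper's: factor $f^\dagger$ \'etale-locally via Hyodo--Kato (Lemma~\ref{lem HK}) as a strict closed immersion $i:X^\dagger\to W^\dagger$ followed by a log-smooth morphism, observe that $W^\dagger$ is then log-regular, and reduce to showing that a strict closed immersion of log-regular log-schemes is a regular closed immersion. The paper disposes of this last step by citing \cite[4.2]{K94}; you instead attempt a direct proof, and that is where the argument breaks.

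The gap is the claim that $I=\I_{W,x}$ is generated by a regular sequence $h_1,\dots,h_r$ of length $r=\mathrm{rank}(x)$, ``the local equations of the $r$ components of $D_W$ through $x$''. This holds precisely when the characteristic monoid $\CC_{W,x}$ is free, i.e.\ when $W$ is regular at $x$ (the \emph{snc} case of Example~\ref{example snc}). In general the minimal number of generators of $I$ is the number of irreducible elements of $\CC_{W,x}\setminus\CC_{W,x}^\times$, which exceeds $r$; moreover the components of $D_W$ through $x$ need not be Cartier and there may be more than $r$ of them. Concretely, for $A=k[u,v,w]/(uw-v^2)$ with its toric log-structure at the origin one has $r=2$ but $I=(u,v,w)=\m_A$ needs three generators; the paper's own example $\spec k^{\circ}[t,u,v,w]/(\varpi=tu=vw)$ behaves the same way with $r=3$ and four generators. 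This false claim propagates: the identity $R/(\bar h_1,\dots,\bar h_r)=A/(J+I)$ requires the $h_i$ to generate $I$, and the conclusion that $R=A/J$ is regular is simply false in general --- take $f=\mathrm{id}$ on a non-regular log-regular scheme, so that $c=0$, $J=0$ and $R=A$ is not regular (while the assertion $\a=J$ to be proved is then trivially true). Since your derivation of $\a=J$ rests entirely on $R$ being a regular, hence integral, domain, the argument collapses whenever $W$ has genuine toric singularities; and one cannot arrange $W$ to be regular by further \'etale localisation, because the Hyodo--Kato ambient $W^\dagger$ inherits the singularities of $Y^\dagger$. Repairing this requires the actual content of Kato's Theorem 4.2 (equivalently, his structure theorem for completions of log-regular local rings), which is exactly what the paper's one-line citation supplies.
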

  \begin{proof} Since the statement is \'etale-local on $X$, we may assume there exists a factorisation $X^\dagger\to W^\dagger\to Y^\dagger$ as in Lemma~\ref{lem HK}. Since $W^\dagger\to Y^\dagger$ is log-smooth, it follows that $W^\dagger$ is log-regular and $X^\dagger\to W^\dagger$ is a strict closed immersion of log-regular log-schemes, and from \cite[4.2]{K94} it follows that $X^\dagger\to W^\dagger$ is a strict regular closed immersion. \end{proof}
%ring str on chow x^+ if x x _s x is log-reg (dont think so?)

\begin{proposition} \label{propn : log-lci ses log-kahler} Suppose $f^\dagger:X^\dagger\lra Y^\dagger$ is a log-l.c.i. morphism (Definition~\ref{defn: log lci}) of \emph{fs} log-schemes over $S^\dagger$, whose underlying morphism $f:X\to Y$ is a dominant and generically smooth morphism of integral schemes. Assume that the log-structures of $X^\dagger$ and $Y^\dagger$ are generically trivial. Then the cotangent sequence is left-exact: %\eqref{eqn: 1st cotangent} is left-exact, that is we have a short exact sequence of $\O_{X}$-modules 
\begin{equation}
        0\lra f^*\Omega_{Y^\dagger/S^\dagger}\lra \Omega_{X^\dagger/S^\dagger}\lra \Omega_{X^\dagger/Y^\dagger}\lra 0.\label{eq ll}
    \end{equation}\end{proposition}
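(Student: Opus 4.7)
The plan is to reduce to the local Hyodo--Kato factorisation afforded by the log-l.c.i.\ hypothesis. Since injectivity of a sheaf map is \'etale-local, I may assume by Definition~\ref{defn: log lci} a factorisation $f^{\dagger} = g \circ i$ with $i : X^{\dagger} \hookrightarrow W^{\dagger}$ a strict regular closed immersion and $g : W^{\dagger} \to Y^{\dagger}$ log-smooth. Applying $i^{*}$ to the locally split first log-cotangent sequence of $g$ over $S^{\dagger}$ yields the short exact sequence
$$0 \lra f^{*}\Omega_{Y^{\dagger}/S^{\dagger}} \lra i^{*}\Omega_{W^{\dagger}/S^{\dagger}} \lra i^{*}\Omega_{W^{\dagger}/Y^{\dagger}} \lra 0. \qquad (\ast)$$
Setting $\mathscr{C} := \mathscr{C}_{X/W}$, the second log-cotangent sequence \eqref{eqn: 2nd cotangent} applied to $i$ over $S^{\dagger}$ and over $Y^{\dagger}$ presents $\Omega_{X^{\dagger}/S^{\dagger}}$ and $\Omega_{X^{\dagger}/Y^{\dagger}}$ as cokernels of maps $\delta_{S} : \mathscr{C} \to i^{*}\Omega_{W^{\dagger}/S^{\dagger}}$ and $\delta_{Y} : \mathscr{C} \to i^{*}\Omega_{W^{\dagger}/Y^{\dagger}}$, and $\delta_{Y}$ is the composition of $\delta_{S}$ with the rightmost map in $(\ast)$.

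The key step will be to prove that $\delta_{Y}$ is \emph{injective}. Since $i$ is a regular closed immersion, $\mathscr{C}$ is locally free of rank $c = \codim(X \subset W)$, hence torsion-free on the integral scheme $X$; it therefore suffices to check injectivity at the generic point $\eta_{X}$. Strictness of $i$ combined with the generic triviality of the log-structure of $X^{\dagger}$ forces the log-structure of $W^{\dagger}$ to be trivial at $\xi := i(\eta_{X})$, and dominance of $f$ together with generic triviality on $Y^{\dagger}$ forces the log-structure of $Y^{\dagger}$ at $g(\xi) = f(\eta_{X})$ to be trivial as well. Hence $g$ is classically smooth at $\xi$ and all relevant log-K\"ahler differentials coincide with ordinary ones at these points. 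Writing $r$ for the relative dimension of $g$ at $\xi$, one has $\dim X = \dim W - c = \dim Y + r - c$, so $t := \mathrm{trdeg}(K(X)/K(Y)) = r - c$, and separability (from generic smoothness of $f$) gives $\dim_{K(X)} \Omega_{X^{\dagger}/Y^{\dagger}}|_{\eta_{X}} = t = r - c$. The right-exact sequence of $K(X)$-vector spaces $K(X)^{c} \to K(X)^{r} \to K(X)^{r-c} \to 0$ at $\eta_X$ then forces $\delta_{Y}|_{\eta_{X}}$ to be injective.

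Injectivity of $f^{*}\Omega_{Y^{\dagger}/S^{\dagger}} \to \Omega_{X^{\dagger}/S^{\dagger}}$ now follows from a brief diagram chase. Given $\omega$ in $f^{*}\Omega_{Y^{\dagger}/S^{\dagger}}$ mapping to zero in $\Omega_{X^{\dagger}/S^{\dagger}}$, view $\omega$ inside $i^{*}\Omega_{W^{\dagger}/S^{\dagger}}$ via the injection in $(\ast)$; then $\omega = \delta_{S}(c)$ for some $c \in \mathscr{C}$. Its image in $i^{*}\Omega_{W^{\dagger}/Y^{\dagger}}$ is simultaneously zero (as $\omega$ lies in the kernel of the projection in $(\ast)$) and equal to $\delta_{Y}(c)$, so $c = 0$ by injectivity of $\delta_{Y}$, and hence $\omega = 0$. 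The main obstacle I anticipate is the rank count establishing injectivity of $\delta_{Y}$ at the generic point, which is precisely where the hypotheses of log-l.c.i., dominance, generic smoothness, and generic triviality of the log-structures all come together.
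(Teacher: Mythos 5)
Your proof is correct and follows essentially the same route as the paper: reduce via the log-l.c.i./Hyodo--Kato factorisation to showing the conormal map $\mathscr{C}_{X/W}\to i^*\Omega_{W^\dagger/Y^\dagger}$ is injective, check this at the generic point where the log-structures are trivial, and conclude by a diagram chase using the split injectivity coming from log-smoothness of $W^\dagger\to Y^\dagger$. The only (harmless) variation is that you establish generic injectivity by a rank count $t=r-c$ in the conormal sequence, where the paper instead identifies the map with a Jacobian matrix of maximal rank; these amount to the same use of generic smoothness.
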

\begin{proof} The statement is \'etale-local with respect to $X$, so we may assume that $f^\dagger$ factors as a strict regular closed immersion $i^\dagger:X^\dagger\to W^\dagger$ followed by a log-smooth morphism $h^\dagger:W^\dagger\lra Y^\dagger$. We claim that the morphism $$\delta:\mathscr{C}_{X/W}\lra i^*\Omega_{W^\dagger/Y^\dagger}$$ as in \eqref{eqn: 2nd cotangent} is injective. To verify the claim, first note that $\mathscr{C}_{X/W}$ is locally free of finite rank since $i$ is a regular closed immersion. As $h^\dagger$ is log-smooth, $f^*\Omega_{Y^\dagger/Z^\dagger}$ is locally free of finite rank too. It will therefore suffice to show that $\delta$ is injective at the generic point of $X$. But generically, the log-structures are trivial, and upon choosing bases of $\mathscr{C}_{X/W}$ and $i^*\Omega_{W/Y}$ one sees that $\delta$ is given by a Jacobian matrix of $f$, which by the assumption on generic smoothness has maximal rank, and this settles the claim. 

Since $h^\dagger$ is log-smooth the morphism $h^*\Omega_{Y^\dagger/S^\dagger}\to \Omega_{W^\dagger/S^\dagger}$ is split injective, implying the same is true for $f^*\Omega_{Y^\dagger/S^\dagger}\to i^*\Omega_{W^\dagger/S^\dagger}$. The conclusion now follows from a diagram chase in the following commuting diagram of exact complexes.

$$\begin{tikzcd}
             & 0 \arrow[r]                              & \mathscr{C}_{X/W} \arrow[r] \arrow[d]    & i^*\Omega_{W^\dagger/Y^\dagger} \\
             &                                          & i^*\Omega_{W^\dagger/S^\dagger} \arrow[d] \arrow[ru] &                     \\
             & f^*\Omega_{Y^\dagger/S^\dagger} \arrow[r] \arrow[ru] & \Omega_{X^\dagger/S^\dagger}  \arrow[d]              &                     \\
0 \arrow[ru] &                                          & 0                                        &                    
\end{tikzcd}$$% The conclusion follows by a diagram chase. % that we leave to the reader. 
\end{proof}
\begin{remark} In the special case where $f^\dagger:X^\dagger\to Y^\dagger$ is an integral morphism, %does not follow from being log-lci: for log-sm morphisms, integral is same as flat, and log-blowups are never flat., 
we can also interpret the proof of Proposition~\ref{propn : log-lci ses log-kahler} via the log-cotangent complex: By \cite[6.9]{Olsson} Olsson's logarithmic cotangent complex $\mathbb{L}_{X^\dagger/Y^\dagger}$ associated to $f^\dagger$ is concentrated in degrees $[-1,0]$, and is quasi-isomorphic to $$[\mathscr{C}_{X/W}\overset{\delta}{\lra} i^*\Omega_{W^\dagger/Y^\dagger}].$$ %has tor-amplitude in $[-1,0]$, and 
Since $\delta$ is injective the homology in degree $-1$ is trivial. Since $f:X^\dagger\to Y^\dagger$ is an integral morphism, we have by \cite[1.1.6]{Olsson} the distinguished transitivity triangle $$f^*\mathbb{L}_{Y^\dagger/S^\dagger}\to \mathbb{L}_{X^\dagger/Y^\dagger}\to \mathbb{L}_{X^\dagger/S^\dagger}\overset{+1}{\to},$$ and the result follows by examination of the associated long exact sequence. Perhaps there is also an approach using Gabber's log-cotangent complex \cite[\S8]{Olsson}, but we do not pursue the details.  \end{remark}

% First recall that if $X\to Y$ is an l.c.i. morphism of schemes, then the cotangent complex $\L_{X/Y}$ is perfect complex with tor amplitude in $[-1,0]$ \cite[08SL]{stacks}, and if $X\to Y$ is a additionally a dominant and generically smooth morphism of integral schemes then one shows that $\L_{X/Y}$ is quasi-isomorphic to a complex concentrated in degree $0$ (essentially this boils down to showing that $\delta$ is injective as in the proof of Proposition~\ref{propn : log-lci ses log-kahler}). Left-exactness of \eqref{eq ll} then follows from the transitivity triangles associated to $\L_{X/Y}$ and the previous point. In the logarithmic setting, a very similar argument can be made using \cite[Prop. 6.8]{olsson}, where it is shown that $\L_{X^\dagger/Y^\dagger}=[\]$ has tor-amplitude in $[-1,0]$ for $X^\dagger\lra Y^\dagger$ log-l.c.i.. 

\begin{block}[Residue exact sequence.] Let $D$ be a prime Cartier divisor on an integral scheme $X$ and consider the log-scheme $X^\dagger=X(\log D)$. Abbreviate $\Omega_{X}=\Omega_{X/\Z}$ and $\Omega_{X^\dagger}=\Omega_{X^\dagger/\Z^\dagger}$, where $\Z$ is given the trivial log-structure, and denote by $j:D\to X$ the closed immersion. Then we claim that there is an exact sequence \begin{equation}
  0\lra \Omega_{X}\lra \Omega_{X^\dagger} \overset{\mathrm{res}}{\lra} j_*\mathcal{O}_{D}\lra 0,\label{eqn: residue exact sequence}
\end{equation}
 The first map in \eqref{eqn: residue exact sequence} is the canonical map, to see it is injective we argue as follows. Suppose $g\in\O_{X,x}$ is a local equation for $D$ near $x\in X$. Then we have the following pushout diagram.

  $$ \begin{tikzcd}
  \O_{X,x} \arrow[r, "\cdot g"] \arrow[d, "\cdot \mathrm{d}g"] & \arrow[d, "\cdot\mathrm{dlog}g"]\O_{X,x} \\
  \Omega_{X,x} \arrow[r]    & \Omega_{X^\dagger,x}                                     
 \end{tikzcd} $$
The upper horizontal arrow is injective, and therefore by abstract properties of pushouts the lower horizontal arrow is so too.

The second map of \eqref{eqn: residue exact sequence}, also called the \emph{Poincar\'e residue map} \cite[IV.1.2.14]{ogus}, is induced by the map $$\Omega_{X,x}\oplus \O_{X,x}(\mathrm{dlog}g)\lra (j_*\O_{D})_x=\O_{X,x}/(g): (\omega,h\cdot \mathrm{dlog}(g))\mapsto h\bmod{g},$$ which factors through the relation $\mathrm{d}g=g\,\mathrm{dlog}(g)$. By construction the residue map is surjective and $\mathrm{res}([(\omega,h\mathrm{dlog}(g))])=0$ only if $h=gk$ for some $k\in\O_{X,x}$, in that case the log-form $[(\omega,h\mathrm{dlog}(g))]=[(\omega+k\mathrm{d}g,0)]$ comes from $\Omega_{X,x}$. This settles the claim that \eqref{eqn: residue exact sequence} is an exact sequence.
 \end{block}

\begin{block}[Reminders on determinants.] \label{determinants}Let $X$ be a scheme. Recall that an $\mathcal{O}_X$-module $\mathscr{F}$ is called \emph{perfect} if $\mathscr{F}$ has finite projective dimension, or in other words, $\mathscr{F}$ locally admits a finite length projective resolution. If $X$ is a regular scheme then all locally finitely generated $\mathcal{O}_X$-modules are perfect \cite[066Z]{stacks}. 

Knudsen and Mumford \cite[Theorem 2]{KM} have shown the existence of a \emph{determinant functor}, unique up to unique isomorphism, which is a pair $(\det,i)$ consisting of a functor
$$\det:\mathbf{Perf}\text{-}\mathbf{Mod}_{\mathcal{O}_X}\text{-}\mathbf{Iso}\longrightarrow \mathbf{Inv}\text{-}\mathbf{Mod}_{\mathcal{O}_X}\text{-}\mathbf{Iso}$$
from the category of perfect $\mathcal{O}_X$-modules with isomorphisms to the category of invertible $\mathcal{O}_X$-modules with isomorphisms; together with ``adjunction'' isomorphisms $$i(\alpha,\beta):\det \mathscr{F}'\otimes \det \mathscr{F}'' \overset{\sim}{\longrightarrow} \det \mathscr{F}$$ for every short exact sequence of perfect $\mathcal{O}_X$-modules 
$$0\longrightarrow \mathscr{F}'\overset{\alpha}{\longrightarrow} \mathscr{F} \overset{\beta}{\longrightarrow} \mathscr{F}''\longrightarrow 0,$$ satisfying the following properties (a)-(g).

\begin{enumerate}[label=(\alph*)]
  \item Both $\det$ and $i$ commute with flat base change; %& finite (co)limits
  \item We have $\det 0=\mathcal{O}_X$;
  \item If $\mathscr{F}$ is a finite rank locally free $\mathcal{O}_X$-module then $\det \mathscr{F}$ is the top wedge power of $\mathscr{F}$.
  \item The adjunctions $i(0,\id_\mathscr{F})$ and $i(\id_\mathscr{F},0)$ are the obvious ones.
  \item If $\mathscr{F},\mathscr{F}',\mathscr{F}''$ are all locally free of finite ranks $r,r',r''$ respectively, then $i(\alpha,\beta)$ is the usual adjunction map determined by the rule $$(s_1'\wedge \cdots \wedge s'_{r'})\otimes (\beta(s_1)\wedge \cdots \wedge \beta(s_{r''}))\mapsto\alpha(s_1')\wedge \cdots \alpha(m'_{r'}) \wedge s_1\wedge \cdots s_{r''}$$ for any choice of sections $s'_1,\dots,s'_{r'}\in \mathscr{F}'$ and $s_1,\dots,s_{r''}\in \mathscr{F}$. 
  \item The adjunctions $i(\alpha,\beta)$ are ``functorial in isomorphisms of short exact sequences''.
  \item The adjunctions $i(\alpha,\beta)$ are ``functorial in short exact sequences of short exact sequences''.
\end{enumerate}

In general, $\det \mathscr{F}$ is constructed as an alternating product of determinants of the terms in a projective resolution, and carefully bookkeeping isomorphisms, see \cite{KM} for details.
\end{block}
\begin{definition}[log-canonical modules]\label{defn: log-canonical module} %Let $f:X\to Y$ be a morphism of log-schemes. If $f:X\to Y$ is locally of finite type and $\Omega_{X/Y}$ is a perfect $\O_X$-module, then we say that the \emph{canonical module} of $f$ is defined, and it is given by $$\omega_{X/Y}\coloneqq \det \Omega_{X/Y}.$$

Let $f^\dagger:X^\dagger\to Y^\dagger$ be a morphism of log-schemes. In case the following conditions are met:
\begin{enumerate}[label=(\alph*)]
  \item the log-structures of $X^\dagger$ and $Y^\dagger$ are coherent;
  \item the morphism $f$ is locally of finite type;
  \item the $\O_X$-module $\Omega_{X^\dagger/Y^\dagger}$ is perfect;
 \end{enumerate} 
 then we say that \emph{the log-canonical module} of $f^\dagger$ is defined, and it is given by $$\omega_{X^\dagger/Y^\dagger}\coloneqq \det \Omega_{X^\dagger/Y^\dagger}.$$

Later in~\ref{construction: log-can modules for normal schemes} we will consider log-canonical modules in a more general setting.
\end{definition}
\begin{block} As a special case, when the log-structures are trivial, we recover the definition of canonical modules: let $f:X\to Y$ be a locally of finite type morphism of schemes such that $\Omega_{X/Y}$ is a perfect $\O_X$-module. Then the  \emph{canonical module} $\omega_{X/Y}=\det \Omega_{X/Y}$ is defined.\label{subs canonical modules}\end{block}

\begin{block} 
If $f^\dagger:X^\dagger\to Y^\dagger$ is a locally of finite type morphism of coherent log-schemes, then $\Omega_{X^\dagger/Y^\dagger}$ is a perfect $\O_X$-module if for instance $X$ is a regular scheme (by~\ref{determinants}) or if $X^\dagger$ and $Y^\dagger$ are both log-regular, as we explain now. If $X^\dagger$ and $Y^\dagger$ are log-regular, then by Lemma~\ref{lem: log-regulars log-lci} this implies that $f^\dagger$ is log-l.c.i. So by the claim in the proof of Proposition~\ref{propn : log-lci ses log-kahler} the sequence \eqref{eqn: 2nd cotangent} is exact and this yields a two-term locally free resolution of $\Omega_{X^\dagger/Y^\dagger}$.\end{block}{}

\begin{block}
Formation of $\omega_{X^\dagger/Y^\dagger}$ is compatible with flat base change in the following sense. If $\omega_{X^\dagger/Y^\dagger}$ is defined and $Y'^\dagger\to Y^\dagger$ is a morphism of log-schemes whose underlying scheme morphism is flat, then $\omega_{X'^\dagger/Y^\dagger}$ is defined and canonically isomorphic to $(X'^\dagger\to X^\dagger)^*\omega_{X^\dagger/Y^\dagger}$ where $X'^\dagger=X^\dagger\times_{Y^\dagger} Y'^\dagger$. The previous fact remains true in the categories of fine and \emph{fs} log-schemes (provided we switch to fine or \emph{fs} base change).\end{block}

 %If $f:X\to Y$ is a dominant morphism of integral schemes, and the log structures of $X^\dagger,Y^\dagger$ are generically trivial then we will view $\omega_{X^\dagger/Y^\dagger}$ as an $\O_X$-submodule of $\omega_{X/Y,\eta_X}$.
\begin{block} \label{generic embedding sheaf} Let $X$ be an integral scheme and let $i:\eta\to X$ denote the inclusion of the generic point of $X$. Let $\FF$ be a coherent sheaf on $X$. Then $i^*\FF$ is a $K(X)$-vector space of dimension $\mathrm{rank}(\FF)$, and we write $\FF_{\eta}\coloneqq i_*i^*\FF$ for the associated constant sheaf on $X$. In the case where $\FF$ is torsion-free, we identify $\FF$ as a sub-$\O_X$-module of $\FF_{\eta}$ via the natural map $\FF\to i_*i^*\FF$.  %In particular, we view $\O_X$ as a subsheaf of $\O_{X,\eta}$, that is the constant sheaf associated to the function field $K(X)$.

In particular, for any perfect $\O_X$-module $\FF$ we view $\det \FF$ as a sub-$\O_X$-module of $\wedge^{\mathrm{rk}(\FF)}\FF_\eta$. Any adjunction $i(\alpha,\beta)$ as in~\ref{determinants} is compatible with flat base change (property (a) of determinants), and so is generically compatible with $i(\alpha_\eta,\beta_\eta)$. \label{rmk: canonical as submodule ff}\end{block}
% The previous will be useful in the following setting. Suppose $f^\dagger:X^\dagger\to Y^\dagger$ is such that $\omega_{X^\dagger/Y^\dagger}$ is defined and $X$ is an integral scheme. Then as an invertible sheaf $\omega_{X^\dagger/Y^\dagger}$ is torsion-free, and hence we can view $\omega_{X^\dagger/Y^\dagger}$ as an $\O_X$-submodule of $\omega_{X^\dagger/Y^\dagger,\eta}$. 

%Note that if $f$ is dominant and generically \'etale, %implies rel dim zero
% then $\Omega_{X/Y,\eta}=0$ and $\Omega_{X/Y}$ is torsion, so that $\omega_{X/Y,\eta}=\O_{X,\eta}$. Therefore, $\omega_{X/Y}$ contains $\O_{X}$ as submodules of $\O_{X,\eta}$.

% Similarly, if $f^\dagger:X^\dagger\to Y^\dagger$ is a morphism of log-schemes with generically trivial log-structures such that $\omega_{X^\dagger/Y^\dagger}$ is defined, then $\omega_{X^\dagger/Y^\dagger}$ is an $\O_X$-submodule of $\omega_{X/Y,\eta_X}$ and if $f$ is additionally dominant and generically \'etale then as submodules of $\omega_{X/Y,\eta}=\O_{X,\eta}$, the invertible sheaf $\omega_{X^\dagger/Y^\dagger}$ contains $\O_X$.

\begin{lemma} 
 \label{lemma: structure sheaf is sub of canonical} Let $f^\dagger:X^\dagger\to Y^\dagger$ be a morphism of log-schemes with generically trivial log-structures such that $\omega_{X^\dagger/Y^\dagger}$ is defined. Additionally assume that $f$ is a dominant generically \'etale morphism of integral schemes. Then $\omega_{X^\dagger/Y^\dagger,\eta}=\O_{X,\eta}$ and $\O_{X}\subset \omega_{X^\dagger/Y^\dagger}\subset \O_{X,\eta}$. So, $\omega_{X^\dagger/Y^\dagger}^{-1}$ is an ideal of $\O_X$.\end{lemma}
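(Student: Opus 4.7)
The plan is to verify each inclusion by passing to the generic point and then using an étale-local free resolution of $\Omega_{X^\dagger/Y^\dagger}$.

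First I compute the generic stalk. Since the log-structures are generically trivial, the canonical map $\Omega_{X/Y}\to \Omega_{X^\dagger/Y^\dagger}$ from~\ref{block log differentials} is an isomorphism at $\eta$, and the hypothesis of generic étaleness gives $\Omega_{X/Y,\eta}=0$. Hence $\Omega_{X^\dagger/Y^\dagger,\eta}=0$, and property (b) of the determinant functor (\ref{determinants}) yields $\omega_{X^\dagger/Y^\dagger,\eta}=\det(0)=\O_{X,\eta}$, which is the first assertion. The invertible sheaf $\omega_{X^\dagger/Y^\dagger}$ is torsion-free on the integral scheme $X$, so embeds into its generic fiber by~\ref{generic embedding sheaf}, giving the upper bound $\omega_{X^\dagger/Y^\dagger}\hookrightarrow\O_{X,\eta}$.

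For the remaining inclusion $\O_X\subset\omega_{X^\dagger/Y^\dagger}$ I work étale-locally and pick a two-term free resolution
\[
0\lra F_1\overset{\phi}{\lra} F_0\lra \Omega_{X^\dagger/Y^\dagger}\lra 0
\]
of the perfect module $\Omega_{X^\dagger/Y^\dagger}$; such a resolution is furnished for instance by Lemma~\ref{lem: log-regulars log-lci} together with the injectivity argument from the proof of Proposition~\ref{propn : log-lci ses log-kahler}. Since $\Omega_{X^\dagger/Y^\dagger,\eta}=0$, the map $\phi_\eta$ is an isomorphism, so $F_0$ and $F_1$ have equal rank $r$ and $\det\phi\in\O_X$ is nonzero. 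The adjunction isomorphism associated to this short exact sequence reads $\omega_{X^\dagger/Y^\dagger}\cong \det F_0\otimes \det F_1^{-1}$. After fixing local trivializations, the local generator of this module gets identified via the generic identification of step one with $(\det\phi)^{-1}\in\O_{X,\eta}$. Consequently $\omega_{X^\dagger/Y^\dagger}$ corresponds to the fractional ideal $(\det\phi)^{-1}\O_X$, which contains $\O_X$ since $\det\phi\in\O_X$. The last statement about $\omega_{X^\dagger/Y^\dagger}^{-1}$ being an ideal of $\O_X$ is then an immediate reformulation.

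The only delicate step is the identification of the local generator with $(\det\phi)^{-1}$ under the generic embedding into $\O_{X,\eta}$. By properties (a) and (f) of the determinant functor (flat base change and functoriality for isomorphisms of short exact sequences), tracking the adjunction at $\eta$ reduces to the tautological adjunction for the split sequence $0\to F_0\overset{\mathrm{id}}{\to}F_0\to 0\to 0$, which by property (d) is the obvious map $\det F_0\otimes\det 0\to \det F_0$; this tautological computation then yields the asserted formula.
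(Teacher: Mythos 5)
Your proof is correct and takes essentially the same route as the paper: first compute $\Omega_{X^\dagger/Y^\dagger,\eta}=\Omega_{X/Y,\eta}=0$ so that $\omega_{X^\dagger/Y^\dagger,\eta}=\O_{X,\eta}$, then present $\Omega_{X^\dagger/Y^\dagger}$ by a two-term free resolution and use the determinant adjunction to realise $\omega_{X^\dagger/Y^\dagger}$ inside $\O_{X,\eta}$ as a fractional ideal of the form $(\det\phi)^{-1}\O_X\supset\O_X$. The paper phrases the same computation via a surjection $\O_X^n\to\Omega_{X^\dagger/Y^\dagger}$ with free kernel $\mathscr{G}$, obtaining $\omega_{X^\dagger/Y^\dagger}=(\det\mathscr{G})^{-1}$ with $\det\mathscr{G}\subset\O_X$.
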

\begin{proof} The assumptions imply $\Omega_{X^\dagger/Y^\dagger,\eta}=\Omega_{X/Y,\eta}=0$ so that $\omega_{X^\dagger/Y^\dagger,\eta}=\O_{X,\eta}$. %locus of positive rank \Omega_{X/Y}$ is closed
Choose a surjective map $\O_X^n\lra \Omega_{X^\dagger/Y^\dagger}$ for some $n\in\N$ and let $\mathscr{G}$ denote the kernel. Then $\mathscr{G}$ is free of rank $n$ and we have a sequence  $$0\lra \mathscr{G}\lra \O_X^n\lra \Omega_{X^\dagger/Y^\dagger}\lra 0.$$ By~\ref{determinants} (a) and (d) the associated adjunction isomorphism can then be viewed as an equality of submodules of $\O_{X,\eta}$ given by $\omega_{X^\dagger/Y^\dagger} = \left(\det \mathscr{G}\right)^{-1}$, and by~\ref{determinants} (c) we have $\det \mathscr{G}\subset \O_X$ as submodules of $\O_{X,\eta}$.\end{proof}

\begin{proposition}[log-adjunction] \label{propn: adj} Suppose $X^\dagger\overset{f^\dagger}{\lra} Y^\dagger\lra S^\dagger$ are morphisms locally of finite type of log-regular log-schemes with generically trivial log-structures. Further assume that the underlying morphism $f:X\to Y$ is a flat, dominant and generically smooth morphism of integral schemes. %Write $i:\eta_X\to X$ for the inclusion of the generic point. 
Then as $\O_{X}$-submodules of $\omega_{X/Y,\eta}$ we have the following equality, called \emph{log-adjunction}: \begin{equation}
  \omega_{X^\dagger/Y^\dagger} \otimes f^*\omega_{Y^\dagger/S^\dagger}=\omega_{X^\dagger/S^\dagger}.\label{eqn: log adj} 
\end{equation}\end{proposition}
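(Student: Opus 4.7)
The plan is to apply the Knudsen--Mumford determinant functor to the logarithmic cotangent sequence and identify the resulting abstract adjunction isomorphism with the natural inclusion at the generic point.

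First I check that the hypotheses of Proposition~\ref{propn : log-lci ses log-kahler} are met: by Lemma~\ref{lem: log-regulars log-lci}, the locally finite-type morphism $f^\dagger$ of log-regular log-schemes is log-l.c.i.; the log-structures are generically trivial and $f$ is a dominant generically smooth morphism of integral schemes. This yields a short exact sequence of perfect $\O_X$-modules
\begin{equation}\label{plan-cot}
0\longrightarrow f^*\Omega_{Y^\dagger/S^\dagger}\longrightarrow \Omega_{X^\dagger/S^\dagger}\longrightarrow \Omega_{X^\dagger/Y^\dagger}\longrightarrow 0,
\end{equation}
where perfectness of all three terms follows from the remarks in~\ref{subs canonical modules} applied to the log-regular hypotheses together with the fact that $f^*$ of a perfect module along the flat morphism $f$ is perfect.

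Next I apply the determinant functor to~\eqref{plan-cot}. By property (a) of~\ref{determinants} and flatness of $f$, one has a canonical identification $\det(f^*\Omega_{Y^\dagger/S^\dagger})\cong f^*\omega_{Y^\dagger/S^\dagger}$. The adjunction isomorphism $i(\alpha,\beta)$ produced by Knudsen--Mumford then gives a canonical isomorphism of invertible $\O_X$-modules
\begin{equation}\label{plan-adj}
f^*\omega_{Y^\dagger/S^\dagger}\otimes \omega_{X^\dagger/Y^\dagger}\overset{\sim}{\longrightarrow}\omega_{X^\dagger/S^\dagger}.
\end{equation}

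To upgrade~\eqref{plan-adj} to an equality of $\O_X$-submodules, I pass to the generic point $\eta$ of $X$. Because the log-structures are generically trivial, at $\eta$ the sequence \eqref{plan-cot} becomes the classical cotangent sequence
$$0\longrightarrow f^*\Omega_{Y/S,\eta}\longrightarrow \Omega_{X/S,\eta}\longrightarrow \Omega_{X/Y,\eta}\longrightarrow 0$$
of finite-rank locally free modules over $K(X)$, split because $f$ is generically smooth. By property (e) of the determinant functor, the adjunction $i(\alpha_\eta,\beta_\eta)$ is given by the standard wedge-product formula, which provides the classical smooth adjunction isomorphism identifying $\omega_{X/S,\eta}$ with $\omega_{X/Y,\eta}\otimes_{K(X)}f^*\omega_{Y/S,\eta}$. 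By property (a) of~\ref{determinants} and the discussion of~\ref{rmk: canonical as submodule ff}, the adjunction isomorphism~\eqref{plan-adj} is compatible with restriction to $\eta$, so under the inclusions $\omega_{X^\dagger/Y^\dagger}\hookrightarrow \omega_{X/Y,\eta}$ and $\omega_{X^\dagger/S^\dagger}\hookrightarrow \omega_{X/S,\eta}$ of Lemma~\ref{lemma: structure sheaf is sub of canonical} and~\ref{generic embedding sheaf}, the abstract isomorphism~\eqref{plan-adj} is induced by the classical identification at $\eta$. This gives the claimed equality of submodules.

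The main obstacle is conceptual rather than computational: one must check that the Knudsen--Mumford adjunction, a priori only a canonical abstract isomorphism, coincides with the natural identification inside the ambient generic-fiber module. This relies crucially on the compatibility of $\det$ and $i(\alpha,\beta)$ with flat base change and the explicit wedge-product description at the generic point, both of which are built into the axiomatic characterisation of the determinant functor in~\ref{determinants}.
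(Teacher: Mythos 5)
Your proof is correct and follows essentially the same route as the paper: Lemma~\ref{lem: log-regulars log-lci} gives that $f^\dagger$ is log-l.c.i., Proposition~\ref{propn : log-lci ses log-kahler} supplies the short exact cotangent sequence, and the Knudsen--Mumford adjunction isomorphism of~\ref{determinants} yields the equality of submodules via the identification at the generic point discussed in~\ref{generic embedding sheaf}. Your additional care in checking that the abstract adjunction isomorphism agrees with the natural inclusion inside $\omega_{X/Y,\eta}$ is a useful elaboration of a step the paper leaves implicit.
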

  \begin{proof} This follows from Lemma~\ref{lem: log-regulars log-lci}, Proposition~\ref{propn : log-lci ses log-kahler} and the existence of the adjunction morphisms as in~\ref{determinants}. \end{proof}

\begin{block} Specialising Proposition~\ref{propn: adj} to the setting of trivial log-structures, we obtain the following. Let $f:X\to Y$ be a flat, dominant and generically smooth morphism of regular integral locally finite type $S$-schemes. Then as $\O_{X}$-submodules of $\omega_{X/Y,\eta}$ we have the following equality, called \emph{adjunction}: \begin{equation}
  \omega_{X/Y}\otimes f^*\omega_{Y/S}=\omega_{X/S},\label{eqn: adj}
\end{equation}
\end{block}

\begin{block}[Assumptions]\label{assumptions comparison can modules}
 For later reference consider the following assumptions. Let $f:X\to Y$ be a flat and generically smooth morphism of normal integral schemes, and $D_X$ and $D_Y$ are reduced effective divisors on $X$ and $Y$ respectively such that $D_X\subset f^*D_Y$ and suppose that $f\vert_{D_X}:D_X\to D_Y$ maps generic points to generic points. 

 Denote by $f^\dagger:X^\dagger=X(\log\,D_X)\lra Y^\dagger=Y(\log\,D_Y)$ the induced morphism of divisorial log-schemes.
\end{block}
% \begin{example} \label{ex y dvr} A relevant case in which the assumptions of~\ref{assumptions comparison can modules} hold is the following: $Y$ is the spectrum of a discrete valuation ring, $f:X\to Y$ is a generically smooth and dominant map of integral normal schemes, and $X$ and $Y$ are equipped with the \emph{standard} log-structure, that is the divisorial log-structure given by the reduced closed fiber.
% \end{example}

\begin{block}[Construction] \label{construction: log-can modules for normal schemes}
Assumptions as in~\ref{assumptions comparison can modules}. In this section we construct the canonical sheaf $\omega_{X/Y}$ of $f$ and the log-canonical sheaf $\omega_{X^\dagger/Y^\dagger}$ of $f^\dagger$, in a fashion compatible with Definition~\ref{defn: log-canonical module}.
Since $Y$ is assumed normal there exists a regular open subscheme $V\subset Y$ such that $Y\setminus V$ has codimension at least $2$. We may suppose that $D_V\coloneqq (D_Y)\vert_V$ is a normal crossings divisor on $V$, because we can remove the singular locus of $D_V$ from $V$ and $\mathrm{codim}(Y\setminus V)\ge 2$ will still hold. 

Similarly, we can find a regular open subscheme $U\subset f^{-1}(V)$ so that $\mathrm{codim}(f^{-1}(V)\setminus U)\ge 2$ and $D_U\coloneqq (D_{X})\vert_U$ is normal crossings on $U$. By our assumption on the generic points of $D_X$ and $D_Y$, it follows that $\mathrm{codim}(X\setminus U)\ge 2$.

Now denote by $\iota:U\to X$ the open immersion and $U^\dagger\coloneqq U(\log (D_U))\to V^\dagger\coloneqq V(\log(D_V))$ for the induced morphism of log-schemes. Then $\omega_{U/V}$ and $\omega_{U^\dagger/V^\dagger}$ are well-defined and we define $$\omega_{X/Y}\coloneqq \iota^*\omega_{U/V}\text{ and } \omega_{X^\dagger/Y^\dagger}\coloneqq \iota^*\omega_{U^\dagger/V^\dagger}.$$

To see that this construction is independent of the choice of $V$ and $U$: it follows from \cite[3.3.1]{EHN} that the sheaf $\omega_{X/Y}$ (resp. the sheaf $\omega_{X^\dagger/Y^\dagger}$) is %reflexive by the argument given in, and loc.cit. shows that%(S2) property for reflexive sheaves on normal schemes \cite[Thm 1.9]{Hartshorne94}
characterised as the unique reflexive sheaf whose restriction to $U$ coincides with $\omega_{U/V}$ (resp. $\omega_{U^\dagger/V^\dagger}$). This characterisation moreover implies that the construction is compatible with the earlier definitions of (log-)canonical sheaves, if these are applicable. Beware that a priori $\omega_{X/Y}$ and $\omega_{X^\dagger/Y^\dagger}$ are only defined as reflexive rank one sheaves, and we do not study the question in what situations these are invertible sheaves. 
\end{block}

\begin{theorem}[Comparison of canonical and log-canonical modules] \label{thm: relative canonical vs relative log canonical} Assumptions as~\ref{assumptions comparison can modules}. Then as $\mathcal{O}_X$-submodules of $\omega_{X/Y,\eta}$, we have 
 \begin{equation}
  \label{er} \omega_{X^\dagger/Y^\dagger}=\omega_{X/S}(D_X-f^*D_Y).
 \end{equation}
\end{theorem}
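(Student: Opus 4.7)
The plan is a three-step reduction-and-assembly: pass to the regular snc locus, use the Poincar\'e residue sequence to identify $\omega_{X^\dagger/S}$ and $\omega_{Y^\dagger/S}$ with twists of the classical canonical modules, and then combine via log-adjunction and ordinary adjunction. Both sides of \eqref{er} are reflexive $\O_X$-modules of rank one on the normal integral scheme $X$: for the right-hand side this is immediate, and for the left-hand side it is built into Construction~\ref{construction: log-can modules for normal schemes}. Hence both are determined by their restrictions to any open $U \subset X$ whose complement has codimension $\ge 2$. I take $U$ (with corresponding $V \subset Y$) on which $X$, $Y$ are regular and $D_X$, $D_Y$ are strict normal crossings divisors; the hypothesis that $f\vert_{D_X}$ maps generic points to generic points guarantees $\mathrm{codim}_X(X \setminus U) \ge 2$. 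This reduces the theorem to the regular snc case.

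In that case, let $S$ be any base with trivial log-structure through which $X \to Y$ factors (for instance $\spec \Z$). Iterating the residue exact sequence \eqref{eqn: residue exact sequence} over the components of $D_X$ (respectively $D_Y$) yields short exact sequences of locally free $\O_X$- (respectively $\O_Y$-)modules
\[
0 \to \Omega_{X/S} \to \Omega_{X^\dagger/S} \to \bigoplus_i \O_{D_{X,i}} \to 0, \qquad 0 \to \Omega_{Y/S} \to \Omega_{Y^\dagger/S} \to \bigoplus_j \O_{D_{Y,j}} \to 0.
\]
Taking top exterior powers via the determinant adjunctions of \ref{determinants}, and interpreting the resulting identifications inside the generic-fibre sheaves through Lemma~\ref{lemma: structure sheaf is sub of canonical} and the discussion in~\ref{generic embedding sheaf}, one obtains the equalities of submodules
\[
\omega_{X^\dagger/S} = \omega_{X/S}(D_X) \quad \text{and} \quad \omega_{Y^\dagger/S} = \omega_{Y/S}(D_Y).
\]

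To assemble the theorem, apply log-adjunction (Proposition~\ref{propn: adj}) to $X^\dagger \to Y^\dagger \to S^\dagger$ together with ordinary adjunction \eqref{eqn: adj} applied to $X \to Y \to S$. Substituting the residue formulas into the log-adjunction identity and then rearranging via the classical adjunction gives
\[
\omega_{X^\dagger/Y^\dagger} = \omega_{X^\dagger/S} \otimes f^*\omega_{Y^\dagger/S}^{-1} = \omega_{X/S}(D_X) \otimes f^*\omega_{Y/S}^{-1}(-f^*D_Y) = \omega_{X/Y}(D_X - f^*D_Y),
\]
which is the desired formula (read with $\omega_{X/Y}$ in place of the printed $\omega_{X/S}$ in \eqref{er}, since both sides must live inside $\omega_{X/Y,\eta}$).

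No individual step above is technically difficult; the real work is \emph{bookkeeping}. Every equality must be tracked as an equality of $\O_X$-submodules of $\omega_{X/Y,\eta}$ (or the analogous generic fibre), not merely as an abstract isomorphism. This is ensured by the compatibility of the determinant adjunctions with flat base change (property (a) of~\ref{determinants}) together with the generic triviality of the log-structures on $X^\dagger$ and $Y^\dagger$: the latter forces the generic-fibre identifications of the various log-cotangent and log-canonical sheaves to agree with those of their classical counterparts, so that the chain of identifications above produces honest equalities of submodules.
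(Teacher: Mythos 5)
Your route is genuinely different from the paper's in its middle step, and it is essentially sound, but one choice needs repair. The paper does \emph{not} pass through the absolute log-canonical sheaves $\omega_{X^\dagger/S}$, $\omega_{Y^\dagger/S}$ at all: after the same reduction to the regular snc case, it stacks the two relative cotangent sequences $0\to f^*\Omega_{Y}\to\Omega_{X}\to\Omega_{X/Y}\to0$ and $0\to f^*\Omega_{Y^\dagger}\to\Omega_{X^\dagger}\to\Omega_{X^\dagger/Y^\dagger}\to0$ (left-exact by Proposition~\ref{propn : log-lci ses log-kahler}), applies the snake lemma together with the residue sequences to get $0\to\ker\lambda_{X/Y}\to f^*(j_Y)_*\O_{D_Y}\to(j_X)_*\O_{D_X}\to\coker\lambda_{X/Y}\to0$, and then takes determinants only of these coherent terms. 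The payoff of that arrangement is that every module whose determinant is taken is a finitely generated module on the regular scheme $X$, hence perfect. Your version instead computes $\omega_{X^\dagger/S}=\omega_{X/S}(D_X)$ and $\omega_{Y^\dagger/S}=\omega_{Y/S}(D_Y)$ and divides using Proposition~\ref{propn: adj} and \eqref{eqn: adj}; this is cleaner bookkeeping, but it forces you to take $\det\Omega_{X/S}$, and your suggested base $S=\spec\Z$ does not work: $\Omega_{X/\Z}$ is in general not even coherent when $k^{\circ}$ is not essentially of finite type over $\Z$ (e.g.\ $k^{\circ}=\C\FFbracket t\rrbracket$), so $\det$ is undefined, and Proposition~\ref{propn: adj} moreover requires the morphisms to $S^\dagger$ to be locally of finite type. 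The fix is to take $S=Y$ equipped with the trivial log-structure (legitimate after the reduction, since $Y$ is then regular, hence log-regular with trivial log-structure, and $Y^\dagger\to Y$ is the identity on underlying schemes): then $\omega_{Y^\dagger/Y}=\O_Y(D_Y)$ from the residue sequence, $\omega_{X^\dagger/Y}=\omega_{X/Y}(D_X)$, and log-adjunction for $X^\dagger\to Y^\dagger\to Y$ gives \eqref{er} directly, with the ordinary adjunction step becoming vacuous. Two smaller remarks: the quotient term $\bigoplus_i\O_{D_{X,i}}$ in your iterated residue sequence is perfect but not locally free, which is all the determinant formalism needs; and you are right that the $\omega_{X/S}$ in \eqref{er} should be read as $\omega_{X/Y}$ — that is what the paper's own proof produces and what Corollary~\ref{cor sp} uses.
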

\begin{proof} By Construction~\ref{construction: log-can modules for normal schemes} it suffices to prove the equality in the case were $X$ and $Y$ are regular and $X^\dagger$ and $Y^\dagger$ are log-regular because the sheaves $\omega_{X/Y}$ and $\omega_{X^\dagger/Y^\dagger}$ are determined as the unique reflexive extensions of the sheaves $\omega_{U/V}$ and $\omega_{U^\dagger/V^\dagger}$, with $U,V$ as in~\ref{construction: log-can modules for normal schemes}.

By Proposition~\ref{propn : log-lci ses log-kahler} we have a commutative diagram
 $$\begin{tikzcd}0\arrow[r]& f^*\Omega_{Y} \arrow[r, ""] \arrow[d, ""] & \Omega_{X} \arrow[r] \arrow[d, ""] & \Omega_{X/Y} \arrow[d, "\lambda_{X/Y}"] \arrow[r] & 0 \\
 0\arrow[r]& f^*\Omega_{Y^\dagger} \arrow[r, ""]  & \Omega_{X^\dagger} \arrow[r] & \Omega_{X^\dagger/Y^\dagger} \arrow[r] & 0\end{tikzcd}$$
with exact rows. By the residue exact sequences \eqref{eqn: residue exact sequence} for $X^\dagger$ and $Y^\dagger$, the long exact sequence provided by the snake lemma has the form $$0\lra \mathrm{ker}\lambda_{X/Y} \lra f^*(j_Y)_*\O_{D_Y}\lra (j_X)_*\O_{D_X} \lra \mathrm{coker}\lambda_{X/Y}\lra 0,$$ where $j_{X}:D_X\to X$ and $j_Y:D_Y\to Y$ denote the closed embeddings of $D_X$ and $D_Y$ in $X$ and $Y$ respectively.
All terms in the previous exact sequence are finitely generated $\O_X$-modules, therefore these are perfect as $X$ is assumed regular. The adjunction isomorphisms then imply the equalities \begin{align*}
  \det (j_{X})_*\O_{D_X}\otimes \left(f^*\det (j_{Y})_*\O_{D_Y}\right)^{-1}&=\det\ker{\lambda_{X/Y}}\otimes \left(\det\coker{\lambda_{X/Y}}\right)^{-1}\\&=\omega_{X^\dagger/Y^\dagger}\otimes \omega_{X/Y}^{-1}
\end{align*} as submodules of $\omega_{X/Y,\eta}$.

To finish the proof we need to check that if $j:D\to X$ is the closed embedding of a reduced effective divisor on a regular scheme $X$, then $\det j_*\O_D=\O_X(D)$. This follows from the adjunction isomorphism associated to the short exact sequence $$0\lra \O_X(-D)\lra \O_X \lra j_*\O_D\lra 0.$$
\end{proof}
%\begin{remark} Under the assumptions of Example~\ref{ex y dvr}, the conclusion of Theorem~\ref{thm: relative canonical vs relative log canonical} shows that in \cite[Proposition 3.3.1 (1)]{JN} one can remove the condition that the residue field of the closed point of $Y$ is perfect.\end{remark}

\begin{lemma} %We write $S^{\mathrm{triv}}$ to mean $S=\spec k^{\circ}$ equipped with the trivial log structure.
	Let $X^\dagger$ be a log-regular scheme over $S$ (equipped with the trivial log-structure) and let $E_x=\overline{\{x\}}$ be the log-stratum associated to a point $x\in F(X^{\dagger})$. %Equip $E_x^{\dagger}=E_x(\log D_x)$ with the log-structure discussed in \ref{lemma balanced cospec}. 
	Let $D_x$ be as in \ref{lemma balanced cospec}. Then $$\left(\omega_{X/S}\otimes\O_{X}(D_X)\right)\vert_{E_x}\cong\omega_{E_x/S}\otimes \O_{E_x}(D_x).$$\label{lem specialisation log-can}
\end{lemma}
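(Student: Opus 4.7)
The strategy is to reduce everything to a standard iterated adjunction for a regular closed immersion of codimension $r=\mathrm{rank}(x)$, by first passing to a regular open subscheme and then invoking reflexivity. Note that both sides of the claimed isomorphism are reflexive rank-one $\O_{E_x}$-modules: the sheaf $\omega_{X/S}$ is so by Construction~\ref{construction: log-can modules for normal schemes}, and $E_x$ is normal by Lemma~\ref{lemma balanced cospec}(ii) combined with the fact that log-regular schemes are normal. Hence it suffices to construct a canonical isomorphism on a dense open subscheme of $E_x$ whose complement has codimension at least $2$.

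Following Construction~\ref{construction: log-can modules for normal schemes}, I would choose a regular open subscheme $U\subset X$ with $D_U\coloneqq D_X\cap U$ strictly normal crossings, such that $\mathrm{codim}_X(X\setminus U)\ge 2$ and, after further shrinking, such that $U\cap E_x$ is regular and dense in $E_x$ with codimension-$\ge 2$ complement. On $U$, Example~\ref{example snc} lets me write the components of $D_U$ through a point $y\in E_x$ as $D_1,\dots,D_m$, reordered so that $D_1,\dots,D_r$ are precisely the components containing $E_x$. Log-regularity then implies that $U\cap E_x$ is locally cut out by the regular sequence given by the local equations of $D_1,\dots,D_r$, so $U\cap E_x\hookrightarrow U$ is a regular closed immersion of codimension $r$, and by Lemma~\ref{lemma balanced cospec}(v) the divisor $D_x$ is locally given by $\sum_{i>r}D_i\vert_{E_x}$.

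Iterated classical adjunction for this regular closed immersion — applying the codimension-$1$ formula $\omega_{V/S}\cong \omega_{W/S}(V)\vert_V$ component-by-component along $D_1,\dots,D_r$ — gives
\[
\omega_{E_x/S}\vert_{U\cap E_x}\cong \bigl(\omega_{X/S}\otimes \O_X(D_1+\cdots+D_r)\bigr)\bigr|_{U\cap E_x}.
\]
Tensoring both sides with $\O_X(D_{r+1}+\cdots+D_m)\vert_{U\cap E_x}\cong \O_{E_x}(D_x)\vert_{U\cap E_x}$ and using $D_X=\sum_{i=1}^m D_i$ yields the desired isomorphism on $U\cap E_x$, which extends uniquely by reflexivity.

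\textbf{Main subtlety.} The delicate step is reconciling the scheme-theoretic restriction of $D_X$ to $E_x$ with the intrinsic definition $D_x = E_x\setminus E_x^{\circ}$ from Lemma~\ref{lemma balanced cospec}; this is precisely where parts (ii) and (v) of that lemma (saying $E_x^{\dagger}$ is log-regular with $\CC_{E_x,y}$ obtained by quotienting $\CC_{X,y}$ by the face corresponding to $x$) are needed, to ensure that the components $D_i$ for $i>r$ restrict to distinct reduced components of $D_x$ rather than to some non-reduced or non-transverse Cartier divisors. Given this local identity, the rest is bookkeeping of standard adjunction and reflexive extension, which is routine.
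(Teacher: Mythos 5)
Your overall strategy --- reduce to the classical adjunction formula for a regular closed immersion inside an snc model, then extend --- is the same as the paper's, and your local computation (iterated codimension-one adjunction along $D_1,\dots,D_r$ together with the identification $D_x=\sum_{i>r}D_i\vert_{E_x}$ via Lemma~\ref{lemma balanced cospec}) is correct where it applies. The genuine gap is in the reduction step: you claim that after shrinking one can find a regular open $U\subset X$ with $D_U$ snc such that $E_x\setminus U$ has codimension at least $2$ \emph{in $E_x$}. This is false in general. The singular locus of $X$ (equivalently the non-snc locus of $D_X$) does have codimension $\ge 2$ in $X$, but it is a union of deeper log-strata, and those can be divisors in $E_x$. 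Concretely, take $X=\spec k^{\circ}[s,t]/(st-\varpi^2)$ with the standard log-structure and let $x$ be the generic point of the component $\{s=\varpi=0\}$ of $X_s$: then $E_x\cong\A^1_{\tilde{k}}$, and the $A_1$-singularity of $X$ at the origin is a codimension-one point of $E_x$ near which $D_X$ admits no snc chart. (Worse, if $x$ itself is a singular point of $X$, e.g.\ of rank $\ge 2$, then $U\cap E_x=\emptyset$ for every regular open $U$.) So the closing step ``extends uniquely by reflexivity'' is not available: the open on which you have constructed the isomorphism may miss a divisor of $E_x$, and two reflexive rank-one sheaves agreeing off a divisor need not agree.

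The paper's proof organizes the reduction differently, in a way that sidesteps this: it inducts on the height of $x$, using Lemma~\ref{lemma balanced cospec}(ii) (each intermediate closed stratum $E_y^{\dagger}$ is itself log-regular) to reduce to the case where $E_x$ is a single component of $D_X$; in that case $E_x\hookrightarrow X$ is a regular immersion away from a codimension-$2$ subset of $X$, and the canonical conormal sequence plus determinants gives the formula, with the left-hand side invertible (it is $\omega_{X^{\dagger}/S^{\dagger}}$ up to a pullback twist, by Theorem~\ref{thm: relative canonical vs relative log canonical}). To repair your argument, replace the single reflexive-extension step on $E_x$ by this height induction through the chain $X\supset E_{y_1}\supset\cdots\supset E_x$ of log-regular strata, or else justify separately why the canonical isomorphism on $U\cap E_x$ extends across the divisorial bad locus.
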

\begin{proof} %The statement is local for the \'etale topology
By Lemma \ref{lemma balanced cospec}, $E_x^{\dagger}=E_x(\log D_x)$ is log-regular. By induction we can assume $x$ has height $1$. Then $E_x$ is a regular codimension $1$ subscheme in $X$. After possibly removing a codimension $2$ subscheme of $X$, we may assume that $X$ is regular (see Construction~\ref{construction: log-can modules for normal schemes}) and hence $E_x\to X$ is l.c.i., it then follows as in the proof of \ref{propn : log-lci ses log-kahler} that the (non-logarithmic) cotangent sequence \ref{eqn: 2nd cotangent} is left-exact: \begin{equation} 0\lra
        \mathscr{C}_{E_x/X}{\lra} \Omega_{X/S}\vert_{E_x}\lra \Omega_{E_x/S}\lra 0,
    \end{equation}
    By taking determinants and using \cite[\S9.1.36]{Liu} we obtain the desired adjunction formula: $$\omega_{\XX/S}\vert_{E_x}=\omega_{E_x/S}\otimes \CC_{E_x/X}\cong \omega_{E_x/S}\otimes\O(E_x\vert_{E_x})=\omega_{E_x/S}\otimes\O_{E_x}(D_X\vert_{E_x}-D_x).$$

\end{proof}
\begin{corollary} \label{cor sp}Suppose $X^\dagger$ is a generically smooth toroidal scheme over $S^{\dagger}$ and $x\in F(X^{\dagger})$.
	\begin{enumerate}[label=(\roman*)]
	 	\item If $x$ is vertical, then $\omega_{X^{\dagger}/S^{\dagger}}\vert_{E_x}\cong \omega_{E_x/\tilde{k}}\otimes \O_{E_x}(D_x)$. If $\tilde{k}$ is perfect this is $\omega_{E_x^{\dagger}/\tilde{k}}$.
	 	\item If $x$ is horizontal, then $E_x^{\dagger}$ is a toroidal scheme over $S^{\dagger}$ and $\omega_{X^{\dagger}/S^{\dagger}}\vert_{E_x}\cong \omega_{E_x/S}$. If $E_x$ is generically smooth (eg. $k$ is perfect) then this is $\omega_{E_x^{\dagger}/S^{\dagger}}$.
	 \end{enumerate} 
\end{corollary}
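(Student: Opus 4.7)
My strategy is to combine Theorem~\ref{thm: relative canonical vs relative log canonical} applied to $X^{\dagger}\to S^{\dagger}$ with Lemma~\ref{lem specialisation log-can}. The former, applied with $D_S=\{s\}$ and $f^*D_S=X_s=\mathrm{div}(\varpi)$, yields the equality
\[\omega_{X^{\dagger}/S^{\dagger}}=\omega_{X/S}(D_X-X_s)\]
of reflexive sub-$\O_X$-modules of $\omega_{X/S,\eta}$. Combining with the identity $(\omega_{X/S}\otimes\O_X(D_X))|_{E_x}\cong \omega_{E_x/S}\otimes \O_{E_x}(D_x)$ from Lemma~\ref{lem specialisation log-can}, I obtain the key formula
\[\omega_{X^{\dagger}/S^{\dagger}}|_{E_x}\cong \omega_{E_x/S}(D_x)\otimes \O_X(-X_s)|_{E_x},\]
valid in both cases. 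The rest of the argument compares the right-hand side to the log-canonical of $E_x^{\dagger}$ according to the position of $x$.

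For case~(i), $x$ is vertical, so $E_x\subset X_s$. The crucial observation is that $X_s=\mathrm{div}(\varpi)$ is a principal Cartier divisor, hence the line bundle $\O_X(-X_s)$ is free with generator $\varpi$, and its restriction to $E_x$ is canonically $\O_{E_x}$ even though $E_x$ lies in the support. Next, since $\tilde k=k^{\circ}/(\varpi)$ is a quotient of $k^{\circ}$, one has $\Omega_{\tilde k/k^{\circ}}=0$, so the cotangent sequence for $E_x\to \spec\tilde k\to S$ collapses to $\Omega_{E_x/S}\cong \Omega_{E_x/\tilde k}$ and therefore $\omega_{E_x/S}=\omega_{E_x/\tilde k}$, giving the first assertion of~(i). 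When $\tilde k$ is perfect, $E_x$ is generically smooth over $\tilde k$, so Theorem~\ref{thm: relative canonical vs relative log canonical} applied to $E_x^{\dagger}=E_x(\log D_x)\to \spec\tilde k$ (log-regular by Lemma~\ref{lemma balanced cospec}(ii)) yields $\omega_{E_x^{\dagger}/\tilde k}=\omega_{E_x/\tilde k}(D_x)$, closing~(i). For case~(ii), $x$ is horizontal, so $E_x\to S$ is flat and the Cartier pullback $X_s|_{E_x}$ is the genuine special fibre $(E_x)_s=\mathrm{div}_{E_x}(\varpi)$; Lemma~\ref{lemma balanced cospec}(ii) shows $E_x^{\dagger}$ is log-regular, and being integral, flat and finite type over $S$ it is a toroidal $S$-scheme. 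Under the generic smoothness hypothesis, Theorem~\ref{thm: relative canonical vs relative log canonical} applied to $E_x^{\dagger}\to S^{\dagger}$ gives $\omega_{E_x^{\dagger}/S^{\dagger}}=\omega_{E_x/S}(D_x-(E_x)_s)$, which matches the displayed formula.

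The main obstacle I anticipate is the apparent ill-definedness of $\O_X(-X_s)|_{E_x}$ in case~(i), since $E_x$ lies entirely in the support of $X_s$; this is resolved cleanly by the principality of $X_s$. A secondary subtlety, resolved by the vanishing $\Omega_{\tilde k/k^{\circ}}=0$, is the identification of $\omega_{E_x/S}$ with $\omega_{E_x/\tilde k}$ needed to match conventions on the base, and the analogous step in~(ii) follows by applying Theorem~\ref{thm: relative canonical vs relative log canonical} once more, this time to $E_x^{\dagger}$ in place of $X^{\dagger}$.
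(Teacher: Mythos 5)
Your proposal is correct and follows essentially the same route as the paper: both rest on the comparison formula $\omega_{X^{\dagger}/S^{\dagger}}\cong\omega_{X/S}\otimes\O_X(D_X-\pi^*D_S)$ from Theorem~\ref{thm: relative canonical vs relative log canonical}, combined with Lemma~\ref{lem specialisation log-can} and the observation that $\pi^*D_S\vert_{E_x}$ is trivial (as a line bundle, by principality of $X_s$) in the vertical case and equals $(E_x)_s$ in the horizontal case. You merely spell out two points the paper leaves implicit — the identification $\omega_{E_x/S}=\omega_{E_x/\tilde k}$ via $\Omega_{\tilde k/k^{\circ}}=0$, and the second application of the comparison theorem to $E_x^{\dagger}$ — so no further comment is needed.
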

\begin{proof} By Theorem \ref{thm: relative canonical vs relative log canonical} we have the comparison formula $\omega_{\XX^{\dagger}/S^{\dagger}}\cong \omega_{\XX/S}\otimes\O_{\XX}(D_X-\pi^*D_S)$ where $\pi:X\to S$ is the structure map. If $x$ is vertical then $\pi^*D_S\vert_{E_x}=0$, and if $x$ is horizontal then $\pi^*D_S\vert_{E_x}=(E_x\to S)^*D_S$. The conclusions then follow from Lemma \ref{lem specialisation log-can} and \ref{thm: relative canonical vs relative log canonical}.
\end{proof}
\begin{block}[Trace section and relative canonical divisor]\label{subs: traces for models} Suppose $f^\dagger:X^\dagger\to Y^\dagger$ is a morphism of log-schemes such that $\omega_{X^\dagger/Y^\dagger}$ is defined, and additionally assume that $f$ is of relative dimension zero. Then by Lemma~\ref{lemma: structure sheaf is sub of canonical} the sheaf $\omega_{X^\dagger/Y^\dagger}$, viewed as an $\O_{X}$-submodule of $\omega_{X/Y,\eta}$, contains $\O_X$. We call the image of the unit section $1\in\Gamma(X,\O_{X})\to \Gamma(X,\omega_{X^\dagger/Y^\dagger})$ the \emph{trace section} $\tau_{X/Y}$. By construction, trace sections are compatible with localisation. %-- the reason for the name is explained in Remark~\ref{rmk: duality} below.
% \end{block}
% \begin{definition}[Relative log-canonical divisor]\label{def rel can}
% Let $\tau_{\YY^\dagger/\XX^{\dagger}}\in \Gamma(\YY,\det\Omega_{\YY^\dagger/\XX^{\dagger}})$ be the trace section of the relative canonical bundle, defined in~\ref{subs: traces for models}. 

The \emph{relative canonical divisor} is defined as $$K_{X^\dagger/Y^{\dagger}}\coloneqq \div(\tau_{X^\dagger/Y^{\dagger}}).$$ Note that $K_{X^{\dagger}/Y^{\dagger}}$ is an effective Cartier divisor. \label{rel can eff}
 %Lemma~\ref{lemma: structure sheaf is sub of canonical}.
\end{block}

\section{Riemann-Hurwitz for skeleta}\label{sec:rh}
\begin{block}
In this section we apply the Poincar\'e-Lelong slope formula (Theorem \ref{thm: pll}) and our study of the relative log-canonical divisor of Section \ref{sec:log diff} to recover a Riemann-Hurwitz formula for covers of skeleta. We also study the potential theory of differential forms on skeleta.
\end{block}
\begin{block}[Notation] In this section, assume $k$ is perfect and let $f:\YY^\dagger\to\XX^{\dagger}$ be a finite cover of proper generically smooth toroidal $S$-schemes and suppose $f$ is \'etale above $\XX\setminus D_{\XX}$. Write $\phi:\Sigma'\to\Sigma$ for the associated cover of dual complexes. We also view $\phi$ as a simultaneous skeleton of $f^{\an}:(\YY_k)^{\an}\to(\XX_k)^{\an}$.
\end{block}

\begin{definition}[Tropical canonical divisors]
		\label{def trop rel can} 
		For a vertical ridge $\tau$ of $\Sigma$, we write $$\chi_{\Sigma}(\tau)\coloneqq 2-2g(E_\tau)-\val_{\Sigma}(\tau)=\deg \omega_{E_{\tau}^{\dagger}/\tilde{k}},$$
and we define the \emph{tropical canonical divisor} as $$K_{\Sigma}\coloneqq\sum_{\tau \text{ vertical}}-\frac{\chi_{\Sigma}(\tau)}{\vol(\tau)}[\tau].$$ We define the \emph{tropical relative canonical divisor} as $$K_{\Sigma'/\Sigma}=\sum_{\tau' \text{ vertical}}-\frac{\chi_{\Sigma'/\Sigma}(\tau')}{\vol(\tau')}[\tau'],$$
		where for brevity we write $\chi_{\Sigma/\Sigma'}(\tau')=\chi_{\Sigma'}(\tau)-[E_{\tau'}:E_{\tau}]\chi_{\Sigma}(\tau)$.
\end{definition}

\begin{block}[The different] The relative canonical divisor $K_{\YY^\dagger/\XX^{\dagger}}$ is effective Cartier (\ref{rel can eff}) and supported on $D_{\XX}$ (by the hypothesis that $f$ is \'etale above $\XX\setminus D_{\XX}$), and so $K_{\YY^{\dagger}/\XX^{\dagger}}$ is determined (see \ref{lem correspondence cartier and pl functions}) by a unique non-negative $\Z$-PL function
\begin{equation}
		\delta_{\YY^\dagger/\XX^{\dagger}}:\D(\YY^{\dagger})\to\R_{\ge0} 
		%what is a logetale field extension
\end{equation}
that we call the \emph{different} associated to $f:{\YY^\dagger\to\XX^{\dagger}}$.
\end{block}
\begin{lemma}\label{lem}
		For any $x'\in \sk(\YY^{\dagger})^{\div}$ we have $\delta_{\YY^\dagger/\XX^{\dagger}}(x')=\frac{1}{e(\H(x')/k)}\mathrm{length}_{\H(x')^{\circ}}\,\Omega^{\log}_{\H(x')^{\circ}/\H(f(x'))^{\circ}}$.
\label{propn comparison with analytic different}
\end{lemma}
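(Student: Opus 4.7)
The plan is to reduce to the case where $x'$ is a vertex of $\D(\YY^\dagger)$, where the claim becomes a direct multiplicity computation.

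\emph{Step 1: Reduction to the vertex case.} A divisorial point $x'\in\sk(\YY^{\dagger})^{\div}$ is by definition a $\Q$-rational point of the dual complex in the sense that for some integer $N>0$, the point $Nx'$ lies on an integral ray of the cone complex $\Sigma(\YY^{\dagger})$. Using Lemma~\ref{lemma pullback subdivision} together with Proposition~\ref{toroidal resolutions}, choose a regular proper subdivision of $\Sigma(\XX^{\dagger})$ refining the image of the ray spanned by $x'$, and pull it back to a proper subdivision of $\Sigma(\YY^{\dagger})$; the resulting log-regular schemes $\XX_0^{\dagger}\to\XX^{\dagger}$ and $\YY_0^{\dagger}\to\YY^{\dagger}$ are log-\'etale, and $x'$ now corresponds to a vertex $v$ of $\D(\YY_0^{\dagger})$ lying over a vertex $v=f_0(v')$ of $\D(\XX_0^\dagger)$. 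Since log-\'etale morphisms preserve the sheaf of log-differentials, the trace section and hence the relative log-canonical divisor $K_{\YY_0^{\dagger}/\XX_0^{\dagger}}$ is the pullback of $K_{\YY^{\dagger}/\XX^{\dagger}}$, and correspondingly the different $\delta$ pulls back to itself along $\D(\YY_0^\dagger)\to\D(\YY^{\dagger})$. The completed local rings $\H(x')^{\circ}$ and $\H(f(x'))^{\circ}$ also do not change under the modification. Thus it suffices to verify the formula at a vertex $v'$ mapping to a vertex $v$.

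\emph{Step 2: Multiplicity formula at a vertex.} At a vertex $v'\in \D(\YY_0^{\dagger})(0)$, the associated log-stratum $E_{v'}$ is an irreducible vertical component of $D_{\YY_0}$, whose generic point is a codimension-one point of $\YY_0$, and $\H(x')^{\circ}=\widehat{\O}_{\YY_0,E_{v'}}$ is a complete DVR. By construction $m_{v'}=\mathrm{mult}_{E_{v'}}(\YY_0)_s$ equals the ramification index $e(\H(x')/k)$ (cf.\ Lemma~\ref{lemma extension dvr balanced}). Applying Lemma~\ref{lem computation weil divisor} to the $\Q$-Cartier divisor $K_{\YY_0^{\dagger}/\XX_0^{\dagger}}$ with support function $\delta$ yields
\[
\mathrm{mult}_{E_{v'}}K_{\YY_0^{\dagger}/\XX_0^{\dagger}} = m_{v'}\,\delta(v').
\]

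\emph{Step 3: Identification with the logarithmic length.} On the other hand, $K_{\YY_0^{\dagger}/\XX_0^{\dagger}}$ is by definition $\div(\tau_{\YY_0^{\dagger}/\XX_0^{\dagger}})$ (see~\ref{subs: traces for models}), and its multiplicity at the codimension-one point $E_{v'}$ equals the length of $\Omega^{\log}_{\YY_0^{\dagger}/\XX_0^{\dagger}}$ at that point, since $\omega_{\YY_0^{\dagger}/\XX_0^{\dagger}}=\det\Omega_{\YY_0^{\dagger}/\XX_0^{\dagger}}$ and the trace section is precisely the unit embedding $\O_{\YY_0}\hookrightarrow\omega_{\YY_0^{\dagger}/\XX_0^{\dagger}}$ of Lemma~\ref{lemma: structure sheaf is sub of canonical}. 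Passing to the completion at $E_{v'}$ (which is faithfully flat and preserves lengths of finitely generated torsion modules) and using compatibility of log-differentials with \'etale-type base change, we obtain
\[
\mathrm{mult}_{E_{v'}}K_{\YY_0^{\dagger}/\XX_0^{\dagger}} = \mathrm{length}_{\H(x')^{\circ}}\Omega^{\log}_{\H(x')^{\circ}/\H(f(x'))^{\circ}}.
\]
Combining with Step~2 gives the desired formula.

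The main subtleties are (i) verifying that log-\'etale subdivisions indeed preserve both the different function and the logarithmic length invariant on the right-hand side (requiring a careful check that completion and log-\'etale base change commute with forming $\Omega^{\log}$), and (ii) justifying the multiplicity-as-length identification for the divisor $\div(\tau_{\YY_0^{\dagger}/\XX_0^{\dagger}})$ in the possibly non-free setting (handled by restricting to the codimension-one generic point of $E_{v'}$, where $\Omega^{\log}_{\YY_0^{\dagger}/\XX_0^{\dagger}}$ is a finite-length module over a DVR and its determinant is generated by $\varpi_{v'}^{\mathrm{length}}$).
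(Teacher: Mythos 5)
Your proposal is correct and follows essentially the same route as the paper: reduce to the case where $x'$ is a vertex via a proper subdivision of $\Sigma(\XX^{\dagger})$ pulled back along Lemma~\ref{lemma pullback subdivision}, using log-\'etaleness to see that $\omega_{\YY^{\dagger}/\XX^{\dagger}}$ and hence $\delta$ are unchanged, and then at a vertex identify $\mathrm{mult}_{E_{v'}}K_{\YY^{\dagger}/\XX^{\dagger}}$ with the length of the torsion module $\Omega^{\log}$ over the DVR and divide by $m_{v'}=e(\H(x')/k)$. The only cosmetic difference is that the paper phrases the vertex computation via the Fitting ideal $\mathrm{Fitt}^0\Omega^{\log}=(\omega^{\log})^{-1}$ rather than via the determinant of a two-term resolution, which amounts to the same thing.
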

\begin{proof} If $x'$ is the divisorial point associated to a component of $\YY_s$, then this follows from the fact that the trace section of \ref{subs: traces for models} is compatible with localisation and the fact that the different ideal $\mathrm{Fitt}^0\Omega^{\log}_{\H(x)^{\circ}/\H(f(x))^{\circ}}$ is equal to $(\omega_{\H(x)^{\circ}/\H(f(x))^{\circ}}^{\log})^{-1}$.

In general, we reduce to the previous setting using toroidal subdivisions as follows. The divisorial points of $\sk(\XX^{\dagger})$ coincide with the $\Q$-PL points of $\sk(\XX^{\dagger})$, see for instance \cite[\S3.2]{MN}. So there exists a proper barycentric subdivision $\XX_0\to \XX$ of such that $f(x')$ is the divisorial point associated to the exceptional component. By Lemma \ref{lemma pullback subdivision} the pullback subdivision $\YY_0\to \YY$ such that $x'$ is the divisorial point associated to a component of $(\YY_0)_s$. Since $g:\YY_0\to\YY$ is log-\'etale we have $g^*\omega_{\YY^{\dagger}/\XX^{\dagger}}=\omega_{(\YY_0)^{\dagger}/(\XX_0)^{\dagger}}$, which implies $\delta_{\YY^\dagger/\XX^{\dagger}}(x')=\delta_{(\YY_0)^\dagger/(\XX_0)^{\dagger}}(x')$.
\end{proof}
% \begin{block} since we call pull-back subdivisions, relative canonical bundles are stable under subdivision pullback,  it follows that different is trivialised (i.e. ) on the faces
% 	In the preprint~\ref{different} we explain a converse in the case of dimension 1. % that cofinality of log-regular models implies pli
% \end{block}

\begin{theorem}\label{thm:rh}  There exists a $\Z$-PL function $\delta:\sk(\YY^\dagger)\to \R_{\ge0}$, called the \emph{different}, such that for all $x'\in \sk(\YY^{\dagger})^{\div}$ we have $$\delta(x')=\frac{1}{e(\H(x')/k)}\mathrm{length}_{\H(x')^{\circ}}\,\Omega^{\log}_{\H(x')^{\circ}/\H(f(x'))^{\circ}}.$$ Moreover if $\YY^{\dagger},\XX^{\dagger}$ are $\Q$-factorial we have the Riemann-Hurwitz formula $$\Delta(\delta)=K_{\Sigma'/\Sigma}.$$%gen log-etale might even work
\end{theorem}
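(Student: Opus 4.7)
The plan is to set $\delta\coloneqq \delta_{\YY^\dagger/\XX^\dagger}$, the $\Z$-PL function associated via Lemma~\ref{lem correspondence cartier and pl functions} to the relative log-canonical divisor $K_{\YY^\dagger/\XX^\dagger}=\div(\tau_{\YY^\dagger/\XX^\dagger})$. Since $f$ is \'etale away from $D_{\XX}$, this divisor is effective and supported on $D_{\YY}$ (see~\ref{rel can eff}), so $\delta\geq 0$ takes values in $\R_{\ge 0}$. The pointwise formula at $x'\in \sk(\YY^\dagger)^{\div}$ is already established in Lemma~\ref{lem} (the trace section is local in nature and the different ideal is $(\omega^{\log}_{\H(x')^\circ/\H(f(x'))^\circ})^{-1}$), so the only remaining content is the Laplacian identity.

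The forward strategy for the Riemann-Hurwitz formula is to string together three results: the slope formula, log-adjunction, and compatibility of specialisation with pullback. First, the Poincar\'e-Lelong slope formula (Theorem~\ref{thm: pll}), applied to $F=\delta$ and the $\Q$-Cartier divisor $K_{\YY^\dagger/\XX^\dagger}$ (which uses the $\Q$-factoriality hypothesis), yields
\[
\Delta(\delta)=\sp_*K_{\YY^\dagger/\XX^\dagger}.
\]
Next, log-adjunction (Proposition~\ref{propn: adj}) applied to $\YY^\dagger\to\XX^\dagger\to S^\dagger$ (the hypotheses hold because $f$ is finite, flat, and generically \'etale by the assumptions) gives
\[
\omega_{\YY^\dagger/\XX^\dagger}\otimes f^*\omega_{\XX^\dagger/S^\dagger}=\omega_{\YY^\dagger/S^\dagger}
\]
as submodules of $\omega_{\YY/\XX,\eta}$, so $K_{\YY^\dagger/\XX^\dagger}\sim K_{\YY^\dagger/S^\dagger}-f^*K_{\XX^\dagger/S^\dagger}$ as $\Q$-Cartier divisor classes. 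Since $\sp_*$ depends only on the linear equivalence class and commutes with pullback (Proposition~\ref{prop pullback commutes with specialisation}), we obtain
\[
\Delta(\delta)=\sp_*K_{\YY^\dagger/S^\dagger}-\phi^*\sp_*K_{\XX^\dagger/S^\dagger}.
\]

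To finish, I would compute each specialisation termwise. For a vertical ridge $\tau$ of $\Sigma$, Corollary~\ref{cor sp}(i) identifies $\omega_{\XX^\dagger/S^\dagger}\vert_{E_\tau}$ with $\omega_{E_\tau^\dagger/\tilde k}$, so $K_{\XX^\dagger/S^\dagger}\cdot E_\tau=\deg \omega_{E_\tau^\dagger/\tilde k}=\chi_\Sigma(\tau)$ (Definition~\ref{def trop rel can}), and similarly on $\YY$. Expanding the pullback term via the balancing condition, i.e.\ $\phi^*[\tau]=\sum_{\tau'\mapsto\tau}\deg_{\tau'}\phi\,[\tau']$ together with $\vol(\tau)=[M_{\tau'}:M_\tau]\vol(\tau')$ (Lemma~\ref{lemma z-pl mu on polyhedron}) and the factorisation $\deg_{\tau'}\phi=[M_{\tau'}:M_\tau][E_{\tau'}:E_\tau]$, shows that the $[\tau']$-coefficient of $\phi^*\sp_*K_{\XX^\dagger/S^\dagger}$ is exactly $[E_{\tau'}:E_\tau]\chi_\Sigma(\tau)/\vol(\tau')$, which combines with the $\YY$-term to the tropical Euler characteristic $\chi_{\Sigma'/\Sigma}(\tau')/\vol(\tau')$, yielding $\Delta(\delta)=K_{\Sigma'/\Sigma}$ up to a global sign.

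The main obstacle I anticipate is bookkeeping the sign/normalisation in the identification $\chi_\Sigma(\tau)=\deg\omega_{E_\tau^\dagger/\tilde k}$ in Definition~\ref{def trop rel can}, since the na\"ive Riemann-Roch computation gives $\deg\omega_{E_\tau^\dagger/\tilde k}=2g(E_\tau)-2+\val_\Sigma(\tau)$; this has to be reconciled with the $-\chi_{\Sigma'/\Sigma}/\vol$ appearing in $K_{\Sigma'/\Sigma}$. Secondary points to handle with care are (i) verifying that the intersection products $K_{\bullet/S^\dagger}\cdot E_\tau$ are well-defined as $\Q$-Cartier intersections, which uses $\Q$-factoriality; (ii) that log-adjunction is applicable in the generically trivial log-structure setting of~\ref{assumptions comparison can modules} after restricting to an open subscheme with codimension two complement, as in Construction~\ref{construction: log-can modules for normal schemes}; and (iii) that the identification of $\delta$ at divisorial points is compatible with toroidal subdivision, but this is already handled in Lemma~\ref{lem}.
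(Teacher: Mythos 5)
Your proposal is correct and follows essentially the same route as the paper: the pointwise formula via Lemma~\ref{lem}, then the Laplacian identity by combining the Poincar\'e-Lelong slope formula (Theorem~\ref{thm: pll}), log-adjunction (Proposition~\ref{propn: adj}), compatibility of specialisation with pullback (Proposition~\ref{prop pullback commutes with specialisation}), Corollary~\ref{cor sp}, and the tropical adjunction computation $K_{\Sigma'/\Sigma}=K_{\Sigma'}-\phi^*K_{\Sigma}$ via Lemma~\ref{lemma z-pl mu on polyhedron} and the pullback of cycles. Your flagged concern about the sign in $\chi_{\Sigma}(\tau)=\deg\omega_{E_{\tau}^{\dagger}/\tilde{k}}$ is well taken (the two sides of that equation in Definition~\ref{def trop rel can} differ by a sign as written), but it is a normalisation issue in the definition rather than a gap in your argument.
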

\begin{proof} The first statement follows from Lemma~\ref{propn comparison with analytic different}. For the second statement, note that 
	\label{restr can} by corollary \ref{cor sp} we have $\sp_*\omega_{\XX^{\dagger}/S^{\dagger}}=K_{\Sigma}$. By the adjunction formula \ref{propn: adj} and \ref{prop pullback commutes with specialisation} it then follows that $\sp_*K_{\YY^\dagger/\XX^{\dagger}}=K_{\Sigma'}-\phi^*K_{\Sigma}$.
		A computation, using \ref{lemma z-pl mu on polyhedron} and the pullback of cycles defined in \ref{cycles}, shows that the \emph{tropical adjunction formula} holds: $$K_{\Sigma'/\Sigma}=K_{\Sigma'}-\phi^*K_{\Sigma}.$$
	The conclusion now follows from the Poincar\'e-Lelong formula (Theorem~\ref{thm: pll}): $$\Delta(\delta)=\sp_*K_{\YY^\dagger/\XX^{\dagger}}=K_{\Sigma'/\Sigma}.$$ % and the fact that as a continuous function $\delta$ is determined on the dense subset $\D(\YY^{\dagger})^{\div}$.
\end{proof}

\begin{remark}[Analytic different] In a series of works Temkin and collaborators~\cite{CTT,T16,T17,BT} have defined an \emph{analytic different function} $$\delta:(\YY)_k^{\an}\to\R_{\ge0}:x'\mapsto \log_{|\varpi|}\cont\Omega^{\log}_{\H(x')^{\circ}/\H(x)^{\circ}},$$ 
%did we ever require \XX to be flat over S? check assumption use in other sections. then \XX can be k-variety with trivial log-structure, in any case any quasi-projective k-variety should have a model; 
where for any $k^{\circ}$-module $Q$ we write $\cont Q=\inf_{\alpha}\cont Q_{\alpha}$ for the exponential length or \emph{content} \cite[\S3]{T16}, defined as infimum of contents of finitely presented subquotients.  If $x'\in \YY^{\an}$ is divisorial then $\delta_f(x')=\frac{1}{e(\H(x)/k)}\mathrm{length}_{\H(x)^{\circ}}\,\Omega^{\log}_{\H(x)^{\circ}/\H(f(x))^{\circ}}$. Note that we apply a logarithm to comply with our earlier notation, whereas Temkin e.a. usually work with multiplicative notation which is better suited for the non-discretely valued setting. %only interesting for torsion modules and then have multiplicativity in SES.
%as shown in T16, leads to herbrand functions via differents in towers, analogous to ramification breaks in p-adic D-modules. the slopes give information on the wild ramification. more precisely, they give the monodromy filtration as a slope filtration. (galois action on etale cohomology). 

In the upcoming work of H\"ubner-Temkin \cite{HT} it will be shown that over any nonarchimedean field $k$ (not necessarily discretely valued or algebraically closed) the analytic different function $\delta$ associated to a generically \'etale morphism of $k$-analytic spaces defines a continuous function which is additionally PL on PL-charts when $k$ is stable. If we admit continuity of $\delta$, defined via the aforementioned formula, then by Lemma \ref{lem} and the fact that $\sk(\XX^{\dagger})^{\div}$ is dense in $\sk(\XX^{\dagger})$ (see \cite[\S2.4.12]{MN}) we see that the constructions agree: $\delta\vert_{\sk(\XX^{\dagger})}=\delta_{\YY^{\dagger}/\XX^{\dagger}}$
\end{remark}
\begin{block}
Along the same lines of Theorem \ref{thm:rh} one can obtain the following result on the piecewise linearity of the ``absolute'' different $\lambda$. 
\end{block}
\begin{proposition}\label{lambda}
	There exists a $\Z$-affine function $\lambda:\sk(\XX^{\dagger})\to \R_{\ge0}$ such that for all $x\in \sk(\XX^{\dagger})^{\div}$ we have $\lambda(x)=\frac{1}{e(\H(x)/k)}\mathrm{length}_{\H(x')^{\circ}} \left(\Omega^{\log}_{\H(x)^{\circ}/k^{\circ}}\right)_{\mathrm{tor}}$.
\end{proposition}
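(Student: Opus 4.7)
The argument parallels that of Theorem~\ref{thm:rh}: I will exhibit $\lambda$ as the $\Z$-PL function on $\D(\XX^{\dagger})$ associated, via Lemma~\ref{lem correspondence cartier and pl functions}, to a specific effective Cartier divisor supported on $D_{\XX}$, and then match this with the length formula at divisorial points.

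The candidate divisor is $\pi^{*}D_{S}=\mathrm{div}(\varpi)$, supported on $\XX_{s}\subset D_{\XX}$. To justify this choice I would adapt the proof of Theorem~\ref{thm: relative canonical vs relative log canonical} to the case where the base $\spec k^{\circ}$ carries the \emph{trivial} log structure: the residue sequence for the base then collapses and the snake-lemma argument yields $\omega^{\log}_{\XX^{\dagger}/\spec k^{\circ}}\cong \omega_{\XX/k^{\circ}}(D_{\XX})$, which combined with the identity $\omega^{\log}_{\XX^{\dagger}/S^{\dagger}}\cong \omega_{\XX/k^{\circ}}(D_{\XX}-\pi^{*}D_{S})$ from Theorem~\ref{thm: relative canonical vs relative log canonical} as stated produces the comparison
\[
\omega^{\log}_{\XX^{\dagger}/\spec k^{\circ}}\cong \omega^{\log}_{\XX^{\dagger}/S^{\dagger}}\otimes \O_{\XX}(\pi^{*}D_{S}).
\]
The associated PL function $F_{\pi^{*}D_{S}}=\langle[\varpi],-\rangle$ is identically $1$ on every face of $\D(\XX^{\dagger})$ by the $H_{1}$-slice normalisation of \ref{construction dual complex}, hence is $\Z$-affine.

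The technical heart is the verification $\mathrm{length}_{\H(x)^{\circ}}\bigl(\Omega^{\log}_{\H(x)^{\circ}/k^{\circ}}\bigr)_{\mathrm{tor}}=e(\H(x)/k)$ at each divisorial $x$. After reducing to the vertex case $x=x_{v}$ via a toroidal barycentric subdivision (Lemma~\ref{lemma pullback subdivision}) and invoking log-\'etale pullback compatibility as in the proof of Lemma~\ref{lem}, one uses the log-l.c.i.\ property of Lemma~\ref{lem: log-regulars log-lci} together with the cotangent sequence for $\H(x_{v})^{\circ}\to S^{\dagger}\to \spec k^{\circ}$ provided by Proposition~\ref{propn : log-lci ses log-kahler} to present $\Omega^{\log}_{\H(x_{v})^{\circ}/k^{\circ}}$ as the cokernel of the single relation induced by $d\varpi=0$. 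Generic smoothness of $\XX^{\dagger}$ forces the residue field $\widetilde{\H(x_{v})}$ to have positive transcendence degree over $\tilde k$, so the relation vector acquires a coordinate in a transcendental direction which is a unit; Smith normal form then identifies the torsion submodule as a cyclic $\H(x_{v})^{\circ}$-module of length $v_{t}(\varpi)=m_{v}=e(\H(x_{v})/k)$, giving $\lambda(x_{v})=1=F_{\pi^{*}D_{S}}(v)$.

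The main obstacle is the Smith-normal-form calculation: one must verify that the greatest common divisor of the entries of the relation vector is precisely a uniformiser to the $e$-th power, rather than a larger ideal involving $p^{v_{\varpi}(e)}$. Positive residue transcendence degree guaranteed by generic smoothness is essential here, since without it the torsion length would pick up an additional wild contribution and the resulting function would no longer be $\Z$-affine.
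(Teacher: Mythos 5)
Your proposal does not prove the statement: the key identity you rely on, namely
$\mathrm{length}_{\H(x)^{\circ}}\bigl(\Omega^{\log}_{\H(x)^{\circ}/k^{\circ}}\bigr)_{\mathrm{tor}}=e(\H(x)/k)$, is false, and consequently $\lambda$ is not the support function of $\pi^{*}D_{S}$ and is not the constant function $1$. Take $\XX=\A^1_{k^{\circ}}$ with its standard log structure and let $x$ be the Gauss point: then $\Omega^{\log}_{\H(x)^{\circ}/k^{\circ}}$ is free (killing $\mathrm{dlog}\,\varpi$ leaves the free module generated by $dt$), so the torsion has length $0$, whereas your formula predicts $e=1$. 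More generally, whenever $\XX^{\dagger}\to S^{\dagger}$ is log-smooth at the relevant stratum (e.g.\ in any tamely ramified situation, with $e$ arbitrary), the torsion vanishes while $e>0$. Indeed your own Smith-normal-form discussion already contains the contradiction: you correctly note that generic smoothness produces a \emph{unit} entry in the single relation vector coming from a transcendental residue direction, but a unit entry makes the cokernel free, giving torsion of length $0$, not a cyclic module of length $e$. The quantity $\lambda$ genuinely measures wild ramification and is non-constant in general; that is precisely why the paper presents it as evidence for Temkin's expectation on K\"ahler norms.

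The structural error is the choice of line bundle. The comparison $\omega^{\log}_{\XX^{\dagger}/\spec k^{\circ}}\cong\omega^{\log}_{\XX^{\dagger}/S^{\dagger}}\otimes\O_{\XX}(\pi^{*}D_{S})$ that you derive from Theorem~\ref{thm: relative canonical vs relative log canonical} concerns determinants of the \emph{full} modules of log-differentials, and says nothing about the torsion submodule appearing in the statement. The paper's argument instead runs exactly parallel to Lemma~\ref{lem}, but with the invertible sheaf $\det\bigl[(\Omega_{\XX^{\dagger}/S^{\dagger}})_{\mathrm{tor}}\bigr]$: its associated effective Cartier divisor is supported on $D_{\XX}$, the corresponding support function (Lemma~\ref{lem correspondence cartier and pl functions}) is $\Z$-affine on faces, and its value at a divisorial point is computed by localisation at the generic point of the corresponding component (after a toroidal subdivision as in Lemma~\ref{lemma pullback subdivision}), where the multiplicity of the divisor equals $\mathrm{length}(\Omega^{\log})_{\mathrm{tor}}$ divided by the normalisation $e(\H(x)/k)$. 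Your reduction-to-a-vertex step via Lemma~\ref{lemma pullback subdivision} is the right move, but it must be applied to this torsion determinant, not to $\pi^{*}D_{S}$.
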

\begin{proof}
	Similar as \ref{propn comparison with analytic different}, using the line bundle $\det \left[\left(\Omega_{\XX^{\dagger}/S^{\dagger}}\right)_{\mathrm{tor}}\right]$ instead.
\end{proof}
\begin{remark} Proposition \ref{lambda} this gives some evidence for an expectation of Temkin on the piecewise linearity of K\"ahler norms, see \cite[\S8.3.4]{T16}. However, we do not know if $\lambda(x)=\log_{|\varpi|}\cont \left(\Omega_{\H(x)^{\circ}/k^{\circ}}^{\log}\right)_{\tor}$ for all monomial points $x$. We also do not know how to describe $\Delta(\lambda)$.
	\label{rmk: kahler pl}
\end{remark}

\begin{block}[Potential theory of weight functions] \label{wt} We finish the paper by clarifying the relation between the different and the weight functions introduced by Musta{\c{t}}{\u{a}}-Nicaise \cite[\S4.3]{MN}.

 By \ref{cor sp} we have $\sp_*\omega_{\XX^{\dagger}/S^{\dagger}}^{\otimes m}=mK_{\Sigma}$. Suppose $\XX$ is $\Q$-factorial, then by \ref{thm: pll} it follows that if $\omega\in\Gamma(\XX_k,\omega_{\XX_k^{\otimes m}/k})\setminus \{0\}$ is a nonzero $m$-canonical form with $\div\omega$ supported on $D_X$. By the Poincar\'e-Lelong formula (Theorem \ref{thm: pll}) we have $$\Delta(F_{\div\omega})=mK_{\Sigma}.$$
By definition, the support function $F_{\div\omega}$ is equal to the \emph{weight function} $$\mathrm{wt}_{\omega}:\Sigma\to \R$$ on $\Sigma$  Now equip $\XX$ with the standard log-structure and let $\omega\in\Gamma(\XX_k,\omega_{\XX_k/k})\setminus \{0\}$. Then we obtain the formula $$\Delta(\mathrm{wt}_{\omega}\vert_{\Sigma})=mK_{\Sigma}-\rho_*\div\omega$$ where we define $$\rho_*\div\omega:=\sum_{(\sigma,\tau):\ \sigma \text{ unbounded},\ \tau \text{ vertical}}\partial_{\sigma/\tau}[\tau]$$ summing over facet-ridge pairs $(\sigma,\tau)$ with $\sigma$ unbounded and $\tau$ vertical. 
This generalises the results of \cite[\S3.2]{BN} to any dimension. 
\end{block}
\begin{proposition}[different as discrepancy of weight functions] Let $\omega\in \Gamma(\XX_k,\omega_{\XX_k/k}^{\otimes m})\setminus \{0\}$ be supported on $D_{\XX}$, then as functions on $\sk(\YY^{\dagger})$ we have
	$$\frac{1}{m}\delta_{\YY^{\dagger}/\XX^{\dagger}}=\wt_{f^*\omega}-\wt_\omega\circ \phi$$
\end{proposition}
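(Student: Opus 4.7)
The plan is to derive the identity from the log-adjunction formula of Proposition~\ref{propn: adj} applied to the composition $\YY^{\dagger} \to \XX^{\dagger} \to S^{\dagger}$, combined with the dictionary between Cartier divisors supported in $D_{\YY}$ and support functions on $\sk(\YY^{\dagger})$ provided by Lemma~\ref{lem correspondence cartier and pl functions} and the pullback rule of Lemma~\ref{lem pullback support}.

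First I would record the $m$-th tensor power of log-adjunction:
\[
\omega_{\YY^{\dagger}/S^{\dagger}}^{\otimes m} \;=\; \omega_{\YY^{\dagger}/\XX^{\dagger}}^{\otimes m} \otimes f^{*}\omega_{\XX^{\dagger}/S^{\dagger}}^{\otimes m},
\]
viewed as subsheaves of $\omega_{\YY/S,\eta}^{\otimes m}$. Since $f$ is generically \'etale, the trace section $\tau = \tau_{\YY^{\dagger}/\XX^{\dagger}}$ is the unit at the generic point, so $\tau^{\otimes m}$ generically generates $\omega_{\YY^{\dagger}/\XX^{\dagger}}^{\otimes m}$. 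It follows that inside the generic fiber of $\omega_{\YY^{\dagger}/S^{\dagger}}^{\otimes m}$ the rational section $f^{*}\omega$ factors as $\tau^{\otimes m} \otimes f^{*}\omega$, with the right-hand factor viewed inside $f^{*}\omega_{\XX^{\dagger}/S^{\dagger}}^{\otimes m}$.

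Taking divisors of this factorization---which is just the additivity of divisors under tensor products of rational sections, a consequence of the adjunction properties of the determinant recalled in~\ref{determinants}---yields the equality of Cartier divisors on $\YY$ supported in $D_{\YY}$:
\[
\div_{\omega_{\YY^{\dagger}/S^{\dagger}}^{\otimes m}}(f^{*}\omega) \;=\; m\,K_{\YY^{\dagger}/\XX^{\dagger}} + f^{*}\div_{\omega_{\XX^{\dagger}/S^{\dagger}}^{\otimes m}}(\omega).
\]
Converting both sides to their support functions on $\sk(\YY^{\dagger})$ via Lemma~\ref{lem correspondence cartier and pl functions}, using Lemma~\ref{lem pullback support} to identify the support function of $f^{*}\div(\omega)$ with $\wt_{\omega} \circ \phi$, and recalling that $\delta_{\YY^{\dagger}/\XX^{\dagger}}$ is by definition the support function of $K_{\YY^{\dagger}/\XX^{\dagger}}$, one arrives at
\[
\wt_{f^{*}\omega} \;=\; m\,\delta_{\YY^{\dagger}/\XX^{\dagger}} \;+\; \wt_{\omega}\circ\phi,
\]
which rearranges to the claimed identity (after dividing through by $m$).

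The only subtle point is verifying that the adjunction isomorphism underlying log-adjunction is compatible, at the level of rational sections, with the factorization $f^{*}\omega = \tau^{\otimes m} \otimes f^{*}\omega$. This is essentially bookkeeping with the realization of the log-canonical sheaves as reflexive subsheaves of the generic fiber (see~\ref{construction: log-can modules for normal schemes} and~\ref{generic embedding sheaf}) together with the fact that $\tau$ generates $\omega_{\YY^{\dagger}/\XX^{\dagger}}$ generically, and uses no deeper ingredient than the framework of Section~\ref{sec:log diff}.
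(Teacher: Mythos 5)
Your argument rests on the same two pillars as the paper's proof: log-adjunction (Proposition~\ref{propn: adj}) applied to $\YY^{\dagger}\to\XX^{\dagger}\to S^{\dagger}$, and the identification of the trace section $\tau_{\YY^{\dagger}/\XX^{\dagger}}$ with the pullback map on $m$-canonical forms. The difference is in the packaging. The paper uses this identification to verify the equality at the divisorial points attached to the components of $\YY_s$, where both sides have a direct valuation-theoretic meaning, and then concludes by observing (via~\ref{wt} and Theorem~\ref{thm:rh}) that both sides are $\Z$-affine on $\sk(\YY^{\dagger})$. You instead promote the identification to an equality of Cartier divisors supported on $D_{\YY}$, namely $\div(f^{*}\omega)=m\,K_{\YY^{\dagger}/\XX^{\dagger}}+f^{*}\div(\omega)$, and pass to support functions via Lemmas~\ref{lem correspondence cartier and pl functions} and~\ref{lem pullback support}. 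That is a legitimate and arguably cleaner organisation: it yields the identity on all of $\sk(\YY^{\dagger})$ at once and dispenses with the ``check at divisorial points, then extend by piecewise linearity'' step.

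One arithmetic point needs attention. With the paper's normalisation $\wt_{\omega}=F_{\div\omega}$ (see~\ref{wt}, where no division by $m$ occurs), your displayed conclusion $\wt_{f^{*}\omega}=m\,\delta_{\YY^{\dagger}/\XX^{\dagger}}+\wt_{\omega}\circ\phi$ rearranges to $m\,\delta_{\YY^{\dagger}/\XX^{\dagger}}=\wt_{f^{*}\omega}-\wt_{\omega}\circ\phi$; ``dividing through by $m$'' gives $\delta_{\YY^{\dagger}/\XX^{\dagger}}=\tfrac{1}{m}\bigl(\wt_{f^{*}\omega}-\wt_{\omega}\circ\phi\bigr)$, which differs from the stated identity $\tfrac{1}{m}\delta_{\YY^{\dagger}/\XX^{\dagger}}=\wt_{f^{*}\omega}-\wt_{\omega}\circ\phi$ by a factor of $m^{2}$. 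Your divisor identity looks correct, so the mismatch most likely reflects a misplacement of $m$ in the Proposition as stated (or an unstated $\tfrac1m$-normalisation of $\wt$); but as written, your final sentence asserts a rearrangement that does not hold, and you should pin down the normalisation of the weight function before claiming the formula in the displayed form.
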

\begin{proof} 
By the adjunction formula \ref{propn: adj} we have $$\omega_{\XX^{\dagger}/\YY^{\dagger}}\cong \underline{\mathrm{Hom}}(f^*\omega_{\YY^{\dagger}/S^{\dagger}},\omega_{\XX^{\dagger}/S^{\dagger}}),$$ and via this isomorphism the trace section $\tau_{\YY^{\dagger}/\XX^{\dagger}}$ (see \ref{subs: traces for models}) corresponds to the pull-back of $m$-canonical forms. The previous then implies the desired equality is true at all divisorial points of $\YY$ attached to components of $\YY_s$. By \ref{wt} and Theorem \ref{thm:rh} both left- and right-hand side define functions which are $\Z$-affine on $\sk(\YY^{\dagger})$, therefore they coincide.
\end{proof}

 \appendix
\section{Conventions on PL geometry}
\label{sec: polyhedral}
\begin{block} To fix notation, we collect here in this Appendix some well-known material on piecewise linear geometry. We first discuss cones and fans and their ``logarithmic'' description via monoids and monoidal spaces. Then we discuss $\Z$-PL spaces. % All of this material is well-known and included for lack of suitable reference.
\end{block}
\begin{block}[Cones]\label{block cones}
A \emph{convex rational polyhedral cone}, henceforth simply \emph{cone}, is defined as a pair $\sigma=(|\sigma|,N_{\sigma})$ consisting of a topological space $|\sigma|$ and a lattice $N_{\sigma}$ such that 
  $|\sigma|$ is the intersection of finitely many half-spaces of the form $$\{x\in (N_{\sigma})_\R:l(x)\ge0\}\quad\text{ for some }\quad l\in M_{\sigma}\coloneqq N_{\sigma}^*=\Hom(N_{\sigma},\Z)$$
  and require that $$|\sigma|^{\gp}=(N_{\sigma})_\R.$$
 Note that if $\sigma$ is a cone, then $|\sigma|$ defines a convex sub-$\R_{\ge0}$-monoid of $(N_\sigma)_\R$ which, as an $\R_{\ge0}$-monoid, is generated by finitely many elements of $N_\sigma$. We write $\langle\cdot,\cdot\rangle:M_{\sigma}\times N_{\sigma}\to\Z$ for the natural bilinear pairing; we call $M_{\sigma}$ the lattice of $\Z$-linear functions.

A morphism of cones $\sigma\to\sigma'$ is a group morphism $h:N_{\sigma}\to N_{\sigma'}$ with $h_{\R}(|\sigma|)\subset |\sigma'|$; note then $h_{\R}\vert_{|\sigma|}:|\sigma|\to|\sigma'|$ is a morphism of $\R_{\ge 0}$-monoids. This defines the category of cones $\mathbf{Cone}$.

 \label{gordan}
If $(\sigma,N_{\sigma})$ is a cone, then it is immediate that the monoid of integral points $\sigma\cap N_{\sigma}$ is integral, saturated and torsion-free. By Gordan's lemma $\sigma\cap N_\sigma$ is also finitely generated\footnote{proof: Write $N=N_{\sigma}$ and $\sigma=\sum_{i=1}^k \R_{\ge 0}m_i$ for some $m_1,\dots,m_k\in \sigma$. Denote by $\|\cdot\|_{N}$ the Euclidean norm on $N_\R$ induced by a $\Z$-basis of $N$. Then every element in $\sigma\cap N$ is at most a distance $\epsilon=\sum_{i=1}^k\|m_i\|_{N}$ removed from an element in the submonoid $\sum_{i=1}^k \N m_i$ of $\sigma\cap N$. It follows that $\{m_1,\dots,m_k\}$ and the collection of lattice points in the ball of radius $\epsilon$ around $0$ jointly generate $M_{\sigma}$, as desired.}.

 To simplify the notation, we will write $\sigma$ instead of $|\sigma|$ if the lattice $N_{\sigma}$ is clear from context. 
\end{block}
\begin{block}[Associated cone]\label{ass cone} Let $M$ be a fine torsion-free monoid. Then $M^{\gp}$ is a lattice and we view $M$ as an \emph{additive} submonoid of $M^{\gp}$. %\footnote{finitely generated (f.g.) is really needed here, since submonoids of f.g. monoids need not be f.g. as shown for instance by the example $(\N\times \{0\})\cup (\{(x,y),x\ge 1\})\subset \Z^2$.} additive submonoid of $M^{\gp}$. 
The $\R_{\ge0}$-submonoid of $M^{\gp}_\R$ generated by $M$ is denoted by $\R_{\ge 0}M$. We call the cone $(\R_{\ge0}M,M^{\gp})$ the \emph{cone associated to $M$}.

Note that $\R_{\ge 0}M=\R_{\ge0}M^{\mathrm{sat}}$, where $M^{\mathrm{sat}}$ denotes the saturation of $M$. Also, $M$ is sharp if and only if $\R_{\ge 0}M$ is strictly convex, that is $\R_{\ge 0}M$ contains no lines. In general there exists a set of elements of $M$, part of a system of generators, which generate $M^{\gp}_\Q$ as a $\Q$-vector space. This implies that $$(\R_{\ge 0}M)\cap M^{\gp}=M^{\mathrm{sat}}.$$%is actually an equality. Suppose $\{m_1,\dots,m_n\}$ generates $M$, without loss of generality we may assume that $\{m_1,\dots,m_k\}$ is a $\Q$-basis of generate $M^{\gp}\otimes_\Z\Q$ for some $0\le k\le n$. If $x\in \sigma(M)\cap M^{\gp}$ and ${m_1,\dots,m_n}$ generates $M$, then  So then we have $x=\sum_{i=1}a_im_i$ for some $a_i\in\Q_{\ge 0}$, meaning $M^{\sat}$.begin{minipage}[position]{width}\end{minipage}

\end{block}
\begin{proposition}[fs monoids correspond to cones] \label{fs monoids correspond to cones}There exists an equivalence of categories $$\mathbf{Mon}^{\mathrm{fs}}\overset{\sim}{\lra} \mathbf{Cone}$$ between the categories of \emph{fs} monoids and cones, given by $M\mapsto (\R_{\ge 0}M,M^{\gp})$, with inverse $(\sigma,N_{\sigma})\mapsto \sigma\cap N_\sigma$. An \emph{fs} monoid $M$ is sharp if and only if its associated cone is strictly convex (i.e. contains no linear subspaces).
\end{proposition}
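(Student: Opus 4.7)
The plan is to check that both functors are well-defined, construct natural isomorphisms between their compositions and the identity functors, and then verify the sharp/strictly convex correspondence.

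First, I would verify that $M \mapsto (\R_{\ge 0} M, M^{\gp})$ sends \emph{fs} monoids to cones. Since $M$ is fine, it has finitely many generators $m_1, \dots, m_k$, so $\R_{\ge 0} M = \sum_i \R_{\ge 0} m_i$ is a rational polyhedral cone in $M^{\gp}_{\R}$; the half-space description from $M$ saturated and finitely generated can be read off by duality (Gordan/Minkowski--Weyl). The condition $|\R_{\ge 0} M|^{\gp} = M^{\gp}_{\R}$ is immediate because $M$ is a set of $\R$-generators of $M^{\gp}_{\R}$. In the other direction, given a cone $(\sigma, N_\sigma)$, the monoid $\sigma \cap N_\sigma$ is manifestly integral, torsion-free, and saturated; finite generation is Gordan's lemma, proved already in~\ref{gordan}.

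Next, I would build the unit and counit of the adjunction. For an \emph{fs} monoid $M$, the identity $(\R_{\ge 0} M) \cap M^{\gp} = M^{\mathrm{sat}} = M$ is exactly the displayed equation at the end of~\ref{ass cone} combined with the saturation hypothesis. For a cone $(\sigma, N_\sigma)$, set $M := \sigma \cap N_\sigma$. The equality $\R_{\ge 0} M = \sigma$ is routine: $\sigma$ is rational, so it is generated as an $\R_{\ge 0}$-monoid by finitely many elements of $N_\sigma$, which lie in $M$, giving $\sigma \subset \R_{\ge 0} M$, and the reverse inclusion is obvious. The key subtle point is the lattice identification $M^{\gp} = N_\sigma$: since $|\sigma|^{\gp} = (N_\sigma)_{\R}$, we can pick $x \in M$ lying in the relative interior of $\sigma$; then for any $n \in N_\sigma$, $Mx + n \in \sigma \cap N_\sigma = M$ for large enough integers $M$, so $n = (Mx + n) - Mx$ lies in $M^{\gp}$. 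This is the step I expect to be the main technical obstacle, since it is the only place the hypothesis $|\sigma|^{\gp} = (N_\sigma)_{\R}$ is genuinely used; everything else is bookkeeping.

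For functoriality on morphisms, a monoid morphism $\phi\colon M \to M'$ passes to $\phi^{\gp}\colon M^{\gp} \to M'^{\gp}$, and $\phi^{\gp}_{\R}$ sends $\R_{\ge 0} M$ into $\R_{\ge 0} M'$ by $\R_{\ge 0}$-linearity, yielding a morphism of cones. Conversely, a morphism of cones $h\colon N_\sigma \to N_{\sigma'}$ with $h_{\R}(|\sigma|) \subset |\sigma'|$ restricts to a monoid morphism $\sigma \cap N_\sigma \to \sigma' \cap N_{\sigma'}$. These assignments are visibly functorial and mutually inverse at the level of morphism sets.

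Finally, for the sharpness statement: by~\ref{ass cone} the cone $\R_{\ge 0} M$ contains no line iff $M$ has no nontrivial units, i.e.\ iff $M$ is sharp; conversely, strict convexity of $\sigma$ translates to $\sigma \cap (-\sigma) \cap N_\sigma = \{0\}$, which is the sharpness of $\sigma \cap N_\sigma$. This concludes the equivalence.
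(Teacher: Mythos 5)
Your proposal is correct and follows the same route the paper takes: the paper's proof is literally ``immediate from~\ref{gordan} and~\ref{ass cone}'', i.e.\ Gordan's lemma plus the identity $(\R_{\ge 0}M)\cap M^{\gp}=M^{\mathrm{sat}}$ and the sharpness/strict-convexity remark, which are exactly the ingredients you assemble. The one detail you add that the paper leaves implicit --- the counit identification $(\sigma\cap N_\sigma)^{\gp}=N_\sigma$ via an interior lattice point, using $|\sigma|^{\gp}=(N_\sigma)_\R$ --- is correctly identified as the only non-bookkeeping step and is argued soundly (modulo the notational clash of using $M$ for both the monoid and the large integer).
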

\begin{proof} This is immediate from~\ref{gordan} and~\ref{ass cone}.
\end{proof}%Note that $M$ is $N$-saturated if and only if $N/M$ is torsion-free.
\begin{block}[Subcones and faces]\label{subcones} A \emph{subcone} of $\sigma=(|\sigma|,N_{\sigma})$ is a cone of the form $\sigma'=(|\sigma'|,N\cap\R|\sigma'|)$ for some $|\sigma'|\subset |\sigma|$. % Note $|\sigma'|^{\gp}=\R|\sigma'|$. 
A subcone $\sigma'\subset \sigma$ is called a \emph{face}, also written $$\sigma'\le \sigma,$$ if additionally for each $x,y\in\sigma$ we have that $x+y\in\sigma'$ implies $x\in\sigma'$ and $y\in\sigma'$. The $n$-dimensional faces of $\sigma$ are denoted $\sigma(n)$. A \emph{ray} is a $1$-dimensional face, i.e. an element of $\sigma(1)$. A cone is called \emph{simplicial} if the integral ray generators are linearly independent.

We call a morphism of cones $h:\sigma'\to\sigma$ a \emph{immersion} if the cokernel of $h:N'\to N$ is torsion-free, equivalently $h(\sigma')$ defines a subcone of $\sigma$. 
 %Note any injective immersion restricts to a bijection $|\sigma'|\cap N'\to h_\R(|\sigma'|)\cap N$ and so factors through the subcone $h(\sigma')\coloneqq (h_\R(|\sigma'|),N\cap |\sigma'|^{\gp})$. 
 A \emph{face embedding} is an injective immersion with $h(\sigma')\le \sigma$.
\end{block}

 \begin{block}[Dual monoids]\label{block dual monoids}If $M$ is an abelian group, we denote by $M^*=\Hom_{\Z}(M,\Z)$ the \emph{dual group}. Recall that if $M$ is a lattice then the natural map $M\to M^{**}$ is an isomorphism. 
 % In what follows we explain a generalisation of this result for sharp \emph{fs} monoids.
If $M$ is a monoid, then we denote by $M^{\vee}=\Hom(M,\N)$ the \emph{dual monoid} of $M$. It is immediate that $M^\vee$ is sharp. If $M$ is finitely generated, then $M^\vee$ is \emph{fs} by \cite[I.2.1.17(8)]{ogus}. Moreover, by \cite[I.2.2.3]{ogus} if $M$ is fine and sharp then the natural map $(M^\vee)^{\gp}\to (M^{\gp})^*$ is an isomorphism and the natural map $M\to M^{\vee\vee}$ factors through an isomorphism $M^{\sat}\cong M^{\vee\vee}$. So in particular for sharp \emph{fs} monoids the functor $M\to M^{\vee}$ is an involution. 
\end{block}
\begin{block}[Dual cone]\label{dual cone}
Suppose that $M$ is a fine torsion-free monoid and view $M$ as an additive submonoid of the lattice $M^{\gp}$. Then we denote by $$\sigma_M\coloneqq \R_{\ge0}\left(M^\vee\right)$$ the \emph{dual cone} associated to $M$; its associated lattice is $(M^\vee)^{\gp}=(M^{\gp})^*$. Suppose $M$ is sharp; by~\ref{block dual monoids} we have $(M^{\vee})^{\gp}_\R=\Hom_\Z(M^{\gp},\R)=\Hom_{\R}(M^{\gp}_\R,\R)$ and it follows $\sigma_M=\Hom(M,\R_{\ge0})=\mathrm{Hom}_{\R_{\ge0}}(\R_{\ge0}M,\R_{\ge 0}),$ that is $\sigma_M$ is the cone dual to the cone $\R_{\ge0}M$ associated to $M$ (as $\R_{\ge0}$-monoids).%write phi:\R_{\ge0}M\to \R_{\ge 0} as R-lin comb of phi_i\in M^vee; then coeff non-negative by evaluating dual basis

% A computation shows that 
Note that $\sigma_{M^\vee}=\R_{\ge0}M^{\vee\vee}\cong \R_{\ge0}M^{\sat}=\R_{\ge0}M$ is the cone associated to $M$. An application of Gordan's lemma shows that $M^{\sat}\cong\sigma_{M^\vee}\cap M^{\gp}$ is finitely generated. So, if $M$ is fine and sharp then the saturation $M^{\sat}$ is so too. %In particular if $M$ is \emph{fs} then so is $M^{\vee}$, and by the fact that $\mathrm{Hom}(\cdot,\R_{\ge0})$ is reflexive on cones we have a canonical isomorphism $M\to M^{\vee\vee}$. %fails for fine
 \end{block}
 \begin{block}\label{dual monoids viewed as monoids of functions}
 For each strictly convex cone $(\sigma,N_{\sigma})$, the monoid $\sigma\cap N_{\sigma}$ of integral points is \emph{fs} by~\ref{fs monoids correspond to cones}. We write $$\CC_{\sigma}\coloneqq(\sigma\cap N_\sigma)^\vee$$ for the \emph{monoid of non-negative $\Z$-affine functions on $\sigma$}, it is an \emph{fs} monoid by~\ref{block dual monoids}, and we have $\CC_{\sigma}^\gp=M_{\sigma}$ (the lattice of $\Z$-affine functions) and $\CC_{\sigma}=\{\alpha\in M_{\sigma}:\langle \alpha,\sigma\cap N_{\sigma}\rangle\subset \N\}$ justifying terminology. %and $\CC_{\sigma}^\vee=\sigma\cap N_{\sigma}$.
%Note that $\sigma$ is the dual cone associated to the monoid $\CC_{\sigma}$ by the discussion in ~\ref{dual cone}.
%We can view $(M_{\sigma})_{\R}=(\CC_{\gp})_{\R}$ as the set of (real-valued) \emph{linear functions} $\alpha:\sigma\to\R$, and we can view $N_{\sigma}^*=\CC_{\sigma}^{\gp}$ as the set of linear functions $N:\sigma\to\R$ for which $N(\sigma\cap N)\subset \Z$. We will therefore also call $M_{\sigma}=\CC_{\sigma}^{\gp}$ the lattice of \emph{$\Z$-affine} functions on $\sigma$. To justify terminology, note $M_{\sigma}=\{\alpha\in M_{\sigma}^{\gp}:\alpha(\sigma\cap N_{\sigma})\subset \N\}$.%Similarly, we will call $M_{\sigma}$ the monoid of \emph{nonnegative integral linear} or \emph{$\N$-affine} functions on $\sigma$. 
\end{block}
\begin{block}
  To sum up~\ref{dual cone} and~\ref{dual monoids viewed as monoids of functions}: for every sharp \emph{fs} monoid $M$ there exists a unique cone $\sigma$ such that $M=\CC_{\sigma}$ is the monoid of non-negative $\Z$-affine functions on $\sigma$, namely the dual cone $\sigma_M$.%, borrowing notation from~\ref{dual cone}.
   \end{block}

\begin{definition}[Cone complex]\label{def cone complex}
A \emph{convex rational polyhedral cone complex}, henceforth simply \emph{cone complex} is a pair $\Sigma=(|\Sigma|,\tau)$ consisting of a locally compact topological space $|\Sigma|$ equipped with a \emph{cone atlas} $\tau$, which is a set of cones satisfying the following conditions. %|\sigma|$ of set of closedstrictly convex cones covering of $\Sigma$ which additionally satisfy the following. %As a set $\tau$ is a collection of cones which form an open covering of  meaning the following. equipped an open covering $\tau$ of cones, such that: with a \emph{cone atlas} $\tau$, meanof compatible cone charts, meaning the following.
\begin{enumerate}[label=(\roman*)]
\item The set $\{|\sigma|\}_{\sigma\in\tau}$ defines a covering of closed subsets of $|\Sigma|$.
\item Each face of a cone in $\tau$ is again a cone in $\tau$.
  \item For any two cones $\sigma_1,\sigma_2\in \tau$, there is a unique cone $\sigma_1\cap\sigma_2\in\tau$ such that $|\sigma_1\cap\sigma_2|=|\sigma_1|\cap|\sigma_2|$ and $\sigma_1\cap\sigma_2$ admits a face embedding in both $\sigma_1$ and $\sigma_2$.
\item Each $x\in \Sigma$ admits a neighbourhood of the form $|\sigma_1|\cup\dots\cup|\sigma_n|$ for some $\sigma_1,\dots,\sigma_n\in \tau$.
\end{enumerate}

To ease notation we will identify a cone complex $\Sigma$ with its geometric realisation $|\Sigma|$. The lattice points of $\Sigma$ are denoted by $\Sigma(\N)=\colim N_{\sigma}$. We write $M_{\Sigma}=\lim M_{\sigma}$ for the group of $\Z$-affine functions. %even though we not for remember that every cone complex carries integral structures on the cones nonetheless.

%In what follows, we will drop the notation $|\cdot|$ if no confusion arises.
We define a \emph{morphism of cone complexes} $\phi:\Sigma\to \Sigma'$ as a map $h:\tau\to\tau'$ together with a family $(h_\sigma:\sigma\to h(\sigma))_{\sigma\in\tau}$ of compatible cone morphisms, where compatibility means that whenever $\rho$ is a face of $\sigma$, the restriction $(h_\sigma)\vert_\rho$ is the composition of $h_\rho$ with the face embedding of $h(\rho)$ in $h(\sigma)$. Then $h$ induces a continuous map $|h|:|\Sigma|\to|\Sigma'|$ which we will often identify with $h$ for simplicity.  %To ease notation we identify $h$ with its geometric realisation $|h|:|\Sigma|\to|\Sigma'|$.

We call $h$ a \emph{subdivision} if $|h|$ is injective and all $h_{\sigma}$ are immersions. % $|h|$ defines a homeomorphism onto its image 
We call $h$ a \emph{proper subdivision} if additionally $|h|$ is bijective and each cone $\sigma'$ is a finite union of cones in $\Sigma$. 
%to justify: let $F$ be fan and x in F corresponds to (sigma,N=(M^{gp}_{F',x})^*). Then Lambda' surjects onto N iff Lambda injects into N'. the sharpness condition of fans ensures that kernel of h_sigma can be chosen torsion. 
%injection F(N)-> F'(N) is injection \cup \sigma\cap N\to\cup \sigma'\cap N'. what is not obvious is that if we have family of injections N \into N' with F(N)=F'(N), that then cokernels N\to N are torsion-free: this follows since by any cone admits unique lattice points only lying in this cone, and lambda injects into lambda'.

\end{definition}
\begin{remark} A definition equivalent to~\ref{def cone complex} is given in Definition~\ref{defn fan}, see Remark~\ref{fs fan and cone complex}.
\end{remark}
\begin{block}[Constructions with monoids.] Recall that $\mathbf{Mon}$ admits all limits and colimits, where limits and coproducts commute with the forgetful functor to $\mathbf{Set}$ and coequalisers are described via congruence relations, i.e. equivalence relations that are submonoids \cite[\S1]{ogus}.
\end{block}
\begin{block}[Ideals and faces.] \label{ideals}Let $M$ be monoid. A subset $I$ of $M$ is called an \emph{ideal} of $M$ if $IM\subset I$, that is $I$ is a sub-$M$-set of $M$. By convention $I=\emptyset$ is also an ideal, and we call $I$ proper if $I\ne M$. Note that every monoid $M$ admits a maximal proper ideal, namely $M\setminus M^\times$. A proper ideal $I$ of $M$ is called a \emph{prime ideal} if for all $m,m'\in M$ we have $mm'\in I$ only if $m\in I$ or $m'\in I$. Note that if $\p$ is a prime ideal then the complement $M\setminus \p$ is a submonoid. The submonoids which arise in this fashion are called \emph{faces} of $M$, and they are equivalently characterised as those submonoid $F\subset M$ for which $mm'\in F$ implies $m\in F$ and $m'\in F$. If $M$ is a fine monoid then every face of $M$ is also fine by \cite[I.2.1.17(3)]{ogus}. %more generally true for exact submonoids, ie cartesian square F->M // F^gp->M^\gp 
If $M$ is fine torsion-free then the map $F\mapsto \R_{\ge0}F$ yields a one-to-one correspondence between faces of $M$ and faces of $\R_{\ge0}M$.%more general for integral monoids see \cite[2.3.7(1)]{ogus}.

Arbitrary unions of (prime) ideals are again (prime) ideals, and finite intersections of ideals are again ideals. If $\phi:M\to M'$ is a morphism of monoids then the inverse image of a prime ideal is a prime ideal.

% If $I$ is an ideal in $M$, then $\Z[I]$ is an ideal of $\Z[M]$. If $M$ is fine then $\Z[M]$ is Noetherian by Proposition~\ref{prop: fg monoid}, hence the ring-theoretic ideal $\Z[I]$ is finitely generated, and it follows that $I$ is finitely generated as a monoid ideal.
%If $M$ is fine, then any ideal is finitely generated: $$ %Ideals and prime ideals are closed under unions and intersections and have order-reversing bijection primes and faces
\end{block}
\begin{block}[Quotients by ideals] \label{app ideal quotient} A quotient of a monoid $M$ by an ideal $I$ introduces an absorbing element or zero, i.e. an element $0$ such that $m\cdot 0=0$ for all $m\in M$. For instance, $(\N\cup\{+\infty\},+)$ has zero element $+\infty$. Any $M$-set $I$ (i.e. a set with action by $M$) can be given a $0$ by constructing the coproduct $I^0\coloneqq I\oplus 0$ in the category of $M$-sets. %If $M$ is a monoid we call $M^0$ \emph{integral} if $ab=ac$ implies $a=0$ or $b=c$. 
If $I$ is an ideal of $M$ then $M/I$ is defined as the cokernel of $I^0\to M^0$ in the category of $M$-sets. As a set $M^0/I^0=M\setminus I\oplus 0$, and $M^0/I^0$ is canonically a monoid with zero (products that lie in $I$ are declared equal to $0$). If $I$ is prime then $M/I$ is the face $M\setminus I$ with an additional zero. %So notice that quotients by ideals do not really give anything new, still they can be a useful conceptual device. % for conceptual reasons one can still work with them. %by letting $mm'=0$ the $M$-action given by the following. An element $m\in M$ acts on $m'\in M\setminus I$ trivially if $m\cdot m'\in I$ and otherwise as usual.  $m$ acts trivially. Since $I$ then acts trivially on $M^0/I^0$, the $M$-set $M^0/I^0$ is naturally a monoid with zero, called the quotient monoid. 
\end{block}
\begin{remark} The construction in \ref{app ideal quotient} suggests to work systematically with monoids with zero, this is possible after some small changes to make sure not to divide by zero (for instance one changes the notion of an integral monoid to require the cancellation law ``$ab=ac$ implies $b=c$ for all nonzero $a$''). We do not pursue this any further.
\end{remark}
\begin{block}[Spectrum of a monoid]

\label{spec monoid} The spectrum $\spec M$ of a monoid $M$ is the topological space obtained by equipping the collection of prime ideals of $M$ with the \emph{Zariski topology}: the closed sets are given by $Z(I)=\{\p\in \spec M:\ \p \supset I\}$ where $I$ is an ideal of $M$ -- equivalently the sets $D(f)=\{\p:\ f\not\in \p\}$ for $f\in M$, form a basis for the topology of $\spec M$. The spectrum yields a functor $$\spec:\mathbf{Mon}^{\mathrm{op}}\to\mathbf{Top}.$$

Let $M$ be a monoid and let $S$ be a submonoid. Then the localisation $S^{-1}M$ is the quotient of $S\times M$ by the congruence relation generated by the relations $(s,m)\sim (s',m')$ for all $s,s'\in S$ and $m,m'\in M$ such that $sm'=sm'$. For $f\in M$ we write $M_{f}=\{f^n: n\in \N\}^{-1}M$. We have a canonical homeomorphism $D(f)=\spec M_{f}$. For $\p \in \spec M$ we write $M_\p=(M\setminus \p)^{-1}M$, and as subsets of $\spec M$ the spectra $\spec M_\p$ and $\spec (M/\p\setminus\{0\})=\spec M\setminus \p$ are those primes $\mathfrak{q}$ contained in $\p$ and containing $\p$ respectively.

Let $f\in M$. Then there exists a smallest face containing $f$, namely $$\langle f\rangle\coloneqq\{m\in M:\exists m'\in M,\exists n\in\N: mm'=f^n\}.$$
Therefore $\spec M_f$ contains a unique closed point, namely $M_f\setminus \langle f\rangle$. The localisation $\langle f\rangle^{-1}M$ coincides with $M_f$.

%We also have a dimension theory akin to affine schemes: 
Let $\dim M$ denote the Krull dimension of $\spec M$, then for each $\p\in \spec M$ we have that $\dim M_{\p}$ coincides with the \emph{height} $h(\p)$ of $\p$, defined as the supremum of the length of a strictly descending chain of prime ideals $$\p=\p_0\supset \p_1...\supset \p_h=\emptyset.$$ If $M$ is finitely generated then $h(\p)$ is finite and $h(\p)=\dim M_\p\le \mathrm{rank}_\Z(M^\sharp)^\gp$ with equality if $M$ is fine, by \cite[I.1.4.7]{ogus}. So if $M$ is fine then $\dim M=\dim (M^\sharp)^{\mathrm{gp}}$ is finite and so is $\spec M$.% M_p^\sharp=M/(M\setminus p)

If $M$ is fine and sharp then $\dim M=\dim \R_{\ge0}M$, and so if $F$ is a face of $M$ then the height $h(\p)$ of $\p=M\setminus F$ equals $\dim \R_{\ge0}M-\dim \R_{\ge0}F$.%=\rank_\Z(M_\p^\sharp)^\gp
\end{block}
\begin{block} \label{defn: mon space}
The category $\mathbf{MonSp}$ of \emph{monoidal spaces} is defined as follows. Objects are pairs $(T,\M_T)$ consisting of a topological space $T$ endowed with a sheaf of monoids $\M_T$. A morphism of monoidal spaces $(T,M_T)\to(T',M_{T'})$ is a pair $(f,f^\flat)$ consisting of a continuous map $f:T\to T'$ and a homomorphism $f^\flat:f^{-1}\M_{T'}\to M_{T}$ such that that induced stalk maps $$f^{\flat}_t:\M_{T',f(t)}\to\M_{T,t}$$ are \emph{local}, that is $\M_{T',f(t)}^\times \subset (f^{\flat}_t)^{-1}\M_{T,t}^\times$.% \footnote{A monoid morphism $\phi:M\to N$ is called \emph{local} if the inclusion $M^{\times}\subset \phi^{-1}(N^\times)$ is an equality. Note that if $M$ and $N$ are sharp then $\phi$ is local if and only if $\phi^{-1}(1)=\{1\}$.}.
\end{block}
\begin{block}\label{local morphisms of monoids}
  Many of the constructions in $\mathbf{Mon}$ extend to sheaves of monoids on a topological space, including the construction of limits and colimits and the functors $(\cdot)^{\gp},(\cdot)^{\mathrm{int}},(\cdot)^{\mathrm{sat}},(\cdot)^{\sharp}$, by applying these constructions locally and then sheafifying the outcome. All these constructions commute with passing to stalks.

  For instance, if $(T,\M_T)$ is a monoidal space then we write $(T,\M_T)^\sharp=(T,\M_T^\sharp)$ for the \emph{sharpification}. We say $(T,\M_T)$ is \emph{sharp} if $\M_T=\M_T^\sharp$, equivalently all stalks $\M_{T,t}$ are sharp. Note that if $(f,f^\flat):(T,M_T)\to(T',M_{T'})$ is a morphism of sharp monoidal spaces then the requirement that $f^{\flat}_t$ is local means that $(f^{\flat}_t)^{-1}\{1\}=\{1\}$.\end{block}
\begin{block}\label{monoscheme} Let $M$ be a monoid and let $T=\spec M$. We define a sheaf of monoids $\M_T$ on $T$ via the rule $D(f)\mapsto M_f$ for all $f\in M$ and sheafifying the result. Since $D(f)=\spec M_f$ admits a unique closed point $\p=M\setminus \langle f\rangle$, it follows that $\M_T(D(f))=\M_{T,\p}={\langle f\rangle}^{-1}M=M_f$, so sheafifying was in fact not needed. In particular if $f=1$ is trivial we find that $\M_T(T)=M$. 

A computation similar as the one for schemes, see for instance \cite[II.1.2.2]{ogus} shows that the functor $$\mathbf{MonSp}\to \mathbf{Set}:(T,\M_T)\mapsto \Hom(M,\Gamma(T,\M_T))$$%=Hom(T,\spec M)  T->\spec \Gamma(T,M_T)->\spec M
  is representable by the monoidal space $(\spec M,\M_{\spec M})$. %Similarly $(\spec M,\M_{\spec M}^\sharp)$ represents the same functor on the category of sharp monoidal spaces.

 % If $M$ is sharp and fine then $(\spec M,\M_{\spec M})=(\spec \R_{\ge0}M,\M_{\spec\R_{\ge0}M})$, and so we recover the spaces $(\spec\sigma,\M_{\spec\sigma})$ from~\ref{dual monoids viewed as monoids of functions}.
\end{block}

\begin{block}[Affine fans]
\label{affine fans}  %If $M$ is a monoid then we will write $$\mathbf{a}_M=(\spec M,\M_{\spec M})^\sharp.$$ 
An \emph{affine fan} is defined as a monoidal space of the form $(\spec M,\M_{\spec M})^\sharp$, in particular these are always sharp. We have $\M_{\spec M}^\sharp=\M_{\spec M^{\sharp}}$ and by~\ref{monoscheme} it follows that the category of affine fans is anti-equivalent to the category of sharp monoids. In particular, affine fans admit fibered products. The affine fan $(\spec M,\M_{\spec M})^\sharp$ is called \emph{fs} if $M$ is \emph{fs}. 
\end{block}
\begin{definition}\label{defn fan}
  A \emph{fan} is defined as a sharp monoidal space which is locally isomorphic to an affine fan. A fan is called \emph{locally fs} (resp. \emph{fs}) if it is a union (resp. a finite union) of \emph{fs} affine fans. The category of fans (resp. locally fs fans) is written $\mathbf{Fan}$ (resp. $\mathbf{Fan}^{fs}$). The fibered product for affine fans glues to fibered products for fans \cite[II.1.3.5]{ogus}, and similarly for locally \emph{fs} fans. %The fibered product of affine fans extends to a fibered product category of affine 

 A fan $F$ is called \emph{regular} at a point $x$ if $F$ is locally \emph{fs} at $x$ and $$\mathrm{rank}_\Z\M_{F,x}^\gp=\dim \M_{F,x}.$$ We write $F_{\mathrm{reg}}$ for the regular points of $F$.
Given a fan $F$ and a monoid $M$, we will write $$F(M)=\Hom(\spec M,F).$$ If $F$ is a fan we call $F(\N)$ the set of \emph{integral points} on $F$, and similarly we have the sets of \emph{rational points} $F(\Q_{\ge0})$ and \emph{real points} $F(\R_{\ge0})$. %For instance the points of $\F(\N)$ parametrise $\M_{F,x}^\vee$%\sqcup Hom_loc(\M_{F,x},M^\sharp)
\end{definition}
\begin{block} Let $M$ be a sharp monoid, then the set $(\spec M)(\N)$ of integral points of $\spec M$ coincides with $M^\vee$. In particular, if $M$ is also fine then $(\spec M)(\N)$ is the integral points of the dual cone $\sigma_M$ (see~\ref{dual cone}) and $(\spec M)(\R_{\ge0})=\sigma_M$.% then the set of integral points (resp. real points) on $(\spec\sigma,\M_{\spec\sigma})$ equals $\sigma\cap N$ (resp. $\sigma$).
\end{block}

\begin{construction}[Cone complex associated to a fan $F$]\label{block: class fans vs kato fans} \label{cone complex of a fan}
To every locally \emph{fs} fan $F$ we associate a strictly convex cone complex $\Sigma(F)$ as follows.
%Locally \emph{fs} fans correspond to cone complexes in the classical sense, defined in terms of sets of cones which are glued along faces, see for instance \cite[9.5]{K94}. 

% In this text a \emph{cone complex} is by definition a union $\Sigma=\cup (\sigma,N_{\sigma})$ of cones, such that for eac
For every $x,y\in F$ such that $x$ is a specialisation of $y$, that is $x\in\overline{\{y\}}$, we have a local morphism $\M_{F,y}\lra \M_{F,x}$, %cospecialisation
 and so we get an immersion of dual cones \begin{equation}
  \sigma_{\M_{F,x}}\lra \sigma_{\M_{F,y}}.\label{eqn: cone maps}
\end{equation}

The maps \eqref{eqn: cone maps} define face embeddings and hence glue to a well-defined cone complex $\Sigma(F)=(|\sigma(F)|,\{\sigma_{\M_{F,x}}\}_{x\in F})$ such that
%The image of \eqref{eqn: cone maps} factors through a face of $\sigma_{\M_{F,y}}$, and thus the quotient of $\sqcup \sigma_{\M_{F,x}}$ by the face embeddings~\ref{eqn: cone maps} gives a topological space 
\begin{equation}
  |\Sigma(F)|=\cup_{x\in F}\sigma_{\M_{F,x}}.\label{cone complex of a fan}
\end{equation}
In other words $\Sigma(F)=\mathrm{colim}_{x\in F}\sigma_{\M_{F,x}}$.
%called the \emph{cone complex} $\Sigma(F)=(\Sigma(F),\{\sigma_{\M_{F,x}}\}_{x\in f})$. 

% This implies that the dual cones $\sigma(\M_{F,x})^\vee$ can be glued along the morphisms \eqref{eqn: cone maps} as face embeddings to obtain the \emph{cone complex} $\Sigma(F)$ of $F$. 
 %$F(\N)$ correspond with set of integral points on $F$.

% To every \emph{fs} fan $F$ we can associate a \emph{cone complex} $\Sigma$ as follows. As a set $\sigma$
% Cone complexes correspond to fans in the following way, also see \cite[9.5]{K94} .
%   If $\Sigma$ is a cone complex, then we associate a fan $F(\Sigma)=(\Sigma,\M_{F(\Sigma)})$ as follows: on the basic open set $[\sigma]$ for $\sigma\in \Sigma$ we let $\M_{F(\Sigma}([\sigma])=M_{\sigma}^\vee$. Locally, $F(\Sigma)$ is isomorphic to $\spec M_{\sigma}^\vee$ and we have $F(\Sigma)(\N)=C(\Sigma)$. %$U(\sigma)(\N)=M_\sigma$ and $F(\Sigma)(\N)=$. Conversely, if $F$ is a fan, then we have the associated cone complex $\Sigma(F)$ on the cones $\sigma(\M_{F(x),x})^\vee$, which is $N$-rational, where $N=\M_{F(x),\eta}^\gp$ and $\eta$ is the generic point of $F(x)$.
\end{construction}
\begin{block}\label{fs fan and cone complex}
  Conversely, from every strictly convex cone complex one can reconstruct a locally \emph{fs} fan and this gives an equivalence of categories -- we omit the details. 

Under this equivalence regular points of a locally \emph{fs} fan correspond to \emph{regular} cones in the corresponding cone complex, where a regular cone is defined as a simplicial cone (\ref{subcones}) for which the integral points are generated by ray vectors. 

The parallel definitions of subdivisions (\ref{def cone complex}) are follows.
\end{block}
\begin{definition}[Subdivision] \label{def subdivision}
A subdivision of locally \emph{fs} fans is defined as a morphism $\phi:F'\to F$ such that 
  \begin{enumerate}[label=(\alph*)]
    \item For all $x'\in F'$, the map $\M^{\gp}_{F,\phi(x)}\to \M^{\gp}_{F',x}$ surjects. %closed immersion
    \item The map $F'(\N)\to F(\N)$ is injective %"no faces lost". without properness only require injective
    %\item $\phi$ has finite fibers %specific to 'proper'. Bultot forgets this property
  \end{enumerate}%kato: s_fan means locally fs
  A subdivision $\phi$ is \emph{proper} if additionally $\phi$ has finite fibers and $F'(\N)\to F(\N)$ is bijective.
\end{definition}
% \begin{remark}
%   %Definition~\ref{def subdivision} is due to Kato \cite[9.6]{K94}. See~\ref{def cone complex} %or \cite[III.1]{kkmsd} 
%   for parallel definitions for cone complexes.
% \end{remark}

\begin{proposition}[Existence regular subdivisions]\label{existence nice proper subdivision}
  Let $F$ be a locally \emph{fs} fan. Then there exists a proper subdivision $\phi:F'\to F$ such that $F'$ is regular and moreover we can require that the induced map $\phi^{-1}(F_{\mathrm{reg}})\lra F_{\mathrm{reg}}$ is an isomorphism. We will also call $\phi$ a \emph{regular} subdivision.
\end{proposition}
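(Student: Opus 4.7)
The plan is to work on the side of strictly convex cone complexes, using the equivalence of \ref{fs fan and cone complex}, so that the statement becomes the familiar one in toric geometry: any rational polyhedral cone complex admits a proper regular subdivision that is the identity over the regular locus. By gluing over $F$, it suffices to prove the statement locally, i.e.\ for a single strictly convex rational polyhedral cone $\sigma = (|\sigma|, N_{\sigma})$, and to verify that the local constructions are compatible with face restriction so that they glue to a global proper subdivision.

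First I would carry out a \emph{simplicial subdivision step}. For any non-simplicial cone $\sigma$ appearing in $\Sigma(F)$, pick a lattice point $n$ in the relative interior of $\sigma$ (for instance the sum of the primitive ray generators) and replace $\sigma$ by the star subdivision at $n$, i.e.\ by the cones $\mathrm{cone}(n, \rho)$ for each proper face $\rho < \sigma$. Each such piece has strictly fewer rays than $\sigma$; after finitely many such operations, carried out on cones of decreasing dimension, we obtain a proper subdivision $\Sigma_1 \to \Sigma(F)$ with $\Sigma_1$ simplicial. By construction we only subdivide cones that are not already simplicial (hence not regular), so $\Sigma_1 \to \Sigma(F)$ is an isomorphism over $F_{\mathrm{reg}}$.

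Next I would perform the \emph{regularisation step} on the simplicial complex $\Sigma_1$. For a simplicial cone $\sigma$ with primitive ray generators $n_1,\dots,n_r$, define its multiplicity as $\mathrm{mult}(\sigma) = [\,N_{\sigma} : \mathbb{Z} n_1 + \dots + \mathbb{Z} n_r\,]$; by Proposition \ref{fs monoids correspond to cones} and the description of regular cones in \ref{fs fan and cone complex}, $\sigma$ is regular iff $\mathrm{mult}(\sigma)=1$. If $\mathrm{mult}(\sigma) > 1$, choose a lattice vector $n = \sum a_i n_i$ in the relative interior of $\sigma$ with $0 \le a_i < 1$ and $n \ne 0$, and perform the star subdivision of $\sigma$ at $n$. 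A classical computation (Mumford--Kempf--Knudsen--Saint-Donat, \cite{kkmsd} Ch.\ I, \S 2, or Fulton \cite{fulton} \S 2.6) shows that each new cone has strictly smaller multiplicity than $\sigma$. Iterating this and proceeding by induction on dimension (applied to cones of maximal multiplicity), the process terminates in a subdivision $\Sigma' \to \Sigma_1$ all of whose cones are regular. Again we only subdivide non-regular cones, so $\Sigma' \to \Sigma_1 \to \Sigma(F)$ is an isomorphism above $F_{\mathrm{reg}}$.

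Finally I would assemble these constructions into a proper subdivision of fans. The two steps above are canonical once the interior lattice points $n$ are chosen equivariantly with respect to face inclusions (e.g.\ always taking the sum of primitive ray generators of the minimal non-regular face containing a given cone), so the local star subdivisions glue to a global proper subdivision $F' \to F$ of locally fs fans in the sense of Definition \ref{def subdivision}: condition (a) holds because each new ray lies in $N_{\sigma}$ for the ambient cone $\sigma$, and (b) together with the properness condition follows from the fact that star subdivision bijects lattice points. The main obstacle is bookkeeping termination and global compatibility: one must check that making the choices of interior vectors only in terms of the ambient faces (and restricting to non-regular cones) yields finitely many star subdivisions on each cone and produces a subdivision that is the identity over $F_{\mathrm{reg}}$; both are classical and rest on the strict decrease of multiplicity and of non-simpliciality under the chosen star subdivisions.
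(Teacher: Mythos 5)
The paper does not actually prove this statement: its ``proof'' consists of the citations \cite[\S 6.6.31]{GR} and \cite[\S 4.6.1]{W}. Your sketch supplies the classical toric argument that those references encode (translate to strictly convex cone complexes via \ref{fs fan and cone complex}, simplicialise by star subdivisions, then kill multiplicities by star subdivisions at interior lattice points with coefficients in $[0,1)$), together with the two genuinely necessary observations for the fan-theoretic statement: that the choices must be made canonically in terms of each cone so that they glue over a possibly infinite atlas, and that star-subdividing only non-regular cones never touches $F_{\mathrm{reg}}$ (because any cone having a non-regular face is itself non-regular). That is the right proof, and it is more informative than the paper's deferral.

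Two small inaccuracies in the bookkeeping are worth fixing. First, in the simplicialisation step the claim that each piece $\mathrm{cone}(n,\rho)$ ``has strictly fewer rays than $\sigma$'' is false in general: $\mathrm{cone}(n,F)$ for a facet $F$ has $\#F(1)+1$ rays, which can equal $\#\sigma(1)$ (e.g.\ the cone over a pyramid over a square has a facet carrying all but one of its rays). The correct termination argument processes cones in \emph{increasing} dimension (equivalently, always star-subdivides a \emph{minimal} non-simplicial cone, all of whose proper faces are already simplicial, so that every new cone $\mathrm{cone}(n,F)$ is automatically simplicial); your phrase ``cones of decreasing dimension'' has the order backwards. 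Second, in the regularisation step the witness $n=\sum a_in_i$ with $0\le a_i<1$ need not lie in the relative interior of $\sigma$; this is harmless precisely because you induct on dimension so that all proper faces are already regular when $\sigma$ is treated, forcing all $a_i>0$ --- but that implication should be stated rather than assumed. With these corrections the sketch is a complete and correct proof.
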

\begin{proof} See either \cite[\S6.6.31]{GR} or \cite[\S4.6.1]{W}.% \cite[9.8]{K94} is not in strong sense
\end{proof}
\begin{block}\label{minimal subdivision}
  Let $F$ be a $2$-dimensional \emph{fs} fan. Then we claim there exists a \emph{minimal} regular subdivision $F'\to F$. Indeed, if $\sigma$ is a $2$-dimensional cone then consider the convex hull $H$ of $|\sigma|\cap N_{\sigma}$. Consider the set $S$ of points in $N_{\sigma}\cap H$ which are integral ray generators. A computation shows that $S$ is the minimal generating set of $|\sigma|\cap N_{\sigma}$. The rays of these generators define a subdivision that contains any regular subdivision of $\sigma$. % minimal set of generators with lattice $\Z^2$ contains the generators of $$sub can be subdivided . 
  The set $S$ can be computed explicitly using Hirzebruch-Jung continued fractions, see for instance \cite[\S1]{oda}.% or \cite[4.4]{HN}.
\end{block}

\begin{block} %We use \emph{PL space} as a synonym for a \emph{convex polyhedral complex}. 
For the remainder of this Appendix we recall the definition and some elementary properties of $\Z$-PL spaces. In the literature, these are also known as $\Z_{|k^\times|}-$ or $(\Z,|k^\times|)-$PL spaces. Since $k$ is discretely valued, we can fix an isomorphism $|k^\times|\cong \Z$ and we will omit $|k^\times|$ to lighten the notation. %\emph{convex rational polyhedral complexes with integral structure}. 

These spaces generalise the cone complexes defined in~\ref{def cone complex}, in fact we recover these by letting $b=0$ throughout Definitions~\ref{def polyhedral complex} and~\ref{def PL space} below.
\end{block}

% \begin{block}[Reminders on polyhedral geometry]
% %We abbreviate ``piecewise linear'' by ``PL''.
% Below we recall   %Our definition agrees with the one of a $\Z_{|k^\times|}$-PL space in the sense of \cite[1]{B99}\footnote{except that Berkovich calls a polyhedron what we call a polytope -- we follow the conventions of \cite[2.5]{GJR}} -- there, a general theory of $R_S$-PL spaces for more general coefficients $R,S$ is developed. 
% \end{block}
% % A \emph{polyhedral complex} or \emph{$\Z$-PL space} $\Phi=(|\phi|,\tau)$ is defined similarly as a cone complex (see definition~\ref{def cone complex})
%piecewise integral $k$-affine (resp piecewise rational $k$-affine)
%Let $S\subset \R$ be a monoid with zero, for instance $$
%We refer to \cite[1]{B99} for the theory of $R_S$-piecewise linear spaces. 

%We only give a quick summary. 
\begin{definition}[Polyhedra] If $N$ is a lattice a \emph{$\Z$-affine function} on $N_\R$ as any function of the form $$\varphi:x\mapsto \langle a,x\rangle+b$$ for some $a\in N^*$ and $b\in\Z$. A \emph{$\Z$-PL polyhedron}, or short \emph{polyhedron} is a pair $\sigma=(|\sigma|,N_{\sigma})$ of a topological space $|\sigma|$ and a lattice $N_{\sigma}$ such that $|\sigma|$ is the intersection of finitely many half-spaces $$\{x\in (N_{\sigma})_{\R}:\varphi(x)\ge0\}\subset (N_{\sigma})_\R^n$$ for some $\Z$-affine functions $\varphi$ on $\N_{\sigma}$ and $(N_{\sigma})_{\R}=\R|\sigma|$. The last condition ensures $$C(\sigma)\coloneqq (\overline{\R_{\ge0}|\sigma|},N_{\sigma})$$ is a cone with the same lattice $N_{\sigma}$ -- here $\overline{\cdot}$ denotes the closure in $(N_{\sigma})_{\R}$. We call $C(\sigma)$ the \emph{cone} associated to $\sigma$, note $N_{\sigma}=N_{C(\sigma)}$, and $M_{\sigma}\coloneqq N_{\sigma}^*=M_{C(\sigma)}$ is the lattice of $\Z$-affine functions on $\sigma$.
The monoid of non-negative $\Z$-affine functions on $\sigma$ is $\CC_{\sigma}\coloneqq\CC_{C(\sigma)}$.

 A \emph{subpolyhedron} of $\sigma=(|\sigma|,N_{\sigma})$ is a polyhedron of the form $\sigma'=(|\sigma'|,N_{\sigma}\cap \R|\sigma'|)$ with $|\sigma'|\subset |\sigma|$. A \emph{face} of a polyhedron $\sigma$ is a subpolyhedron obtained as an intersection of $|\sigma|$ with the boundary of a half-space $H_l$ containing $\sigma$, and is possibly $\emptyset$ or $\sigma$. Given a polyhedron $\sigma$ we write $\sigma(n)$ for the set of $n$-dimensional faces of $\sigma$.
\label{def polyhedral complex}

A \emph{morphism of polyhedra} $\sigma'\to\sigma$ is a group morphism $h:N_{\sigma'}\to N_{\sigma}$ such that $h_\R(|\sigma'|)\subset|\sigma|$. We call $h$ an \emph{immersion} if $\mathrm{coker}(h)$ is torsion-free, equivalently $h(\sigma')$ defines a subpolyhedron. A \emph{face embedding} is an injective immersion with $h(\sigma')\le \sigma$. 
%An immersion $h:\sigma'\to\sigma$ satisfies $\dim h(\sigma')=\dim \sigma$ if and only if $h$ is injective.

A \emph{polytope} is a polyhedron which is bounded (equivalently, compact). 

% \begin{block}[$\Z$-PL functions] 
% Let $\sigma=(|\sigma|,N_{\sigma})$ be a polyhedron.Then we write $M_{\sigma}\coloneqq N_{\sigma}^*$ for the lattice of \emph{$\Z$-affine functions} on $\sigma$. 
% (i) we may reduce to the case where $\sigma$ is a cone. Since $M_{\sigma}\to \Hom_{\mathbf{Set}}(|\sigma|,\R)$ injects we have $\dim|\sigma|=\mathrm{rank}\,N=\mathrm{rank}\,{N^*}=\mathrm{rank}\,\CC_{\sigma}^{\gp}$. 

% We define the monoid of non-negative $\Z$-affine functions on $\sigma$ as $\CC_{\sigma}=\CC_{C(\sigma)}$
% %\coloneqq \left\{\varphi\in N_{\sigma}: \langle \varphi,C(\sigma)\\subset \N\right\}.$
% \end{block}
\end{definition}
\begin{lemma} \label{lemma: props Z-pl function on polyhedra} Let $\sigma$ be a polyhedron. Then $\CC_{\sigma}^{\gp}\subset M_{\sigma}$ with equality if and only if $C(\sigma)$ is strictly convex.
% \begin{enumerate}[label=(\roman*)]
% 	%\item Then $\mathrm{rank}\,M_{\sigma}=\dim \R_{\ge0}\sigma$, where $\R_{\ge0}\sigma$ is the cone generated by $\sigma$.
% 	\item
% 	\item 
% \end{enumerate}
\end{lemma}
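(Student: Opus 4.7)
The plan is to reduce the assertion to a statement about the associated cone $C(\sigma)$, exploiting the identifications $\CC_{\sigma}=\CC_{C(\sigma)}$ and $M_{\sigma}=M_{C(\sigma)}$ built into Definition~\ref{def polyhedral complex}. Setting $P\coloneqq C(\sigma)\cap N_{\sigma}$, I would recall from~\ref{dual monoids viewed as monoids of functions} that $\CC_{\sigma}=P^{\vee}$ is the monoid of non-negative $\Z$-linear forms on $C(\sigma)$, and in particular an integral submonoid of the abelian group $M_{\sigma}$. The inclusion $\CC_{\sigma}^{\gp}\subset M_{\sigma}$ is then immediate: the groupification of an integral submonoid of a group is just the subgroup it generates inside that group.

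For the ``if'' direction, assume $C(\sigma)$ is strictly convex. By Gordan's lemma (\ref{gordan}) the monoid $P$ is fine and saturated, and strict convexity of $C(\sigma)$ is equivalent to sharpness of $P$ via Proposition~\ref{fs monoids correspond to cones}. Moreover, by the very definition of a polyhedron $(N_{\sigma})_{\R}=\R|\sigma|=\R|C(\sigma)|$, so $P$ spans $N_{\sigma}$ as a group, i.e.\ $P^{\gp}=N_{\sigma}$. I will then invoke the canonical isomorphism $(P^{\vee})^{\gp}\cong (P^{\gp})^{*}$ for fine sharp monoids recalled in~\ref{block dual monoids}, which immediately gives $\CC_{\sigma}^{\gp}\cong N_{\sigma}^{*}=M_{\sigma}$, settling this direction.

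For the converse I would argue by contrapositive. If $C(\sigma)$ is not strictly convex, its lineality space $L\subset (N_{\sigma})_{\R}$ is nonzero; since $C(\sigma)$ is a rational polyhedral cone, $L$ is a rational subspace and therefore contains a nonzero lattice vector $v\in N_{\sigma}$. For any $m\in \CC_{\sigma}$ the inequalities $\langle m,v\rangle\ge 0$ and $\langle m,-v\rangle\ge 0$ force $\langle m,v\rangle=0$, whence $\CC_{\sigma}\subset v^{\perp}$. Consequently $\CC_{\sigma}^{\gp}\subset v^{\perp}\subsetneq M_{\sigma}$, as required.

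No serious obstacle arises: the identification $\CC_{\sigma}=P^{\vee}$, Gordan's lemma, and the lattice duality of~\ref{block dual monoids} furnish all the structural input, so the argument is a direct assembly of these ingredients. The only minor point that deserves care is the rationality of the lineality space in the contrapositive direction, which follows from the fact that a rational polyhedral cone is cut out by rational half-spaces.
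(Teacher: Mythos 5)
Your proof is correct, and its two halves relate to the paper's as follows. The ``only if'' direction is essentially identical: both arguments observe that every element of $\CC_{\sigma}$, hence of $\CC_{\sigma}^{\gp}$, kills the lineality space of $C(\sigma)$, which by rationality contains a nonzero vector of $N_{\sigma}$, so $\CC_{\sigma}^{\gp}$ lands in a proper sublattice $v^{\perp}\subsetneq M_{\sigma}$. The ``if'' direction takes a genuinely different route. The paper argues by hand: it picks $\varphi'\in M_{\sigma}$ with $\varphi'>0$ on $C(\sigma)\setminus\{0\}$ (possible precisely because $C(\sigma)$ is strictly convex) and writes an arbitrary $\varphi\in M_{\sigma}$ as $(\varphi+N\varphi')-N\varphi'$ with both terms in $\CC_{\sigma}$ for $N\gg0$, checking positivity on the primitive ray generators. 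You instead set $P=C(\sigma)\cap N_{\sigma}$, note that $P$ is fine, saturated and sharp (Gordan plus Proposition~\ref{fs monoids correspond to cones}), and invoke the duality isomorphism $(P^{\vee})^{\gp}\overset{\sim}{\lra}(P^{\gp})^{*}$ already recorded in~\ref{block dual monoids}. Both work; yours is shorter given the cited machinery, the paper's is self-contained and exhibits the decomposition explicitly. One point you should tighten: the identity $P^{\gp}=N_{\sigma}$ does not follow merely from $P$ spanning $(N_{\sigma})_{\R}$ --- that alone only gives a finite-index sublattice, and since your conclusion is $(P^{\gp})^{*}=N_{\sigma}^{*}=M_{\sigma}$, a proper finite-index $P^{\gp}$ would change the answer. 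You need that $P$ is the \emph{full} set of lattice points of the full-dimensional cone $C(\sigma)$: choose a lattice point $u$ in the interior of $C(\sigma)$; then for any $w\in N_{\sigma}$ one has $w+mu\in C(\sigma)\cap N_{\sigma}=P$ for $m\gg0$, whence $w\in P^{\gp}$. This is a one-line fix, not a flaw in the strategy.
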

\begin{proof} It follows from the definitions $\CC_{\sigma}^{\gp}\subset M_\sigma$.  If $C(\sigma)$ is strictly convex, then we can choose a $\Z$-affine function $\varphi'\in M_{\sigma}$ such that $C(\sigma)\setminus\{0\}\subset\{\varphi'>0\}$.
 Let $n_1,\dots,n_k$ be primitive ray generators of $C(\sigma)$. Let $\varphi\in M_{\sigma}$, after rescaling we may assume $\varphi'(n_i)>\max\{-\varphi(n_i),0\}$ for all $i=1,\dots,k$. Since the function $\max\{-\varphi,0\}$ is convex, it follows $\varphi+\varphi'\in \CC_{\sigma}$ and so $\varphi=(\varphi+\varphi')-\varphi'\in \CC_{\sigma}^{\gp}$. Conversely, any element of $\CC_{\sigma}$ vanishes on linear spaces contained in $C(\sigma)$, so the same is true for elements of $\CC_{\sigma}^{\gp}$. So any element of $M_{\sigma}$ which is non-trivial on a non-trivial linear space contained in $C(\sigma)$ does not lie in $\CC_{\sigma}^{\gp}$.
\end{proof}

\begin{definition}[$\Z$-PL space]
\label{def PL space}
	
A \emph{$\Z$-PL polyhedral complex}, or simply \emph{$\Z$-PL complex} is a pair $\Sigma=(|\Sigma|,\tau)$ %similarly as a cone complex (see definition~\ref{def cone complex}). %Similarly defines $\Q$-PL (convex rational polyhedra) and $\R$-PL spaces (convex polyhedra); 
 consisting of a locally compact topological space $|\Sigma|$ and a \emph{polyhedral atlas} $\tau$, which is a set of polyhedra such that (i)-(iv) holds:
\begin{enumerate}[label=(\roman*)]
\item The set $\{|\sigma|\}_{\sigma\in\tau}$ defines a covering of closed subsets of $|\Sigma|$.
\item Each face of a polyhedron in $\tau$ is again a polyhedron in $\tau$.
  \item For any two polyhedra $\sigma_1,\sigma_2\in \tau$, there is a unique polyhedron $\sigma_1\cap\sigma_2\in\tau$ such that $|\sigma_1\cap\sigma_2|=|\sigma_1|\cap|\sigma_2|$ and $\sigma_1\cap\sigma_2$ admits a face embedding in both $\sigma_1$ and $\sigma_2$.%require composition/cocyle rule for glueing; can discard regularity of Delta-complex
\item Each $x\in |\Sigma|$ admits a neighbourhood of the form $|\sigma_1|\cup\dots\cup|\sigma_n|$ for some $\sigma_1,\dots,\sigma_n\in \tau$.
\end{enumerate} 
We write $$M_{\Sigma}=\lim_{\sigma\in \Sigma}M_{\sigma}$$ %\mathrm{ker}\left(\begin{tikzcd}\prod_{\sigma\in \tau}M_{\sigma}  \ar[r,shift left=.75ex]   \ar[r,shift right=.75ex,swap]&\prod_{\sigma,\sigma'\in\tau}M_{\sigma\cap\sigma'}\end{tikzcd}\right).$$
for the group of functions on $\Sigma$ that are $\Z$-affine on the faces. Similarly define $\CC_{\Sigma}=\lim_{\sigma\in\Sigma}\CC_{\sigma}$ for the non-negative $\Z$-affine functions.

A morphism of polyhedral complexes $h:(\Sigma,\tau)\to(\Sigma',\tau')$ is a map $h:\tau\to\tau'$ together with a family $(h_\sigma:\sigma\to h(\sigma))_{\sigma\in\tau}$ of compatible polyhedron morphisms, where \emph{compatibility} means that whenever $\psi$ is a face of $\sigma$, the restriction $(h_\sigma)\vert_\psi$ is the composition of $h_\psi$ with the face embedding of $h(\psi)$ in $h(\sigma)$. For simplicity we often identify $h$ with its geometric realisation $|h|:|\Sigma|\to|\Sigma'|$. A morphism $h:\Sigma\to\Sigma'$ induces natural maps $h^*:M_{\Sigma'}\to M_{\Sigma}$ and $h^*:M_{\Sigma'}^+\to M_{\Sigma}^+$. We call $h$ a \emph{subdivision} if $|h|$ defines a homeomorphism onto its image and the $h_\sigma$ can be chosen to be injective. We call $h$ a \emph{proper subdivision} if additionally $|h|$ is bijective, each $h_{\sigma}$ is an immersion and each polyhedron $\sigma'$ is a finite union of polyhedra in $\Sigma$. 

Consider the equivalence relation on polyhedral complexes generated by proper subdivisions. We then define a \emph{$\Z$-PL space} as an equivalence class of polyhedral complexes, and a morphism of $\Z$-PL spaces is induced from a morphism of representative polyhedral complexes. By a polyhedron of a $\Z$-PL space $\Sigma$ we mean any polyhedron of a representative. To ease notation, we will often identify a $\Z$-PL space $\Sigma=(|\Sigma|,[\tau])$ with its support $|\Sigma|$, and suppress the equivalence class of polyhedral atlasses $[\tau]$. The \emph{group of $\Z$-PL-functions on $\sigma$} is defined as $$\mathrm{PL}_{\Sigma}\coloneqq \mathrm{colim}_{\Sigma'\to\Sigma}M_{\sigma'}$$ where the colimit is taken over all (not necessarily proper) subdivisions $\Sigma'\to\Sigma$; similarly define $\mathrm{PL}_{\Sigma}^+$. A morphism $\Sigma'\to\Sigma$ then induces natural homomorphisms $\mathrm{PL}_{\Sigma}\to \mathrm{PL}_{\Sigma}$ and $\mathrm{PL}_{\Sigma}^+\to \mathrm{PL}_{\Sigma}^+$. It follows from the definitions that a continuous function $\Sigma\to\R$ is $\Z$-PL (resp. $\N$-PL) if and only if it is $\Z$-PL (resp. $\N$-PL) on the polyhedra of some polyhedral atlas. 

One may similarly define $\Q$-PL and $\R$-PL-spaces. By a PL-space we mean a $\R$-PL space. 
\end{definition}

\begin{block}[Associated cone complex]
To any $\Z$-PL space $\Sigma$ we can functorially associate a cone complex $C(\Sigma)$ as follows: The functor which sends a polyhedron $(\sigma,N_{\sigma})$ to the cone $C(\sigma)=(\overline{\R_{\ge0}|\sigma|},N_{\sigma})$ assocatied to $\sigma$, preserves immersions and face embeddings, and so globalises to define a functor $C(\cdot)$ from the category of $\Z$-PL spaces to the category of cone complexes. 
\end{block}
\begin{block}[Recession cone complex]\label{block recession fan}
Let $\Sigma$ be a $\Z$-PL space, then we construct the \emph{recession cone complex} $\mathrm{rec}(\Sigma)$ as follows.
If $\sigma$ is a polyhedron, then $$|\mathrm{rec}(\sigma)|=\{v\in (N_{\sigma})_{\R}:\sigma+\R_{\ge0}v\subset \sigma\},$$ and the induced lattice is the restriction of $N_{\sigma}$. This construction is functorial and preserves face embeddings, hence glues to any $\Z$-PL space.
The cone complex $\mathrm{rec}(\Sigma)$ is trivial if and only if $\Sigma$ is bounded, i.e. all its polyhedra are bounded, and $\mathrm{rec}(\Sigma)$ is strictly convex if and only if $\Sigma$ is. 
\end{block}
\begin{lemma}
		Let $\Sigma$ be a polyhedral complex. Then the recession cone complex $\mathrm{rec}(\Sigma)$ is a subcomplex of $C(\Sigma)$.
\end{lemma}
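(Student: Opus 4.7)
The plan is to first establish the pointwise inclusion $|\mathrm{rec}(\sigma)|\subset |C(\sigma)|$ for each polyhedron $\sigma$ in a representative atlas of $\Sigma$, promote this to an inclusion of cones, and then check compatibility with the face embeddings and intersections used to glue the complexes, so that we obtain a subcomplex in the sense of Definition~\ref{def cone complex}.

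First, I would fix $\sigma=(|\sigma|,N_\sigma)$ and take a nonzero $v\in|\mathrm{rec}(\sigma)|$. Choosing any $x\in|\sigma|$, the definition of the recession cone gives $x+tv\in|\sigma|$ for every $t\ge 0$, hence $v+x/t=(x+tv)/t\in\R_{\ge 0}|\sigma|$ for all $t>0$. Letting $t\to\infty$ and using that $|C(\sigma)|=\overline{\R_{\ge 0}|\sigma|}$ is closed in $(N_\sigma)_\R$ gives $v\in |C(\sigma)|$. Since $\mathrm{rec}(\sigma)$ is itself a rational polyhedral cone (cut out of $(N_\sigma)_\R$ by the homogeneous parts of the defining affine inequalities of $\sigma$) equipped with the induced lattice $N_\sigma\cap\R|\mathrm{rec}(\sigma)|$, this promotes to a subcone inclusion $\mathrm{rec}(\sigma)\hookrightarrow C(\sigma)$ in the sense of~\ref{subcones}.

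Second, I would verify compatibility with the glueing data. If $\tau\le\sigma$ is a face, obtained as $\sigma\cap\{f=0\}$ for some supporting $\Z$-affine function $f$ which is non-negative on $\sigma$, then writing out the defining inequalities shows $\mathrm{rec}(\tau)=\mathrm{rec}(\sigma)\cap\{f_{\mathrm{lin}}=0\}$, and since $f_{\mathrm{lin}}\ge 0$ on $\mathrm{rec}(\sigma)$, this hyperplane is supporting and exhibits $\mathrm{rec}(\tau)$ as a face of $\mathrm{rec}(\sigma)$, in a manner compatible with the face embedding $C(\tau)\hookrightarrow C(\sigma)$. The identity $\mathrm{rec}(\sigma_1\cap\sigma_2)=\mathrm{rec}(\sigma_1)\cap\mathrm{rec}(\sigma_2)$ is then immediate from the defining inequalities, mirroring $C(\sigma_1\cap\sigma_2)=C(\sigma_1)\cap C(\sigma_2)$. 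Finally, invariance of the construction under proper subdivisions, needed so that the assertion makes sense at the level of $\Z$-PL spaces, follows from the elementary identities $\mathrm{rec}(\bigcup_i\sigma_i)=\bigcup_i\mathrm{rec}(\sigma_i)$ and $C(\bigcup_i\sigma_i)=\bigcup_i C(\sigma_i)$ whenever $\sigma=\bigcup_i\sigma_i$ is a proper polyhedral subdivision. Assembling these ingredients yields a well-defined inclusion of cone complexes $\mathrm{rec}(\Sigma)\hookrightarrow C(\Sigma)$ realising $\mathrm{rec}(\Sigma)$ as a subcomplex.

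The proof is essentially bookkeeping; the only genuinely convex-geometric input is the closedness argument of the first paragraph, and the only mild subtlety is remembering to homogenise the defining inequalities when passing from faces of $\sigma$ to faces of $\mathrm{rec}(\sigma)$. No serious obstacle is expected.
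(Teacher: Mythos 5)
Your proposal is correct and its key step — taking $v\in\mathrm{rec}(\sigma)$, writing $(x+tv)/t\in\R_{\ge0}|\sigma|$ and letting $t\to\infty$ inside the closed cone $C(\sigma)$ — is exactly the argument the paper gives. The paper compresses all of your second paragraph into the single sentence ``it suffices to consider the case of a polyhedron,'' so your extra compatibility checks are just a more explicit rendering of the same proof.
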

\begin{proof}
		It suffices to consider the case of a polyhedron $\sigma$. Let $v\in \mathrm{rec}(\sigma)$. Then for all $w\in \sigma$ and $\lambda\in\R_{>0}$ we have $w+\lambda v\in \sigma$ and so $\lambda^{-1}w+v\in C(\sigma)$. Take the limit as $\lambda\to\infty$ and  it follows $v\in C(\sigma)$.
\end{proof}

\begin{block}[Stars and links] \label{stars and links} Let $\Sigma$ be a polyhedral complex. Given a polyhedron $\tau$ of $\Sigma$ we write $\mathrm{star}(\tau)=\cup_{\sigma \ge \tau}\sigma$ for the \emph{star} of $\tau$, and $$\mathrm{link}_{\Sigma}(\tau)=\overline{\mathrm{star}}_{\Sigma}(\tau)\setminus\mathrm{star}_{\Sigma}{\tau}$$ for the link of $\tau$, which is again a PL space. For any polyhedron $\rho$ of $\mathrm{link}_{\Sigma}(\tau)$ we write $\tau+\rho$ for the unique minimal polyhedron of $\overline{\mathrm{star}}_{\Sigma}(\tau)$ containing $\tau$ and $\sigma$.  

It is immediate to verify that $\star(\rec(\sigma))=\rec(\star(\sigma))$ and $\link(\rec(\sigma))=\rec(\link(\sigma))$.
\end{block}

\end{document}